\theoremstyle{plain}
\newtheorem{theorem}{Theorem}
\newtheorem{corollary}[theorem]{Corollary}
\newtheorem{lemma}[theorem]{Lemma}
\newtheorem{proposition}[theorem]{Proposition}
\theoremstyle{remark}
\newtheorem{remark}[theorem]{Remark}
\newtheorem{example}[theorem]{Example}
\numberwithin{equation}{section}
\numberwithin{theorem}{section}
\numberwithin{figure}{section}
\newcounter{AssumptionCounter}    
\newcommand{\Dual}[1]{#1'} 
\newcommand{\R}{\mathbb{R}}
\newcommand{\ta}{\widetilde{a}}     
\newcommand{\tA}{\widetilde{A}} 
\newcommand{\tV}{\widetilde{V}}
\newcommand{\tv}{\tilde{v}}
\newcommand{\calA}{\mathcal{A}}
\newcommand{\calE}{\mathcal{E}}
\newcommand{\calI}{\mathcal{I}}
\newcommand{\calP}{\mathcal{P}}
\newcommand{\Cav}{C_{\mathtt{av}}}
\newcommand{\Cqo}{C_{\mathtt{qo}}} 
\newcommand{\Cloc}{C_{\mathtt{loc}}}
\newcommand{\Ccol}{C_{\mathtt{col}}}
\newcommand{\Cobl}{C_{\mathtt{ort}}}
\newcommand{\Cstab}{C_{\textsf{stab}}}
\newcommand{\Csplt}{C_\mathtt{splt}}
\newcommand{\CL}{C_{\mathtt{\overline{com}}}}
\newcommand{\CNL}{C_{\mathtt{\underline{com}}}}
\DeclareMathOperator{\Res}{\mathrm{res}} 
\DeclareMathOperator{\Osc}{\mathrm{osc}} 
\DeclareMathOperator{\Ncf}{\mathrm{ncf}} 
\DeclareMathOperator{\Est}{\mathrm{est}} 
\newcommand{\Index}{\mathcal{Z}} 
\newcommand{\scp}[3][]{\ensuremath{\langle #2,#3\rangle_{#1}}}
\newcommand{\norm}[2][]{\|#2\|_{#1}}
\newcommand{\normtr}[1]{ \lvert\!\lvert\!\lvert{#1} \rvert\!\rvert\!\rvert}
\newcommand{\grid}{\mathcal{T}}  
\newcommand{\sides}{\mathcal{F}}
\newcommand{\vertices}{\mathcal{V}}
\newcommand{\normal}{\mathsf{n}}
\newcommand{\tangent}{\mathsf{t}}
\newcommand{\poly}{\mathbb{P}}
\newcommand{\jump}[1]{\left\llbracket#1\right\rrbracket}
\NewDocumentCommand{\mean}{sO{}m}{%
  \IfBooleanTF{#1}
    {\meanext{#3}}
    {\meanx[#2]{#3}}%
}
\NewDocumentCommand{\meanext}{m}{%
  \sbox0{%
    \mathsurround=0pt 
    $\left\{\vphantom{#1}\right.\kern-\nulldelimiterspace$%
  }%
  \sbox2{\{}%
  \ifdim\ht0=\ht2
    \{\kern-.45\wd2 \{#1\}\kern-.45\wd2 \}%
  \else
    \left\{\kern-.5\wd0\left\{#1\right\}\kern-.5\wd0\right\}%
  \fi
}
\NewDocumentCommand{\meanx}{om}{%
  \sbox0{\mathsurround=0pt$#1\{$}%
  \sbox2{\{}%
  \ifdim\ht0=\ht2
    \{\kern-.45\wd2 \{#2\}\kern-.45\wd2 \}%
  \else
    \mathopen{#1\{\kern-.5\wd0 #1\{}
    #2
    \mathclose{#1\}\kern-.5\wd0 #1\}}
  \fi
}
\newcommand{\CR}{{C\!R}}
\renewcommand{\MR}{{M\!R}}
\newcommand{\mr}{\mathsf{MR}}
\newcommand{\HCT}{H\!C\!T}
\newcommand{\COip}{\mathtt{C0}}
\begin{document}

\title[A~posteriori estimates for quasi-optimal nonconforming methods]{Strictly equivalent a~posteriori error estimators for quasi-optimal nonconforming methods}

\author[C.~Kreuzer]{Christian Kreuzer}
\address{TU Dortmund \\ Fakult{\"a}t f{\"u}r Mathematik \\ D-44221 Dortmund \\ Germany}
\email{christian.kreuzer@tu-dortmund.de}

\author[M.~Rott]{Matthias Rott}
\address{TU Dortmund, Fakult{\"a}t f{\"u}r Mathematik \\ D-44221 Dortmund \\ Germany}
\email{matthias.rott@tu-dortmund.de}

\author[A.~Veeser]{Andreas Veeser}
\address{Universit\`{a} degli Studi di Milano, Dipartimento di
	Matematica\\ 20133 Milano\\ Italy}
\email{andreas.veeser@unimi.it}

\author[P.~Zanotti]{Pietro Zanotti}
\address{Universit\`{a} degli Studi di Milano, Dipartimento di
	Matematica\\ 20133 Milano\\ Italy}
\email{pietro.zanotti@unimi.it}

\keywords{a~posteriori analysis, nonconforming finite elements, error-dominated oscillation, quasi-optimality, discontinuous Galerkin, $C^0$ interior penalty}

\subjclass[2010]{65N30, 65N15}

\begin{abstract}
We devise a posteriori error estimators for quasi-optimal nonconforming finite element methods approximating symmetric elliptic problems of second and fourth order. These estimators are defined for all source terms that are admissible to the underlying weak formulations. More importantly, they are equivalent to the error in a strict sense. In particular, their data oscillation part is bounded by the error and, furthermore, can be designed to be bounded by classical data oscillations.  The estimators are computable, except for the data oscillation part. Since even the computation of some bound of the oscillation  part is not possible in general, we advocate to handle it on a case-by-case basis. We illustrate the practical use of two estimators obtained for the Crouzeix-Raviart method applied to the Poisson problem with a source term that is not a function and its singular part with respect to the Lebesgues measure is not aligned with the mesh.    
\end{abstract}

\maketitle

\section{Introduction}
\label{sec:introduction}
%
%
Nonconforming finite element methods are a well-established technique for the approximate solution of partial differential equations (PDEs). Classical nonconforming elements, like the ones of Morley \cite{Morley:68} and Crouzeix-Raviart \cite{Crouzeix.Raviart:73}, were originally proposed as valuable alternatives to conforming ones for the biharmonic and the Stokes equations. Later on, other nonconforming techniques like discontinuous Galerkin and $C^0$ interior penalty
methods have proved to be competitive for a wide range of problems.  

Nonconforming methods weaken the coupling between elements so that at least some discrete functions are not admissible to the weak formulation of the PDE of interest. On the one hand, this increases flexibility for approximation and accommodating desired structural properties,  but on the other hand it complicates the theoretical analysis and, without suitable measures, excludes data that is admissible in the weak PDE. 

An a~posteriori error analysis aims at devising a quantity, called estimator, that, ideally, is computable and equivalent to the error of the approximate solution. Such an estimator can be used to asses the quality of the approximate solution and, if it splits into local contributions, to guide adaptive mesh refinement.
To outline the structure of most available results, consider a weak formulation of the form
\begin{equation*}
	u \in V \text{ such that } \forall v \in V \; a(u,v) = \scp{f}{v},
\end{equation*}
where the bilinear form $a$ is symmetric and $V$-coercive, and apply some nonconforming method based upon a discrete space $V_h \not\subset V$. Then the error $\norm{u-u_h}$ and the estimator $\Est_h$ are related by the following equivalence,  cf.\ \cite{Dari.Duran.Padra:95, Karakashian.Pascal:03, Carstensen:05, Vohralik:07, Ainsworth.Rankin:08, Brenner:15, Carstensen.Nataraj:22, Carstensen.Graessle.Nataraj:24}:
\begin{equation}
	\label{intro-spoiled-equivalence}
	\norm{u-u_h}^2 + \Osc_h^2
	\eqsim
	\Est_h^2
	=
	\Ncf_h^2 + \eta_h^2 + \Osc_h^2,
\end{equation} 
where $\Ncf_h$ is an approximation of the distance of $u_h$ to $V$, $\eta_h$ is a PDE-specific part of the estimator depending on $u_h$, and $\Osc_h$ is the so-called oscillation, typically involving only data. Notice the presence of $\Osc_h$ on both sides, which spoils the ideal equivalence of error and estimator. This flaw is due to the fact that in general the error cannot bound the oscillation $\Osc_h$ on a given mesh and this may even persist under adaptive refinement; cf.\  Remark~\ref{R:strict-equivalence} below. 
Often and related, $\eta_h$  cannot be bounded by the error alone. 

In contrast to \eqref{intro-spoiled-equivalence}, this article establishes  the strict equivalence
\begin{equation}
	\label{intro-strict-equivalence}
	\norm{u-u_h}^2
	\eqsim
	\Est_h^2
	=
	\Ncf_h^2 + \eta_h^2 + \Osc_h^2,
\end{equation} 
where $\Osc_h$ and often also $\eta_h$ differ from their counterparts in \eqref{intro-spoiled-equivalence} and are defined for all $f \in V'$. Moreover, the oscillation $\Osc_h$ in \eqref{intro-strict-equivalence} can be designed to be bounded by the one in \eqref{intro-spoiled-equivalence}. Observe that now the error dominates both oscillation $\Osc_h$ and $\eta_h$. We achieve this improvement by generalizing the new approach of \cite{Kreuzer.Veeser:21}, used also in the recent survey \cite{Bonito.Canuto.Nochetto.Veeser:24}, to nonconforming methods. To shed some light on the proof, let us outline our approach.

%
We start by identifying a residual $\Res_h$ and, similarly to existing approaches, obtain 
\begin{equation*}
	\norm{u-u_h}^2
	=
	\norm[\Dual{(V+V_h)}]{\Res_h}^2
	=
	\inf_{v \in V} \norm{u_h -v}^2
	+
	\norm[\Dual{V}]{\Res_h^\mathtt{C}}^2
\end{equation*}
with the conforming part $\Res_h^\texttt{C} := \Res_h{}_{|V}$ of the residual. Note that the infinite dimension of $V$ obstructs the computation of both terms on the right-hand side. For the first term, 
averaging operators allow deriving indicators $\Ncf_h$ that are strictly equivalent, computable, and readily split into local contributions.

The second term with the conforming residual $\Res_h^\mathtt{C}$ is more delicate in view of the dual norm and the fact that data, say only the source $f$, is typically taken from an infinite-dimensional space. Considering just for a moment a simplifying special case, we address these two issues by the following two steps:
\begin{equation}
	\label{intro-conf-res}
	\norm[\Dual{V}]{\Res_h^\mathtt{C}}^2
	\eqsim
	\sum_{z\in\vertices} 	\norm[\Dual{V_z}]{\Res_h^\mathtt{C}}^2
	\eqsim
	\sum_{z\in\vertices}  \norm[\Dual{V_z}]{\calP_z\Res_h^\mathtt{C}}^2
	+
	\sum_{z\in\vertices} 	\norm[\Dual{V_z}]{f -\calP_z f}^2 ,
\end{equation}
where $\vertices$ are the vertices of the underlying mesh, $V_z$ are local counterparts of $V$, and $\calP_z$ are local projections onto finite-dimensional counterparts $D_z$ of $\Dual{V_z}$.

The general version of the first step in \eqref{intro-conf-res} is given in Lemma~\ref{L:localizating-conf-res}. It hinges on (near) orthogonality properties of the conforming residual $\Res_h^\mathtt{C}$, which originate in the nonconforming discrete test functions. If the nonconforming method is  quasi-optimal in the sense of
\begin{equation}
\label{eq:apriori-new}
	\norm{u_h-u}
	\leq
	\Cqo \inf_{v_h \in V_h} \norm{u-v_h},
\end{equation}
they are verified in Lemma~\ref{L:conforming-residual}, else they have to be measured by an additional indicator in the spirit of Remark~\ref{R:role-of-qopt}.

The second step in \eqref{intro-conf-res} further decomposes the residual, adapting the ideas in \cite{Kreuzer.Veeser:21} to new features arising from nonconformity. Thanks to $\dim D_z < \infty$, classical techniques allow deriving strictly equivalent and computable indicators $\eta_h$ for the sum with the approximate residuals $\calP_z\Res_h^\mathtt{C}$, while the computability issue due to infinite dimensional data is isolated in the second sum, which coincides with $\Osc_h^2$ in \eqref{intro-strict-equivalence}. The latter decomposition is also useful for adaptivity; see Remark~\ref{R:conf-res-splitting-and-adaptivity} below.
It is also the crucial step for the difference between the two equivalences \eqref{intro-spoiled-equivalence} and \eqref{intro-strict-equivalence}. For the latter, the projections $\calP_z$ have to be stable in a $\Dual{V}$-like manner, while classical techniques (implicitly) use projections that are stable only in proper subspaces; see Remark~\ref{R:strict-equivalence}.

We advocate to first apply the outlined approach and then address the computability of the oscillation $\Osc_h$ on a case-by-case basis, exploiting all the available information on the structure and regularity of the given data. We postpone a more detailed discussion to the Remarks~\ref{R:computability}, \ref{R:neglecting-data-osc}, and \ref{R:class-osc-as-surrogate} and only mention here that an a~posteriori analysis based upon a `more computable' oscillation may lead to indicators $\eta_h$ that cannot be bounded by the error alone, as in the case of several classical techniques.

To illustrate the generality of the outlined approach, we apply it to various quasi-optimal nonconforming methods covering arbitrary approximation order as well as second- and fourth-order problems. The approach also allows for estimator simplifications thanks to nonconformity as in \cite{Dari.Duran.Padra:95,Dari.Duran.Padra.Vampa:96}. We restrict ourselves to equations with data-free symmetric principal terms in order to avoid the technical complications which are addressed in \cite[Section~4]{Bonito.Canuto.Nochetto.Veeser:24} in the case of conforming methods. Furthermore, for the sake of the simplicity, we do not cover estimators  based on flux equilibration; the related changes are however illustrated for conforming methods in \cite[Section~4.4]{Kreuzer.Veeser:21} (lowest-order case) and in \cite[Section~4.9]{Bonito.Canuto.Nochetto.Veeser:24} (higher-order cases). 

\subsection*{Organization}
In Section~\ref{sec:qo-methods}, we
briefly recall the framework for quasi-optimal nonconforming methods
from~\cite{Veeser.Zanotti:18}. Section~\ref{sec:posteriori-analysis} provides the framework for the a~posteriori analysis of this class of methods, based on the principles in~\cite{Kreuzer.Veeser:21}. Section~\ref{sec:Poisson} illustrates the application of the abstract results to lowest- and higher-order quasi-optimal methods for the Poisson problem, while Section~\ref{sec:Biharmonic} concerns lowest-order methods for the biharmonic problem. Finally, we illustrate the practical use of two derived estimators  in Section~\ref{sec:numerics}.

\subsection*{Notation}
Let $X$, $Y$ be Hilbert spaces. We equip the dual space $\Dual{X}$ with the norm $\norm[\Dual{X}]{\cdot} = \sup_{x \in X, \norm[X]{x} \leq 1} \scp{\cdot}{x}$, where \(\scp{\cdot}{\cdot}\) is the dual pairing on \(\Dual{X}\times
X\) and $\norm[X]{\cdot}$ is the norm on $X$. We denote the norm of a bounded linear operator $\mathcal{L}: X \to Y$  by $\normtr{\mathcal{L}}$. Moreover, we use standard notation for Lebesgue and Sobolev spaces.

\section{Quasi-optimal nonconforming methods}
\label{sec:qo-methods}
In this section we summarize the framework for quasi-optimal nonconforming methods introduced in \cite{Veeser.Zanotti:18}. We report only the notions and results that are useful to our subsequent developments.

Let $V$ be a Hilbert space with scalar product $a(\cdot,\cdot)$ and induced norm \(\norm{\cdot}:= \norm[V]{\cdot} := \sqrt{a(\cdot,\cdot)}\). As in \cite{Veeser.Zanotti:18}, we focus on PDEs whose weak formulation reads 
\begin{equation}\label{Eq:AnaPro}
  \text{for \(f\in \Dual{V}\), find $u \in V$ such that } 
 \forall v\in V\quad a(u, v) = \scp{f}{v}.
\end{equation}
Thanks to the Riesz representation theorem, the operator \(A: V \to \Dual{V}\) induced by $a$ and defined 
as \(Av := a(v,\cdot)\) is an isometry. Hence, for any source term $f \in \Dual{V}$, there exists a unique solution $u = A^{-1} f$ of \eqref{Eq:AnaPro} with
\(\norm{u}=\norm[\Dual{V}]{f}\). 

Let $V_h$ be a finite-dimensional, linear space, which is possibly \emph{nonconforming}, i.e.\  not necessarily a subspace of $V$. We assume
that $a$ extends to a scalar product $\ta$ on the space $\tV := V + V_h$. The norm induced by $\widetilde{a}$ extends the one induced by $a$, therefore we still denote it by $\norm{\cdot}$. Consequently, the operator \(\tA:\tV\to\Dual{\tV}\) extending \(A\) via \(\tA \tilde{v}=\ta(\tilde{v},\cdot)\) is an isometry as well. Writing  $\Pi_V:\tV\to V$ for the \(\ta\)-orthogonal projection onto $V$, the \emph{distance to conformity} of an element \(\widetilde{v}\in\tV\) is given by the distance to its projection
\begin{align}\label{df:PiV}
  \norm{\widetilde{v}-\Pi_V \widetilde{v}}=\inf_{v\in V}\norm{\widetilde{v} - v}.
\end{align}

To discretize \eqref{Eq:AnaPro}, we need a nondegenerate bilinear form $a_h: V_h \times V_h \to \R$ and a functional $f_h \in \Dual{V_h}$, which approximate their counterparts $a$ and $f$ in a sense to be specified. According to \cite[Theorem~4.7]{Veeser.Zanotti:18}, a necessary condition for the quasi-optimality \eqref{eq:apriori-new} is \emph{full stability}, i.e.\  the mapping $V' \ni f \mapsto f_h \in V_h'$ induced by the discretization of the source term must be bounded. This, in turn, implies that we necessarily have $f_h = \calE^*_h f$, where $\calE^*_h: \Dual{V} \to \Dual{V_h}$ is the adjoint of some linear \emph{smoothing} operator $\calE_h: V_h \to V$. Therefore, we restrict our attention to nonconforming discretizations of \eqref{Eq:AnaPro} which read as follows:
\begin{equation} 
\label{Eq:DisEq}
  \text{for \(f\in \Dual{V}\)}, \text{find $u_h \in V_h$ such that}~\forall v_h\in V_h\quad
   a_h(u_h, v_h) = \scp{f}{\calE_h v_h}. 
\end{equation}

Since $a_h$ is nondegenerate and $\dim V_h < \infty$, the operator \(A_h: V_h \to \Dual{V_h}\), defined by \(A_h(v_h) := a_h(v_h, \cdot)\), is invertible. Then, for any source term $f \in \Dual{V}$, there exists a unique solution of the discrete problem \eqref{Eq:DisEq} with $u_h = A_h^{-1} \calE^*_h f =: M_h f$. Henceforth, we assume that the evaluations of $a_h$ and $f$ in the discrete problem are computationally realizable, so that $u_h$ is computable (in exact arithmetic). This entails that the evaluations of $f$ for all test functions in $\calE_h(V_h)$ are known. We call the linear operator $M_h: \Dual{V} \to V_h$ the \emph{nonconforming method}, i.e.\  the mapping of the source term $f$ to the approximation $u_h$ of $u$, whereas $P_h: V \to V_h$ defined as
\begin{equation}
\label{Eq:approximation-operator}
P_h := M_hA = A_h^{-1} \calE_h^* A
\end{equation}
is the associated approximation operator, i.e.\  the mapping of $u$ to $u_h$. Figure~\ref{Fig:LaplSch} summarizes the functional framework.

\begin{figure}[h]
	\[\xymatrixcolsep{4pc}
	\xymatrixrowsep{4pc}
	\xymatrix{
		\Dual{V}
		\ar[d]^{\calE^*_h}\ar[rd]^{M_h}
		& \ar[l]^{A} V
		\ar[d]^{P_h} \\
		\Dual{V}_h 
		& \ar[l]^{A_h}V_h}\]
	\caption{Commutative diagram summarizing the construction of a fully stable nonconforming method $M_h$.}
	\label{Fig:LaplSch}
\end{figure}
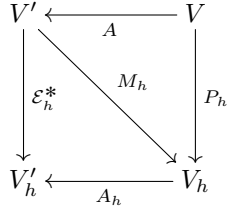

The definitions \eqref{Eq:AnaPro} and \eqref{Eq:DisEq} of exact and approximate solution directly lead to the \emph{generalized Galerkin orthogonality}
\begin{equation}
\label{Eq:galerkin-orthogonality}
	\forall v_h \in V_h \quad  a_h(u_h, v_h) = a(u,\calE_h v_h).
\end{equation}
Note the presence of the smoothing operator $\calE_h$ and the related possible difference between the test functions in the bilinear forms $a$ and $a_h$.

In view of \cite[Theorem~4.14]{Veeser.Zanotti:18}, the compatibility condition
\begin{equation}
\label{Eq:AlgCons}
 \forall u\in V\cap V_h,~v_h\in V_h\quad a_h(u, v_h) = a(u,\calE_h v_h),
\end{equation}
called \emph{full algebraic consistency}, is necessary and sufficient for the quasi-optimality of a fully stable nonconforming method. More precisely, the following result holds true.

\begin{proposition}[Quasi-optimality]
\label{P:QuasiOptMethods}
For the fully stable method $M_h$, induced by the discretization \eqref{Eq:DisEq}, the following statements are equivalent: 
\begin{enumerate}
	\item \label{QuasiOptMethods-QuasiOptimality} $M_h$ is quasi-optimal, i.e.\  \eqref{eq:apriori-new} is verified for some constant $\Cqo\geq 1$.
	\item \label{QuasiOptMethods-FullConsistency} $M_h$ is fully consistent, i.e.\  \eqref{Eq:AlgCons} is verified.
	\item \label{QuasiOptMethods-ConsistencyMeasure} There is a constant $\delta_h \geq 0$ verifying
	\begin{equation*} \label{Eq:QuasiGO}
	\forall w_h\in V_h\quad 
	\sup_{v_h\in V_h}\frac{a_h(w_h,v_h)-a(\Pi_V w_h, \calE_h v_h)}
	{\norm[\Dual{V_h}]{A_h^* v_h}}
	\le \delta_h\|w_h-\Pi_V w_h\|.
	\end{equation*} 
\end{enumerate} 
Moreover, if \eqref{QuasiOptMethods-QuasiOptimality}-\eqref{QuasiOptMethods-ConsistencyMeasure} are verified, then the constant $\Cqo$ from \eqref{eq:apriori-new} is such that
\begin{equation*}
\label{Eq:quasi-optimality-constant}
\max\{\Cstab, \delta_h\}\le \Cqo\le
\sqrt{\Cstab^2+ \delta_h^2}
\end{equation*}
with 
\begin{equation*}
\label{Eq:stability-constant}
\Cstab:= \sup_{v_h \in V_h}
\dfrac{\norm{\calE_h v_h}}{\norm[\Dual{V_h}]{A_h^*v_h}}.
\end{equation*}
\end{proposition}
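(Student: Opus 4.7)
The plan is to establish the cyclic chain (i) $\Rightarrow$ (ii) $\Rightarrow$ (iii) $\Rightarrow$ (i), extracting the constant bounds along the way; the argument parallels \cite[Theorem~4.14]{Veeser.Zanotti:18}.

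The implication (i) $\Rightarrow$ (ii) is elementary: applying \eqref{eq:apriori-new} to $u \in V \cap V_h$ yields a vanishing best-approximation (take $v_h = u$), hence $u_h = u$, and the discrete equation \eqref{Eq:DisEq} collapses to \eqref{Eq:AlgCons}. For (ii) $\Rightarrow$ (iii), I would first dualize through the bijection $A_h^*$, using the identity $\norm{w_h} = \sup_{v_h'\in V_h} a_h(w_h,v_h')/\norm[\Dual{V_h}]{A_h^* v_h'}$, to rewrite the supremum on the left of (iii) as $\norm{w_h - P_h\Pi_V w_h}$, with $P_h$ as in \eqref{Eq:approximation-operator}. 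Under (ii) this seminorm on $V_h$ vanishes on $V \cap V_h$, as does the denominator $\norm{w_h - \Pi_V w_h}$; the finite dimension of $V_h$ then yields a finite ratio $\delta_h$.

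For (iii) $\Rightarrow$ (i), the central computation is to control $\norm{u_h - v_h}$ for arbitrary $v_h \in V_h$. Using \eqref{Eq:galerkin-orthogonality} to rewrite
\begin{equation*}
 a_h(u_h-v_h,v_h') = a(u-\Pi_V v_h,\calE_h v_h') - \bigl(a_h(v_h,v_h') - a(\Pi_V v_h,\calE_h v_h')\bigr),
\end{equation*}
and bounding the first summand via $\norm{\calE_h v_h'} \leq \Cstab\,\norm[\Dual{V_h}]{A_h^* v_h'}$ and the second via (iii) applied to $v_h$, dualization yields
\begin{equation*}
 \norm{u_h - v_h} \leq \Cstab\,\norm{u - \Pi_V v_h} + \delta_h\,\norm{v_h - \Pi_V v_h}.
\end{equation*}
Combining this with the Pythagorean identities $\norm{u - v_h}^2 = \norm{u - \Pi_V v_h}^2 + \norm{v_h - \Pi_V v_h}^2$ and $\norm{u - u_h}^2 = \norm{u - \Pi_V u_h}^2 + \norm{u_h - \Pi_V u_h}^2$ (both valid in $(\tilde V,\tilde a)$ since $u\in V$), and applying Cauchy--Schwarz to the pairs $(\Cstab,\delta_h)$ and $(\norm{u - \Pi_V v_h},\norm{v_h - \Pi_V v_h})$, yields $\norm{u - u_h} \leq \sqrt{\Cstab^2 + \delta_h^2}\,\norm{u - v_h}$ for every $v_h \in V_h$, whence $\Cqo \leq \sqrt{\Cstab^2 + \delta_h^2}$. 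The lower bounds $\Cqo \geq \Cstab$ and $\Cqo \geq \delta_h$ follow by testing \eqref{eq:apriori-new} on source terms that nearly realize the defining suprema of $\Cstab$ and $\delta_h$, respectively.

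The main obstacle is assembling the sharp upper constant $\sqrt{\Cstab^2 + \delta_h^2}$: a bare triangle inequality $\norm{u - u_h} \leq \norm{u - v_h} + \norm{v_h - u_h}$ together with the bound on $\norm{u_h - v_h}$ above produces only the suboptimal constant $1 + \sqrt{\Cstab^2 + \delta_h^2}$. Recovering the sharper bound requires coordinating the two layers of Pythagorean decomposition with the Cauchy--Schwarz bundling of the orthogonal conforming/nonconforming contributions, rather than proceeding via triangle inequalities.
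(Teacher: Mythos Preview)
The paper does not give its own proof---it cites \cite[Theorem~4.14 and Remark~3.5]{Veeser.Zanotti:18}---so your sketch is compared against that reference rather than against anything in the present text. Your cycle (i)$\Rightarrow$(ii)$\Rightarrow$(iii)$\Rightarrow$(i) is sound: the dualization $\norm{w_h}=\sup_{v_h'}a_h(w_h,v_h')/\norm[\Dual{V_h}]{A_h^*v_h'}$ does rewrite the supremum in (iii) as $\norm{w_h-P_h\Pi_V w_h}$, and the seminorm comparison on the finite-dimensional quotient $V_h/(V\cap V_h)$ closes (ii)$\Rightarrow$(iii). The intermediate estimate $\norm{u_h-v_h}\le\Cstab\,\norm{u-\Pi_V v_h}+\delta_h\,\norm{v_h-\Pi_V v_h}$ is also correct.

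The gap is exactly where you locate it, and your last paragraph names it without closing it: the sharp upper bound $\Cqo\le\sqrt{\Cstab^2+\delta_h^2}$ does \emph{not} follow from the ingredients you list. Writing $a=\norm{u-\Pi_V v_h}$ and $b=\norm{v_h-\Pi_V v_h}$, Cauchy--Schwarz gives $\norm{u_h-v_h}\le\sqrt{\Cstab^2+\delta_h^2}\,\norm{u-v_h}$, but this bounds $\norm{u_h-v_h}$, not $\norm{u-u_h}$. Taking $v_h$ to be the $\ta$-orthogonal projection $\Pi_{V_h}u$ and using $u-\Pi_{V_h}u\perp V_h$ in $\ta$ gives at best
\[
\norm{u-u_h}^2=\norm{u-\Pi_{V_h}u}^2+\norm{u_h-\Pi_{V_h}u}^2\le\bigl(1+\Cstab^2+\delta_h^2\bigr)\norm{u-\Pi_{V_h}u}^2,
\]
hence only $\Cqo\le\sqrt{1+\Cstab^2+\delta_h^2}$. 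Invoking in addition the second Pythagorean identity $\norm{u-u_h}^2=\norm{u-\Pi_V u_h}^2+\norm{u_h-\Pi_V u_h}^2$ does not help: the inequality you would need, namely $a^2+b^2+(\Cstab a+\delta_h b)^2\le(\Cstab^2+\delta_h^2)(a^2+b^2)$, is false for generic $(a,b)$, and the two $\Pi_V$-decompositions alone do not supply the missing constraint linking $(a,b)$ to the components of $u-u_h$. The phrase ``coordinating the two layers of Pythagorean decomposition'' describes the difficulty, not its resolution; in \cite{Veeser.Zanotti:18} the extra $1$ is removed by a finer argument than a single scalar bound on $\norm{u_h-v_h}$ can provide.
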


\begin{proof}
See \cite[Theorem~4.14]{Veeser.Zanotti:18} and, for the gap between the barriers for $\Cqo$, \cite[Remark~3.5]{Veeser.Zanotti:18}.	
\end{proof}

We refer to the constants $\delta_h$ and $\Cstab$ in Proposition~\ref{P:QuasiOptMethods} as the \emph{consistency measure} and the \emph{stability constant} of the method $M_h$, respectively.

\begin{remark}[Overconsistency]
\label{R:overconsistent-methods}	
Quasi-optimal methods satisfying $\delta_h = 0$ are called \emph{overconsistent} in \cite{Veeser.Zanotti:19b}, hinting at the fact that this condition does not need to be satisfied for consistent quasi-optimality (or convergence for a suitable sequence of discrete spaces $V_h$). Such methods are closer to conforming methods in the following sense. First, their quasi-optimality constant equals the stability constant, $\Cqo = \Cstab$. Second,  the error function is $\ta$-orthogonal to $\calE_h(V_h)$ because the generalized Galerkin orthogonality \eqref{Eq:galerkin-orthogonality} can be rewritten as
\begin{equation}
\label{Gal-orth-under-overconsistency}
 \forall v_h \in V_h
\quad
 \ta(u-u_h,\calE_h v_h) = 0.
\end{equation}
\end{remark}

\section{Abstract a~posteriori analysis}
\label{sec:posteriori-analysis}
%
%
The goal of an a~posteriori analysis is to devise a quantity, called \emph{error estimator}, that is equivalent to the typically unknown error, that splits into local contributions, so-called \emph{indicators}, and that is computable. Such an estimator allows assessing the quality of the current approximate solution and its indicators may be used as input for adaptive mesh refinement.
 
This section proposes a framework for the a~posteriori analysis of
quasi-optimal nonconforming methods. Conceptually speaking, it
proceeds in three steps. The first step determines the
\emph{residual}, along with a convenient decomposition, and relates it
to the error by means of a dual norm. This complements the previous
frameworks used in \cite{Karakashian.Pascal:03,Vohralik:07} and is an
alternative to \cite{Carstensen:05}.
The second step provides a
\emph{localization}, i.e.\ it splits the global residual norm into
local contributions. Finally, the third step addresses computability,
generalizing the approach \cite{Kreuzer.Veeser:21} to nonconforming
methods. 

\subsection{The residual of nonconforming errors}
\label{sec:err=res}
We start by looking for a quantity that is equivalent to the error and given in terms of available information, viz.\ problem data $f$ and approximate solution $u_h$ in the setting of Section~\ref{sec:qo-methods}.

In the special case of a conforming Galerkin discretization, i.e.\ for $V_h \subseteq V$, $a_h = a_{|V_h \times V_h}$, and $\calE_h = \mathrm{id}_V$ in \eqref{Eq:DisEq}, the residual $f-Au_h \in \Dual{V}$ satisfies
\begin{equation}
\label{err-res-for-conf-case}
 \norm{ u-u_h } 
 =
 \sup_{v \in V, \norm{v} \leq 1} a(u-u_h,v) 
 =
 \norm[\Dual{V}]{f-Au_h}
\end{equation}
thanks to \eqref{Eq:AnaPro} and the fact that $A$ is a linear  isometry. 

In the general case, the possibility of $u_h \not\in V$ and the identity
\begin{equation}
\label{var-char-of-err}
 \norm{u-u_h}  = \sup_{\tv \in \tV, \norm{\tv} \leq 1} \ta(u-u_h, \tv)
\end{equation}
thus suggest to suitably extend the residual $f - Au_h \in \Dual{V}$ in \eqref{err-res-for-conf-case} to a functional in $\Dual{\tV}$ allowing for $u_h  \not\in V$.  While $\tA u_h \in \Dual{\tV}$ is such an extension for the term $Au_h \in \Dual{V}$, the extension for $f \in \Dual{V}$ is less obvious. Mimicking the role of \eqref{Eq:AnaPro} in \eqref{err-res-for-conf-case} for the given nonconforming situation, we observe
\begin{equation}
\label{extended-var-eq}
 \forall \tv \in \tV
\quad
 \ta(u,\tv) = \ta(u,\Pi_V\tv) = \scp{f}{\Pi_V\tv}.
\end{equation}
Note that these conditions are meaningful for $u \in \tV$ and the original condition $u \in V$ is encoded in the new conditions
\begin{equation*} 
  \forall v^\perp \in V^\perp \;\; \ta(u,v^\perp) = 0
\end{equation*}
where $V^\perp$ denotes the $\ta$-orthogonal complement of $V$ in $\tV$. These observations suggest to take $\scp{f}{\Pi_V\cdot} \in \Dual{\tV}$ as extension for $f \in \Dual{V}$ and motivate the following residual notions: given the solution $u_h$ of the discrete problem \eqref{Eq:DisEq}, we define its residual as
\begin{equation}
\label{Eq:residual-nonconforming-method}
 \Res_h := \Res_h(f,u_h) := \scp{f}{\Pi_V \cdot} - \tA u_h \in \Dual{\tV},
\end{equation}
which can be decomposed into a conforming and nonconforming component by
\begin{subequations}
\label{Eq:residual-components}
\begin{align}
 \Res_h^{\mathtt{C}}
 &:=
 \Res_h^{\mathtt{C}}(f,u_h)
 :=
 \Res_h{}_{|V} = f - \tA u_h \in \Dual{V},
\\
 \Res_h^{\texttt{NC}}
 &:=
 \Res_h^{\mathtt{NC}}(u_h)
 :=
 \Res_h{}_{|V^\perp} = -\tA u_h \in \Dual{\big(V^\perp\big)}.
\end{align}
\end{subequations}
Thus, we have $\Res_h = \scp{\Res_h^{\mathtt{C}}}{\Pi_V \cdot} + \scp{\Res_h^{\texttt{NC}}}{(\mathrm{id}_{\tV} - \Pi_V)(\cdot)}$ and, upon identifying a component with its respective term in this relationship, it may be viewed as an element of $\Dual{\tV}$.

The following lemma states a well-known orthogonal decomposition of the error, which is usually formulated not completely in terms of residuals.
\begin{lemma}[Error, residual and their decompositions]
\label{L:err-res-decomps}
For the respective solutions $u$ and $u_h$ of \eqref{Eq:AnaPro} and \eqref{Eq:DisEq}, we have
\begin{gather*}
 \norm{u-u_h}^2
 =
 \norm[\Dual{\tV}]{\Res_h}^2
 =
 \norm[\Dual{V}]{\Res_h^{\mathtt{C}}}^2
  + \norm[\Dual{(V^\perp)}]{\Res_h^{\mathtt{NC}}}^2,
\\
 \norm{u-\Pi_V u_h}
 =
 \norm[\Dual{V}]{\Res_h^{\mathtt{C}}},
\quad\text{and}\quad
 \norm{u_h - \Pi_V u_h}
 =
 \norm[\Dual{(V^\perp)}]{\Res_h^{\mathtt{NC}}}.
\end{gather*}
\end{lemma}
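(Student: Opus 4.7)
The crucial observation is that the residual coincides with $\tA$ applied to the error. Indeed, relation \eqref{extended-var-eq} gives $\scp{f}{\Pi_V \cdot} = \ta(u,\cdot) = \tA u$ as elements of $\Dual{\tV}$, so the definition \eqref{Eq:residual-nonconforming-method} becomes
\begin{equation*}
 \Res_h = \tA u - \tA u_h = \tA(u - u_h).
\end{equation*}
Since $\tA$ is an isometry (this was noted right after the extension of $a$ to $\ta$), the first equality $\norm{u - u_h} = \norm[\Dual{\tV}]{\Res_h}$ follows immediately.

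Next I would split $\tV = V \oplus V^\perp$ into $\ta$-orthogonal summands. A short Hilbert-space computation shows that any $\phi \in \Dual{\tV}$ satisfies the Pythagorean identity $\norm[\Dual{\tV}]{\phi}^2 = \norm[\Dual{V}]{\phi|_{V}}^2 + \norm[\Dual{(V^\perp)}]{\phi|_{V^\perp}}^2$, obtained by maximizing $\phi(\tv) = \phi(v) + \phi(v^\perp)$ subject to $\norm{v}^2 + \norm{v^\perp}^2 \le 1$ independently in the two components. Applying this to $\phi = \Res_h$ and recalling the definitions in \eqref{Eq:residual-components} yields the second equality.

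For the two remaining identities I would decompose $u - u_h$ along the same orthogonal sum. Because $u \in V$, we have $u - \Pi_V u_h \in V$ and $u_h - \Pi_V u_h \in V^\perp$. Pairing $\tA(u - \Pi_V u_h)$ with $v \in V$ and using $\ta(\Pi_V u_h, v) = \ta(u_h, v)$ (by $\ta$-orthogonality) shows that $\tA(u - \Pi_V u_h)|_V = \Res_h^{\mathtt{C}}$; similarly, pairing $\tA(u_h - \Pi_V u_h)$ with $v^\perp \in V^\perp$ and using $\ta(\Pi_V u_h, v^\perp) = 0$ gives $\tA(u_h - \Pi_V u_h)|_{V^\perp} = \tA u_h|_{V^\perp} = -\Res_h^{\mathtt{NC}}$. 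The isometries $A\colon V \to \Dual{V}$ and $\tA|_{V^\perp}\colon V^\perp \to \Dual{(V^\perp)}$ (the latter inherited from $\tA$ via orthogonality) then convert these identities into the claimed norm equalities.

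I do not expect a genuine obstacle here; the whole argument is really just the identification $\Res_h = \tA(u-u_h)$ combined with the standard Pythagoras theorem on an orthogonal direct sum, so the only mild care is in checking that restriction to $V$ and $V^\perp$ of $\tA$ applied to the $V$- and $V^\perp$-components of $u - u_h$ reproduces $\Res_h^{\mathtt{C}}$ and $-\Res_h^{\mathtt{NC}}$, respectively.
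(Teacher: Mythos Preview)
Your proposal is correct and follows essentially the same approach as the paper: both rest on the identification $\Res_h = \tA(u-u_h)$ via \eqref{extended-var-eq}, the isometry of $\tA$, and the $\ta$-orthogonal decomposition $\tV = V \oplus V^\perp$. The only cosmetic difference is order: you apply Pythagoras on the dual side first and then identify the primal components, while the paper computes the two component norms via $\sup$-characterizations and then invokes orthogonality of the primal error components $u-\Pi_V u_h$ and $u_h-\Pi_V u_h$.
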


\begin{proof}
Let us write $e_h := u - u_h$ for the error function and observe $\Pi_V u = u$. In light of \eqref{var-char-of-err},
\begin{gather*}
 \norm{u - \Pi_V u_h}
 =
 \sup_{v \in V, \norm{v} \leq 1} \ta(e_h, v),
\quad\text{and}\quad
 \norm{u_h - \Pi_V u_h}
 =
 \sup_{v^\perp \in V^\perp, \norm{v^\perp} \leq 1} \ta(e_h, v^\perp),
\end{gather*}
the identities for these errors follow from \eqref{extended-var-eq} and the fact that $\tA$ is a linear isometry. The missing identity is then a consequence of orthogonality
\begin{equation*}
 \ta(u - \Pi_V u_h, u_h - \Pi_V u_h)
 =
 \ta(\Pi_V e_h, e_h - \Pi_V e_h)
 =
 0, 
\end{equation*}
which expresses also the orthogonality of the residual components in $\Dual{\tV}$ endowed with the scalar product $\ta(\tA^{-1}\cdot, \tA^{-1}\cdot)$.
\end{proof}

\begin{remark}[Extended weak formulation]
\label{R:ex-weak-form}
An equivalent formulation of the weak formulation \eqref{Eq:AnaPro} in the extended space $\tV$ reads as follows: for $f \in \Dual{V}$, find $u \in \tV$ such that 
\begin{equation*}
 \forall v \in V \quad a(u,v) = \scp{f}{v} 
\qquad \text{and} \qquad
 \forall v^\perp \in V^\perp \quad \ta(u,v^\perp) = 0.
\end{equation*}
This formulation is an alternative departure point for the definition of the residual $\Res_h$ and its components $\Res_h^{\mathtt{C}}$ and $\Res_h^{\mathtt{NC}}$.
\end{remark}

\begin{remark}[Decomposition features]
\label{R:decomp-features}
Orthogonal decompositions are `constant-free' and therefore desirable
in an a~posteriori error analysis. The orthogonal decompositions in
Lemma~\ref{L:err-res-decomps} are essentially characterized by the
property that one component coincides with the conforming
subcase. This prepares the ground for using techniques available
for conforming methods and will be exploited in the following steps
concerning localization and computability. 
\end{remark}

In what follows, we shall refer to the two parts  $\Res_h^{\mathtt{C}}$ and $\Res_h^{\mathtt{NC}}$ of the residual, respectively, as \emph{conforming} and \emph{nonconforming residual} for short and handle them differently.

\subsection{Indicators for the nonconforming residual}
\label{sec:ind-for-nc-res}
%
We start with the nonconforming residual norm $\norm[\Dual{(V^\perp)}]{\Res_h^{\mathtt{NC}}}$, which, according to Lemma~\ref{L:err-res-decomps}, equals the distance to conformity $\norm{u_h-\Pi_V u_h}$ of the approximate solution $u_h$. 

In applications, the abstract space $V$ is a space of functions over some domain $\Omega$ and the extended bilinear form $\widetilde{a}$ is defined in terms of an integral over $\Omega$. Thus, given a partition $\grid$ of $\Omega$, the extended energy norm can be written as
\begin{equation}
	\label{energy-norm-localization}
	\| \cdot \|^2
	=
	\sum_{T \in \grid} | \cdot |_T^2,
\end{equation}
where the local contributions $| \cdot |_T$ are in general only seminorms.

The distance to conformity $\norm{u_h-\Pi_V u_h}$ therefore readily splits into the local quantities $| u_h - \Pi_V u_h |_T$, $ T \in \grid$. Note however that the presence of the projection $\Pi_V$ and the infinite dimension of $V$ typically entail that these quantities are not computable and depend on $u_h$ in a global manner. We therefore look for an equivalent alternative. For this purpose, the following proposition individuates a simple algebraic condition. It coincides with \cite[Lemma~2.2]{Carstensen.Nataraj:22}, which is proved by a different argument and seems to be used in the context of the smoothing operator $\calE_h$ in the discrete problem \eqref{Eq:DisEq}; for such a usage, it is instructive to take Remark~\ref{R:dG-smoother-ass} into account. 

\begin{proposition}[Approximating the distance to conformity]
\label{P:inds-for-dist-to-conf}
If 
\begin{equation}
\label{A:equiv-to-dist-to-conf}
\stepcounter{AssumptionCounter}
\tag{H\arabic{AssumptionCounter}}
\begin{aligned}
 \calA_h:V_h \to V &\text{ is a bounded linear operator with }
 \\
 	&\forall w_h \in V \cap V_h
 	\quad
 	\calA_h w_h = w_h,
\end{aligned} 
\end{equation}
then there exists a constant $\Cav > 0$ such that
\begin{equation*}
 \forall v_h \in V_h
\quad
 \Cav  \| v_h - \calA_h v_h\|
 \leq
 \| v_h - \Pi_V v_h\|
 \leq
 \| v_h - \calA_h v_h \|.
\end{equation*}
\end{proposition}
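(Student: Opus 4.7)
For the upper bound, I would note that $\calA_h v_h \in V$ is a feasible competitor in the infimum
\begin{equation*}
  \|v_h - \Pi_V v_h\| = \inf_{v \in V} \|v_h - v\|
\end{equation*}
from \eqref{df:PiV}, so $\|v_h - \Pi_V v_h\| \leq \|v_h - \calA_h v_h\|$ follows immediately. An equivalent, and for the sequel useful, route exploits the fact that $\Pi_V v_h - \calA_h v_h \in V$ while $v_h - \Pi_V v_h \in V^\perp$, so the split $v_h - \calA_h v_h = (v_h - \Pi_V v_h) + (\Pi_V v_h - \calA_h v_h)$ is $\ta$-orthogonal and Pythagoras yields
\begin{equation*}
  \|v_h - \calA_h v_h\|^2 = \|v_h - \Pi_V v_h\|^2 + \|\Pi_V v_h - \calA_h v_h\|^2.
\end{equation*}

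For the lower bound, my plan is to argue by finite-dimensional norm equivalence on the quotient space $V_h / (V \cap V_h)$. Introduce the two nonnegative functionals
\begin{equation*}
  p(v_h) := \|v_h - \calA_h v_h\|, \qquad q(v_h) := \|v_h - \Pi_V v_h\|
\end{equation*}
on $V_h$. Both are seminorms by linearity of $\calA_h$ and $\Pi_V$. The key observation to verify is that each vanishes exactly on $V \cap V_h$: if $\calA_h v_h = v_h$, then $v_h = \calA_h v_h \in V$ since $\calA_h$ maps into $V$, hence $v_h \in V \cap V_h$; conversely, \eqref{A:equiv-to-dist-to-conf} ensures $\calA_h v_h = v_h$ for $v_h \in V \cap V_h$, and the analogous equivalence for $q$ is built into the definition of $\Pi_V$. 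Therefore $p$ and $q$ descend to genuine norms on the finite-dimensional quotient $V_h / (V \cap V_h)$, where all norms are equivalent. This delivers the existence of some $\Cav > 0$ with $\Cav \, p(v_h) \leq q(v_h)$ for every $v_h \in V_h$, which is exactly the claimed inequality.

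The main obstacle I anticipate is that this quotient-space argument is purely qualitative: it yields the existence of $\Cav$ but no quantitative control, and in particular no information about dependence on the mesh $\grid$. For a meaningful a~posteriori estimator one wants $\Cav$ to depend only on $\normtr{\calA_h}$ and on shape-regularity of $\grid$, which requires exploiting the concrete structure of $\calA_h$, e.g.\ via scaling to a reference element. Since the proposition as stated only asserts existence of some $\Cav > 0$, the finite-dimensional argument sketched above is already sufficient; quantitative and $h$-uniform versions must be handled within each concrete instantiation of the framework.
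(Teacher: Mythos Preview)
Your proof is correct but takes a genuinely different route from the paper's.

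For the upper bound the two arguments coincide. For the lower bound, the paper extends $\calA_h$ to an operator $\widetilde{\calA}_h:\tV\to V$ by setting $\widetilde{\calA}_h(v+v_h):=v+\calA_h v_h$; the compatibility condition in \eqref{A:equiv-to-dist-to-conf} makes this well-defined, and closedness of $V$ and $V_h$ in $\tV$ makes it bounded. One then writes $v_h-\calA_h v_h=(\mathrm{id}_{\tV}-\widetilde{\calA}_h)(v_h-v)$ for any $v\in V$ and reads off the explicit constant $\Cav=\normtr{\mathrm{id}_{\tV}-\widetilde{\calA}_h}^{-1}$, which the paper notes is in fact optimal.

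Your quotient-space argument is shorter and more elementary, and it is fully sufficient for the proposition as stated. What the paper's route buys is an \emph{identifiable} constant: $\Cav$ is expressed through an operator norm on $\tV$ that one can in principle bound uniformly in the mesh via the structure of $\calA_h$ and the angle between $V$ and $V_h$. Your finite-dimensional norm-equivalence argument gives no such handle, as you rightly flag; the constant it produces depends a~priori on $V_h$ in an uncontrolled way. Since the paper defers quantitative control of $\Cav$ to the concrete applications anyway, both proofs settle the abstract proposition, but the paper's version leaves a clearer trail for later tracking of constants.
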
 

\begin{proof}
The upper bound is an immediate consequence of $\calA v_h \in V$, while the lower bound essentially follows from the proof of \cite[Theorem~3.2]{Veeser.Zanotti:18} about the quasi-optimality of a nonconforming method. We provide a sketch for the sake of completeness. Thanks to the compatibility condition in \eqref{A:equiv-to-dist-to-conf}, the following extension of $\calA_h$ to $\widetilde{V}$ is well-defined:
\begin{equation*}
		\widetilde{\calA}_h \widetilde{v}
		:=
		v + \calA_h v_h,
		\quad
		\widetilde{v} = v + v_h \text{ with } v \in V, v_h \in V_h.
\end{equation*}
Since $V$ and $V_h$ are closed subspaces of $\widetilde{V}$, the extension $\widetilde{\calA}_h$ is bounded, too; cf.\ \cite[Lemma~3.1]{Veeser.Zanotti:18}, swapping the roles of $V$ and $V_h$.  Given $v_h \in V_h$, we can thus write
\begin{equation*}
		v_h - \calA_h v_h 
		=
		( \mathrm{id}_{\widetilde{V}} - \widetilde{\calA}_h  ) (v_h - v)
\end{equation*}
with an arbitrary $v \in V$ and immediately get
\begin{equation*}
		\norm{ v_h - \calA_h v_h }
		\leq
		\norm{ \mathrm{id}_{\widetilde{V}} - \widetilde{\calA}_h } \inf_{v \in V} \norm{v_h - v}
		=
		\norm{ \mathrm{id}_{\widetilde{V}} - \widetilde{\calA}_h }  \norm{v_h - \Pi_V v_h}.
\end{equation*}
Notably,  \cite[Lemma~3.4]{Veeser.Zanotti:18} shows that $\norm{ \mathrm{id}_{\widetilde{V}} - \widetilde{\calA}_h }^{-1}$ is actually the largest constant $\Cav$ such that the claimed lower bound holds.
\end{proof}

Observe that the upper bound in Proposition \ref{P:inds-for-dist-to-conf}
comes with constant $1$, while the concrete value of $\Cav$ in the
lower bound is left unspecified. We will provide more information on
$\Cav$ in the applications. 

\begin{remark}[Computable realization of the distance to conformity]
\label{R:dist-to-conf-averaging}
An operator $\calA_h$ verifying \eqref{A:equiv-to-dist-to-conf} provides a computable realization of the distance to conformity whenever it is itself computable and each $|\cdot|_T$ can be computationally evaluated on $V_h - \calA_h (V_h)$. Note that, even if $\calA_h$ is a local operator,  the indicators are still of global nature due to their dependence on $u_h$. In the applications below, $\calA_h$ will be an \emph{averaging operator}. 
\end{remark}



%
%
\subsection{Localization of the conforming residual}
\label{sec:local-resid}
%
%
Our next goal is to split the conforming residual norm $\norm[\Dual{V}]{\Res_h^{\mathtt{C}}} = \sup_{v \in V,\norm{v}\leq1} \scp{\Res_h^{\mathtt{C}}}{v}$ into an equivalent `sum' of local counterparts. This step is underlying most techniques. It is complicated by the fact that functions in $V$ and the norm $\norm[]{\cdot}$ typically involve derivatives.
Indeed, first, the partition of unity with characteristic functions underlying \eqref{energy-norm-localization} cannot be directly applied. Instead, as for conforming methods, we use sufficiently smooth partitions of unity to split the test function $v$ into local contributions.
Second, the effect of the cut-off in the norm $\norm\cdot$ has to be stabilized with the help of some orthogonality property of the residual.

For nonconforming methods, it is convenient to formulate such orthogonality properties with the help of the smoothing operator $\calE_h$. Indeed, the overconsistency of Remark~\ref{R:overconsistent-methods} then leads to exact orthogonality, while, in general, quasi-optimality turns out to be a useful ingredient.


%
\begin{lemma}[Near orthogonality of conforming residual]
\label{L:conforming-residual}
If the discretization \eqref{Eq:DisEq} induces a quasi-optimal method $M_h$, then we have, for all $v \in V$ and $K > 0$,
\begin{equation}
\label{Eq:conforming-residual}
 \scp{\Res_h^{\mathtt{C}}}{v}
 \leq 
 \inf_{v_h\in V_h, \norm{v_h} \leq K \norm{v}} \scp{\Res_h^{\mathtt{C}}}{v - \calE_h v_h}
  + K \delta_h \normtr{A_h} \norm{u_h - \Pi_V u_h} \norm{v}.
\end{equation}
\end{lemma}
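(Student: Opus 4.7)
The plan is to reduce the estimate to the consistency measure inequality of Proposition~\ref{P:QuasiOptMethods}\eqref{QuasiOptMethods-ConsistencyMeasure} applied with $w_h = u_h$. To do so, I would first compute the ``defect'' introduced by testing with $v-\calE_h v_h$ instead of $v$. Using the definition $\Res_h^{\mathtt{C}} = f - \tA u_h{}_{|V}$, the fact that $\calE_h v_h \in V$, and the relation $\ta(u_h,\calE_h v_h) = a(\Pi_V u_h,\calE_h v_h)$ (which follows since $u_h - \Pi_V u_h \in V^\perp$), one finds
\begin{equation*}
  \scp{\Res_h^{\mathtt{C}}}{v} - \scp{\Res_h^{\mathtt{C}}}{v-\calE_h v_h}
  =
  \scp{f}{\calE_h v_h} - a(\Pi_V u_h,\calE_h v_h).
\end{equation*}
Invoking the discrete equation \eqref{Eq:DisEq} to substitute $\scp{f}{\calE_h v_h}=a_h(u_h,v_h)$, the right-hand side becomes exactly the numerator $a_h(u_h,v_h) - a(\Pi_V u_h,\calE_h v_h)$ appearing in statement \eqref{QuasiOptMethods-ConsistencyMeasure} of Proposition~\ref{P:QuasiOptMethods}.

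Next, since $M_h$ is quasi-optimal, that proposition applies and yields, with $w_h = u_h$,
\begin{equation*}
  a_h(u_h,v_h) - a(\Pi_V u_h,\calE_h v_h)
  \leq
  \delta_h \norm{u_h - \Pi_V u_h}\, \norm[\Dual{V_h}]{A_h^* v_h}
\end{equation*}
for every $v_h \in V_h$. To reach the stated bound I would then dominate the dual norm via $\norm[\Dual{V_h}]{A_h^* v_h} \leq \normtr{A_h^*}\norm{v_h} = \normtr{A_h}\norm{v_h}$, using that the operator norm equals that of the adjoint. Restricting to discrete test functions satisfying $\norm{v_h}\leq K\norm{v}$, and combining with the decomposition above, gives
\begin{equation*}
  \scp{\Res_h^{\mathtt{C}}}{v}
  \leq
  \scp{\Res_h^{\mathtt{C}}}{v-\calE_h v_h}
  + K\delta_h \normtr{A_h}\norm{u_h - \Pi_V u_h}\norm{v}.
\end{equation*}
Taking the infimum over admissible $v_h$ on the right-hand side completes the argument.

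There are no real obstacles — the computation of the defect is bookkeeping, and once $\calE_h v_h$ is recognized as a test function in $V$ (making $\ta$ collapse to $a(\Pi_V u_h, \cdot)$), the consistency measure inequality supplies the needed bound out of the box. The only conceptual point worth emphasizing is the role of the constraint $\norm{v_h}\leq K\norm{v}$: it is what converts the bound in terms of $\norm{v_h}$ into one in terms of $\norm{v}$ and thus allows reading the result as a near-orthogonality of $\Res_h^{\mathtt{C}}$ to the smoothed discrete space $\calE_h(V_h)$, with a consistency error controlled by $\delta_h\norm{u_h-\Pi_V u_h}$. Under overconsistency ($\delta_h=0$, cf.\ Remark~\ref{R:overconsistent-methods}), this recovers the exact orthogonality \eqref{Gal-orth-under-overconsistency}.
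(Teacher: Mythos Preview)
Your proposal is correct and follows essentially the same route as the paper: decompose $\scp{\Res_h^{\mathtt{C}}}{v}$ by adding and subtracting $\calE_h v_h$, identify $\scp{\Res_h^{\mathtt{C}}}{\calE_h v_h} = a_h(u_h,v_h) - a(\Pi_V u_h,\calE_h v_h)$ via the discrete equation and the orthogonality of $u_h-\Pi_V u_h$ to $V$, then apply Proposition~\ref{P:QuasiOptMethods}\eqref{QuasiOptMethods-ConsistencyMeasure} and bound $\norm[\Dual{V_h}]{A_h^* v_h}\leq\normtr{A_h}\norm{v_h}\leq K\normtr{A_h}\norm{v}$. The paper's proof is the same; you merely spell out the step $\ta(u_h,\calE_h v_h)=a(\Pi_V u_h,\calE_h v_h)$ a bit more explicitly.
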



\begin{proof}
Let $v_h \in V_h$ be such that $\norm{v_h} \leq K \norm{v}$. We add and subtract $\calE_h v_h$ in the action of the conforming 
residual
\begin{equation*}
 \scp{\Res_h^{\mathtt{C}}}{v} 
 =
 \scp{\Res_h^{\mathtt{C}}}{v - \calE_h v_h} 
 +
 \scp{\Res_h^{\mathtt{C}}}{\calE_h v_h}. 
\end{equation*}
For the second term on the right-hand side, we use the method \eqref{Eq:DisEq}, the $\ta$-orthogonal projection $\Pi_V$, and the quasi-optimality of the method through Proposition~\ref{P:QuasiOptMethods}\eqref{QuasiOptMethods-ConsistencyMeasure}, to derive
\begin{equation*}
 \scp{\Res_h^{\mathtt{C}}}{\calE_h v_h} 
 =
 a_h(u_h, v_h) - a(\Pi_V u_h, \calE_h v_h)
 \leq
 \delta_h \norm[\Dual{V_h}]{A_h^* v_h} \norm{u_h-\Pi_V u_h}. 
\end{equation*}
We thus conclude by recalling that the operator norm of the adjoint $A_h^*$ equals the operator norm of $A_h$. 
\end{proof}

Concerning the partition of unity, we assume the following properties:
\begin{equation}
\label{A:partition-of-unity}
\stepcounter{AssumptionCounter}
\tag{H\arabic{AssumptionCounter}}
\begin{minipage}{0.85\hsize}
 There are a finite index set $\Index$, functions $(\Phi_z)_{z\in \Index}$, 
 subspaces $(V_z)_{z \in \Index}$ of $V$, a bounded linear operator $\calI_h: V \to V_h$ and constants $\Cloc,\Ccol>0$ such that
{ \begin{enumerate}[label = (\roman*)]
\item for $z \in \Index$ and $v \in V$, we have $\sum_{z\in \Index} v\Phi_z=v$ with $v\Phi_z\in V_z$,
\item for $v_z \in V_z$, we have
 $\|\sum_{z\in \Index}v_z\|^2\le \Ccol^2 \sum_{z\in\Index}\|v_z\|^2$, and
\item for $v \in V$, we have
$ \sum_{z\in \Index}\norm{(v-\calE_h\calI_h v)\Phi_z}^2\le \Cloc^2\norm{v}^2$. 
\end{enumerate}}
\end{minipage}
\end{equation}
The term `localization'  in the title of this subsection hints to the fact that in applications the functions $\Phi_z$, $z \in \Index$, have compact supports.  We therefore shall refer to the subspaces $V_z$, $z \in \Index$, as \emph{local test spaces}. Properties (ii) and (iii) prescribe some sort of stability of the partition of unity, where the form of (iii) allows countering the effect of the cut-off.

\begin{lemma}[Localizing the conforming residual norm]
\label{L:localizating-conf-res}
If  the discretization \eqref{Eq:DisEq} induces a quasi-optimal method $M_h$, then \eqref{A:partition-of-unity} implies that 
\begin{equation*}
 \frac{1}{\Ccol^2} \sum_{z\in\Index} \norm[\Dual{V}_z]{\Res_h^{\mathtt{C}}}^2
 \le
 \norm[\Dual{V}]{\Res_h^{\mathtt{C}}}^2
 \le
 2 \Cobl^2 \delta_h^2 \norm{ u_h - \Pi_V u_h}^2
 + 
 2 \Cloc^2 \sum_{z\in\Index} \norm[\Dual{V}_z]{\Res_h^{\mathtt{C}}}^2 
\end{equation*} 
holds for some constant $\Cobl \leq \normtr{A_h} \normtr{\calI_h} $ arising through near orthogonality properties of the residual. 
\end{lemma}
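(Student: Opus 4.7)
The plan is to prove the two bounds independently, using only elementary duality on top of Lemma~\ref{L:conforming-residual} and the structural properties in \eqref{A:partition-of-unity}.

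For the lower bound, I exploit that each local norm is attained: by the Riesz representation theorem applied inside the closed subspace $V_z \subseteq V$, pick $v_z \in V_z$ with $\norm{v_z} = \norm[\Dual{V_z}]{\Res_h^{\mathtt{C}}}$ and $\scp{\Res_h^{\mathtt{C}}}{v_z} = \norm[\Dual{V_z}]{\Res_h^{\mathtt{C}}}^2$. Summing and using \eqref{A:partition-of-unity}(ii),
\[
\sum_{z\in\Index} \norm[\Dual{V_z}]{\Res_h^{\mathtt{C}}}^2
= \scp{\Res_h^{\mathtt{C}}}{\textstyle\sum_z v_z}
\le \norm[\Dual{V}]{\Res_h^{\mathtt{C}}} \, \Ccol \Bigl(\sum_z \norm{v_z}^2\Bigr)^{1/2}
= \Ccol \norm[\Dual{V}]{\Res_h^{\mathtt{C}}} \Bigl(\sum_z \norm[\Dual{V_z}]{\Res_h^{\mathtt{C}}}^2\Bigr)^{1/2},
\]
and dividing yields the claimed inequality with factor $\Ccol^{-2}$.

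For the upper bound, take any $v \in V$ and apply Lemma~\ref{L:conforming-residual} with the choice $v_h := \calI_h v$ and $K := \normtr{\calI_h}$, which is admissible since $\norm{\calI_h v} \le \normtr{\calI_h}\norm{v}$. This yields
\[
\scp{\Res_h^{\mathtt{C}}}{v}
\le
\scp{\Res_h^{\mathtt{C}}}{v-\calE_h\calI_h v}
+ \normtr{\calI_h}\,\delta_h\,\normtr{A_h}\,\norm{u_h-\Pi_V u_h}\,\norm{v}.
\]
Since $\calE_h\calI_h v \in V$, the difference $w := v - \calE_h \calI_h v$ lies in $V$, so \eqref{A:partition-of-unity}(i) gives $w\Phi_z \in V_z$ and $\sum_z w\Phi_z = w$. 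Thus
\[
\scp{\Res_h^{\mathtt{C}}}{w}
= \sum_{z\in\Index} \scp{\Res_h^{\mathtt{C}}}{w\Phi_z}
\le \sum_{z\in\Index} \norm[\Dual{V_z}]{\Res_h^{\mathtt{C}}}\,\norm{w\Phi_z},
\]
and two applications of Cauchy--Schwarz, together with \eqref{A:partition-of-unity}(iii), bound this by $\Cloc\,\norm{v}\,\bigl(\sum_z \norm[\Dual{V_z}]{\Res_h^{\mathtt{C}}}^2\bigr)^{1/2}$. Taking the supremum over $\norm{v}\le 1$, setting $\Cobl := \normtr{A_h}\normtr{\calI_h}$, and using the elementary inequality $(a+b)^2 \le 2a^2 + 2b^2$ produces the stated upper bound.

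The only slightly subtle point is the choice $v_h = \calI_h v$ in Lemma~\ref{L:conforming-residual}: it is exactly what allows the cut-off residual to be tested against $(v - \calE_h \calI_h v)\Phi_z$, on which the cut-off stability \eqref{A:partition-of-unity}(iii) is calibrated. The consistency measure $\delta_h$ and the operator norms $\normtr{A_h}$, $\normtr{\calI_h}$ then absorb the defect caused by discrete inconsistency, and combine into the single constant $\Cobl$. There is no real obstacle beyond correctly matching these pieces.
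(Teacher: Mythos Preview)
Your proof is correct and follows essentially the same approach as the paper: Riesz representers in $V_z$ combined with \eqref{A:partition-of-unity}(ii) for the lower bound, and Lemma~\ref{L:conforming-residual} with $v_h=\calI_h v$, $K=\normtr{\calI_h}$ together with \eqref{A:partition-of-unity}(i),(iii) and $(a+b)^2\le 2a^2+2b^2$ for the upper bound. The only difference is cosmetic ordering of the steps.
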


Recall that the distance to conformity of the approximate solution $u_h$ is localized in Section~\ref{sec:ind-for-nc-res}. Furthermore, for overconsistent methods, we have $\delta_h=0$ meaning that the upper bound does not mix with the nonconforming residual; in this case, the factor 2 in the constant  $2\Cloc^2$ of the upper bound can be removed.
  
\begin{proof}
We begin by verifying the lower bound. Given $z \in \Index$, let $v_z \in V_z$ be the Riesz representer of $\Res_h^{\mathtt{C}}{}_{|{V_z}}$ in $V_z$ entailing $\norm{v_z} = \norm[\Dual{V_z}]{\Res_h^{\mathtt{C}}}$ and $\scp{\Res_h^{\mathtt{C}}}{v_z} = \norm[\Dual{V_z}]{\Res_h^{\mathtt{C}}}^2$. Defining $v := \sum_{z\in \Index} v_z \in V$, we infer
\begin{equation*}
 \sum_{z\in \Index} \norm[\Dual{V_z}]{\Res_h^{\mathtt{C}}}^2
 =
 \sum_{z\in \Index} \scp{\Res_h^{\mathtt{C}}}{v_z}
 =
 \scp{\Res_h^{\mathtt{C}}}{v}
 \leq
 \norm[\Dual{V}]{\Res_h^{\mathtt{C}}}\norm{v}.
\end{equation*}
Thus, by invoking (ii) in \eqref{A:partition-of-unity}, we obtain the lower bound
\begin{equation*}
 \sum_{z\in \Index} \norm[\Dual{V_z}]{\Res_h^{\mathtt{C}}}^2
 \leq 
 \Ccol^2 \norm[\Dual{V}]{\Res_h^{\mathtt{C}}}^2.
\end{equation*}

To prove the upper bound, let $v \in V$ with $\norm{v} \leq 1$ and set $v_h := \calI_h v \in V_h$. The assumptions (i) and (iii) in \eqref{A:partition-of-unity} provide
\begin{equation*}
 \scp{\Res_h^{\mathtt{C}}}{v - \calE_h v_h}
 =
 \sum_{z\in \Index} \scp{\Res_h^{\mathtt{C}}}{(v - \calE_h \calI_h v)\Phi_z} 
 \leq
 \Cloc \left(
  \sum_{z\in \Index} \norm[\Dual{V_z}]{\Res_h^{\mathtt{C}}}^2
 \right)^{\frac{1}{2}}. 
\end{equation*}
We use this estimate in Lemma~\ref{L:conforming-residual} with $K = \normtr{\calI_h}$, then take the supremum over $v$ and conclude the upper bound
\begin{equation*}
 \norm[\Dual{V}]{\Res_h^{\mathtt{C}}}^2
 \leq
 2 \delta_h^2 \normtr{A_h}^2 \normtr{\calI_h}^2 \norm{ u_h - \Pi_V u_h}^2
 +
 2 \Cloc^2 \sum_{z\in \Index} \norm[\Dual{V_z}]{\Res_h^{\mathtt{C}}}^2. \qedhere
\end{equation*}
\end{proof}

\subsection{Computability and conforming residual norm}
\label{sec:comp-and-conf-res-norm}
%
%
Lemma~\ref{L:localizating-conf-res} may suggest to employ the local
quantities $\norm[\Dual{V_z}]{\Res_h^{\mathtt{C}}} =
\norm[\Dual{V_z}]{f-\tA u_h}$, $z \in V_z$, as indicators. However,
these quantities neither are computable nor can be bounded by a
computable expression if we only know that $f$ is taken from
the infinite-dimensional space $\Dual{V}$; cf.\
\cite[Lemma~2]{Kreuzer.Veeser:21}. We therefore adapt the approach in
\cite{Kreuzer.Veeser:21} to the nonconforming methods in
Section~\ref{sec:qo-methods}, thus isolating this computability problem
in data terms.

More precisely, for each $z \in \Index$, using a suitable \emph{local
projection} $\calP_z$, we split the local conforming residual
$(f-\tA u_h)_{|V_z}$ into two parts with the following properties:
The first part $(\calP_z \Res_h^{\mathtt{C}})_{|V_z}$ is a
local  finite-dimensional approximation of the conforming residual, while
the second part $(\Res_h^{\mathtt{C}}-\calP_z \Res_h^{\mathtt{C}})_{|V_z} =(f - \calP_z f)_{|V_z}$ only depends on data and is of
oscillatory nature. Doing so, $\norm[\Dual{V_z}]{\calP_z \Res_h^{\mathtt{C}}}$ is equivalent to a computable quantity and, therefore, the above computability problem gets
isolated in the second part and may be handled on a case-by-case basis,
depending on the knowledge of $f$. A further advantage of this
splitting is outlined in Remark~\ref{R:conf-res-splitting-and-adaptivity} below. 

To construct a local projection $\calP_z$, we discretize the local
test and dual spaces $V_z$ and $\Dual{V_z}$ by respective local,
finite-dimensional spaces $S_z$  and $D_z$.  We shall refer to the
elements of these discrete spaces as \emph{simple (test)
  functions} and \emph{simple functionals}, respectively. The simple test functions
may be locally nonconforming, i.e.\ $S_z \not\subseteq V_z$ is
allowed, but have to be globally conforming, i.e.\ $S_z \subseteq
V$. Similar to \cite{Dari.Duran.Padra:95,Dari.Duran.Padra.Vampa:96},
this additional freedom is exploited in
Theorem~\ref{T:CR-error-estimator-2} below to provide an estimator,
which is simpler than the one in
Theorem~\ref{T:CR-error-estimator-1} based on locally conforming
simple test functions. In view of possibly nonconforming simple test
functions, we consider the local space $\tV_z := V_z + S_z$ and
require $D_z \subseteq \Dual{\tV_z}$. Thus, the simple functionals are
locally 
nonconforming only if there are locally nonconforming simple test functions.
The operator $\calP_z$ will then project $\tV_z'$ onto $D_z$
orthogonal to $S_z$ and we shall build on the following assumption: 
\begin{equation}
\label{A:construction-Pz-local}
\stepcounter{AssumptionCounter}
\tag{H\arabic{AssumptionCounter}}
\begin{minipage}{0.9\hsize}
There are subspaces $(D_z)_{z \in\Index}$, $(S_z)_{z\in\Index}$, and constants $\CL,\CNL > 0$, such that, for all $z \in \Index$, 
\begin{enumerate}[label = (\roman*)]
\item $S_z \subseteq V$ and $D_z \subseteq \Dual{\tV_z} $, where $\tV_z := V_z + S_z$,
\item $\tA (V_h)_{|\tV_z} \subseteq D_z$,
\item $\dim S_z \leq \dim D_z < \infty$, 
\item for $\chi \in D_z$, we have
 $\norm[\Dual{V_z}]{\chi} \leq \CL \norm[\Dual{S_z}]{\chi}$, 
\item for $g \in \Dual{V}$, we have
 $\sum_{z\in \Index} \norm[\Dual{S_z}]{g}^2
 \leq
 \CNL^2 \sum_{z\in \Index} \norm[\Dual{V_z}]{g}^2$.
\end{enumerate}
\end{minipage}
\end{equation}
If we consider the case of locally conforming simple test functions,
i.e.\ $S_z \subseteq V_z$, the conditions (i) and (ii) are equivalent
to  
\begin{equation}
\label{simplification-for-conf-simple-test-fcts}
  \tA(V_h)_{|V_z} \subseteq D_z \subseteq\Dual{V_z}
\end{equation}
and (v) holds with $\CNL=1$  since \(\norm[\Dual{S_z}]{g}^2 \leq \norm[\Dual{V_z}]{g}^2\).

In any case, the simple functionals have to cover locally the part of
the residual depending on the approximate solution. This is a
crucial structural condition and the remaining conditions (iii)-(v) concern the relationship between simple functionals and simple
test functions in order to ensure well-posedness and stability of the
aforementioned operators $\calP_z$, $z\in \Index$. Condition~(iv) is
equivalent to the inf-sup condition 
\begin{equation}
	\label{Eq:infsup-local}
	\adjustlimits{\inf}_{\chi \in D_z}{\sup}_{s \in S_z}
	\dfrac{\scp{\chi}{s}}{\norm[\Dual{V_z}]{\chi}\norm{s}}
	\geq \dfrac{1}{\CL} > 0 
\end{equation} 
and implies $\dim D_z \leq \dim S_z$. Consequently, a necessary condition for \eqref{A:construction-Pz-local} is $\dim S_z = \dim D_z$. In Sections \ref{sec:Poisson} and \ref{sec:Biharmonic} below, we exemplify how to choose simple functionals and local test functions in concrete settings.
 
The next lemma establishes the local projections, along with their stability properties and their interplay with the extended 
operator $\tA$ and the discrete space $V_h$.

\begin{proposition}[Local projections]
\label{P:construction-Pz-local}
Suppose assumption~\eqref{A:construction-Pz-local} holds. Then the variational equations
\begin{equation}
\label{Eq:construction-Pz-local}
 \forall s \in S_z \qquad \scp{\calP_z g}{s} = \scp{g}{s},
\quad
 z \in \Index,
\end{equation}
define linear projections $\calP_z: \Dual{\tV_z} \to D_z$, satisfying the invariances
\begin{subequations}
\begin{align}
\label{Pz-invariance}
 \forall z \in \Index, v_h\in V_h
 \qquad
 \big( \calP_z\tA v_h \big)_{|V_z} = \big( \tA v_h \big)_{|V_z}
\intertext{and the collective stability bound}
\label{Eq:boundedness-Pz-local}
 \forall g \in \Dual{V}
\qquad
 \sum_{z\in\Index}\norm[\Dual{V_z}]{\calP_z g}^2
 \le
 \CL^2 \CNL^2 \sum_{z\in\Index}\norm[\Dual{V_z}]{g}^2.
\end{align}
Hereafter, we also write simply $\calP_z g$ instead of $\calP_z (g_{|\Dual{\tV_z}})$ for $g \in \Dual{V}$.
\end{subequations}
\end{proposition}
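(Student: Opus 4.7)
The plan is to tackle the three claims in the natural order: well-posedness of $\calP_z$, the $V_h$-invariance, and finally the collective stability bound; each falls out of one of the hypotheses in \eqref{A:construction-Pz-local}.

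\textbf{Well-posedness and projection property.} For each $z \in \Index$, the defining relations \eqref{Eq:construction-Pz-local} form a square linear system once $\dim S_z = \dim D_z$ is established. I would first show that the restriction map $D_z \to \Dual{S_z}$, $\chi \mapsto \chi_{|S_z}$, is injective: if $\chi_{|S_z} = 0$, then $\norm[\Dual{S_z}]{\chi} = 0$ and (iv) gives $\norm[\Dual{V_z}]{\chi} = 0$, so that $\chi$ vanishes on both $V_z$ and $S_z$; since $\tV_z = V_z + S_z$, this forces $\chi = 0$ in $\Dual{\tV_z}$. Hence $\dim D_z \leq \dim S_z$, which combined with (iii) yields equality. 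The restriction map is thus a bijection, so for every $g \in \Dual{\tV_z}$ there is a unique $\calP_z g \in D_z$ solving \eqref{Eq:construction-Pz-local}; linearity is evident, and the projection property is immediate because $\calP_z \chi = \chi$ whenever $\chi \in D_z$.

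\textbf{Invariance on $V_h$.} For $v_h \in V_h$, assumption (ii) says $(\tA v_h)_{|\tV_z} \in D_z$. By the projection property just shown, $\calP_z(\tA v_h) = (\tA v_h)_{|\tV_z}$ as elements of $\Dual{\tV_z}$. Restricting both sides to $V_z \subseteq \tV_z$ yields \eqref{Pz-invariance}.

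\textbf{Collective stability bound.} The key observation is that the defining equation \eqref{Eq:construction-Pz-local} implies $(\calP_z g)_{|S_z} = g_{|S_z}$ and therefore $\norm[\Dual{S_z}]{\calP_z g} = \norm[\Dual{S_z}]{g}$. Applying (iv) to $\calP_z g \in D_z$ gives the pointwise estimate
\begin{equation*}
  \norm[\Dual{V_z}]{\calP_z g} \le \CL \norm[\Dual{S_z}]{\calP_z g} = \CL \norm[\Dual{S_z}]{g}.
\end{equation*}
Squaring, summing over $z \in \Index$, and inserting (v) for $g \in \Dual{V}$ (whose restriction to $\tV_z \subseteq V$ is the object being projected) delivers \eqref{Eq:boundedness-Pz-local} with the stated constant $\CL \CNL$.

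The only delicate point is the dimension count in the first step: one must resist the temptation to argue directly in $\Dual{V_z}$, since $\chi \in D_z$ might not be determined by its action on $V_z$ alone when $S_z \not\subseteq V_z$. Reading (iv) as an inf-sup statement on the pairing $D_z \times S_z$, as reformulated in \eqref{Eq:infsup-local}, makes it transparent that the correct test space is $S_z$ and bookkeeping becomes routine once this is acknowledged.
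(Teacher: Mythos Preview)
Your proof is correct and follows essentially the same route as the paper's: both establish well-posedness by showing that the restriction $D_z \to \Dual{S_z}$ is injective via (iv) and the identity $\tV_z = V_z + S_z$, then invoke the dimension condition (iii); both derive the invariance from (ii) and the projection property; and both obtain the stability bound by chaining (iv), the orthogonality $\norm[\Dual{S_z}]{\calP_z g} = \norm[\Dual{S_z}]{g}$, and (v). Your closing remark about why one must argue in $\Dual{\tV_z}$ rather than $\Dual{V_z}$ is a helpful clarification that the paper leaves implicit.
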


\begin{proof}
Fix any $z \in \Index$. Thanks to the rank-nullity theorem and condition~(iii) in \eqref{A:construction-Pz-local}, the well-posedness of the linear variational problem for $\calP_z$ follows from its uniqueness. To show the latter,  let $\chi \in D_z$ with  $\scp{\chi }{s} = 0$ for all $s \in S_z$, that is \ $\chi_{|S_z} = 0$. Condition~(iv) in \eqref{A:construction-Pz-local} then implies $\chi_{|V_z} = 0$. By its linearity, $\chi$ vanishes on the sum $V_z + S_z$, meaning $\chi = 0$ in the light of (i) in \eqref{A:construction-Pz-local}. Consequently, \eqref{Eq:construction-Pz-local} has a unique solution in $D_z$ and $\calP_z$ is the linear projection from $\Dual{\tV_z}$ onto $D_z$ with the orthogonality property
\begin{equation}
\label{Pz-orthogonality}
 \forall s \in S_z \qquad \scp{g - \calP_z g}{s} = 0.
\end{equation}
The invariance \eqref{Pz-invariance} then follows from the observation that, for \(v_h\in V_h\), the difference $\calP_z\tA v_h - (\tA v_h)_{|\tV_z}$ is in the image $D_z$ of $\calP_z$ thanks to (ii) in \eqref{A:construction-Pz-local}.
	
Finally, given any $g \in \Dual{V}$, the collective stability bound  \eqref{Eq:boundedness-Pz-local} readily follows by combining (iv) and (v) in \eqref{A:construction-Pz-local} and the orthogonality \eqref{Pz-orthogonality} of $\calP_z$:	
\begin{equation*}
 \sum_{z \in \Index} \norm[\Dual{V_z}]{\calP_z g}^2 
 \leq
 \CL^2  \sum_{z \in \Index} \norm[\Dual{S_z}]{\calP_z g}^2
 =
 \CL^2  \sum_{z \in \Index} \norm[\Dual{S_z}]{g}^2
 \leq
 \CL^2 \CNL^2  \sum_{z \in \Index} \norm[\Dual{V_z}]{g}^2.\qedhere
\end{equation*}
\end{proof}

The use of local projections is motivated by difficulties with the computability of the local quantities $\norm[\Dual{V_z}]{\Res_h^{\mathtt{C}}}$. The next remark is therefore of primary interest.

\begin{remark}[Computing the local projections]
\label{R:computing-local-projections}
Let $z \in \Index$ and $g \in \Dual{V}$. From an algebraic viewpoint,
the projection $\calP_z g$ from \eqref{Eq:construction-Pz-local} can
be computed by solving a linear system of equations. Moreover, we have
for the $\Dual{S_z}$-norm of $\calP_z g$ that
\begin{equation*}
 \|\calP_z g\|_{S_z'}^2=\mathbf{g}_z^T \mathbf{A}_z^{-1} \mathbf{g}_z,
\end{equation*}
where $\mathbf{g}_z$ is the column vector representing the action of $g$ on some basis of $S_z$ and $\mathbf{A}_z$ is the matrix representing the extended scalar product $\ta$ on $S_z \times S_z$ with respect to the same basis.
\end{remark}

The reader may skip the following two remarks as they are not used in what follows. They, however, give instructive information on the approximation qualities of the local projections and how they are achieved.

\begin{remark}[Near-best approximation of the local projections]
\label{R:qopt-of-Pzs}
The local projections provide the collective near-best approximation
\begin{equation}
\label{Pz_qopt}
 \sum_{z \in \Index} \norm[\Dual{V_z}]{g - \calP_z g}^2
 \leq
 (2+ 2\CL^2 \CNL^2)
 \inf_{\chi \in \tA(V_h)} \sum_{z \in \Index} \norm[\Dual{V_z}]{g - \chi}^2
\end{equation}
for all $g \in \Dual{V}$. This follows from the local invariances \eqref{Pz-invariance} for any fixed $\chi \in \tA(V_h)$, the linearity of the local projections, and their collective stability \eqref{Eq:boundedness-Pz-local}.
\end{remark}

\begin{remark}[Local projections as nonconforming approximations]
\label{R:nonconformity_of_Pz}
%
Each projection $\calP_z$ can be viewed as a \emph{nonconforming Petrov-Galerkin approximation} of the identity $\mathrm{id}_{\Dual{V_z}}$ on $\Dual{V_z}$. Equally, in Section~\ref{sec:qo-methods}, $M_h$ is such an approximation of $A^{-1}$. It is therefore in order to compare the approach based upon \eqref{A:construction-Pz-local} with the one in \cite{Veeser.Zanotti:18} used in Section~\ref{sec:qo-methods}. To this end, we note that (ii) of \eqref{A:construction-Pz-local} is not of interest in this context, and ignore that \cite{Veeser.Zanotti:18} does not properly cover the collective or product setting with different continuous trial and test spaces of the local projections.

A common feature of both approaches is that they invoke the same type of extended spaces. Indeed, the model $\tV = V + V_h$ is repeated in the context of \eqref{A:construction-Pz-local}, on the test side, by $\tV_z = V_z + S_z$, which we may write also as $\tV_z = V_z \oplus S_z^\mathrm{NC}$, where $S_z^\mathrm{NC}$ is a complement of $S_z \cap V_z$ in $S_z$, i.e.\  we have $S_z = (S_z \cap V_z) \oplus S_z^\mathrm{NC}$.
Similarly, for the more involved trial side, we can view $\Dual{\tV_z}$ as the direct sum of $\Dual{V_z}$ and the nonconforming part $(D_z{}_{|S_z^\mathrm{NC}})$ of $D_z$. In fact, we have $\Dual{\tV_z} = \Dual{ (V_z \oplus S_z^\mathrm{NC}) } \eqsim \Dual{V_z} \oplus \Dual{(S_z^\mathrm{NC})} \eqsim \Dual{V_z} \oplus (D_z{}_{|S_z^\mathrm{NC}})$  and $D_z \eqsim (D_z{}_{|V_z}) \oplus (D_z{}_{|S_z^\mathrm{NC}})$.

There is, however, an important and, at the first look, perhaps striking difference. The nonconforming discrete problems \eqref{Eq:construction-Pz-local} do not invoke smoothers but exploit simple restriction. Recall that classical nonconforming methods relying on simple restriction cannot be quasi-optimal, i.e.\ cannot provide near-best approximation; cf.\ \cite{Veeser.Zanotti:18}. Nevertheless, the use of restriction in \eqref{Eq:construction-Pz-local} does not obstruct the collective near-best approximation \eqref{Pz_qopt} thanks to the fact that all simple test functions are globally conforming. In addition, restriction is preferable over smoothing in \eqref{Eq:construction-Pz-local} as it simplifies the implementation of the local projections. The global conformity of all simple test functions implies also that each extended space $\Dual{\tV_z}$, like its counterpart $\Dual{V_z}$, can be viewed as a subspace of  $\Dual{V}$, meaning that the simple functionals are globally conforming.  Hence, the extension of the local projections to the extended local spaces is `built-in', while the extension of the approximation operator $P_h$ in \eqref{Eq:approximation-operator} has to be constructed.
\end{remark}

The next lemma applies the local projections in order to obtain the
announced splitting of the conforming residual. Therein, an indicator
$\eta_h$ is \emph{computationally $C$-quantifiable} whenever it is
equivalent to a computable quantity $\bar{\eta}_h$ up to a
multiplicative constant $C>0$, i.e., for all $f \in \Dual{V}$, we have $C^{-1} \bar{\eta}_h \leq \eta_h \leq C \bar{\eta}_h$.

\begin{lemma}[Conforming residual splitting]
\label{L:splitting-of-conf-res}
Assumption~\eqref{A:construction-Pz-local} ensures
\begin{equation*}
 \Csplt^2 \left( \eta_h^2 + \Osc_h^2 \right)
 \leq
 \sum_{z\in\Index} \norm[\Dual{V}_z]{\Res_h^{\mathtt{C}}}^2
 \leq
 2 \left( \eta_h^2 + \Osc_h^2 \right)
\end{equation*}
with some $\Csplt \geq (1 + 2\CL^2\CNL^2)^{-1/2}$ and the indicators
\begin{equation}
\label{Eq:indicator-oscillation-local}
 \eta_h^2
 :=
 \sum_{z\in\Index} \norm[\Dual{V_z}]{\calP_z \Res_h^{\mathtt{C}}}^2  
\qquad \text{and} \qquad
 \Osc_h^2
 :=
 \sum_{z\in\Index} \norm[\Dual{V_z}]{f - \calP_z f}^2,
\end{equation}
where the indicator $\eta_h$ is computationally quantifiable up to $\max\{\CL,\CNL^{-1}\}$ whenever $\Res_h^{\mathtt{C}} = f - \tA u_h$ can be computationally evaluated for all simple test functions.
\end{lemma}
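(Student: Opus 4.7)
The plan is to leverage the invariance from Proposition~\ref{P:construction-Pz-local} to unify both $\eta_h$ and $\Osc_h$ as dual-norm contributions of a single decomposition of the conforming residual, then combine elementary inequalities with the collective stability \eqref{Eq:boundedness-Pz-local}. Writing $g := \Res_h^{\mathtt{C}} = f - \tA u_h$ and using $(\calP_z \tA u_h)_{|V_z} = (\tA u_h)_{|V_z}$ from \eqref{Pz-invariance}, I obtain on each $V_z$ the identities $\calP_z g = \calP_z f - \tA u_h$ and $g - \calP_z g = f - \calP_z f$. Consequently $\eta_h^2 = \sum_{z\in\Index} \norm[\Dual{V_z}]{\calP_z g}^2$ and $\Osc_h^2 = \sum_{z\in\Index} \norm[\Dual{V_z}]{g - \calP_z g}^2$, so the asserted splitting is just the decomposition $g = \calP_z g + (g - \calP_z g)$ measured in $\Dual{V_z}$-norms.

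The upper bound is then immediate from $(a+b)^2 \leq 2a^2 + 2b^2$ applied pointwise in $z$ and summed. For the lower bound, the collective stability \eqref{Eq:boundedness-Pz-local} yields $\eta_h^2 \leq \CL^2\CNL^2 \sum_{z}\norm[\Dual{V_z}]{g}^2$, while for $\Osc_h^2$ I would apply Minkowski's inequality in the $\ell^2$-sum $\bigoplus_z \Dual{V_z}$ to obtain $\Osc_h \leq \bigl(\sum_z \norm[\Dual{V_z}]{g}^2\bigr)^{1/2} + \eta_h$, and then combine with the stability bound via a Young-type split to produce an estimate of the form $\eta_h^2 + \Osc_h^2 \leq C \sum_{z}\norm[\Dual{V_z}]{g}^2$, with the sharp choice of Young parameter aiming at $C = 1 + 2\CL^2\CNL^2$.

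For the computability claim, the natural computable surrogate is $\bar\eta_h^2 := \sum_{z\in\Index} \norm[\Dual{S_z}]{g}^2$: since $\calP_z g$ and $g$ coincide on $S_z$ by \eqref{Eq:construction-Pz-local}, we have $\norm[\Dual{S_z}]{g} = \norm[\Dual{S_z}]{\calP_z g}$, and Remark~\ref{R:computing-local-projections} expresses each such quantity through a small linear system on a basis of $S_z$, evaluable under the assumed computability of $\Res_h^{\mathtt{C}}$ on simple test functions. Condition~(iv) of \eqref{A:construction-Pz-local} then directly yields $\eta_h \leq \CL\,\bar\eta_h$, while the reverse estimate $\bar\eta_h \leq \CNL^{-1}\,\eta_h$ follows by combining (v) with (iv) applied to the projected residual, giving the stated quantifiability up to $\max\{\CL, \CNL^{-1}\}$.

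The main obstacle is the lower bound's sharp constant $1 + 2\CL^2\CNL^2$: a naive combination of Minkowski's inequality with the collective stability only yields the weaker $1 + 2\CL\CNL + 2\CL^2\CNL^2$, so obtaining the claimed constant requires a refined analysis exploiting the Hilbert structure of $\Dual{V_z}$ through the identity $\norm[\Dual{V_z}]{g}^2 = \norm[\Dual{V_z}]{\calP_z g}^2 + 2\langle \calP_z g, g - \calP_z g\rangle_{\Dual{V_z}} + \norm[\Dual{V_z}]{g - \calP_z g}^2$ together with a careful Cauchy--Schwarz bound on the cross term that absorbs the offending $2\CL\CNL$ into the $2\CL^2\CNL^2$ contribution.
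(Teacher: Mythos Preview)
Your proof is essentially the paper's argument. The identification of $\eta_h$ and $\Osc_h$ via the invariance \eqref{Pz-invariance}, the upper bound by the triangle inequality followed by $(a+b)^2\le 2a^2+2b^2$, and the lower bound by combining the collective stability \eqref{Eq:boundedness-Pz-local} with a triangle (or Minkowski) inequality are exactly what the paper does. Your computable surrogate $\bar\eta_h^2=\sum_z\norm[\Dual{S_z}]{\Res_h^{\mathtt C}}^2$ coincides with the one in the paper.

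The only divergence is your final paragraph about the ``obstacle'' of hitting the constant $1+2\CL^2\CNL^2$. You should drop that discussion: the paper's own proof does not achieve this constant either. It uses the cruder bound $(a+b)^2\le 2a^2+2b^2$ on $\norm[\Dual{V_z}]{g-\calP_z g}\le\norm[\Dual{V_z}]{g}+\norm[\Dual{V_z}]{\calP_z g}$ and arrives at
\[
\eta_h^2+\Osc_h^2 \le (2+3\CL^2\CNL^2)\sum_{z\in\Index}\norm[\Dual{V_z}]{\Res_h^{\mathtt C}}^2,
\]
so there is a minor mismatch between the constant displayed in the statement and the one actually proved. Your Minkowski route yields $1+2\CL\CNL+2\CL^2\CNL^2$, which is in fact slightly sharper than the paper's $2+3\CL^2\CNL^2$ (their difference is $(\CL\CNL-1)^2\ge 0$). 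The Hilbert-space refinement you sketch does not eliminate the $2\CL\CNL$ cross term---expanding $\norm[\Dual{V_z}]{g}^2$ and applying Cauchy--Schwarz to the inner product still leaves you bounding $2\eta_h\Osc_h$, which brings the same constant back---so you should not claim it closes the gap. Simply state the constant you get; it already suffices for the lemma's purpose and matches the paper's level of rigor.
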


\begin{proof}
The upper bound follows from the triangle inequality and the special case $(\calP_z \tA u_h){}_{|V_z} = \tA u_h{}_{|V_z}$ of the invariance \eqref{Pz-invariance}:
\begin{equation}
\label{danger-of-overest}
 \norm[\Dual{V_z}]{\Res_h^{\mathtt{C}}}
 \le
 \norm[\Dual{V_z}]{\calP_z\Res_h^{\mathtt{C}}}
  +
 \norm[\Dual{V_z}]{\Res_h^{\mathtt{C}}-\calP_z \Res_h^{\mathtt{C}}}
 =
 \norm[\Dual{V_z}]{\calP_z\Res_h^{\mathtt{C}}}
 +
 \norm[\Dual{V_z}]{f-\calP_z f}.
\end{equation}
The lower bound follows from using the last equality in the opposite direction and applying the collective stability~\eqref{Eq:boundedness-Pz-local} 
of the local projections,
\begin{equation*}
 \sum_{z\in \Index}\norm[\Dual{V_z}]{\calP_z \Res_h^{\mathtt{C}}}^2
 +
 \norm[\Dual{V_z}]{f - \calP_z f}^2
 \leq
 (2 + 3\CL^2\CNL^2)
 \sum_{z\in \Index}\norm[\Dual{V}_z]{\Res_h^{\mathtt{C}}}^2. 
\end{equation*}

To show the quantifiability of $\eta_h$, observe that $\bar{\eta}_h^2 := \sum_{z \in \Index} \norm[\Dual{S_z}]{\calP_z \Res_h^{\mathtt{C}}}^2$ is computable thanks to Remark~\ref{R:computing-local-projections} and the assumed knowledge on $\Res_h^{\texttt{C}}$. Furthermore, it is equivalent to $\eta_h$ thanks to (iv) and (v) of \eqref{A:construction-Pz-local}.
\end{proof}

It is important to note that, thanks to the structural condition (ii) in \eqref{A:construction-Pz-local}, the oscillation $\Osc_h$ is not just of oscillatory nature, but a \emph{data oscillation}, i.e., in the context of  \eqref{Eq:AnaPro}, it depends only on the data $f$ and not on the discrete solution $u_h$.

\begin{remark}[Improving $\Csplt$]
\label{R:constant_in_low_bd}
In the analysis \cite[Section~4]{Bonito.Canuto.Nochetto.Veeser:24} for a conforming method, one has the counterpart of the improved inequality
\begin{equation*}
 \Csplt \geq \frac{1}{\sqrt{2} \CL \CNL}
\end{equation*}
thanks to  \cite[Lemma~2.1]{Szyld:06}. For the corresponding case when
all simple test functions are locally conforming, i.e.\ $S_z \subseteq
V_z$ for all $z \in \Index$, this improvement follows from applying
the cited lemma on each projection $\calP_z$. Similarly, it follows
also if the local projections arise from a global one, viz.\ there is
a projection on $P_h$ on $\Dual{V}$ such that $P_z{}_{|V_z} =
P_h{}_{|V_z}$ for all $z \in \Index$. In fact, we can then apply the
cited lemma on $P_h$, endowing $\Dual{V}$ with the square root of
$\sum_{z \in \Index} \norm[\Dual{V_z}]{\cdot}^2$. 
\end{remark}

\subsection{Deriving a strictly equivalent error estimator}
\label{sec:err-est}
We now combine the various ingredients and previous results to obtain
the main result of our abstract a~posteriori analysis: a guideline to
derive an estimator bounding the error from above and below without
any additional terms. 
\begin{theorem}[Abstract estimator]
\label{T:error-estimator}
Let $M_h$ be a quasi-optimal method, induced by the discretization
\eqref{Eq:DisEq}. Suppose that \eqref{A:equiv-to-dist-to-conf},
\eqref{A:partition-of-unity}, and \eqref{A:construction-Pz-local} hold
and, given tuning constants $C_1, C_2 >0$, define an abstract
estimator by 
\begin{multline*}
  \Est_h^2
  :=
  (1 + C_1^2 \delta_h^2) \Ncf_h^2 + C_2^2 (\eta_h^2 + \Osc_h^2),  \text{ where }
\\
\begin{aligned}
 \Ncf_h^2
 &:= 
  \norm{ u_h - \calA_h u_h }^2,  &&\text{(approximate distance to conformity)}
\\
 \eta_h^2
  &:= 
 \textstyle \sum_{z\in\Index} \norm[\Dual{V_z}]{\calP_z f- \tA u_h}^2 ,  &&\text{(approximate conforming residual)}
\\
 \Osc_h^2
 &:= 
 \textstyle \sum_{z\in\Index} \norm[\Dual{V_z}]{f - \calP_z f}^2, &&\text{(data oscillation)}
\end{aligned}
\end{multline*}
and the projections $\calP_z$ 
are defined by \eqref{Eq:construction-Pz-local}. Then this estimator quantifies the error by
\begin{equation*}
 \underline{C} \Est_h
 \leq
 \norm{u-u_h}
 \leq
 \overline{C}\Est_h
\end{equation*}
where the equivalence constants $\underline{C}$, $\overline{C}$ depend only on the norm $\normtr{A_h}$ of the discrete operator $A_h$, the constants arising from the assumptions \eqref{A:equiv-to-dist-to-conf}, \eqref{A:partition-of-unity}, and \eqref{A:construction-Pz-local}, and the tuning constants $C_1$, $C_2$.
\end{theorem}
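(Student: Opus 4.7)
The plan is a bookkeeping exercise that assembles the three preceding results into the advertised two-sided bound on the error. The starting point is Lemma~\ref{L:err-res-decomps}, which splits the error orthogonally as
\[
 \norm{u-u_h}^2 = \norm{u_h - \Pi_V u_h}^2 + \norm[\Dual{V}]{\Res_h^{\mathtt{C}}}^2.
\]
This reduces the task to estimating each summand in terms of the three components of $\Est_h$.

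For the nonconforming summand, assumption~\eqref{A:equiv-to-dist-to-conf} activates Proposition~\ref{P:inds-for-dist-to-conf}, yielding $\Cav^2\,\Ncf_h^2 \leq \norm{u_h-\Pi_V u_h}^2 \leq \Ncf_h^2$. For the conforming summand, the quasi-optimality of $M_h$ together with \eqref{A:partition-of-unity} activates Lemma~\ref{L:localizating-conf-res}, and then \eqref{A:construction-Pz-local} permits invoking Lemma~\ref{L:splitting-of-conf-res} to replace the local sum $\sum_z \norm[\Dual{V_z}]{\Res_h^{\mathtt{C}}}^2$ by an equivalent multiple of $\eta_h^2 + \Osc_h^2$, with ratios $\Csplt^{-2}$ and $2$. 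Before doing so I verify that the $\eta_h$ of the theorem coincides with the $\eta_h$ of Lemma~\ref{L:splitting-of-conf-res}: writing $\calP_z f - \tA u_h = \calP_z\Res_h^{\mathtt{C}} + (\calP_z \tA u_h - \tA u_h)$ and applying the invariance~\eqref{Pz-invariance} makes the bracketed functional vanish on $V_z$, so the two definitions agree as elements of $\Dual{V_z}$.

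Assembling the pieces produces the lower bound
\[
 \norm{u-u_h}^2 \geq \Cav^2\,\Ncf_h^2 + \tfrac{\Csplt^2}{\Ccol^2}(\eta_h^2+\Osc_h^2)
\]
and, after using $\norm{u_h-\Pi_V u_h}^2 \leq \Ncf_h^2$ to absorb the cross term appearing on the right-hand side of Lemma~\ref{L:localizating-conf-res}, the upper bound
\[
 \norm{u-u_h}^2 \leq (1+2\Cobl^2\delta_h^2)\,\Ncf_h^2 + 4\Cloc^2(\eta_h^2+\Osc_h^2).
\]
Matching summand by summand against $\Est_h^2 = (1+C_1^2\delta_h^2)\Ncf_h^2 + C_2^2(\eta_h^2+\Osc_h^2)$, one can take $\overline{C}^2 = \max\{1,\,2\Cobl^2/C_1^2,\,4\Cloc^2/C_2^2\}$ and $\underline{C}^{-2} = (1+C_1^2\delta_h^2)\Cav^{-2} + C_2^2\Ccol^2\Csplt^{-2}$, both expressible in terms of the listed data since $\Cobl \leq \normtr{A_h}\normtr{\calI_h}$ by Lemma~\ref{L:localizating-conf-res} and $\delta_h$ is controlled by the quasi-optimality constant via Proposition~\ref{P:QuasiOptMethods}.

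No individual step is hard once the three lemmas are in place; the main and only delicate point is design rather than technique, namely recognizing that the built-in factor $(1+C_1^2\delta_h^2)$ on $\Ncf_h^2$ in $\Est_h^2$ is precisely what is needed to absorb the mixed $\delta_h^2\norm{u_h-\Pi_V u_h}^2$ term from Lemma~\ref{L:localizating-conf-res}, so that $\overline{C}$ does not inflate with $\delta_h$ and the clean structural split of the estimator into a distance-to-conformity part, an approximate-conforming-residual part, and a data-oscillation part is preserved.
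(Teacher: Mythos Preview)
Your proposal is correct and follows essentially the same route as the paper: orthogonal error splitting via Lemma~\ref{L:err-res-decomps}, Proposition~\ref{P:inds-for-dist-to-conf} for the nonconforming part, and the chain Lemma~\ref{L:localizating-conf-res} $\to$ Lemma~\ref{L:splitting-of-conf-res} for the conforming part. The only cosmetic difference is that the paper bounds $\Est_h^2$ by a \emph{maximum} of the two coefficients times $\norm{u-u_h}^2$, giving $\underline{C}^{-2} = \max\{(1+C_1^2\delta_h^2)\Cav^{-2},\, C_2^2\Ccol^2\Csplt^{-2}\}$, whereas you take the sum; both are valid and the max is marginally sharper. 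Your explicit verification that $\calP_z f - \tA u_h$ and $\calP_z\Res_h^{\mathtt{C}}$ agree on $V_z$ via the invariance~\eqref{Pz-invariance} is a nice touch that the paper leaves implicit.
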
 

\begin{proof}
We first show the upper bound. Combining Lemmas~\ref{L:err-res-decomps}, \ref{L:localizating-conf-res}, \ref{L:splitting-of-conf-res}, and Proposition~\ref{P:inds-for-dist-to-conf} leads to
\begin{align*}
 \norm{u-u_h}^2
  &=
  \norm[\tV']{\Res_h}^2
  =
  \norm{u_h - \Pi_V u_h}^2  + \norm[V']{\Res_h^\mathtt{C}}^2 
\\
  &\le
   (1 + 2  \Cobl^2 \delta_h^2 ) \Ncf_h^2 + 2 \Cloc^2 \sum_{z \in \Index} \norm[\Dual{V_z}]{\Res_h^\mathtt{C}}^2
\\
  &\le
  (1 + 2  \Cobl^2 \delta_h^2 ) \Ncf_h^2 + 4 \Cloc^2 (\eta_h^2 + \Osc_h^2).
\end{align*}
This implies the upper bound for some
\begin{equation*}
		\overline{C}
		\leq
		\max\left\{ 1, \frac{\sqrt{2}\Cobl}{C_1}, \frac{2\Cloc}{C_2} \right\}.
\end{equation*}
Regarding the lower bound, we apply the same auxiliary results to derive
\begin{align*}
 \Est_h^2
 &\leq
 (1 + C_1^2 \delta_h^2) \Cav^{-2} \norm{u_h - \Pi_V u_h}^2
 +
 C_2^2  \Csplt^{-2} \Ccol^2 \norm[\Dual{V}]{\Res_h^\mathtt{C}}^2
\\
 &\leq
 \max \big\{ (1 + C_1^2 \delta_h^2) \Cav^{-2}, C_2^2  \Csplt^{-2} \Ccol^2 \big\}
  \norm{u-u_h}^2,
\end{align*}
whence the lower bound holds for some
\begin{equation*}
	\underline{C}
	\geq
	\max \big\{ (1 + C_1^2 \delta_h^2) \Cav^{-2}, C_2^2  \Csplt^{-2} \Ccol^2 \big\}^{-1}.\qedhere
\end{equation*}	
\end{proof}

Several remarks about the properties of the estimator $\Est_h$ in
Theorem~\ref{T:error-estimator} are in order. We start by discussing
the computability of $\Est_h$, compare
with typical results from literature, highlight its features for
adaptivity,  and conclude with a remark about how to choose the simple
functionals. 

\begin{remark}[On computability]
\label{R:computability}
In Remark~\ref{R:dist-to-conf-averaging} and
Lemma~\ref{L:splitting-of-conf-res}, we have already observed that,
under mild assumptions,  the indicators $\Ncf_h$ and $\eta_h$ are
computable (in exact arithmetic), exploiting their finite-dimensional
nature. The remaining oscillation indicator $\Osc_h$ cannot be
computationally bounded, not to mention computed, in general, viz.\
knowing only that the data $f $ is from the infinite-dimensional space
$\Dual{V}$; cf.\ \cite[Lemma~2 and Corollary~5]{Kreuzer.Veeser:21}. We
may neglect it (see Remark~\ref{R:neglecting-data-osc} for a
discussion of this option) or resort to some approximation of $\Osc_h$,
which is derived case-by-case, possibly exploiting additional
information on the regularity and/or structure of $f$. The
significance of Theorem~\ref{T:error-estimator} then of course hinges
on the quality of those approximations. In particular, to facilitate
the approximation, it may be convenient to waive the lower bound with
the data oscillation $\Osc_h$ and replace it by a so-called
\emph{surrogate oscillation}, i.e.\ a quantity that provides an upper
bound of $\Osc_h$ and can be considered computable; cf.\
\cite[Section~7.1]{Cohen.DeVore.Nochetto:12}.  
\end{remark}

\begin{remark}[Neglecting data oscillation?]
\label{R:neglecting-data-osc}
Let  us compare the splitting of the conforming residual in
Lemma~\ref{L:splitting-of-conf-res} to the following formula in
numerical integration: 
\begin{equation*}
\label{mid-point-with-error}
	\int_0^1 v 
	=
	v(\tfrac{1}{2}) + \int_0^1 v(x) - v(\tfrac{1}{2}) \, dx.
\end{equation*}
Similarly to Lemma~\ref{L:splitting-of-conf-res} combined with
Remark~\ref{R:computing-local-projections}, but with an equality
instead of an equivalence, this formula relates an exact
non-computable value, a computable approximation and a representation
of the error of the approximation. Since in numerical integration the
error is often not considered on a computational level,  one may be
tempted to also neglect the oscillation $\Osc_h$. This approach
amounts to use in computations simply the estimator 
\begin{equation*}
	\widetilde{\mathrm{est}}{}_h^2
	:=
	(1 + C_1^2 \delta_h^2) \Ncf_h^2 + C_2^2  \eta_h^2.
\end{equation*}
Note that this estimator depends on the data $f$ only through the discrete solution $u_h$ and the local projections $\calP_z$, $z \in \Index$. It therefore cannot see components of $f$ orthogonal to $\calE_h(V_h) + \bigcup_{z\in\Index} S_z$, which may constitute the main part of the error; cf.\ \cite[Section~5.3.1]{Bonito.Canuto.Nochetto.Veeser:24}. Hence, a reliable use of $	\widetilde{\mathrm{est}}{}_h$ 
requires to verify a smallness assumption of the type $\Osc_h \ll
\widetilde{\mathrm{est}}_h$. Apart from the fact that such a smallness
condition would a~priori restrict the admissible data, it is not clear
to us that its verification is a simpler task than the alternatives
outlined in Remark~\ref{R:computability}. Notice also that, in an
adaptive context, the estimator $\widetilde{\mathrm{est}}{}_h$ does
not provide any information to adapt  to the aforementioned orthogonal
components of $f$ and the smallness assumption will be needed for the
initial and all subsequent meshes.  
\end{remark}

\begin{remark}[Strict equivalence and error-dominated oscillation]
\label{R:strict-equivalence}
The strict equivalence between error and the estimator $\Est_h$ and
the related error-dominated oscillation $\Osc_h$ are key features of
Theorem~\ref{T:error-estimator}. In fact, if we derive a~posteriori
bounds  for \eqref{Eq:AnaPro} and \eqref{Eq:DisEq} by means of
classical techniques,  the equivalence between error and estimator  is
spoiled. 

To explain this defect in more detail, denote the classical estimator and oscillation
by  $\widehat{\Est}_h$ and $\widehat{\Osc}_h$, respectively. The classical oscillation requires $f \in L^2(\Omega)
\subsetneq\Dual{V}$ or other regularity assumptions beyond $\Dual{V}$. The
modified equivalence then reads 
\begin{equation}
\label{spoiled-equivalence}	
  \norm{u-u_h}  \leq C_1 \big( \widehat{\Est}_h + \widehat{\Osc}_h \big)
 \quad\text{and}\quad
\widehat{\Est}_h \leq C_1 \big( \norm{u-u_h} + \widehat{\Osc}_h \big),
\end{equation}
where, depending on the estimator type, $\widehat{\Osc}_h$ may be not
present in one of the bounds. Since the classical oscillation
$\widehat{\Osc}_h$ does not vanish whenever the error does, a bound of
the type $\widehat{\Osc}_h \leq C \norm{u-u_h}$ does not
hold. Furthermore,  the fact that the classical oscillation $\widehat{\Osc}_h$ is
formally of higher order does not exclude that, under adaptive refinement,
it converges slower than the error in certain cases; cf.\ 
\cite[Section~6.4]{Cohen.DeVore.Nochetto:12} and
\cite[Section~3.8]{Kreuzer.Veeser:21}. Consequently, a strict
equivalence cannot be obtained from \eqref{spoiled-equivalence} and
the defect may not mitigate under adaptive refinement. In contrast to
this, Theorem~\ref{T:error-estimator} ensures $\Osc_h \leq C
\norm{u-u_h}$ for some constant $C$. 

The crucial step for this improvement is the splitting of the
conforming residual by means of the triangle inequality in
\eqref{danger-of-overest}. Therein, the splitting is realized with the
local projections that  are collectively stable without changing the
involved norm thanks to conditions (iv) and (v) of
\eqref{A:construction-Pz-local}. Classical techniques split,
explicitly or  implicitly, the conforming residual in the upper and/or
lower bound in a less stable manner, passing to some norm that is strictly stronger than $\norm[\Dual{V}]{\cdot}$.
This  difference in the stability of the splitting of the
conforming residual is the new twist. In view of the discussion in
\cite[Remark~4.19]{Bonito.Canuto.Nochetto.Veeser:24} and
\cite{Kreuzer.Veeser.Zanotti:24}, it is also a necessary ingredient
for the strict equivalence. 
\end{remark} 

\begin{remark}[Classical oscillation as surrogate]
\label{R:class-osc-as-surrogate}
A classical oscillation $\widehat{\Osc}_h$  from
Remark~\ref{R:strict-equivalence} may be used as a surrogate
oscillation, as defined in Remark~\ref{R:computability}; see Theorems~\ref{T:CR-error-estimator-1}, \ref{T:CR-error-estimator-2}, \ref{T:dG-error-estimator}, \ref{TH:mainC0}, and \ref{TH:mainMR} below.  In this context, it is
worth mentioning that the application of
Theorem~\ref{T:error-estimator} is nevertheless advantageous.  Indeed,
Theorem~\ref{T:error-estimator} ensures the lower bound $\eta_h \leq C
\norm{u-u_h}$ with some constant $C$, while, for certain classical
techniques, the corresponding lower bound gets spoiled, i.e.\ one has
only $\widehat{\eta}_h \leq C (\norm{u-u_h} + \widehat{\Osc}_h)$ for
the counterpart $\widehat{\eta}_h$ of $\eta_h$, with the disadvantages
outlined in Remark~\ref{R:strict-equivalence};
cf. \cite[Remark~4.43]{Bonito.Canuto.Nochetto.Veeser:24}. 
\end{remark}

\begin{remark}[Conforming residual splitting and adaptivity]
\label{R:conf-res-splitting-and-adaptivity}
We briefly discuss a further advantage of the splitting of the conforming residual in an adaptive context. The quantity $\sum_{z \in \Index} \norm[\Dual{V_z}]{\Res_h^\mathtt{C}}^2 = \sum_{z \in \Index} \norm[\Dual{V_z}]{f - \tA u_h}^2$ is of infinite-dimensional and, although localized, of global nature. The infinite dimension arises through the data $f$, while the global dependence through the approximate solution $u_h$.  These features lead to complications  in adaptivity. Roughly speaking, infinite dimension in absence of additional information entails that a given pattern for local refinement cannot be expected to uniformly reduce the error, while global dependence complicates the behavior of indicators under local refinement. After the splitting in Lemma~\ref{L:splitting-of-conf-res}, the indicator $\eta_h$ is of finite-dimensional but global nature and the data oscillation $\Osc_h$ of  local, but infinite-dimensional  nature. In other words: the two difficulties get separated by this splitting. This separation, which relies on the structural condition (ii) in \eqref{A:construction-Pz-local}, can be exploited in the design and analysis of adaptive algorithms; cf., e.g., \cite[Section~5]{Bonito.Canuto.Nochetto.Veeser:24}. 
\end{remark}

\begin{remark}[Choosing the simple functionals]
\label{R:choosing-simple-fcts}
The crucial structural condition (ii) of \eqref{A:construction-Pz-local} prescribes a minimal size for the sets $D_z$, $z \in \Index$, of simple functionals. Furthermore, in the light of the computational issues with $\Osc_h$, it appears convenient that its formal convergence order is larger than the one for the error and that it is smaller than available classical oscillations. It will then play, at least asymptotically for sufficiently smooth sources $f$, a less important role. Of course, the larger the sets $D_z$, $z \in \Index$, the greater the computational cost of the estimator.  All this suggests to take the simple functionals not much larger than necessary for the aforementioned three conditions.
\end{remark}

\section{Estimators for the Poisson problem}
\label{sec:Poisson}
In this section, we first exemplify the application of the guidelines developed Section~\ref{sec:posteriori-analysis} in a simple case and then start to illustrate its generality. For the first purpose, we consider the Poisson problem
\begin{align}\label{Eq:Poisson}
	-\Delta u= f\quad\text{in}~\Omega
\qquad\text{and}\qquad
	u=0\quad\text{on}~\partial\Omega,
\end{align}
together with a first-order Crouzeix-Raviart method, which is quasi-optimal and overconsistent. Covering the use of locally conforming and locally nonconforming simple test functions, we derive and discuss two different strictly equivalent a~posteriori error estimators, highlighting various aspects of the abstract framework.
 
To illustrate the generality of the guidelines, we then consider quasi-optimal discontinuous Galerkin methods of arbitrary but fixed order for the second-order problem \eqref{Eq:Poisson}. This will be complemented in Section~\ref{sec:Biharmonic} by dealing with nonconforming methods for a fourth-order problem.

\subsection{Domain, mesh, and polynomials}
\label{sec:doma-mesh-polyn}
Let $\Omega$ be  a polyhedral Lipschitz domain $\Omega\subset \R^d$, \(d\ge 2\). We denote by $\normal$ the outward pointing unit normal vector field of $\partial\Omega$ and, correspondingly, $\partial_{\normal} v := \nabla v \cdot \normal$ stands for the normal derivative of a sufficiently smooth function $v$.

Let $\grid$ be a simplicial face-to-face (conforming) mesh of the domain $\Omega$. We denote, respectively, the set of its faces, of its interior faces, of its vertices, and of its interior vertices by $\sides$, $\sides^i$, $\vertices$, and $\vertices^i$. The mesh-size function $h:\Omega \to
[0,\infty)$ of $\grid$ is given by 
\begin{equation*}
\label{Eq:meshsize}
	h_{|\mathrm{int}(T)} := h_T \quad \text{for $T \in \grid$},
\qquad \text{and} \qquad 
	h_{|F} := h_F, \quad \text{for $F \in \sides$},
\end{equation*}
where $h_T$ and $h_F$ are the respective diameters of $T$ and $F$. We extend the normal field $\normal$ on the skeleton $\Sigma := \bigcup_{F \in \sides} F$ by setting
\begin{equation*}
\label{Eq:normal-field}
	\normal_{|F} := n_{F} \quad \text{for $F \in \sides^i$},
\end{equation*}
where $n_{F}$ is an arbitrary but fixed unit vector normal to the
interior face $F$. The patches around a simplex $T \in \grid$, a face
$F \in \sides$ and a vertex $z \in \vertices$ are the respective sets 
\begin{equation*}
\label{Eq:patches}
\begin{gathered}
	\omega_T := \bigcup_{T' \in \grid, T' \cap T \neq \emptyset} T',
\qquad
	\omega_F :=  \bigcup_{T' \in \grid, F \subseteq \partial T'} T',
\quad
 \omega_z := \bigcup_{T' \in \grid, z \in T'} T',
\\
\quad\text{and}\quad
 \omega_z^+ := \bigcup_{T' \in \grid, T ' \cap \omega_z \neq \emptyset} T'.
\end{gathered}
\end{equation*}
For convenience, we introduce a dedicated notation also for diameter, induced mesh and set of interior faces of the above patches around a vertex, namely 
\begin{equation*}
\label{Eq:patch-z}
\begin{gathered}
	h_z := \mathrm{diam}(\omega_z), \qquad
	\grid_z := \{T \in \grid \mid z \in T\}, \qquad
	\sides_z^i := \{F \in \sides^i \mid z \in F\},
\\
  \grid_z^+ := \{T \in \grid \mid T \cap \omega_z \neq \emptyset \}, \qquad
  \sides_z^{i+} := \{F \in \sides^i \mid F \cap \omega_z \neq \emptyset \}.
 \end{gathered}
\end{equation*}

For $\ell \in \mathbb N_0$, we denote by $\poly_\ell(T)$ and $\poly_\ell(F)$
the sets of polynomials with total degree not larger than $\ell$ on a
simplex $T \in \grid$ and a face $F \in \sides$, respectively. For
$\ell < 0$, we use the convention $\poly_\ell(T) := \{0\} =:
\poly_\ell(F)$. Given $k \in \mathbb{N}_0$, we consider the following
sets of piecewise polynomials on $\grid$ 
\begin{equation*}
	\label{Eq:piecewise-polynomials}
	S^k_\ell := \{v \in H^k(\Omega) \mid \forall T \in \grid \quad v_{|T} \in \poly_\ell(T)\}
	\qquad \text{and} \qquad
	\mathring{S}^k_\ell := S^k_\ell \cap \mathring{H}^k(\Omega),
\end{equation*} 
with the convention $H^0(\Omega) := L^2(\Omega) =: \mathring H^0(\Omega)$. We
make use of the Courant basis functions $\{\Psi_z\}_{z \in
  \vertices}$ of the first-order space $S^1_1$, which are determined by the condition
\begin{equation}
	\label{Eq:nodal-basis-S11}
	\Psi_z(y)=\delta_{zy},\quad y,z\in\vertices,
\end{equation}
and of the 
bubble functions 
\begin{equation}
	\label{Eq:bubbles}
	\Psi_T:=
	\prod_{z\in\vertices,z\in T}\Psi_z  \in \mathring{S}^1_{d+1}
	\qquad\text{and}\qquad
	\Psi_F :=
	\prod_{z\in\vertices,z\in F}\Psi_z \in S^1_d
\end{equation}
associated with each element $T \in \grid$ and face $F \in \sides$. 
 
For a piecewise smooth function $v:\Omega \to \R$, the jump
$\jump{v}:\Sigma \to \R$ of $v$ is defined on the skeleton $\Sigma$ of
$\grid$ as 
\begin{equation*}
	\jump{v}_{|F} := v_{|T_1} - v_{|T_2} \quad
	\text{for $F\in\sides^i$},
	\qquad\text{and}\qquad
	\jump{v}_{|F} := v_{|T}\quad\text{for $F\in \sides \setminus \sides^i$},
\end{equation*}
where, for $F\in\sides^i$, the simplices $T_1, T_2 \in \grid$ are such that $T_1 \cap T_2 = F$ and $\normal_{F}$ points from $T_1$ to $T_2$, and for $F \in \sides \setminus \sides^i$, $T \in \grid$ is such that $T \supset F$.
Similarly, the average $\mean{v}:\Sigma \to \R$ of $v$ is
defined as 
\begin{equation*}
	\mean{v}_{|F} := \frac{v_{|T_1} + v_{|T_2}}{2}\quad
	\text{on $F\in\sides^i$},
	\qquad\text{and}\qquad
	\mean{v}_{|F} := v_{|T}\quad\text{on $F\in \sides \setminus \sides^i$}.
\end{equation*}

Finally, we write \(a\lesssim b\) if there exists a positive constant \(c\) such that
\(a\le c\,b\) and \(a\eqsim b\) when in addition \(b\lesssim a\).
The hidden constants possibly depend on the space dimension $d$, the
polynomial degree $\ell$ and  the shape constant 
\begin{align}\label{Eq:shape-const}
	\gamma_\grid:=\max_{T\in\grid}\frac{h_T}{\rho_T},
\end{align}
of the mesh $\grid$, where $\rho_T$ is the diameter of the largest ball in $T$. 

\subsection{Crouzeix-Raviart method as model example}
\label{sec:CR1}
%
This section is devoted to the a~posteriori analysis of  the quasi-optimal Crouzeix-Raviart method from \cite[section~3.2]{Veeser.Zanotti:19b}. Exemplifying the guidelines in Section~\ref{sec:posteriori-analysis}, two strictly equivalent error estimators of hierarchical type are derived in Theorems~\ref{T:CR-error-estimator-1} and~\ref{T:CR-error-estimator-2}. The first estimator involves face-bubble functions, while the second one does not, in line with the results in \cite{Dari.Duran.Padra:95,Dari.Duran.Padra.Vampa:96}. 

We start by recalling the aforementioned Crouzeix-Raviart method. The lowest-order finite element space of
Crouzeix-Raviart~\cite{Crouzeix.Raviart:73} on \(\grid\) is defined as 
\begin{equation} \label{Eq:CR1}
	\CR_1:= \left\{ v \in S^0_{1}\mid \forall F\in\sides ~\int_F\jump{v} = 0 \right\},
\end{equation}
and is not contained in $\mathring{H}^1(\Omega)$,  the trial and test space  of the standard weak formulation of the Poisson problem~\eqref{Eq:Poisson}. The corresponding discrete problem reads
\begin{equation}
\label{Eq:CR-qopt}
 \begin{multlined} 
    \text{for \(f\in H^{-1}(\Omega)\)},~\text{find $u_h \in \CR_1$ such that}
 \\
    \forall v_h\in \CR_1\quad
    \int_\Omega \nabla_h u_h \cdot \nabla_h v_h = \scp{f}{\calE_\mathsf{CR} v_h}.
\end{multlined}
\end{equation}
Hereafter,  $\nabla_h$ denotes the broken gradient given by
\[
 (\nabla_h v)_{|T}:= \nabla (v_{|T})
 \quad\text{for }
 T\in\grid \text{ and  suitable } v.
\]
The \emph{smoothing operator} $\calE_{\mathsf{CR}}: \CR_1 \to \mathring{H}^1(\Omega)$ is defined
in \cite[Proposition~3.3]{Veeser.Zanotti:19b} and satisfies the following properties: for all $v_h \in
\CR_1$, we have
\begin{subequations} 
\label{Eq:CR-smoother}
\begin{alignat}{2}
    \label{Eq:CR-smoother-moments}
    \forall F &\in \sides &\qquad &\int_F \calE_{\mathsf{CR}}v_h = \int_F v_h,
 \\  \label{Eq:CR-smoother-stability}
    \forall T&\in\grid &\qquad
    &h_T^{-1} \norm[L^2(T)]{v_h - \calE_{\mathsf{CR}} v_h} + \|\nabla \calE_\mathsf{CR}
    v_h\|_{L^2(T)}\lesssim \|\nabla_h v_h\|_{L^2(\omega_T)},
\end{alignat}
which are related to consistency and stability of the method, respectively; see also below. Note that the bound \eqref{Eq:CR-smoother-stability} entails that $\calE_\mathsf{CR}$ is a local operator but may enlarge the support of its argument. 
\end{subequations}

In order to apply Section~\ref{sec:posteriori-analysis}, we first note that the Poisson problem \eqref{Eq:Poisson} and the method \eqref{Eq:CR-qopt} fit into the abstract framework of Section~\ref{sec:qo-methods} with
\begin{equation}
\label{setting-for-Poisson-and-CR1}
\begin{gathered}
 \tV = \mathring{H}^1(\Omega) + \CR_1
\quad\text{with}\quad
  V = \mathring{H}^1(\Omega),
\quad
 V_h = \CR_1,
\\
 \ta(v,w) = \int_\Omega \nabla_h v \cdot \nabla_h w,
\quad
 \norm{v} =\|\nabla_h v\|_{L^2(\Omega)} ,
\\
 a_h = \ta_{|\CR_1\times\CR_1},
\quad\text{and}\quad
 \calE_h = \calE_\mathsf{CR}.
\end{gathered}
\end{equation}
The conservation \eqref{Eq:CR-smoother-moments} of face means by the
smoother $\calE_\mathsf{CR}$ and the fact that both, discrete and continuous
bilinear forms, are restrictions of $\ta$ imply 
\begin{alignat}{2}
	\label{Eq:CR-smoother-overconsistency}
	\forall v_h,w_h &\in \CR_1 &\quad
	&\int_\Omega \nabla_h w_h \cdot \nabla_h(v_h - \calE_{\mathsf{CR}} v_h) = 0.
\end{alignat}
Consequently, the consistency measure vanishes, i.e. $\delta_h = 0$, and the method is overconsistent; cf.\ Remark~\ref{R:overconsistent-methods}. Hence, 
Proposition~\ref{P:QuasiOptMethods} and the smoother stability~\eqref{Eq:CR-smoother-stability} ensure that the method is  quasi-optimal with the constant $\Cqo = \normtr{\calE_\mathsf{CR}}$  depending on the shape constant $\gamma_\grid$ and the dimension $d$.

We next verify the main assumptions of Section~\ref{sec:posteriori-analysis} for the setting \eqref{setting-for-Poisson-and-CR1}. The derivations of the two estimators will differ only in the verification of \eqref{A:construction-Pz-local} for the construction of the local projections. Let us start with \eqref{A:equiv-to-dist-to-conf} concerning the distance to conformity. According to Proposition~\ref{P:inds-for-dist-to-conf}, we need a linear operator that is invariant on the intersection
\( 
\mathring{H}^1(\Omega)\cap\CR_1=\mathring{S}^1_1\). In line with Remark~\ref{R:dist-to-conf-averaging}, a possible choice is the averaging operator $\mathcal{A}_\mathsf{CR}:
\CR_1 \to \mathring{S}^1_1$ defined by 
\begin{equation}
\label{Eq:CR1-averaging}
	\mathcal{A}_\mathsf{CR} v_h(z) := \dfrac{1}{\#\grid_z} \sum_{T \in \grid_z}v_{h|T}(z),
\qquad
 z \in \vertices^i,\; v_h\in\CR_1,
\end{equation}
see \cite{Brenner:96,Carstensen.Graessle.Nataraj:24}.

\begin{lemma}[Approximate $\|\nabla_h\cdot\|_{L^2}$-distance to $\mathring{H}^1$ by averaging]
\label{L:CR-distance-to-conformity}
In the setting \eqref{setting-for-Poisson-and-CR1}, the operator
$\mathcal{A}_\mathsf{CR}$ in \eqref{Eq:CR1-averaging} satisfies assumption
\eqref{A:equiv-to-dist-to-conf}  and \(\Cav\gtrsim 1\). In particular,
 for
all $v_h \in 
  \CR_1$ we have that
  \begin{align*}
		\norm[L^2(\Omega)]{\nabla_h(v_h - \mathcal{A}_\mathsf{CR} v_h)}
		\lesssim
		\inf_{v\in \mathring{H}^1(\Omega)}\norm[L^2(\Omega)]{\nabla_h(v_h - v)}
		\leq
		\norm[L^2(\Omega)]{\nabla_h(v_h - \mathcal{A}_\mathsf{CR} v_h)}.
	\end{align*}
\end{lemma}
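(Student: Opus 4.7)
My plan is to verify the two conditions in \eqref{A:equiv-to-dist-to-conf} and then quote Proposition~\ref{P:inds-for-dist-to-conf} to obtain the claimed two-sided bound; the quantitative lower bound $\Cav \gtrsim 1$ will follow by making the boundedness constant of $\mathcal{A}_\mathsf{CR}$ explicit in terms of $\gamma_\grid$ and $d$.

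First, I check that $\mathcal{A}_\mathsf{CR}$ is a well-defined linear map into $\mathring{S}^1_1$: for interior vertices $z \in \vertices^i$ the value is given by \eqref{Eq:CR1-averaging}, while for boundary vertices we set $\mathcal{A}_\mathsf{CR} v_h(z) := 0$ so that the image is contained in $\mathring{S}^1_1 \subset \mathring{H}^1(\Omega)$. The invariance on $\mathring{H}^1(\Omega) \cap \CR_1 = \mathring{S}^1_1$ is immediate: every $w_h \in \mathring{S}^1_1$ is continuous, so $w_h|_T(z) = w_h(z)$ for all $T \in \grid_z$ and $\mathcal{A}_\mathsf{CR} w_h(z) = w_h(z)$ at interior vertices, while $w_h(z) = 0$ at boundary vertices. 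Thus assumption \eqref{A:equiv-to-dist-to-conf} holds once we verify boundedness of $\mathcal{A}_\mathsf{CR}$, which we do simultaneously with a bound on $\Cav$.

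The main step is a standard jump-seminorm argument. Writing $w_h := v_h - \mathcal{A}_\mathsf{CR} v_h \in \CR_1 + \mathring{S}^1_1 \subset S^0_1$, a scaling argument on the reference element together with the nodal representation yields
\begin{equation*}
  \|\nabla_h w_h\|_{L^2(T)}^2
  \lesssim
  h_T^{d-2} \sum_{z \in \vertices \cap T} \bigl( w_h|_T(z) \bigr)^2
\end{equation*}
for every $T \in \grid$. For an interior vertex $z$, $w_h|_T(z) = \frac{1}{\#\grid_z}\sum_{T'\in\grid_z}\bigl(v_h|_T(z)-v_h|_{T'}(z)\bigr)$ is a linear combination of vertex jumps, each of which can be telescoped along a chain of elements in $\grid_z$ and hence controlled by $\sum_{F \in \sides_z^i} h_F^{(1-d)/2} \|\jump{v_h}\|_{L^2(F)}$ using that $\jump{v_h}$ is affine on $F$ and has zero mean (by \eqref{Eq:CR1}). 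An analogous estimate holds at boundary vertices, where $w_h|_T(z) = v_h|_T(z)$ is bounded via the boundary faces, on which $\int_F v_h = 0$. Summing over $T$ gives the global boundedness
\begin{equation*}
  \|\nabla_h(v_h - \mathcal{A}_\mathsf{CR} v_h)\|_{L^2(\Omega)}^2
  \lesssim
  \sum_{F \in \sides} h_F^{-1} \|\jump{v_h}\|_{L^2(F)}^2
  \lesssim
  \|\nabla_h v_h\|_{L^2(\Omega)}^2,
\end{equation*}
with constants depending only on $d$ and $\gamma_\grid$; the second estimate is the classical discrete trace/Poincaré inequality applied to the zero-mean jumps of a Crouzeix--Raviart function.

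The hard part of this argument, which is entirely routine here, is the explicit propagation of the constant from the boundedness of $\mathcal{A}_\mathsf{CR}$ to $\Cav$. Following the proof of Proposition~\ref{P:inds-for-dist-to-conf}, the operator $\mathcal{A}_\mathsf{CR}$ extends to $\widetilde{\mathcal{A}_\mathsf{CR}}: \widetilde V \to \widetilde V$ with $\|\mathrm{id}_{\widetilde V} - \widetilde{\mathcal{A}_\mathsf{CR}}\| \lesssim 1$, and since the cited proposition identifies $\Cav = \|\mathrm{id}_{\widetilde V} - \widetilde{\mathcal{A}_\mathsf{CR}}\|^{-1}$, we conclude $\Cav \gtrsim 1$ with implicit constant depending only on $d$ and $\gamma_\grid$. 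Combining this with $\mathcal{A}_\mathsf{CR} v_h \in \mathring{H}^1(\Omega)$ to get the trivial upper bound $\inf_{v \in \mathring{H}^1(\Omega)}\|\nabla_h(v_h-v)\|_{L^2(\Omega)} \leq \|\nabla_h(v_h - \mathcal{A}_\mathsf{CR} v_h)\|_{L^2(\Omega)}$ finishes the proof.
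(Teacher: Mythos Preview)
Your argument has a genuine gap in the last paragraph. You establish
\[
  \norm[L^2(\Omega)]{\nabla_h(v_h-\calA_\mathsf{CR} v_h)}^2
  \lesssim
  \sum_{F\in\sides} h_F^{-1}\norm[L^2(F)]{\jump{v_h}}^2
  \lesssim
  \norm[L^2(\Omega)]{\nabla_h v_h}^2,
\]
i.e.\ boundedness of $\calA_\mathsf{CR}$ on $\CR_1$ with a shape-regular constant, and then invoke Proposition~\ref{P:inds-for-dist-to-conf} to conclude $\Cav\gtrsim 1$. But Proposition~\ref{P:inds-for-dist-to-conf} only asserts that \emph{some} $\Cav>0$ exists; the constant produced there is $\normtr{\mathrm{id}_{\tV}-\widetilde{\calA}_\mathsf{CR}}^{-1}$, and the boundedness of the extension $\widetilde{\calA}_\mathsf{CR}$ quoted from \cite[Lemma~3.1]{Veeser.Zanotti:18} carries a constant that depends on the angle between $V$ and $V_h$ in $\tV$, not just on $\normtr{\calA_\mathsf{CR}}$. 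Concretely, $(\mathrm{id}_{\tV}-\widetilde{\calA}_\mathsf{CR})(v+v_h)=v_h-\calA_\mathsf{CR} v_h$, so $\normtr{\mathrm{id}_{\tV}-\widetilde{\calA}_\mathsf{CR}}\lesssim 1$ is equivalent to $\norm{v_h-\calA_\mathsf{CR} v_h}\lesssim\norm{v+v_h}$ for \emph{all} $v\in V$, which is precisely the bound $\norm{v_h-\calA_\mathsf{CR} v_h}\lesssim\inf_{v\in V}\norm{v_h-v}$ you are trying to prove. Bounding the jumps by $\norm[L^2(\Omega)]{\nabla_h v_h}$ rather than by the distance to conformity is too weak to close this loop.

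What is actually needed is a direct estimate of the scaled jumps by the distance to $\mathring H^1(\Omega)$. The paper obtains this, after the same intermediate jump bound you have, by solving on each element $T$ adjacent to a face $F$ the auxiliary Neumann problem $-\Delta\phi_T=0$ in $T$, $\partial_\normal\phi_T=\jump{v_h}$ on $F$, $\partial_\normal\phi_T=0$ on $\partial T\setminus F$ (well-posed because $\int_F\jump{v_h}=0$), and then integrating by parts against $v_h-v$ for arbitrary $v\in\mathring H^1(\Omega)$ to get $\int_F\frac{|\jump{v_h}|^2}{h}\lesssim\norm[L^2(\omega_F)]{\nabla_h(v_h-v)}^2$. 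This step (or an equivalent one) is the missing ingredient; once you have it, your jump bound immediately yields the claimed inequality without any appeal to the abstract extension.
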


\begin{proof}
Clearly,  $\mathcal{A}_\mathsf{CR}$ satisfies \eqref{A:equiv-to-dist-to-conf} and so the second bound of the claimed equivalence holds. For its first bound, which is equivalent to  $\Cav\gtrsim1$, we have
\begin{equation*}
    \norm[L^2(\Omega)]{\nabla_h(v_h - \mathcal{A}_\mathsf{CR} v_h)}^2
    \lesssim
    \int_\Sigma \dfrac{|\jump{v_h}|^2}{h};
  \end{equation*}
see, e.g., \cite[Theorem~2.2]{Karakashian.Pascal:03}. To conclude, we adapt ideas from~\cite[Theorem 10]{Achdou.Bernardi.Coquel:03} to prove  that the jumps are bounded by the distance to $\mathring{H}^1(\Omega)$.  Let $F \in \sides$ be any face and, for any element \(T\in\grid\) with  \(\partial T\supset F\), define \(\phi_T\in H^1(T)\) as the weak solution of the Neumann problem 
  \begin{align*}
    -\Delta \phi_T=0~\text{in}~T,\qquad \partial_\normal
    \phi_T=\jump{v_h}~\text{on}~F, \quad\text{and}\quad \partial_\normal
    \phi_T=0~\text{on}~\partial T\setminus F,
  \end{align*}
 with $\int_T \phi_T = 0$. Note that the existence of $\phi_T$ hinges on $\int_F \jump{v_h} = 0$ and thus on $v_h \in \CR_1$.  For all \(v\in \mathring{H}^1(\Omega)\), integration by parts yields
\begin{align*}
	\int_F |\jump{v_h}|^2
	=
	\sum_{T \in \grid, T \supset F} \int_{T} \nabla \phi_T \cdot \nabla(v_h-v)
	\le 
	\sum_{T \in \grid, T \supset F} \norm[L^2(T)]{\nabla \phi_T}\norm[L^2(T)]{\nabla (v_h-v)}. 
\end{align*}
For any $T \in \grid$ with $\partial T \supset F$, the norm of $\phi_T$ can be further estimated by
\begin{align*}
		\norm[L^2(T)]{\nabla \phi_T}^2 
		= 
		\int_F \jump{v_h}\phi_T
		\lesssim
		h_F^{\frac12}\norm[L^2(F)]{\jump{v_h}}\norm[L^2(T)]{\nabla\phi_T}
\end{align*}
with the help of the scaled trace theorem, a Poincar\'e inequality thanks to \(\int_T \phi_T=0\), and $h_T  \lesssim h_F$. Inserting this bound into the previous one shows
	\begin{equation*}
		\int_{F} \dfrac{|\jump{v_h}|^2}{h} \lesssim \inf_{v \in \mathring{H}^1(\Omega)} \norm[L^2(\omega_F)]{\nabla_h(v_h-v)}^2.
	\end{equation*}
	We conclude by summing over all faces, because the number of patches $\omega_F$ containing a given simplex is bounded by $d+1$.
\end{proof}

The proof of Lemma~\ref{L:CR-distance-to-conformity} reveals that properly scaled jumps of $v_h$ on the mesh skeleton can be considered as an alternative indicator for the distance to $\mathring{H}^1(\Omega)$. 
\begin{corollary}[Approximate $\|\nabla_h\cdot\|_{L^2}$-distance to $\mathring{H}^1$ by jumps]
\label{C:CR-jump-distance-to-conformity}
  For \(v_h\in\CR_1\), we have that 
	\begin{equation*}
		\inf_{v\in \mathring{H}^1(\Omega)}\norm[L^2(\Omega)]{\nabla_h(v_h - v)}^2 
		\eqsim
		\int_\Sigma \dfrac{|\jump{v_h}|^2}{h}.
	\end{equation*}
\end{corollary}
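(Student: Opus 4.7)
My plan is to observe that both inequalities of the claimed equivalence have already been established, implicitly or explicitly, inside the proof of Lemma~\ref{L:CR-distance-to-conformity}, so the corollary is essentially a repackaging of that proof.

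For the direction $\inf_{v\in \mathring{H}^1(\Omega)}\norm[L^2(\Omega)]{\nabla_h(v_h-v)}^2 \lesssim \int_\Sigma |\jump{v_h}|^2/h$, I would use the averaging operator $\mathcal{A}_\mathsf{CR}$ as an admissible competitor in the infimum, since $\mathcal{A}_\mathsf{CR} v_h \in \mathring{S}^1_1 \subset \mathring{H}^1(\Omega)$. The first displayed inequality in the proof of Lemma~\ref{L:CR-distance-to-conformity} (which cites \cite[Theorem~2.2]{Karakashian.Pascal:03}) gives exactly the broken-gradient norm of $v_h - \mathcal{A}_\mathsf{CR} v_h$ bounded by the weighted jump integral, so I would simply cite that bound.

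For the reverse direction $\int_\Sigma |\jump{v_h}|^2/h \lesssim \inf_{v\in \mathring{H}^1(\Omega)}\norm[L^2(\Omega)]{\nabla_h(v_h-v)}^2$, I would reuse the face-by-face Neumann-problem argument from the same proof: for any $v \in \mathring{H}^1(\Omega)$ and any face $F$, integration by parts with the auxiliary harmonic $\phi_T$ (whose construction crucially uses the Crouzeix-Raviart property $\int_F \jump{v_h}=0$) together with a scaled trace inequality and a Poincaré inequality on $T$ yields $\int_F |\jump{v_h}|^2/h \lesssim \norm[L^2(\omega_F)]{\nabla_h(v_h-v)}^2$. Summing over $F\in \sides$ and using finite overlap of the face patches produces the claim, after taking the infimum over $v$.

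Since both estimates are verbatim steps of the previous proof, I expect no genuine obstacle; the only writing choice is whether to fully reprove or to simply refer to the two bounds embedded in the proof of Lemma~\ref{L:CR-distance-to-conformity}. I would opt for a short proof that invokes that lemma for the first inequality (via $\mathcal{A}_\mathsf{CR} v_h$ as competitor) and cites the Neumann-problem computation there for the second, so as to avoid duplication.
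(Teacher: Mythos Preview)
Your proposal is correct and matches the paper's approach exactly: the paper presents this corollary without a separate proof, simply noting that the proof of Lemma~\ref{L:CR-distance-to-conformity} already establishes both inequalities (the Karakashian--Pascal bound for one direction and the Neumann-problem argument summed over faces for the other).
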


We next establish assumption \eqref{A:partition-of-unity}. Recalling that the Courant basis functions 
\eqref{Eq:nodal-basis-S11} form a partition of unity in $H^1(\Omega)$, we take
\begin{equation}
\label{Eq:CR-partition-of-unity}
 \Index = \vertices,
\qquad
 \Phi_z = \Psi_z,
\qquad
 V_z = \mathring{H}^1(\omega_z),
\qquad
 \calI_h = \calI_\mathsf{CR}
\end{equation}
where $\calI_\mathsf{CR}: \mathring{H}^1(\Omega) \to \CR_1$ is the Crouzeix-Raviart interpolant; see, e.g., \cite[Section~2.1]{Brenner:15}.

\begin{lemma}[Partition of unity in $H^1$ for $\mathsf{CR}$]
\label{L:CR-partition-of-unity}	
In the  setting  \eqref{setting-for-Poisson-and-CR1}, the choices  \eqref{Eq:CR-partition-of-unity} satisfy assumption \eqref{A:partition-of-unity} with constants $\Cloc,\Ccol \lesssim 1$.
\end{lemma}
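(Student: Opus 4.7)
My plan is to verify the three conditions (i), (ii), (iii) of \eqref{A:partition-of-unity} for the choices in \eqref{Eq:CR-partition-of-unity}. Conditions (i) and (ii) are standard properties of the Courant basis, so the main work lies in proving the localization estimate (iii), which will follow by applying the product rule and then combining the $H^1$-stability and $L^2$-approximation properties of $\calI_\mathsf{CR}$ and $\calE_\mathsf{CR}$ provided in \eqref{Eq:CR-smoother}.

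For (i), I would use that $\{\Psi_z\}_{z\in\vertices}$ is a partition of unity on $\Omega$, so $\sum_{z\in\vertices} v\Psi_z = v$ for any $v\in\mathring{H}^1(\Omega)$. The product $v\Psi_z$ has support in $\overline{\omega_z}$ and vanishes on $\partial\omega_z$: at interior vertices of $\omega_z$, this follows because $\Psi_z$ vanishes at all other vertices hence on all of $\partial\omega_z$ not touching $\partial\Omega$; where $\partial\omega_z \cap \partial\Omega\neq\emptyset$, it vanishes because $v\in\mathring{H}^1(\Omega)$. Hence $v\Psi_z \in \mathring{H}^1(\omega_z) = V_z$. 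For (ii), extending each $v_z\in \mathring{H}^1(\omega_z)$ by zero to $\Omega$, I use the finite overlap of the patches $(\omega_z)_{z\in\vertices}$ (bounded by $d+1$ thanks to $\gamma_\grid$) together with Cauchy--Schwarz to conclude $\|\sum_z v_z\|^2 \lesssim \sum_z \|v_z\|^2$, which gives $\Ccol\lesssim 1$.

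The main step is (iii). Given $v\in\mathring{H}^1(\Omega)$, set $w:=v-\calE_\mathsf{CR}\calI_\mathsf{CR} v$ and, on each $T\in\grid$, apply the product rule to obtain
\begin{equation*}
  \|\nabla(w\Psi_z)\|_{L^2(T)}
  \leq
  \|\nabla w\|_{L^2(T)} + \|w\nabla\Psi_z\|_{L^2(T)}
  \lesssim
  \|\nabla w\|_{L^2(T)} + h_T^{-1}\|w\|_{L^2(T)},
\end{equation*}
using $|\Psi_z|\le 1$ and $|\nabla\Psi_z|\lesssim h_T^{-1}$. The key is then to control both terms by $\|\nabla v\|_{L^2(\omega_T)}$. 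For the first, I combine the triangle inequality with the smoother stability \eqref{Eq:CR-smoother-stability} and the elementwise identity $\nabla(\calI_\mathsf{CR} v)_{|T}=\fint_T\nabla v$ (which yields $\|\nabla_h\calI_\mathsf{CR} v\|_{L^2(\omega_T)}\le\|\nabla v\|_{L^2(\omega_T)}$). For the second, an elementwise Poincar\'e-type estimate gives $\|v-\calI_\mathsf{CR} v\|_{L^2(T)}\lesssim h_T\|\nabla v\|_{L^2(T)}$, and a second application of \eqref{Eq:CR-smoother-stability} controls $\|\calI_\mathsf{CR} v - \calE_\mathsf{CR}\calI_\mathsf{CR} v\|_{L^2(T)}\lesssim h_T\|\nabla_h\calI_\mathsf{CR} v\|_{L^2(\omega_T)}$. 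Together these yield $\|w\|_{L^2(T)}\lesssim h_T\|\nabla v\|_{L^2(\omega_T)}$. Squaring, summing first over $T\subset\omega_z$ and then over $z\in\vertices$ (each element $T$ appears in at most $d+1$ patches $\omega_z$, and each $\omega_T$ is contained in a uniformly bounded number of $\omega_z^+$), I arrive at $\sum_{z\in\vertices}\|\nabla(w\Psi_z)\|_{L^2(\omega_z)}^2\lesssim \|\nabla v\|_{L^2(\Omega)}^2$, i.e.\ $\Cloc\lesssim 1$. The only subtlety to watch out for is bookkeeping of the patch enlargements $\omega_T$ inherited from \eqref{Eq:CR-smoother-stability}, which is kept under control by the shape constant $\gamma_\grid$.
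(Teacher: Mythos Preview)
Your proposal is correct and follows essentially the same route as the paper: verify (i) and (ii) via the standard properties of the Courant hat functions and finite overlap, and for (iii) apply the product rule to $(v-\calE_\mathsf{CR}\calI_\mathsf{CR} v)\Psi_z$, then control the resulting $L^2$- and gradient-terms using the approximation and stability bounds of $\calI_\mathsf{CR}$ and $\calE_\mathsf{CR}$ from \eqref{Eq:CR-smoother}, finishing with a finite-overlap argument. One small inaccuracy: in (ii) the overlap bound $d+1$ comes simply from each simplex having $d+1$ vertices, not from the shape constant $\gamma_\grid$.
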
 

\begin{proof}
Property~(i) in \eqref{A:partition-of-unity} is valid because $\Psi_z \in W^{1,\infty}(\Omega)$ and $\operatorname{supp} \Psi_z = \omega_z$ for all $z \in \vertices$ as well as $\sum_{z \in \vertices} \Psi_z = 1$. Property~(ii) follows from  the fact that each given element $T \in \grid$ is contained in $(d+1)$ stars $\omega_z$, $z \in\vertices$, the triangle and the Cauchy-Schwarz inequality. Indeed, given any $v_z \in \mathring{H}^1(\omega_z)$, $z \in \vertices$, we have
\begin{align*}
 \left\| \nabla \left(\sum_{z\in\vertices} v_z \right) \right\|_{L^2(T)}^2
 &=
 \left\| \sum_{z\in\vertices \cap T} \nabla v_z \right\|_{L^2(T)}^2
 \leq
 \left( \sum_{z\in\vertices \cap T} \| \nabla v_z  \|_{L^2(T)} \right)^2
\\
 &\leq
  (d+1) \sum_{z\in\vertices \cap T} \| \nabla v_z  \|_{L^2(T)}^2,
\end{align*}
and summing over $T \in \grid$ yields $\Ccol  \leq \sqrt{d+1}$. Finally, in order to verify property~(iii), let $v \in \mathring{H}^1(\Omega)$ and $z \in \vertices$. The scaling properties $\norm[L^\infty(\Omega)]{\Psi_z} = 1$ and $\norm[L^\infty(\Omega)]{\nabla \Psi_z} \eqsim h_z^{-1}$ of the Courant basis functions and a triangle inequality entail
\begin{equation*}
		\begin{split}
			\norm[L^2(\Omega)]{\nabla \big( (v - \calE_{\mathsf{CR}} \calI_\mathsf{CR}v)\Psi_z \big)}
			&\lesssim 
			h_z^{-1}\left( \norm[L^2(\omega_z)]{v- \calI_\mathsf{CR} v}
			+
			\norm[L^2(\omega_z)]{\calI_\mathsf{CR} v - \calE_{\mathsf{CR}} \calI_\mathsf{CR} v}\right) \\
			&\quad + \norm[L^2(\omega_z)]{\nabla (v - \calE_{\mathsf{CR}} \calI_\mathsf{CR} v)}.
		\end{split}
\end{equation*}
Then, the stability properties of the smoother $\calE_{\mathsf{CR}}$, see
\eqref{Eq:CR-smoother-stability},  and of  the interpolation
$\calI_\mathsf{CR}$ (see \cite[eq. (2.14)]{Brenner:15}) imply 
\begin{equation*}
		\sum_{z \in \vertices} \norm[L^2(\Omega)]{\nabla((v - \calE_{\mathsf{CR}} \calI_\mathsf{CR}v)\Psi_z)}^2
		\lesssim 
		\sum_{z \in \vertices} \norm[L^2(\omega_z^+)]{\nabla v}^2
		\lesssim
		\norm[L^2(\Omega)]{\nabla v}^2.
\end{equation*}
Here the second inequality follows from the observation that the number of enlarged
stars $\omega_z^+$ containing a given simplex is bounded in terms of
the shape constant \(\gamma_\grid\) and $d$. Hence $\Cloc \lesssim 1$. 
\end{proof}

We finally turn to assumption \eqref{A:construction-Pz-local} enabling
the construction of the local projections.  In order to choose the
simple functionals, we follow Remark~\ref{R:choosing-simple-fcts}. To
this end, we observe that, for a discrete trial function $v_h\in\CR_1$ and test function $v \in
\mathring{H}^1(\Omega)$, element-wise integration by parts provides the
representation 
\begin{equation}
\label{Eq:action-laplacian}
	\int_\Omega \nabla_h v_h \cdot \nabla v
	=
	-\sum_{T\in\grid}\int_{T}\Delta v_h v
	+
	\sum_{F\in\sides^i} \int_{F} \jump{\nabla v_h }\cdot\normal v 
\end{equation}
where we do not exploit \(\Delta ({v_h}_{|T})=0\) for $v_h \in \CR_1$
and $T \in \grid$, in order to hint at the higher-order
case. Furthermore, we note that the classical oscillation in the case
at hand arises from approximating with functionals of the form 
\begin{equation*}
	\mathring{H}^1(\Omega) \ni v \mapsto \int_\Omega \Bar{f} v,
\end{equation*}
 where $\Bar{f} \in L^\infty(\Omega)$ is given by $\Bar{f}_{|T} :=
 |T|^{-1}\int_T f$ for suitable $f$. For local test functions
 $v \in \mathring{H}^1(\omega_z)$, $z \in \vertices$, the latter type of functionals and those of the second sum in the right-hand side of \eqref{Eq:action-laplacian}
 are given by constants associated with the elements of the set
 $\grid_z \cup \sides_z^i$. We thus define  
\begin{align}
\label{Eq:CR-simple-functionals}
	\begin{aligned}
		\widehat{D}_z = \Big\{\chi\in H^{-1}(\Omega)&\mid  \scp{\chi}{v} =
		\sum_{T\in\grid_z} \int_T r_T v +
		\sum_{F\in\sides_z^i}\int_F r_F v
		\\
		&\phantom{\mid} \text{with}~r_T\in \R,\,  T \in \grid_z, 
		\text{ and }
		r_F\in\R,\, F\in\sides_z^i\Big\},
	\end{aligned}
\end{align} 
where the degrees of freedom are decoupled in contrast to \eqref{Eq:action-laplacian}, and take
\begin{subequations}
\label{simple-CR1-conf}
\begin{equation}
\label{Eq:CR-conf-simple-functionals}
 D_z
 =
 \widehat{D}_z{}_{|\mathring{H}^1(\omega_z)}
\end{equation}
as simple functionals. The space $\widehat{D}_z$ is introduced to allow for a better comparison with the announced second approach to \eqref{A:construction-Pz-local}.
The fact that each member of the set $\grid_z  \cup \sides_z^i$  corresponds to a degree of freedom in the given $D_z$ suggests to establish an analogous correspondence for the simple test functions. A possible choice is
\begin{equation}
\label{Eq:CR-simple-test-functions}
	S_z
	=
	\mathrm{span}\{ \Psi_T \mid T \in \grid_z \} \oplus   \mathrm{span}\{\Psi_F \mid F \in \sides_z^i\}, 
\end{equation}
where $\Psi_T$ and $\Psi_F$ are the bubble functions from \eqref{Eq:bubbles}.
\end{subequations}
These choices for simple functionals and test functions are essentially used also in \cite{Kreuzer.Veeser:21} for the conforming method based upon the space $\mathring{S}^1_1$. The only difference is that here we pair simple functionals and test functions locally instead of globally.
\begin{lemma}[Local projections for $\mathsf{CR}$@Poisson with face bubbles]
\label{L:CR-local-projections-1}
In the settings \eqref{setting-for-Poisson-and-CR1} and \eqref{Eq:CR-partition-of-unity},  the choices \eqref{simple-CR1-conf} verify assumption \eqref{A:construction-Pz-local} with \(\CL\lesssim1\) and \(\CNL=1\). Therefore, \eqref{Eq:construction-Pz-local} defines projections $\calP_z: H^{-1}(\omega_z) \to D_z \subset H^{-1}(\omega_z)$ satisfying 
\begin{equation*}
 \forall g \in H^{-1}(\Omega)
\quad
 \sum_{z\in\vertices}  \norm[H^{-1}(\omega_z)]{\calP_z g}^2
	\lesssim 
 \sum_{z\in\vertices}  \norm[H^{-1}(\omega_z)]{g}^2.
\end{equation*}
\end{lemma}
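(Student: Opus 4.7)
The proof amounts to verifying the five conditions of \eqref{A:construction-Pz-local} for the choices \eqref{simple-CR1-conf}, after which Proposition~\ref{P:construction-Pz-local} delivers the projections and the stability bound. Since each $\Psi_T$ and $\Psi_F$ vanishes on $\partial\omega_z$, the simple test functions lie in $\mathring{H}^1(\omega_z) = V_z \subseteq V$, so we are in the locally conforming case $S_z \subseteq V_z$. This immediately gives $\widetilde V_z = V_z$, trivializes condition (v) with $\CNL = 1$, and reduces (i)--(ii) to \eqref{simplification-for-conf-simple-test-fcts}; the dimension count (iii) is evident since $\dim S_z = \#\grid_z + \#\sides_z^i = \dim D_z$ by the disjoint supports of the bubbles. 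For (ii), I would fix $v_h \in \CR_1$ and $v \in \mathring{H}^1(\omega_z)$ and apply element-wise integration by parts as in \eqref{Eq:action-laplacian}: since $\Delta v_{h|T} = 0$ (element-wise $P_1$), the volume contributions vanish, the boundary contributions on $\partial\omega_z$ disappear thanks to the zero trace of $v$, and each surviving face contribution is an integral of the constant $r_F := \jump{\nabla v_h}\cdot \normal$ against $v$; this matches the form of $\widehat{D}_z$ restricted to $\mathring{H}^1(\omega_z)$.

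The main step is (iv), the inf-sup bound $\norm[H^{-1}(\omega_z)]{\chi} \leq \CL\,\norm[\Dual{S_z}]{\chi}$ for $\chi \in D_z$ of the form $\langle\chi,v\rangle = \sum_{T\in\grid_z} \int_T r_T v + \sum_{F\in\sides_z^i}\int_F r_F v$. The plan is to show the two-sided equivalence
\begin{equation*}
 \norm[H^{-1}(\omega_z)]{\chi}^2
 \;\eqsim\;
 \norm[\Dual{S_z}]{\chi}^2
 \;\eqsim\;
 \sum_{T\in\grid_z} h_T^{d+2} r_T^2 + \sum_{F\in\sides_z^i} h_F^d r_F^2.
\end{equation*}
The upper bound on $\norm[H^{-1}(\omega_z)]{\chi}$ comes from applying, for any $v\in\mathring{H}^1(\omega_z)$, the Poincar\'e inequality on $\omega_z$ (giving $\|v\|_{L^2(T)} \lesssim h_T\|\nabla v\|_{L^2(\omega_z)}$) together with the scaled trace estimate $\|v\|_{L^2(F)}\lesssim h_F^{1/2}\|\nabla v\|_{L^2(\omega_z)}$, followed by Cauchy--Schwarz and the shape-regular bounds on $\#\grid_z$, $\#\sides_z^i$.

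The matching lower bound on $\norm[\Dual{S_z}]{\chi}$ is the core of the argument and I would carry it out in two steps with the bubble functions. First, testing against $s_1 := \sum_{T\in\grid_z} h_T^2 r_T \Psi_T$ isolates the element contributions (since $\Psi_T$ vanishes on every face), giving $\langle\chi,s_1\rangle \eqsim \|\nabla s_1\|_{L^2(\omega_z)}^2 \eqsim \sum_T h_T^{d+2} r_T^2$ via the standard scalings $\int_T\Psi_T\eqsim h_T^d$ and $\|\nabla\Psi_T\|_{L^2(T)}\eqsim h_T^{(d-2)/2}$, and hence $\sum_T h_T^{d+2}r_T^2 \lesssim \norm[\Dual{S_z}]{\chi}^2$. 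Second, testing against $s_2 := \sum_{F\in\sides_z^i} h_F r_F \Psi_F$ produces the desired $\sum_F h_F^d r_F^2$ term plus unwanted cross-terms from $\int_T r_T \Psi_F$ for $T \supset F$; I would absorb these via Young's inequality using the already controlled $\sum_T h_T^{d+2}r_T^2$. Combining these bounds with the trivial $\norm[\Dual{S_z}]{\chi} \leq \norm[H^{-1}(\omega_z)]{\chi}$ yields (iv) with $\CL \lesssim 1$, and Proposition~\ref{P:construction-Pz-local} concludes the lemma. The only delicate bookkeeping is the cross-term absorption in Step~2, since the face bubbles probe volume terms as well; everything else is mechanical scaling.
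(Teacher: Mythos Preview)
Your proposal is correct. The verification of (i), (ii), (iii), and (v) matches the paper's own argument essentially verbatim: local conformity $S_z\subset \mathring H^1(\omega_z)=V_z$ immediately gives (i) and (v) with $\CNL=1$, integration by parts \eqref{Eq:action-laplacian} gives (ii), and the dimension count gives (iii).

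For the key condition (iv), the paper does not spell out a proof but cites \cite[Theorems~8--10]{Kreuzer.Veeser:21} and points to the argument in Lemma~\ref{L:CR-local-projections-2} as ``similar''. That argument is a Fortin-type construction: given $v\in\mathring H^1(\omega_z)$, one builds $s\in S_z$ matching the relevant moments so that $\langle\chi,v\rangle=\langle\chi,s\rangle$ and $\|\nabla s\|\lesssim\|\nabla v\|$. Your route is instead via the coefficient norm $\big(\sum_T h_T^{d+2}r_T^2+\sum_F h_F^d r_F^2\big)^{1/2}$, bounding $\|\chi\|_{H^{-1}(\omega_z)}$ from above by it (Poincar\'e and trace) and $\|\chi\|_{\Dual{S_z}}$ from below by it (sequential testing with element bubbles, then face bubbles with Young absorption of the cross-terms). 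Both arguments are standard and yield $\CL\lesssim1$ with constants depending only on $d$ and $\gamma_\grid$; yours has the modest advantage of making the equivalence $\|\chi\|_{H^{-1}(\omega_z)}\eqsim\|\chi\|_{\Dual{S_z}}$ and the underlying scaled coefficient norm explicit, while the paper's construction is more directly reusable in the locally nonconforming setting of Lemma~\ref{L:CR-local-projections-2}.
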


\begin{proof}
Let $z \in \vertices$. As $S_z \subset \mathring{H}^1(\omega_z) = V_z$, the simple test functions are locally conforming. This readily yields condition (i) of \eqref{A:construction-Pz-local} with $\tV_z =  \mathring{H}^1(\omega_z)$, condition (v) with $\CNL=1$, and condition (ii) in the light of \eqref{Eq:action-laplacian}. Condition (iii) holds because the functionals $\mathring{H}^1(\omega_z) \ni v \mapsto \int_K v$, $K \in \grid_z \cup \sides_z^i$, form a basis of $D_z$. Condition (iv) is proved in \cite[Theorems~8-10]{Kreuzer.Veeser:21} with $\CL \lesssim 1$; see also Lemma~\ref{L:CR-local-projections-2} below for a similar argument. Finally, Proposition~\ref{P:construction-Pz-local} provides the existence of $\calP_z$ and the claimed stability bound.        
\end{proof}

The approach represented by Lemmas \ref{L:CR-distance-to-conformity}, \ref{L:CR-partition-of-unity} and \ref{L:CR-local-projections-1} leads to the following strictly equivalent error estimator.

\begin{theorem}[Estimator for $\mathsf{CR}$@Poisson]
\label{T:CR-error-estimator-1}
Let $u \in \mathring{H}^1(\Omega)$  be the weak solution of the Poisson problem \eqref{Eq:Poisson} and $u_h \in \CR_1$ its quasi-optimal Crouzeix-Raviart approximation from \eqref{Eq:CR-qopt}. Given tuning constants \(C_1, C_2>0\), define
\begin{multline*}
	\Est_\mathsf{CR}^2
	:=
	\Ncf_\mathsf{CR}^2 + C_1^2 \eta_\mathsf{CR}^2 + C_2^2 \Osc_\mathsf{CR}^2,  \text{ where }
	\\
	\begin{aligned}
		\Ncf_\mathsf{CR}^2
		&:= 
		\norm[L^2(\Omega)]{ \nabla_h( u_h - \calA_\mathsf{CR} u_h ) }^2,
		\\ 
		\eta_\mathsf{CR}^2
		&:= 
		\sum_{T \in \grid} \eta_{\mathsf{CR},T}^2
		\text{ with  }
		\eta_{\mathsf{CR},T}
		:=
		\max_{ K \in \mathcal{K}_T }
		 \frac{|\scp{f}{\Psi_K} - \int_\Omega \nabla_h u_h \cdot \nabla \Psi_K |}{\|\nabla \Psi_K\|_{L^2(\Omega)}},  
		\\
		\Osc_\mathsf{CR}^2
		&:= 
		\sum_{z\in\vertices} \norm[H^{-1}(\omega_z)]{f - \calP_z f}^2, 
	\end{aligned}
\end{multline*}
with the averaging operator $\calA_\mathsf{CR}$ from \eqref{Eq:CR1-averaging}, $\mathcal{K}_T =  \{T\} \cup \{F \in \sides^i \mid F \subset T \}$, the bubble functions $\Psi_T$ and $\Psi_F$ from \eqref{Eq:bubbles}, and the local projections \( \calP_z\) from Lemma~\ref{L:CR-local-projections-1}.
This estimator quantifies the error by
\begin{equation*}
 \underline{C} \Est_\mathsf{CR}
 \leq
 \norm[L^2(\Omega)]{\nabla_h(u-u_h)}
 \leq
 \overline{C} \Est_\mathsf{CR}
\quad\text{and}\quad
 \eta_{\mathsf{CR},T} 
 \leq \norm[L^2(\widetilde{\omega}_T)]{\nabla_h(u-u_h)}
\end{equation*}
where $\widetilde{\omega}_T = \bigcup_{F \in \sides^i, F \subset \partial T} \omega_F$  and the equivalence constants $\underline{C}$ and $\overline{C}$
depend only on the dimension \(d\),  the shape constant
\(\gamma_\grid\), and the tuning constants \(C_1, C_2\).

Furthermore, if $f \in L^2(\Omega)$, the oscillation indicator is bounded in terms of the classical $L^2$-oscillation:
\begin{equation*}
 \Osc_\mathsf{CR}^2
 \lesssim
 \sum_{T \in \grid} h_T^2 \inf_{c \in \R} \norm[L^2(T)]{f-c}^2.
\end{equation*}
\end{theorem}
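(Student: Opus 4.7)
The plan is to deduce the result from the abstract estimator Theorem~\ref{T:error-estimator}, having verified its three hypotheses via Lemmas~\ref{L:CR-distance-to-conformity}, \ref{L:CR-partition-of-unity}, and \ref{L:CR-local-projections-1}. Since the Crouzeix--Raviart method \eqref{Eq:CR-qopt} is overconsistent ($\delta_h=0$) by \eqref{Eq:CR-smoother-overconsistency}, the $\delta_h$-term in the abstract $\Est_h$ vanishes, and the abstract equivalence $\underline{C}\Est_h \leq \|\nabla_h(u-u_h)\|_{L^2(\Omega)} \leq \overline{C}\Est_h$ holds with constants depending only on $d$ and $\gamma_\grid$. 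It then remains to identify the abstract quantity $\eta_h^2 = \sum_{z\in\vertices}\|\calP_z \Res_h^{\mathtt C}\|_{H^{-1}(\omega_z)}^2$ with the concrete $\eta_\mathsf{CR}^2$ of the statement.

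For this identification, I would invoke Lemma~\ref{L:splitting-of-conf-res}, which asserts that $\eta_h$ is computationally quantifiable up to $\max\{\CL,\CNL^{-1}\}\lesssim 1$ by $\bar\eta_h^2 := \sum_{z\in\vertices}\|\calP_z \Res_h^{\mathtt C}\|_{\Dual{S_z}}^2$. The orthogonality relation \eqref{Pz-orthogonality} gives $\scp{\calP_z \Res_h^{\mathtt C}}{s}=\scp{\Res_h^{\mathtt C}}{s}$ for $s\in S_z$, so in terms of the basis $\{\Psi_K : K\in\grid_z\cup\sides_z^i\}$ of $S_z$ the $\Dual{S_z}$-norm equals $\mathbf{g}_z^T\mathbf{A}_z^{-1}\mathbf{g}_z$ (cf.~Remark~\ref{R:computing-local-projections}). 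Since the bubble functions have local support and satisfy the usual scaling $\|\nabla\Psi_K\|_{L^2}^2\eqsim h_K^{d-2}$, the Gram matrix $\mathbf{A}_z$ with respect to the basis normalized by $\|\nabla\Psi_K\|_{L^2}^{-1}$ is spectrally equivalent to the identity up to constants depending on $d$ and $\gamma_\grid$. Hence
\begin{equation*}
	\bar\eta_h^2 \eqsim \sum_{z\in\vertices}\sum_{K\in\grid_z\cup\sides_z^i}\frac{|\scp{\Res_h^{\mathtt C}}{\Psi_K}|^2}{\|\nabla\Psi_K\|_{L^2(\Omega)}^2}.
\end{equation*}
Interchanging the order of summation (each $T\in\grid$ and each $F\in\sides^i$ lies in $O(1)$ stars) and comparing the maximum over $\mathcal{K}_T$ with the sum (at a cost of $|\mathcal{K}_T|\leq d+2$), this double sum is equivalent to $\sum_{T\in\grid}\eta_{\mathsf{CR},T}^2=\eta_\mathsf{CR}^2$, as required.

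For the local lower bound of $\eta_{\mathsf{CR},T}$, I would note that each bubble $\Psi_K$ with $K\in\mathcal{K}_T$ lies in $\mathring H^1(\Omega)$, so $\scp{f}{\Psi_K}=\int_\Omega\nabla u\cdot\nabla\Psi_K$ by \eqref{Eq:AnaPro}. Therefore
\begin{equation*}
	\scp{f}{\Psi_K}-\int_\Omega\nabla_h u_h\cdot\nabla\Psi_K = \int_\Omega\nabla_h(u-u_h)\cdot\nabla\Psi_K,
\end{equation*}
and Cauchy--Schwarz together with $\operatorname{supp}\Psi_T=T\subseteq\widetilde{\omega}_T$ and $\operatorname{supp}\Psi_F=\omega_F\subseteq\widetilde{\omega}_T$ for $F\subset\partial T$ yields $\eta_{\mathsf{CR},T}\leq \|\nabla_h(u-u_h)\|_{L^2(\widetilde{\omega}_T)}$.

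Finally, for the classical $L^2$-oscillation bound when $f\in L^2(\Omega)$, let $\bar f\in S_0^0$ denote the $L^2$-orthogonal projection of $f$ onto piecewise constants, so $\bar f|_T=|T|^{-1}\int_T f$. The functional $v\mapsto\int_{\omega_z}\bar f\,v$ belongs to $D_z$ (take $r_T=\bar f|_T$ and $r_F=0$), hence it is invariant under $\calP_z$. Using the $H^{-1}$-stability of $\calP_z$ from Lemma~\ref{L:CR-local-projections-1} and the Poincar\'e inequality on $\mathring H^1(\omega_z)$,
\begin{equation*}
	\|f-\calP_z f\|_{H^{-1}(\omega_z)} = \|(\mathrm{id}-\calP_z)(f-\bar f)\|_{H^{-1}(\omega_z)} \lesssim h_z\|f-\bar f\|_{L^2(\omega_z)}.
\end{equation*}
Summing over $z\in\vertices$, using the finite overlap of the patches $\omega_z$ and $h_z\eqsim h_T$ for $T\in\grid_z$, and recognizing that $\bar f|_T$ attains $\inf_c\|f-c\|_{L^2(T)}$ concludes the proof. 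The main technical hurdle is the first step, namely the passage from the abstract $\eta_h$ to the explicit bubble-based $\eta_\mathsf{CR}$: one must verify that the $\Dual{S_z}$-norm is captured, up to shape-regular constants, by the coordinate expression with respect to the rescaled bubble basis, and that the max over $\mathcal{K}_T$ correctly reassembles into the global sum $\bar\eta_h$.
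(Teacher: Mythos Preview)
Your proposal is correct and follows essentially the same route as the paper: invoke Theorem~\ref{T:error-estimator} via Lemmas~\ref{L:CR-distance-to-conformity}, \ref{L:CR-partition-of-unity}, \ref{L:CR-local-projections-1}, use overconsistency to drop the $\delta_h$-term, identify $\eta_h\eqsim\eta_\mathsf{CR}$, and derive the local lower bound and the oscillation bound exactly as you do. The only cosmetic difference is that the paper offloads the $\eta_h\eqsim\eta_\mathsf{CR}$ equivalence to \cite[\S4.1]{Kreuzer.Veeser:21}, whereas you spell out the Gram-matrix diagonalization argument (cf.\ Remark~\ref{R:CR-computable-indicator}); also, the paper applies the collective stability \eqref{Eq:boundedness-Pz-local} in the oscillation step, but since $S_z\subset V_z$ here, your use of the individual bound is equally valid.
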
 

\begin{remark}[Estimator variants for  $\mathsf{CR}$@Poisson]
\label{R:CR-computable-indicator}
For the ease of implementation, the reduction of computational cost, or with the hope to improve the equivalence constants $\overline{C}, \underline{C}$, one may consider the following variants of the indicators in Theorem~\ref{T:CR-error-estimator-1}:
\begin{itemize}
\item In view of  Corollary~\ref{C:CR-jump-distance-to-conformity}, we may replace $\Ncf_\mathsf{CR}$ with properly scaled jumps and an additional tuning constant.
\item A simplification of $\eta_\mathsf{CR}$ is derived in Theorem~\ref{T:CR-error-estimator-2} below. 
\cite[Section~4]{Kreuzer.Veeser:21} discusses other alternatives for $\eta_\mathsf{CR}$ by using so-called local problems (this type is used also in Theorem~\ref{T:dG-error-estimator} below) and the standard residual technique. 
Moreover, in light of the equivalences $\norm[L^2(\Omega)]{\nabla \psi_K} \eqsim h_K^{d-2}$, one may replace $\eta_{\mathsf{CR},T}$ by
\begin{equation*}
 \max_{ K \in \mathcal{K}_T }
	\frac{|\scp{f}{\Psi_K} - \int_\Omega \nabla_h u_h \cdot \nabla \Psi_K |}{h_K^{d-2}},  
\end{equation*}	
which is particularly simple for $d=2$. This simplified hierarchical indicator is closely related to Remark~\ref{R:computing-local-projections}, which in turn connects to local problems on $S_z$. Indeed, taking into account also $h_K \eqsim h_z$, we observe that the matrix $\mathbf{A}_z$ in Remark ~\ref{R:computing-local-projections} is spectrally equivalent to the diagonal matrix $\mathbf{diag}(h_z^{d-2},\dots, h_z^{d-2})$. We therefore have
\begin{equation*}
 \norm[H^{-1}(\omega_z)]{\calP_z \Res_h^{\mathtt{C}}}^2
 \eqsim
 \sum_{T\in\grid_z} \dfrac{|\scp{\Res_h^{\mathtt{C}}}{\Psi_T}|^2}{h_T^{d-2} }
 +
 \sum_{F \in \sides_z^i}  \dfrac{|\scp{\Res_h^{\mathtt{C}}}{\Psi_F}|^2}{h_F^{d-2}}.
\end{equation*}
and see that the `$\max$' in the above hierarchical indicators is mainly motivated by the `constant-free' lower bound in Theorem~\ref{T:CR-error-estimator-1}.
\item For alternative oscillation indicators, see the discussion in the Remarks~\ref{R:computability}, \ref{R:neglecting-data-osc}, and \ref{R:class-osc-as-surrogate}.
\end{itemize}
The hierarchical variant for $\eta_\mathsf{CR}$ in Theorem~\ref{T:CR-error-estimator-1} is best suited for a comparison with Theorem~\ref{T:CR-error-estimator-2}. 
\end{remark}

\begin{proof}[Proof of Theorem~\ref{T:CR-error-estimator-1}]
Thanks to Lemmas~~\ref{L:CR-distance-to-conformity},
\ref{L:CR-partition-of-unity}, and~\ref{L:CR-local-projections-1}, we
can apply Theorem~\ref{T:error-estimator} with
\eqref{setting-for-Poisson-and-CR1} and
\eqref{Eq:CR-partition-of-unity}. The overconsistency of the
Crouzeix-Raviart method \eqref{Eq:CR-qopt}, i.e.\ $\delta_h=0$,
simplifies the abstract estimator therein and   the claimed
equivalence follows with the help of 
 \begin{equation}
\label{CR-pde-indicator-1}
 \sum_{z \in \vertices} \norm[H^{-1}(\omega_z)]{\calP_z\Res_h^{\mathtt{C}}}^2
 \eqsim
 \sum_{z \in \vertices}  \norm[S_z']{\Res_h^{\mathtt{C}}}^2
 \eqsim
 \eta_\mathsf{CR}^2,
 \end{equation}
 where the first equivalence is a consequence of properties (iv) and (v) of
 \eqref{A:construction-Pz-local} and the second one can be shown with
 the arguments in \cite[\S4.1]{Kreuzer.Veeser:21}.
 
 To verify the claimed lower bound, let $T \in \grid$ and $K \in \{T\} \cup \{ F \in \sides^i \mid F \subset \partial T\}$ be arbitrary. Thanks to the weak formulation of \eqref{Eq:Poisson} and $\operatorname{supp} \Psi_K \subset \widetilde{\omega}_T$, we readily obtain the constant-free lower bound
 \begin{equation*}
 	\frac{|\scp{f}{\Psi_K} - \int_\Omega \nabla_h u_h \cdot \nabla \Psi_K |}{\|\nabla \Psi_K\|_{L^2(\Omega)}}
 	=
 	\frac{|\int_\Omega \nabla_h (u-u_h) \cdot \nabla \Psi_K|}{\|\nabla \Psi_K\|_{L^2(\Omega)}}
 	\leq
 	\norm[L^2(\widetilde{\omega}_T)]{\nabla_h(u-u_h)}.
\end{equation*}

To show the bound for the oscillation, write $\bar{f}$ for the piecewise constant function with $\bar{f}_{|T} = |T|^{-1} \int_T f$ for all $T \in \grid$. Since $\bar{f}_{|\omega_z} \in D_z$ for any vertex $z \in \vertices$, the properties of the local projections and the Poincar\'e inequality imply
\begin{align*}
 \sum_{z \in \vertices} \norm[H^{-1}(\omega_z)]{f - \calP_z f}^2
 &=
 \sum_{z \in \vertices} \norm[H^{-1}(\omega_z)]{f - \bar{f} - \calP_z(f-\Bar{f})}^2
 \lesssim
 \sum_{z \in \vertices} \norm[H^{-1}(\omega_z)]{f - \bar{f}}^2
\\
 &\lesssim
 \sum_{z \in \vertices} h_z^2 \norm[L^2(\omega_z)]{f - \bar{f}}^2
 \lesssim
 \sum_{T \in \grid} h_T^2 \inf_{c \in \R} \norm[L^2(T)]{f-c}^2
\end{align*}
and the proof is finished.
\end{proof}

\begin{remark}[Generalization to higher order Crouzeix-Raviart elements]
\label{R:CRHiO}
\cite[section~3.3]{Veeser.Zanotti:19b}  derives quasi-optimal methods based upon Crouzeix-Raviart spaces of any order and dimension. Using techniques similar to those in Section~\ref{sec:dG}
for discontinuous Galerkin methods, Theorem~\ref{T:CR-error-estimator-1} can be generalized to these methods.
\end{remark}

%
%
We next derive an error estimator that does not  need the evaluations $\scp{\Res_h^{\mathtt{C}}}{\Psi_F}$, $F \in \sides^i$, of the conforming residual associated with interior faces. This  simplification is in line with \cite{Dari.Duran.Padra:95,Dari.Duran.Padra.Vampa:96} and, in the framework of Section \ref{sec:posteriori-analysis},  mainly relies  on suitably replacing the face-bubble functions $\psi_F$, $F \in \sides_z^i$, in the choice \eqref{Eq:CR-simple-test-functions} of the simple test functions. To individuate their alternatives,  denote by $(\Psi_F^{\mathsf{CR}})_{F \in \sides^i} \subseteq \CR_1$ the Crouzeix-Raviart basis functions, defined by the condition   
\begin{align}
	\label{Eq:CR-basis-functions}
	\Psi_F^{\mathsf{CR}}(m_{F'})=\delta_{FF'},\quad\forall F'\in\sides^i,
\end{align}
where $m_{F'}$ is the barycenter of $F'$. We observe from \eqref{Eq:CR-qopt} that
\begin{equation}
\label{Eq:CR-further-orthogonality}
	\forall F \in \sides^i 
	\qquad 
	\scp{\Res_h^{\mathtt{C}}}{\calE_{\mathsf{CR}} \Psi_F^\mathsf{CR}}
	=
	\scp{f}{\calE_\mathsf{CR} \Psi_F} - \int_\Omega \nabla_h u_h\cdot \nabla_h \Psi_F^\mathsf{CR}
	= 0.
\end{equation}
This orthogonality suggests to replace the face-bubble functions by the smoothed Crouzeix-Raviart basis functions, which have enlarged supports; cf.\ \eqref{Eq:CR-smoother-stability}.  In order to maintain the boundedness in terms of classical oscillation, these  enlarged supports need to be accompanied  by additional element-bubble functions; see Remark~\ref{R:nconf-elm-bubbles} below. Thus, the new simple test functions and functionals are
\begin{subequations}
\label{simple-CR1-nconf}
\begin{align}
\label{Eq:CR-simple-test-functions-2}
	S_z
	&=
	\mathrm{span}\{ \Psi_T \mid T \in \grid_z^+ \}
	\oplus
	\mathrm{span}\{\calE_{\mathsf{CR}}\Psi_F^\mathsf{CR} \mid F \in \sides_z^i\}
\\ \label{simple-fct-CR1-nconf}
	D_z
	&=
	\Big\{\chi\in \big( \mathring{H}^1(\omega_z) + S_z \big)' \mid  
	\scp{\chi}{v} = \sum_{T\in\grid_z^+} \int_T r_T v + \sum_{F\in\sides_z^i}\int_F r_F v
\\ \notag
   &\phantom{=\Big\{\chi\in H^{-1}(\Omega) \mid}
    \text{with}~r_T\in \R,\,  T \in \grid_z^+,  \text{ and } r_F\in\R,\, F\in\sides_z^i\Big\}.
\end{align} 
The simple test functions \eqref{Eq:CR-simple-test-functions-2} are locally nonconforming, i.e.\ $S_z \not\subset \mathring{H}^1(\omega_z)$, and the simple functionals \eqref{simple-fct-CR1-nconf} essentially differ from \eqref{Eq:CR-conf-simple-functionals} only by the additional characteristic functions $\chi_T$, $T \in \grid_z^+ \setminus \grid_z$.
\end{subequations}

\begin{lemma}[Local projections for $\mathsf{CR}$@Poisson with smoothed CR basis]
\label{L:CR-local-projections-2}
In the settings \eqref{setting-for-Poisson-and-CR1} and \eqref{Eq:CR-partition-of-unity},  the choices \eqref{simple-CR1-nconf} satisfy assumption \eqref{A:construction-Pz-local} with \(\CL\lesssim1\) and \(\CNL \lesssim1\). Hence, \eqref{Eq:construction-Pz-local} defines projections $\calP_z: \Dual{\tV_z} \to D_z \subset \Dual{\tV_z}$  with $\tV_z = \mathring{H}^1(\omega_z) + S_z$ and we have
\begin{equation*}
 \forall g \in H^{-1}(\Omega)
\quad
 \sum_{z\in\vertices}  \norm[H^{-1}(\omega_z)]{\calP_z g}^2 
 \lesssim
 \sum_{z\in\vertices}  \norm[H^{-1}(\omega_z)]{g}^2.
\end{equation*}
\end{lemma}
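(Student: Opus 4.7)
The plan is to verify the five structural conditions of \eqref{A:construction-Pz-local} for the choices in \eqref{simple-CR1-nconf} and then invoke Proposition~\ref{P:construction-Pz-local}. Conditions (i) and (iii) are immediate from the definitions: each $\Psi_T\in\mathring H^1_0(T)\subset V$ and $\calE_{\mathsf{CR}}\Psi_F^{\mathsf{CR}}\in \mathring H^1(\Omega)=V$, so $S_z\subseteq V$; the functionals in $D_z$ are by construction defined on $\tV_z$; and both $S_z$ and $D_z$ have dimension $\#\grid_z^+ + \#\sides_z^i$. For (ii) I would compute $\tA v_h|_{\tV_z}$ on each generator: elementwise integration by parts together with $\Delta(v_h|_T)=0$ for $v_h\in\CR_1$ yields $\scp{\tA v_h}{w}=\sum_{F\in\sides_z^i}(\jump{\nabla v_h}\cdot\normal_F)\int_F w$ for $w\in V_z$ and $\scp{\tA v_h}{\Psi_T}=0$, while the overconsistency identity \eqref{Eq:CR-smoother-overconsistency} combined with the face-moment preservation \eqref{Eq:CR-smoother-moments} gives $\scp{\tA v_h}{\calE_{\mathsf{CR}}\Psi_{F'}^{\mathsf{CR}}}=|F'|\,\jump{\nabla v_h}\cdot\normal_{F'}$. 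Hence $\tA v_h|_{\tV_z}$ coincides with the functional in $D_z$ specified by $r_T=0$ and $r_F=\jump{\nabla v_h}\cdot\normal_F$.

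The inf-sup bound (iv) is the main technical step. My strategy is to build a basis of $S_z$ that is biorthogonal to the canonical generators of $D_z$. Set $s_T:=\Psi_T$ and
\[
s_F := \calE_{\mathsf{CR}}\Psi_F^{\mathsf{CR}} - \sum_{T\in\grid_z^+} \frac{\int_T \calE_{\mathsf{CR}}\Psi_F^{\mathsf{CR}}}{\int_T\Psi_T}\,\Psi_T.
\]
The moment identity $\int_{F'}\calE_{\mathsf{CR}}\Psi_F^{\mathsf{CR}}=|F|\,\delta_{FF'}$ from \eqref{Eq:CR-smoother-moments} and the element-bubble corrections produce biorthogonality against $D_z$: $\int_{T'}s_T=\delta_{TT'}\int_T\Psi_T$, $\int_{F}s_T=0$, $\int_T s_F=0$, $\int_{F'}s_F=|F|\delta_{FF'}$. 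The smoother stability bound \eqref{Eq:CR-smoother-stability} together with local inverse estimates on the polynomial correction keeps the scaling of $s_F$ intact, so that $\norm[V]{s_F}\lesssim h_F^{d/2-1}$ and $\norm[V]{s_T}\eqsim h_T^{d/2-1}$. Combined with the standard bubble-function equivalence
\[
\norm[\Dual{V_z}]{\chi}^2\eqsim\sum_{T\in\grid_z} r_T^2\,h_T^{d+2} + \sum_{F\in\sides_z^i} r_F^2\,h_F^{d},
\]
testing $\chi\in D_z$ with coefficients $(r_T,r_F)$ against $s_*:=\sum_T r_T h_T^2\, s_T+\sum_F r_F h_F\, s_F$ shows both $\scp{\chi}{s_*}$ and $\norm[V]{s_*}^2$ are comparable to $\sum_T r_T^2 h_T^{d+2}+\sum_F r_F^2 h_F^d$; since the right-hand side dominates $\norm[\Dual{V_z}]{\chi}^2$, I obtain $\scp{\chi}{s_*}\gtrsim\norm[\Dual{V_z}]{\chi}\,\norm[V]{s_*}$, giving (iv) with $\CL\lesssim 1$.

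For (v) the novelty compared with Lemma~\ref{L:CR-local-projections-1} is that $S_z\not\subseteq V_z$, so the trivial embedding bound used there is unavailable. My plan is to use the nodal partition of unity $\{\Psi_{z'}\}_{z'\in\vertices}$ of \eqref{Eq:nodal-basis-S11} to write every $s\in S_z$ as $s=\sum_{z'}s\Psi_{z'}$ with $s\Psi_{z'}\in V_{z'}=\mathring H^1(\omega_{z'})$. The localized supports of $\Psi_T$ and the support-enlargement bound in \eqref{Eq:CR-smoother-stability} ensure that only $z'$ in a neighborhood $N(z)$ of cardinality depending solely on $\gamma_\grid$ and $d$ contribute, and a scaling argument analogous to the proof of Lemma~\ref{L:CR-partition-of-unity} yields $\sum_{z'}\norm[V]{s\Psi_{z'}}^2\lesssim\norm[V]{s}^2$. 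Cauchy--Schwarz then gives $\norm[\Dual{S_z}]{g}^2\lesssim\sum_{z'\in N(z)}\norm[\Dual{V_{z'}}]{g}^2$, and summing over $z\in\vertices$ with bounded overlap of the $N(z)$ produces (v) with $\CNL\lesssim 1$. Proposition~\ref{P:construction-Pz-local} then supplies the projections $\calP_z$ together with the asserted collective stability bound. The main obstacle is step (iv): the moment preservation of $\calE_{\mathsf{CR}}$ must be carefully combined with the bubble corrections in $s_F$ to secure biorthogonality against $D_z$ despite the enlarged supports of the smoothed Crouzeix--Raviart basis functions; condition (v) is the second nontrivial ingredient, being the only place where the local nonconformity $S_z\not\subset V_z$ enters the analysis.
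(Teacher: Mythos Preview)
Your proposal is correct and close in spirit to the paper's proof. Conditions (i)--(iii) and (v) are handled essentially as in the paper; in particular, your argument for (v) is the same localization via the Courant partition of unity and finite overlap, merely skipping the paper's intermediate inequality $\norm[\Dual{S_z}]{g}\le\norm[H^{-1}(\omega_z^+)]{g}$ (note that the Poincar\'e step implicitly uses $S_z\subset\mathring H^1(\omega_z^+)$, which follows from the construction of $\calE_{\mathsf{CR}}$ rather than from \eqref{Eq:CR-smoother-stability} alone). The only genuine methodological difference is in (iv): the paper, given $v\in\mathring H^1(\omega_z)$, constructs $s\in S_z$ that \emph{matches the moments} of $v$ on the generators of $D_z$ (so that $\scp{\chi}{v}=\scp{\chi}{s}$) with $\norm[L^2(\omega_z^+)]{\nabla s}\lesssim\norm[L^2(\omega_z)]{\nabla v}$, whereas you build an explicit \emph{biorthogonal basis} $\{s_T,s_F\}$ of $S_z$ and test $\chi$ against the tailored combination $s_*$. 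Both routes are standard and deliver the same inf-sup bound with the same scaling ingredients; your version makes the dual structure between $D_z$ and $S_z$ explicit, while the paper's lifting formulation is slightly more amenable to higher-order generalizations (cf.\ Lemma~\ref{L:dG-local-projections}).
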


\begin{proof}
Let $z \in \vertices$. Clearly, the choices \eqref{simple-CR1-nconf} satisfy conditions (i) and (iii) of  \eqref{A:construction-Pz-local}. In order to verify (ii), let $ v_h \in V_h$, \( v \in \mathring{H}^1(\omega_z)\) and \(s \in S_z\). Then the representation \eqref{Eq:action-laplacian} and $\Delta (v_{h|T}) = 0$ in each $T \in \grid$ yield
\begin{equation*}
 \int_\Omega\nabla_hv_h\cdot\nabla (v+s)
 =
 \sum_{F \in \sides^i_z}  \int_F \jump{\nabla v_h} \cdot \normal (v+s)
 +
 \sum_ {F' \in \sides \setminus \sides^i_z} \int_{F'} \jump{\nabla v_h} \cdot \normal s
\end{equation*}
and we have to show that the second sum vanishes. Let $F '\in \sides \setminus \sides^i_z$ be any face appearing therein and observe,  for any $T \in \grid_z^+$ and any $F \in \sides_z^i$, that
\begin{equation}
\label{CR-nconf-sides}
 \Psi_T{}_{|F'}  = 0,
\quad
 \int_{F'} \calE_\mathsf{CR} \Psi_F^\mathsf{CR} =  \int_{F'} \Psi_F^\mathsf{CR} = \Psi_F^\mathsf{CR} (m_{F'})= 0
\end{equation}
thanks to the moment conservation \eqref{Eq:CR-smoother-moments} and the definition of the Crouzeix-Raviart basis. Therefore, $\int_{F'}  \jump{\nabla v_h} \cdot \normal s =  0$ and condition (ii) is verified.

We next show condition (iv) of \eqref{A:construction-Pz-local}. Given any $\chi \in D_z$ and $v \in \mathring{H}^1(\omega_z)$, we can write	
\begin{align*}
 \langle \chi, v\rangle
  =
  \sum_{T\in\grid_z^+}\int_T r_T v
  +
  \sum_{F\in\sides_z^i}\int_F r_F v
 \end{align*}
for some $r_T \in \poly_0(T)$, $T \in \grid_z^+$ and $r_F\in \poly_0(F)$,  $F\in\sides_z^i$.  Fixing a local test function $v$, we  choose the simple test function
\begin{equation*}
	s
	=
	\sum_{T\in\grid_z^+} s_T \Psi_T + \sum_{F \in \sides_z^i} s_F \calE_{\mathsf{CR}} \Psi_F^\mathsf{CR} \in S_z
\end{equation*}
where $s_T \in \poly_0(T)$, $T \in \grid_z^+$, and $s_F \in \poly_0(F)$, $F \in \sides_z^i$, solve the problems
\begin{align*}
		&\forall p_F \in \poly_0(F) \qquad \int_F s_F p_F \Psi_F^\mathsf{CR} = \int_F v p_F,
\\
		&\forall p_T \in \poly_0(T) \qquad \int_Ts_T p_T \Psi_T = \int_T v p_T - \sum_{F \in \sides_z^i} \int_T s_F p_T \calE_{\mathsf{CR}} \Psi_F^\mathsf{CR}.
\end{align*}
These problems are uniquely solvable because $\Psi_F^\mathsf{CR}$ and $\Psi_T$ are strictly positive in the interior of $F$ and $T$, respectively. We then have
$
 \scp{\chi}{v} = \scp{\chi}{s}
$ 
and
\begin{equation*}
	\norm[L^2(\omega_z^+)]{\nabla s} 
	\lesssim
	h_z^{-1}  \sum_{T\in\grid_z^+} \norm[L^2(T)]{s_T} + h_z^{-\frac{1}{2}}\sum_{F \in \sides_z^i} \norm[L^2(F)]{s_F} 
	\lesssim
	\norm[L^2(\omega_z)]{\nabla v},
\end{equation*}
where the first inequality follows from \eqref{Eq:CR-smoother-stability} and the scaling properties of the basis functions of $S_z$ and, for the second one, we use in addition norm equivalences with bubble functions~\cite[Proposition~1.4]{Verfuerth:13}, trace and Poincar\'e inequalities. Hence, we arrive
at 
\begin{equation*}
		\dfrac{\left\langle \chi, v\right\rangle }{\norm[L^2(\omega_z)]{\nabla v}}
		\lesssim 
		\dfrac{\left\langle \chi, s\right\rangle }{\norm[L^2(\omega_z^+)]{\nabla s}}
		\leq \norm[S_z']{\chi}
\end{equation*} 
and, taking the supremum over $v$, we  obtain the desired condition (iv).
        
Finally, for verifying condition (v)  in \eqref{A:construction-Pz-local}, assume $g \in H^{-1}(\Omega)$. Since we have $S_z \subset \mathring{H}^1(\omega_z^+)$ and so $\norm[\Dual{S_z}]{g} \leq \norm[H^{-1}(\omega_z^+)]{g}$ for all vertices $z \in \vertices$, the desired bound follows from
\begin{equation}
\label{stability-of-overlapping}
 \sum_{z\in\vertices}	 \norm[H^{-1}(\omega_z^+)]{g}^2
 \lesssim
 \sum_{z\in\vertices}	 \norm[H^{-1}(\omega_z)]{g}^2. 
\end{equation}
Given any vertex $z \in \vertices$ and $v \in \mathring{H}^1(\omega_z^+)$, we can write $ v = \sum_{y \in \vertices \cap \omega_z^+} v \Psi_y$ and have the stability bounds $\norm[L^2(\omega_y)]{\nabla(v \Psi_y)} \lesssim
\norm[L^2(\omega_z^+)]{\nabla v}$ thanks to scaling properties of the Courant basis and the Poincar\'e inequality with zero boundary values on $\omega_z^+$. Hence, similarly to the proof of Lemma~\ref{L:CR-partition-of-unity}, we derive
\begin{equation}
\label{E:omega+<omega}
\begin{aligned}
 \dfrac{\left\langle g, v\right\rangle }{\norm[L^2(\omega_z^+)]{\nabla v}}
 &\lesssim
 \sum_{y \in \vertices \cap \omega_z^+} \dfrac{\left\langle g, v\Psi_y\right\rangle }{\norm[L^2(\omega_y)]{\nabla(v \Psi_y)}}
 \lesssim 
 \sum_{y \in \vertices \cap \omega_z^+} \norm[H^{-1}(\omega_y)]{g}
\\
 & \lesssim
 \left(
  \sum_{y \in \vertices \cap \omega_z^+} \norm[H^{-1}(\omega_y)]{g}^2
 \right)^{1/2},
\end{aligned}
\end{equation}
where in the last inequality we used that $\#(\vertices\cap\omega_z^+)$ is bounded in terms of the shape constant $\gamma_\grid$ and $d$. We take the supremum over $v$, square and sum over all vertices $z$. This verifies \eqref{stability-of-overlapping}
because the number of patches $\omega_z^+$ containing a vertex $y$ is
again bounded in terms of shape constant \(\gamma_\grid\) and $d$.  
\end{proof}

\begin{theorem}[Simplified estimator for $\mathsf{CR}$@Poisson]
\label{T:CR-error-estimator-2}
Let $u \in \mathring{H}^1(\Omega)$  be the weak solution of the Poisson problem \eqref{Eq:Poisson} and $u_h \in \CR_1$ its quasi-optimal Crouzeix-Raviart approximation from \eqref{Eq:CR-qopt}. Given tuning constants \(C_1,C_2>0\), define
\begin{multline*}
	\Est_{\widetilde{\mathsf{CR}}}^2
	:=
	\Ncf_{\widetilde{\mathsf{CR}}}^2 + C_1^2 \eta_{\widetilde{\mathsf{CR}}}^2 + C_2^2 \Osc_{\widetilde{\mathsf{CR}}}^2,  \text{ where }
	\\
	\begin{aligned}
		\Ncf_{\widetilde{\mathsf{CR}}}^2
		&:= 
		\norm[L^2(\Omega)]{ \nabla_h( u_h - \calA_\mathsf{CR} u_h ) }^2,
		\\ 
		\eta_{\widetilde{\mathsf{CR}}}^2
		&:= 
		\sum_{T \in \grid} \eta_{{\widetilde{\mathsf{CR}}},T}^2
		\text{ with  }
		\eta_{{\widetilde{\mathsf{CR}}},T}
		:=
		\frac{|\scp{f}{\Psi_T} - \int_\Omega \nabla_h u_h \cdot \nabla \Psi_T |}{\|\nabla \Psi_T\|_{L^2(\Omega)}},  \qquad\qquad
		\\
		\Osc_{\widetilde{\mathsf{CR}}}^2
		&:= 
		\sum_{z\in\vertices} \norm[H^{-1}(\omega_z)]{f - \calP_z f}^2, 
	\end{aligned}
\end{multline*}
with the averaging operator $\calA_\mathsf{CR}$ from \eqref{Eq:CR1-averaging}, the element-bubble functions $\Psi_T$  from \eqref{Eq:bubbles}, and the local projections \( \calP_z\) from Lemma~\ref{L:CR-local-projections-2}.
This estimator quantifies the error by
\begin{equation*}
	\underline{C} \Est_{\widetilde{\mathsf{CR}}}
	\leq
	\norm[L^2(\Omega)]{\nabla_h(u-u_h)}
	\leq
	\overline{C} \Est_{\widetilde{\mathsf{CR}}}
	\quad\text{and}\quad
	\eta_{{\widetilde{\mathsf{CR}}},T} 
	\leq \norm[L^2(T)]{\nabla_h(u-u_h)}
\end{equation*}
and the equivalence constants $\underline{C}$ and $\overline{C}$ depend only on the dimension \(d\),  the shape constant
\(\gamma_\grid\), and the tuning constants \(C_1,C_2\).

Furthermore, if $f \in L^2(\Omega)$, the oscillation indicator is bounded in terms of the classical $L^2$-oscillation:
\begin{equation*}
	\Osc_{\widetilde{\mathsf{CR}}}^2
	\lesssim
	\sum_{T \in \grid} h_T^2 \inf_{c \in \R} \norm[L^2(T)]{f-c}^2.
	\end{equation*}
\end{theorem}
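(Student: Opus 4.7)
The plan is to mirror the proof of Theorem~\ref{T:CR-error-estimator-1}, now invoking Lemma~\ref{L:CR-local-projections-2} in place of Lemma~\ref{L:CR-local-projections-1}. Combining Theorem~\ref{T:error-estimator} with the setting \eqref{setting-for-Poisson-and-CR1}, the partition of unity \eqref{Eq:CR-partition-of-unity}, Lemmas~\ref{L:CR-distance-to-conformity}, \ref{L:CR-partition-of-unity}, \ref{L:CR-local-projections-2}, and the overconsistency $\delta_h = 0$ of the Crouzeix--Raviart method yields
\begin{equation*}
\|\nabla_h(u-u_h)\|_{L^2(\Omega)}^2 \;\eqsim\; \Ncf_{\widetilde{\mathsf{CR}}}^2 + \eta_h^2 + \Osc_{\widetilde{\mathsf{CR}}}^2,
\end{equation*}
with the abstract approximate conforming residual $\eta_h^2 := \sum_{z\in\vertices} \|\calP_z\Res_h^{\mathtt{C}}\|_{H^{-1}(\omega_z)}^2$ corresponding to the new projections. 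It then remains to identify $\eta_h$ with $\eta_{\widetilde{\mathsf{CR}}}$, to obtain the pointwise lower bound, and to bound the oscillation by the classical $L^2$-oscillation.

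The central new step is the equivalence $\eta_h \eqsim \eta_{\widetilde{\mathsf{CR}}}$. Lemma~\ref{L:splitting-of-conf-res}, through properties (iv)--(v) of \eqref{A:construction-Pz-local}, first reduces $\eta_h^2$ up to constants to the computable $\sum_{z\in\vertices}\|\Res_h^{\mathtt{C}}\|_{S_z'}^2$. Decomposing any $s\in S_z$ as $s = s_B + s_W$ via the direct sum \eqref{Eq:CR-simple-test-functions-2}, with $s_B = \sum_{T\in\grid_z^+} c_T\,\Psi_T$ and $s_W\in \mathrm{span}\{\calE_\mathsf{CR}\Psi_F^\mathsf{CR}\}$, the orthogonality \eqref{Eq:CR-further-orthogonality} gives $\scp{\Res_h^{\mathtt{C}}}{s} = \sum_{T\in\grid_z^+} c_T R_T$ with $R_T := \scp{\Res_h^{\mathtt{C}}}{\Psi_T}$, while the disjoint supports of the element bubbles yield $\|\nabla s_B\|_{L^2(\Omega)}^2 = \sum_T c_T^2 \|\nabla\Psi_T\|_{L^2(\Omega)}^2$. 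Testing with $s=s_B$ using the choice $c_T = R_T/\|\nabla\Psi_T\|_{L^2(\Omega)}^2$ gives $\|\Res_h^{\mathtt{C}}\|_{S_z'}^2 \ge \sum_{T\in\grid_z^+} \eta_{\widetilde{\mathsf{CR}},T}^2$; the matching upper bound follows from Cauchy--Schwarz once the stability estimate $\|\nabla s_B\|_{L^2(\Omega)} \lesssim \|\nabla s\|_{L^2(\Omega)}$ is in hand. Summing over $z$ and exploiting the bounded overlap of $\grid_z^+$ finishes the equivalence.

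The local lower bound is immediate: $\Psi_T\in\mathring{H}^1(\Omega)$ and $\operatorname{supp}\Psi_T = T$ imply, via the weak formulation of \eqref{Eq:Poisson},
\begin{equation*}
\scp{f}{\Psi_T} - \int_\Omega \nabla_h u_h\cdot\nabla\Psi_T = \int_T \nabla(u-u_h)\cdot\nabla\Psi_T \le \|\nabla_h(u-u_h)\|_{L^2(T)}\,\|\nabla\Psi_T\|_{L^2(\Omega)}.
\end{equation*}
For the oscillation, I would observe that any $v\in V_z = \mathring{H}^1(\omega_z)$ extended by zero satisfies $\int_T v = 0$ for $T\in\grid_z^+\setminus\grid_z$. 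Hence, given the piecewise-constant approximation $\bar f$ of $f$ on $\grid$, the functional in $D_z$ with $r_T = \bar f_{|T}$ for $T\in\grid_z$, $r_T = 0$ for $T\in\grid_z^+\setminus\grid_z$, and $r_F = 0$ for all $F\in\sides_z^i$ agrees with $\bar f$ on $V_z$; the invariance of $\calP_z$ on $D_z$, its collective stability \eqref{Eq:boundedness-Pz-local}, and a Poincar\'e inequality on each star $\omega_z$ then conclude the classical $L^2$-oscillation bound exactly as in Theorem~\ref{T:CR-error-estimator-1}.

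The main obstacle is the stability estimate $\|\nabla s_B\|_{L^2(\Omega)} \lesssim \|\nabla s\|_{L^2(\Omega)}$: the bubble and smoothed Crouzeix--Raviart subspaces are not $a$-orthogonal and have overlapping supports in $\omega_z^+$, so the inequality amounts to a uniform lower bound on the angle between these subspaces in the $a$-inner product. A scaling and compactness argument on the finitely many reference patch configurations admitted by shape regularity supplies this bound.
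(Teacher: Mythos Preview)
Your overall route matches the paper's: apply Theorem~\ref{T:error-estimator} with Lemma~\ref{L:CR-local-projections-2}, exploit $\delta_h=0$, and remove the smoothed Crouzeix--Raviart contributions via the orthogonality~\eqref{Eq:CR-further-orthogonality}. The stability $\|\nabla s_B\|_{L^2}\lesssim\|\nabla s\|_{L^2}$ you isolate is indeed the crux of the equivalence $\sum_z\|\Res_h^{\mathtt{C}}\|_{S_z'}^2\eqsim\eta_{\widetilde{\mathsf{CR}}}^2$; the paper glosses over it, and your compactness argument is a legitimate way to supply it.

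The oscillation step, however, has a genuine gap. Your functional $\chi\in D_z$ with $r_T=\bar f_{|T}$ on $\grid_z$ and $r_T=0$ on $\grid_z^+\setminus\grid_z$ does satisfy $\chi_{|V_z}=\bar f_{|V_z}$, but this does \emph{not} yield $(\calP_z\bar f)_{|V_z}=\bar f_{|V_z}$. The projection $\calP_z$ is characterized by testing against $S_z$, and for $s=\Psi_T$ with $T\in\grid_z^+\setminus\grid_z$ one has $\scp{\chi}{\Psi_T}=0$ while $\scp{\bar f}{\Psi_T}=\bar f_{|T}\int_T\Psi_T\neq 0$ in general, so $\calP_z\bar f\neq\chi$. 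Writing $f-\calP_z f=(I-\calP_z)(f-\chi)$ and invoking the bounds for $\calP_z$ then leaves the term $\|f-\chi\|_{S_z'}$, which carries uncontrolled contributions $\bar f_{|T}\int_T\Psi_T$ from $T\in\grid_z^+\setminus\grid_z$; the argument cannot close.

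The repair is simpler than your detour: because $D_z$ in~\eqref{simple-fct-CR1-nconf} allows constants $r_T$ on \emph{every} $T\in\grid_z^+$, the choice $r_T=\bar f_{|T}$ for all $T\in\grid_z^+$ and $r_F=0$ represents $\bar f_{|\tV_z}$ itself. Hence $\bar f_{|\tV_z}\in D_z$, so $\calP_z\bar f=\bar f_{|\tV_z}$ by invariance and $(\calP_z\bar f)_{|V_z}=\bar f_{|V_z}$; the remainder of the oscillation bound then proceeds exactly as in Theorem~\ref{T:CR-error-estimator-1}. This is precisely the role of the extra companions on $\grid_z^+\setminus\grid_z$ highlighted in Remark~\ref{R:nconf-elm-bubbles}, and the paper records the needed condition as~\eqref{pw-constant-invariance-on-ex-patch}.
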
 

\begin{proof}
The proof is very similar to the one of Theorem~\ref{T:CR-error-estimator-1}. The main differences for the estimator equivalence are that we must invoke Lemma~\ref{L:CR-local-projections-2} instead of Lemma~\ref{L:CR-local-projections-1} and that  the smoothed CR basis functions do not appear in the estimator in view of their orthogonality \eqref{Eq:CR-further-orthogonality} to the residual. Also the  oscillation bound follows along the same lines, with the caveat that now the condition
\begin{equation}
\label{pw-constant-invariance-on-ex-patch}
 \bar{f}_{|\omega_z^+} \in D_z
\end{equation}
ensures the invariance $(\calP_z \bar{f})_{|\mathring{H}^1(\omega_z)} = \bar{f}_{|\omega_z}$, where we identify $\bar{f}_{|\omega_z}$ with the functional $\mathring{H}^1(\omega_z) \ni v \mapsto \int_\Omega \bar{f} v$. 
\end{proof}

To discuss Theorem~\ref{T:CR-error-estimator-2}, two remarks address the design and the generality of the approach behind it and three remarks compare its estimator with alternatives.

\begin{remark}[Nonconforming element-bubble functions]
\label{R:nconf-elm-bubbles}
One may be tempted to build the second approach on the simpler pairs
\begin{equation}
\label{wrong-DDPV}
 S_z
 =
 \mathrm{span}\{ \Psi_T \mid T \in \grid_z \} \oplus   \mathrm{span}\{\calE_\mathsf{CR}\Psi^\mathsf{CR}_F \mid F \in \sides_z^i\}, 
\quad
  D_z = \widehat{D}_z{}_{|(\mathring{H}^1(\omega_z) + S_z)},
\end{equation}
which just replace the face-bubble functions in \eqref{Eq:CR-simple-test-functions} by the smoothed Crouzeix-Raviart basis functions, thus avoiding the nonconforming element-bubble functions in $S_z$ along with their companions in $D_z$. In this case, the estimator equivalence in Theorem~\ref{T:CR-error-estimator-2} still holds, but the oscillation bound is not valid in general. Indeed, assuming \eqref{wrong-DDPV}, testing \eqref{Eq:construction-Pz-local} first with the element-bubble functions $\Psi_T$, $T \in \grid_z$, and then with the smoothed Crouzeix-Raviart basis functions $\calE_\mathsf{CR} \Psi_F^\mathsf{CR}$, $F \in \sides_z$ reveals that $\calP_z \bar{f} = \bar{f}$ holds only under additional assumptions on $\bar{f}_{|(\omega_z^+ \setminus\omega_z)}$ outside of the star $\omega_z$. This drawback is overcome by including the nonconforming element-bubble functions $\Psi_T$, $T \in \grid_z^+ \setminus \grid_z$, and their companions $\chi_T$,   $T \in \grid_z^+ \setminus \grid_z$, ensuring \eqref{pw-constant-invariance-on-ex-patch}, which in turn implies $(\calP_z \bar{f})_{|\mathring{H}^1(\omega_z)} = \bar{f}_{|\omega_z}$.

On the other hand, if we set the oscillation bound in Theorem~\ref{T:CR-error-estimator-2} aside, we can use the even simpler pair
\begin{equation*}
	S_z
	=
	\mathrm{span}\{\calE_\mathsf{CR}\Psi^\mathsf{CR}_F \mid F \in \sides_z^i\}, 
	\quad
	D_z = \widehat{D}_z{}_{|(\mathring{H}^1(\omega_z) + S_z)},
\end{equation*}
and obtain another strictly equivalent  estimator, which consists only
of an approximate distance to conformity and a data oscillation term
and which is a lower bound of the estimators in
\cite[Section~2.3]{Brenner:15} and~\cite[Section
3.2.3]{Carstensen.Graessle.Nataraj:24}, respectively. 
\end{remark}

\begin{remark}[Simplification for some higher-order Crouzeix-Raviart methods]
\label{R:CRHiO-2}
For \(d=2\) and odd polynomial degree $\ell$, a similarly simplified estimator as in Theorem~\ref{T:CR-error-estimator-2} can be derived for the corresponding Crouzeix-Raviart method in \cite[Section~3.3]{Veeser.Zanotti:19b}. In this case, \cite[Lemma~1]{Stoyan.Baran:06} shows that point evaluations at the order \(\ell\) Legendre  nodes on the element edges together with the order \(\ell\) Lagrange points in the interior of a given element determine  $\mathbb{P}_\ell$. The smoothed basis functions corresponding to the Legendre nodes on the edges are then orthogonal to the residual. Thus, the higher order counterpart of \eqref{simple-CR1-conf} can be modified to a higher order counterpart of \eqref{simple-CR1-nconf} in order to obtain a generalization of Theorem~\ref{T:CR-error-estimator-2}. 
\end{remark}

\begin{remark}[Comparison of strictly equivalent estimators with quasi-optimality]
\label{R:comp-strictly-eq-ests}
%
Let us compare the two estimators in Theorems~\ref{T:CR-error-estimator-1} and \ref{T:CR-error-estimator-2}. The differing indicators are related as follows:
\begin{equation}
\label{CR-indicator-comparison}
 \eta_\mathsf{CR} \geq \eta_{{\widetilde{\mathsf{CR}}}},
\quad
 \eta_\mathsf{CR} \not\lesssim \eta_{{\widetilde{\mathsf{CR}}}},
\quad \textit{and} \quad
 \Osc_\mathsf{CR} \eqsim \Osc_{{\widetilde{\mathsf{CR}}}}.
\end{equation}
These properties are proved below. 

Clearly, the implementation of the indicator $\eta_{\widetilde{\mathsf{CR}}}$ is easier than the one of $\eta_{\mathsf{CR}}$, while the one of $\Osc_{\mathsf{CR}}$ is slightly easier that the one of $\Osc_{{\widetilde{\mathsf{CR}}}}$ because the latter in addition involves some smoothing; recall that smoothing itself has to be implemented in any case for assembling the discrete problem \eqref{Eq:DisEq} and that oscillations may be replaced by surrogates. In terms of computational costs, the situation is quite similar in the sense that `easier' is replaced with `less costly'.  To sum up, we may view $\Est_{\widetilde{\mathsf{CR}}}$ as a simplification of $\Est_\mathsf{CR}$.

We finally  inspect the (hidden) constants in the proof. Since the derivations of the two upper bounds differ only in the second equivalence of \eqref{CR-pde-indicator-1}, the constants in front of the oscillations $\Osc_\mathsf{CR}$ and $\Osc_{\widetilde{\mathsf{CR}}}$ coincide and we expect that those in front of $\eta_\mathsf{CR}$ and $\eta_{\widetilde{\mathsf{CR}}}$ are different. In light of the first inequality in \eqref{CR-indicator-comparison},  the latter should turn out to be larger.   For the global lower bounds, the constants for $\eta_{\widetilde{\mathsf{CR}}}$ and $\Osc_{\widetilde{\mathsf{CR}}}$ are expected to be larger in view of the increased overlapping in the simple  functions, as suggested by the given proofs for (v) in \eqref{A:construction-Pz-local}. 
\end{remark}

\begin{proof}[Proof of \eqref{CR-indicator-comparison}]
We use the superscript `$\sim$' to distinguish the spaces and operators related to the estimator $\Est_{\widetilde{\mathsf{CR}}}$ from those related to $\Est_\mathsf{CR}$. 

\fbox{\scriptsize 1} The first claimed inequality readily follows by the respective definitions. Furthermore, the  invariance and stability properties of $\calP_z$ as well as $\widetilde{D}_{z}{}_{|\mathring{H}^1(\omega_z)} = D_z$ imply
 \begin{equation*}
	\| f - \calP_z  f\|_{H^{-1}(\omega_z)}
	=
	\| (\operatorname{id} - \calP_z) ( f - \widetilde{\calP}_{z} f)\|_{H^{-1}(\omega_z)}
	\lesssim
	\| f - \widetilde{\calP}_{z} f\|_{H^{-1}(\omega_z)}.
\end{equation*} 
Hence, summing over $z \in \vertices$, gives the oscillation bound $\Osc_\mathsf{CR} \lesssim \Osc_{\widetilde{\mathsf{CR}}}$.

\fbox{\scriptsize 2} To prepare the proof of inequality  $\Osc_{\widetilde{\mathsf{CR}}} \lesssim \Osc_\mathsf{CR}$, we introduce a further local projection acting over the extended star $\omega_z^+$: for any vertex $z \in \vertices$, the pair 
 \begin{gather*}
  S_z^+
  :=
  \mathrm{span}\{ \Psi_T \mid T \in \grid_z^+ \} \oplus \mathrm{span}\{\Psi_F \mid F \in \sides_z^{i+}\},
\\
 \begin{aligned}
  D_z^+
  :=
  \Big\{ \chi\in H^{-1}(\omega_z^+)&\mid 
   \scp{\chi}{v} =\sum_{T\in\grid_z^+} \int_T r_T v + \sum_{F\in\sides_z^{i+}}\int_F r_F v
\\
	&\phantom{\mid} \text{with}~r_T\in \R,\,  T \in \grid_z^+,  \text{ and } r_F\in\R,\, F\in\sides_z^{i+}\Big\},
\end{aligned}
\end{gather*}
induces through Proposition~\ref{P:construction-Pz-local} a local projection $\calP_z^+:H^{-1}(\omega_z^+) \to D_z^+$. For a given vertex $y \in \vertices \cap \omega_z$,  combining $D_z^+{}_{|\mathring{H}^1(\omega_y)} = D_y$, $S_y \subset S_z^+$, and  the uniqueness of \eqref{Eq:construction-Pz-local} for \eqref{simple-CR1-conf} yield that, for any $g \in H^{-1}(\Omega)$,
\begin{equation}
\label{P_z^+-orthogonality}
	( \calP_z^+ g)_{|\mathring{H}^1(\omega_y)} = \calP_y g.
\end{equation}
Also, there are constants $r_T$, $ T \in \grid_z^+$, and $r_F$, $F \in \sides_z^i$, such that, for all $ v \in \mathring{H}^1(
\omega_z)$ and $s \in \widetilde{S}_z$, we have
\begin{equation*}
  \scp{\calP_z^+ g}{v+s}
  =
  \sum_{T \in \grid_z^+} \int_T r_T (v+s)
  +
  \sum_{F \in \sides_z^i} \int_F r_F  (v+s)
\end{equation*}
thanks to \eqref{CR-nconf-sides}. In other words, $(\calP_z^+
g)_{|(\mathring{H}^1(\omega_z) + \widetilde{S}_z)}  \in
\widetilde{D}_z$ and the invariance of $\widetilde{\calP}_z$ ensure 
\begin{equation}
\label{Pzvar-and-Pz+}
 \big( \calP_z^+g \big)_{|(\mathring{H}^1(\omega_z) + \widetilde{S}_z)}
 =
 \widetilde{\calP}_z ( \calP_z^+g ).
\end{equation}
Note that this property hinges on the companions of the nonconforming
element-bubble functions in Remark~\ref{R:nconf-elm-bubbles}. 

\fbox{\scriptsize 3} We turn to the proper proof of
$\Osc_{\widetilde{\mathsf{CR}}} \lesssim \Osc_\mathsf{CR}$. Employing
\eqref{Pzvar-and-Pz+} restricted to $\mathring{H}^1(\omega_z)$, the
stability properties of $\widetilde{\calP}_z$, $\widetilde{S}_z
\subset \mathring{H}^1(\omega_z^+)$, \eqref{E:omega+<omega} and
\eqref{P_z^+-orthogonality},  we obtain 
\begin{align*}
	\norm[H^{-1}(\omega_z)]{ f - \widetilde{\calP}_{z} f}
	&=
	\norm[H^{-1}(\omega_z)]{ f - \calP_z^+ f - \widetilde{\calP}_{z} ( f - \calP_z^+ f)} 
	\\
	& \lesssim
	\norm[H^{-1}(\omega_z^+)]{ f - \calP_z^+ f} 
	\lesssim
	\left(
	\sum_{y \in \vertices \cap \omega_z^+} \norm[H^{-1}(\omega_y)]{ f -  \calP_y f }^2
	\right)^{1/2},
\end{align*}
and  so squaring, and summing over all vertices, yields the desired bound.

\fbox{\scriptsize 4} Finally, we verify that $\eta_{{{\mathsf{CR}}}}$ cannot be bounded by $\eta_{{\widetilde{\mathsf{CR}}}}$. For $f \in H^{-1}(\Omega)$ and $T \in \grid$, an integration by parts reveals 
\begin{equation*}
	\scp{f}{\Psi_T} - \int_\Omega \nabla_h u_h \cdot \nabla \Psi_T = \scp{f}{\Psi_T}.
\end{equation*} 
Moreover, for $F \in \sides^i$, the orthogonality property of the interpolant $\calI_\mathsf{CR}$ in \eqref{Eq:CR-partition-of-unity}, see \cite[Section~2.1]{Brenner:15}, and \eqref{Eq:CR-qopt} entail that
\begin{equation*}
	\scp{f}{\Psi_F} - \int_\Omega \nabla_h u_h \cdot \nabla \Psi_F = \scp{f}{\Psi_F - \calE_{\mathsf{CR}} \calI_\mathsf{CR} \Psi_F}.
\end{equation*} 
The construction of $\calE_{\mathsf{CR}}$ in \cite[Proposition~3.3]{Veeser.Zanotti:19b} reveals that there is a face $F$ such that  $(\Psi_F - \calE_{\mathsf{CR}} \calI_\mathsf{CR} \Psi_F)_{|F}$ is not the zero function. Therefore, fixing that face and taking the source $f$ so that $\scp{f}{v} = \int_F (\Psi_F - \calE_{\mathsf{CR}} \calI_\mathsf{CR} \Psi_F) v$ for $v \in \mathring{H}^1(\Omega)$, we have $\eta_{{{\mathsf{CR}}}} \neq 0$, whereas $\eta_{{\widetilde{\mathsf{CR}}}} = 0$. 
\end{proof} 

\begin{remark}[Comparison with classical estimators]
\label{R:comparison-with-classical-est}
We compare the estimators in Theorems~\ref{T:CR-error-estimator-1} and \ref{T:CR-error-estimator-2} with the classical ones in \cite{Dari.Duran.Padra.Vampa:96}, for the classical Crouzeix-Raviart method
\begin{equation}
\label{Eq:CR-without-smoothing}
	\text{find $u_h \in \CR_1$ such that\;}
	\forall v_h\in \CR_1\quad
	\int_\Omega \nabla_h u_h \cdot \nabla_h v_h = \scp{f}{v_h}.
\end{equation}
For $* \in \{ \mathsf{CR}, \widetilde{\mathsf{CR}} \}$, the classical estimators are given by
\begin{equation}
\label{CR-class-ests}
\begin{aligned}
 \Est_\mathrm{*}^2
 :=
 &\Ncf_\mathrm{cl, *}^2 + \eta_\mathrm{cl,*}^2 + \Osc_\mathrm{cl,*}^2,
 \quad\text{where}
\\
 	 \Ncf_{\mathrm{cl,*}}^2
 	 &:=
 	 \int_\Sigma \dfrac{|\jump{u_h}|^2}{h},
 \qquad
 	 \Osc_{\mathrm{cl},*}^2
 	 :=
 	 \sum_{T \in \grid} \inf_{c \in \R} \int_T h^2 |f-c|^2
  \\
 	 \eta_{\mathrm{cl,*}}^2
 	 &:=
 	 \sum_{T \in \grid}   \eta_\mathrm{cl,*}(T)^2
 	\quad\text{with}
\\
  &\eta_\mathrm{cl,*}(T)^2
 	 :=
 	 \begin{cases}
      \int_T h^2 |f|^2 + \sum_{F \subseteq \partial T} \int_F h |\jump{\nabla u_h}\cdot n|^2,
       &\text{if }\mathrm{*}=\mathsf{CR}, 
     \\
      \int_T h^2 |f|^2 , &\text{if }\mathrm{*}= \widetilde{\mathsf{CR}}.
    \end{cases}
\end{aligned}\end{equation}
Note that both, the classical method \eqref{Eq:CR-without-smoothing} and the estimators \eqref{CR-class-ests}, are not defined for all $f \in H^{-1}(\Omega)$. More crucially, there are no continuous extensions of their definitions to $H^{-1}(\Omega)$. As a consequence, the classical method \eqref{Eq:CR-without-smoothing}
is not quasi-optimal and the estimators cannot be strictly equivalent;  cf.\ \cite[Remark~4.9]{Veeser.Zanotti:18} and \cite{Kreuzer.Veeser.Zanotti:24}. It therefore comes as no surprise that, if $ f \in L^2(\Omega)$, we have
\begin{equation}
\label{est<cl-est}
	\Est_{*}
	\lesssim
	\Est_{\mathrm{cl},\mathrm{*}}
\quad
 * \in \{\mathsf{CR},\widetilde{\mathsf{CR}}\}.
\end{equation}
In fact, the respective inequalities for the  oscillations indicators are already shown in Theorems~\ref{T:CR-error-estimator-1} and \ref{T:CR-error-estimator-2},  while those for  the indicators of the approximate $\mathring{H}^1$-distance and the conforming residual follow from Corollary \ref{C:CR-jump-distance-to-conformity} and standard residual techniques. The fact that the dependence of $\Est_{\mathrm{cl,*}}$ on $f$ does not continuously extend to $H^{-1}(\Omega)$ entails that there is a sequence $(f_k)_k \subset L^2(\Omega)$ such that  the right-hand side of \eqref{est<cl-est} tends to $\infty$, while the left-hand side remains bounded or even tends to $0$.
\end{remark}

\begin{remark}[Quasi-optimality and strict estimator equivalence]
\label{R:role-of-qopt}
The quasi-op\-ti\-mal\-i\-ty of the nonconforming method is a useful
but not necessary ingredient for the strict equivalence of error
estimators.  In fact, the usefulness is clear from
Lemma~\ref{L:conforming-residual} and, to show the non-necessity, we
outline the derivation of  a strictly equivalent error estimator for
the classical Crouzeix-Raviart method \eqref{Eq:CR-without-smoothing}. Since
the quasi-optimality enters our approach only in Lemma
~\ref{L:conforming-residual}, we can generalize the bounds therein by
accepting the additional indicator 
\begin{equation*}
  \| \Res_h^\mathtt{C} \|_{\Dual{\calE_h(V_h)}}
 :=
 \sup_{w_h \in \calE_h(V_h), \| w_h \| \leq 1 } \scp{ \Res_h^\mathtt{C}}{w_h}.
\end{equation*}
This indicator can be computed by solving a discrete problem or quantified by using the correction of a solver with strict error reduction or by a hierarchical approach as illustrated in \cite{Fierro.Veeser:06}. Combing this idea, e.g., with Theorem~\ref{T:CR-error-estimator-1} and using the notation therein shows that the estimator
\begin{equation*}
 \left(
	 \Ncf_\mathsf{CR}^2 + \eta_\mathsf{CR}^2 +   \| \Res_h^\mathtt{C} \|_{\Dual{\calE_\mathsf{CR}(\CR_1)}} ^2+ \Osc_\mathsf{CR}^2
\right)^{1/2}
\end{equation*} 
is strictly equivalent to the error $\| \nabla_h (u-u_h) \|_{L^2(\Omega)}$ of the classical Crouzeix-Raviart method \eqref{Eq:CR-without-smoothing}.
\end{remark}

%


%
%
\subsection{Discontinuous Galerkin methods of fixed arbitrary order}
\label{sec:dG}
In this section we apply the abstract framework of Sections~\ref{sec:qo-methods} and \ref{sec:posteriori-analysis} to derive strictly equivalent error estimators for quasi-optimal 
discontinuous Galerkin methods of fixed arbitrary order.  We shall restrict to the symmetric and non-symmetric interior penalty methods in \cite[Section~3.2]{Veeser.Zanotti:18b} for the ease of presentation, see also Remark~\ref{R:dG-smoother-ass}. To recall them, we use the notation of Section \ref{sec:doma-mesh-polyn}. 


Given a polynomial degree $\ell \in \mathbb{N}$ 
and a stabilization parameter $\sigma$, we define a discrete bilinear form on the possibly discontinuous piecewise polynomials \(S_\ell^0\) by
\begin{equation}
\label{dG-bilinear-form}
\begin{multlined}
	a_h(u_h,v_h)
	=
	\int_\Omega \nabla_h u_h \cdot \nabla_h v_h
	+
	\int_\Sigma 
	 \frac{\sigma}{h} \jump{u_h} \jump{v_h}
\\
 {} + \int_\Sigma 
	 \theta 
	 \jump{u_h}\mean{\partial_n v_h}
	 - 
	 \mean{\partial_n u_h} \jump{v_h}
\end{multlined}
\end{equation}
with $\theta = 1$ for the non-symmetric and $\theta = -1$ for the symmetric case. In order to ensure coercivity of the bilinear
form, the former case requires \(\sigma>0\) and the latter \(\sigma>\sigma^*\) for some sufficiently large \(\sigma^*\) depending on the dimension $d$, the shape constant \(\gamma_\grid\) and the polynomial degree \(\ell\); compare, e.g., with~\cite[Lemma~3.5 and (3.23)]{Veeser.Zanotti:18b}. The discrete problem then reads
\begin{equation}
\label{Eq:dG-qopt}
	\text{for \(f\in H^{-1}(\Omega)\)},~\text{find $u_h \in S_\ell^0$ such that }
	\forall v_h\in S_\ell^0\quad
	a_h(u_h,\,v_h) = \scp{f}{\calE_\ell v_h},
\end{equation}
where the smoothing operator $\calE_\ell:  S^0_\ell \to \mathring{H}^1(\Omega)$ is defined in \cite[Proposition~3.4]{Veeser.Zanotti:18b} and satisfies for the following \emph{moment conditions}  and  \emph{stability properties}: for all \(v_h\in S^0_\ell\), $F \in \sides^i$, $m_F\in\poly_{\ell-1}(F)$, $ T \in \grid $, $m_T\in\poly_{\ell-2}(T)$ 
\begin{subequations}
\label{Eq:dG-smoother}
\begin{gather} 
\label{Eq:dG-smoother-moments}
  \int_F \calE_\ell(v_h) m_F
  =
  \int_F \mean{v_h} m_F,
\qquad
  \int_T \calE_\ell(v_h) m_T 
  =
  \int_T v_h m_T,
\\ \label{Eq:dG-smoother-stability}
  \begin{aligned}
		h_T^{-2} \norm[L^2(T)]{v_h - \calE_{\ell} v_h}^2
		&+
		\|\nabla \calE_\ell v_h\|_{L^2(T)}^2
\\
		&\lesssim
		\|\nabla_h v_h\|_{L^2(\omega_T)}^2
		+
		\sum_{F\in \sides, F\cap T\neq\emptyset} \int_F \frac{1}{h} |\!\jump{v_h}\!|^2.
 \end{aligned}
\end{gather}
\end{subequations}

Together with the Poisson problem \eqref{Eq:Poisson}, method \eqref{Eq:dG-qopt} fits into the framework of Section~\ref{sec:qo-methods} by considering
\begin{equation}
\label{dG-setting}
\begin{gathered}
 	\tV = S^0_\ell + \mathring{H}^1(\Omega) \quad\text{with}\quad V = \mathring{H}^1(\Omega), \quad V_h = S^0_\ell,
\\
 \ta(v,w)
 =
 \int_{\Omega} \nabla_h v \cdot \nabla_h w
 +
 \int_\Sigma 
  \frac{\sigma}{h} \jump{v} \jump{w},
 \quad
 \norm{v}
 =
 \|v\|_{\texttt{dG}} := \ta(v,v)^{1/2},
\\
 a_h \text{ as in \eqref{dG-bilinear-form}},
\quad
 \text{and}\quad \calE_h = \calE_\ell.
\end{gathered} 
\end{equation}
We note that these methods are quasi-optimal but not overconsistent, i.e.\ we have $\delta_h \lesssim 1$ in Proposition~\ref{P:QuasiOptMethods}, where the hidden constants depend only on $d$, \(\gamma_\grid\), \(\ell\) and $\sigma$; see \cite[Theorems~3.6 and 3.7]{Veeser.Zanotti:18b}, which also shows that $\delta_h \searrow 0$ for $\sigma \nearrow \infty$. 

To establish the main assumptions of Section~\ref{sec:posteriori-analysis}, we follow the arguments of the first approach for the Crouzeix-Raviart method in Section~\ref{sec:CR1}, focusing on the changes due to the higher order.
Regarding a suitable operator for \eqref{A:equiv-to-dist-to-conf}, recall that averaging the function values at the Lagrange points of order \(\ell\) instead of the vertices of the mesh generalizes the  operator
from~\eqref{Eq:CR1-averaging} to an averaging operator 
\begin{align}
\label{Eq:dG-averaging}
  \calA_\ell: S_\ell^0 \to \mathring{S}^1_\ell
 \qquad\text{such that}\qquad
  \forall w_h\in\mathring{S}_\ell^1=S_\ell^0\cap \mathring{H}^1(\Omega)
 \quad
  \calA_\ell w_h=w_h
  ;
\end{align}
compare, e.g.,  with \cite[Section~2.1]{Karakashian.Pascal:03}.

\begin{lemma}[Approximate $\|\cdot\|_{\mathtt{dG}}$-distance to $\mathring{H}^1$ by averaging]
\label{L:dG-distance-to-conformity}
In the setting~\eqref{dG-setting},  the operator $\calA_\ell$  from \eqref{Eq:dG-averaging} satisfies assumption \eqref{A:equiv-to-dist-to-conf} and \(\Cav\gtrsim \sqrt{{\sigma}/{(1+\sigma)}}\). In particular, we have that,  for all $v_h \in S_\ell^0$, 
 \begin{align*}
		\sqrt{\frac{\sigma}{1+\sigma}}\|v_h - \mathcal{A}_\ell v_h\|_{\mathtt{dG}}
		\lesssim
		\inf_{v\in \mathring{H}^1(\Omega)}\|v_h - v\|_{\mathtt{dG}}
		\leq
			\|v_h - \mathcal{A}_\ell v_h\|_{\mathtt{dG}}.
\end{align*}
\end{lemma}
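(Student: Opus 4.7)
The structure parallels the proof of Lemma~\ref{L:CR-distance-to-conformity}, but the role of the faces changes because dG functions have no zero-mean constraint on their jumps and the norm $\|\cdot\|_{\mathtt{dG}}$ carries an explicit jump contribution. The upper bound is immediate: since $\calA_\ell v_h\in\mathring{S}_\ell^1\subset\mathring{H}^1(\Omega)$, it is an admissible competitor in the infimum over $\mathring{H}^1(\Omega)$.

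For the lower bound, my plan is to write $\|v_h-\calA_\ell v_h\|_{\mathtt{dG}}^2$ as the sum of the broken-gradient part and the weighted-jump part. For the jump part I use that $\calA_\ell v_h\in\mathring{H}^1(\Omega)$ has vanishing jumps, so $\jump{v_h-\calA_\ell v_h}=\jump{v_h}$ on $\Sigma$. For the broken-gradient part I invoke the classical averaging estimate
\begin{equation*}
  \|\nabla_h(v_h-\calA_\ell v_h)\|_{L^2(\Omega)}^2
  \lesssim
  \int_\Sigma\frac{|\jump{v_h}|^2}{h},
\end{equation*}
available, e.g., in \cite[Section~2.1]{Karakashian.Pascal:03}, with hidden constant depending only on $d$, $\ell$, and $\gamma_\grid$. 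Combining these two observations yields
\begin{equation*}
  \|v_h-\calA_\ell v_h\|_{\mathtt{dG}}^2
  \lesssim
  (1+\sigma)\int_\Sigma\frac{|\jump{v_h}|^2}{h}.
\end{equation*}

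On the other hand, any $v\in\mathring{H}^1(\Omega)$ satisfies $\jump{v_h-v}=\jump{v_h}$ on $\Sigma$, so
\begin{equation*}
  \|v_h-v\|_{\mathtt{dG}}^2
  \ge
  \int_\Sigma\frac{\sigma}{h}|\jump{v_h}|^2,
\end{equation*}
and taking the infimum over $v$ bounds the right-hand side of the previous display by $\tfrac{1+\sigma}{\sigma}\inf_{v\in\mathring{H}^1(\Omega)}\|v_h-v\|_{\mathtt{dG}}^2$. Taking square roots delivers the desired constant $\Cav\gtrsim\sqrt{\sigma/(1+\sigma)}$.

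I do not foresee a real obstacle: the identity $\jump{v_h-\calA_\ell v_h}=\jump{v_h}$ is structural, and the bound on $\|\nabla_h(v_h-\calA_\ell v_h)\|_{L^2}$ by scaled jumps is standard for averaging/smoothing operators that reproduce $\mathring{S}^1_\ell$. The only mild subtlety is keeping track of the $\sigma$-dependence, which enters only through the jump weight in $\|\cdot\|_{\mathtt{dG}}$ and not through the averaging bound itself; this is what produces the factor $\sqrt{\sigma/(1+\sigma)}$ in place of the dimensionless constant obtained in the Crouzeix--Raviart case.
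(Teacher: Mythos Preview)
Your proposal is correct and follows essentially the same approach as the paper: both establish the key bound $\|v_h-\calA_\ell v_h\|_{\mathtt{dG}}^2\lesssim(1+\sigma)\int_\Sigma h^{-1}|\jump{v_h}|^2$ (the paper cites \cite[(3.17)]{Veeser.Zanotti:18b} for this, while you derive it by splitting into the broken-gradient part and invoking \cite{Karakashian.Pascal:03}), and then use that the $\mathtt{dG}$-norm dominates $\sigma\int_\Sigma h^{-1}|\jump{v_h}|^2$ for any competitor $v\in\mathring{H}^1(\Omega)$.
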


\begin{proof}
The second inequality holds because the operator $\calA_\ell$ verifies assumption~\eqref{A:equiv-to-dist-to-conf} and it remains to show the first one. Similar to Lemma~\ref{L:CR-distance-to-conformity}, \cite[(3.17)]{Veeser.Zanotti:18b} assures
\begin{equation*}
	\norm[\texttt{dG}]{v_h - \mathcal{A}_\ell v_h}^2
    \lesssim
    (1+\sigma) \int_\Sigma \dfrac{|\jump{v_h}|^2}{h}.
\end{equation*}
Since the right-hand side scaled by \(\frac\sigma{1+\sigma}\) is bounded by \(\|v-v_h\|_{\texttt{dG}}\) for any \(v\in \mathring{H}^1(\Omega)\), the proof is finished.
\end{proof}

In analogy to Section~\ref{sec:CR1}, the proof of Lemma~\ref{L:dG-distance-to-conformity} shows that properly scaled jumps of $v_h$ on the mesh skeleton may be used as approximate distance. 
\begin{corollary}[Approximate $\|\cdot\|_{\mathtt{dG}}$-distance to $\mathring{H}^1$ by jumps]
\label{C:dG-jump-distance-to-conformity}
For \(v_h\in S_\ell^0\),  we have 
 \begin{align*}
  \int_\Sigma \frac{\sigma}{h} |\jump{v_h}|^2
  \le
  \inf_{v\in \mathring{H}^1(\Omega)} \|{v - v_h}\|_{\mathtt{dG}}^2
   \lesssim
   \int_\Sigma\frac {1+\sigma}{h} |\jump{v_h}|^2 
          .
\end{align*}
\end{corollary}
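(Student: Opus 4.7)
The plan is to establish the two inequalities separately, leveraging results already developed for Lemma~\ref{L:dG-distance-to-conformity}. The key observation is that the $\texttt{dG}$-norm decomposes into a broken gradient part and a jump part via the bilinear form $\widetilde{a}$ in \eqref{dG-setting}, and any admissible comparison function $v \in \mathring{H}^1(\Omega)$ has zero jumps across $\Sigma$.

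For the lower bound, I would fix an arbitrary $v \in \mathring{H}^1(\Omega)$ and observe that $\jump{v} = 0$ on every face, so $\jump{v-v_h} = -\jump{v_h}$ on $\Sigma$. Hence
\begin{equation*}
 \|v-v_h\|_{\texttt{dG}}^2
 =
 \int_\Omega |\nabla_h(v-v_h)|^2 + \int_\Sigma \frac{\sigma}{h}|\jump{v_h}|^2
 \geq
 \int_\Sigma \frac{\sigma}{h}|\jump{v_h}|^2.
\end{equation*}
Taking the infimum over $v \in \mathring{H}^1(\Omega)$ yields the first inequality.

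For the upper bound, I would exploit the fact that the averaging operator $\calA_\ell$ from \eqref{Eq:dG-averaging} produces an admissible test function $\calA_\ell v_h \in \mathring{S}^1_\ell \subset \mathring{H}^1(\Omega)$. Using this as a competitor in the infimum gives
\begin{equation*}
 \inf_{v \in \mathring{H}^1(\Omega)} \|v-v_h\|_{\texttt{dG}}^2
 \leq
 \|v_h - \calA_\ell v_h\|_{\texttt{dG}}^2,
\end{equation*}
and the proof of Lemma~\ref{L:dG-distance-to-conformity} already established the bound $\|v_h - \calA_\ell v_h\|_{\texttt{dG}}^2 \lesssim (1+\sigma) \int_\Sigma |\jump{v_h}|^2/h$ via \cite[(3.17)]{Veeser.Zanotti:18b}. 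Chaining these yields the second inequality.

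No step here is an obstacle, since both directions reduce to already-proven facts: the lower bound is a free consequence of the $\texttt{dG}$-norm decomposition together with $\jump{v}=0$ for conforming $v$, and the upper bound is merely a restatement of the jump-based estimate buried in Lemma~\ref{L:dG-distance-to-conformity}. The corollary is essentially a reorganization of that lemma in terms of jump quantities rather than distances involving the averaging operator.
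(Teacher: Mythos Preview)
Your proof is correct and matches the paper's approach: the corollary is stated without explicit proof, with the paper noting that it follows from the proof of Lemma~\ref{L:dG-distance-to-conformity}, exactly as you argue. Both directions you give---the lower bound via $\jump{v}=0$ for $v\in\mathring{H}^1(\Omega)$ and the upper bound via the averaging estimate $\|v_h-\calA_\ell v_h\|_{\mathtt{dG}}^2\lesssim(1+\sigma)\int_\Sigma|\jump{v_h}|^2/h$---are precisely the ingredients used in that lemma's proof.
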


Next, we turn to assumption \eqref{A:partition-of-unity}.  Motivated by  the inclusion \( \CR_1\subset S_\ell^0\), we choose 
\begin{equation}
\label{Eq:dG-partition-of-unity}
  \Index = \vertices,
\qquad
  \Phi_z = \Psi_z,
\qquad
  V_z = \mathring{H}^1(\omega_z),
 \qquad
  \calI_h = \calI_\mathsf{CR},
\end{equation}
which coincides with \eqref{Eq:CR-partition-of-unity} and, in particular, does not depend on the polynomial degree  $\ell$.

\begin{lemma}[Partition of unity in $H^1$]
\label{L:dG-partition-of-unity}	
In the setting \eqref{dG-setting}, the choices \eqref{Eq:dG-partition-of-unity} satisfy	assumption~\eqref{A:partition-of-unity} with constants $\Cloc, \Ccol \lesssim 1$. 
\end{lemma}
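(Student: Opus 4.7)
The plan is to verify conditions (i)--(iii) of \eqref{A:partition-of-unity} in parallel with the proof of Lemma~\ref{L:CR-partition-of-unity}, paying attention only to the jump terms in the dG norm and to the use of the different smoother $\calE_\ell$.

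Properties (i) and (ii) transfer almost verbatim. The Courant basis $\{\Psi_z\}_{z\in\vertices}\subset W^{1,\infty}(\Omega)$ is a partition of unity with $\operatorname{supp}\Psi_z=\omega_z$, so $v\Psi_z\in\mathring{H}^1(\omega_z)=V_z$ for every $v\in V$, giving (i). For (ii), since $v_z\in V_z\subset V=\mathring{H}^1(\Omega)$, the sum $\sum_z v_z$ lies in $\mathring{H}^1(\Omega)$, its jumps vanish, and $\|\sum_z v_z\|_{\mathtt{dG}}^2=\|\nabla\sum_z v_z\|_{L^2(\Omega)}^2$. Counting that each element is contained in at most $d+1$ stars then yields $\Ccol\le\sqrt{d+1}$, exactly as in Lemma~\ref{L:CR-partition-of-unity}.

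For (iii), the decisive observation is that $w:=v-\calE_\ell\calI_\mathsf{CR} v$ lies in $\mathring{H}^1(\Omega)$, because $\calE_\ell$ maps into $\mathring{H}^1(\Omega)$. Hence $w\Psi_z\in\mathring{H}^1(\omega_z)$ has no jumps, $\|w\Psi_z\|_{\mathtt{dG}}=\|\nabla(w\Psi_z)\|_{L^2(\omega_z)}$, and also $\|v\|_{\mathtt{dG}}=\|\nabla v\|_{L^2(\Omega)}$. Using $\|\Psi_z\|_{L^\infty}=1$ and $\|\nabla\Psi_z\|_{L^\infty}\eqsim h_z^{-1}$, the same product-rule splitting as in the Crouzeix-Raviart proof yields
\begin{equation*}
\|w\Psi_z\|_{\mathtt{dG}}
\lesssim
h_z^{-1}\|v-\calI_\mathsf{CR} v\|_{L^2(\omega_z)}
+h_z^{-1}\|\calI_\mathsf{CR} v-\calE_\ell\calI_\mathsf{CR} v\|_{L^2(\omega_z)}
+\|\nabla w\|_{L^2(\omega_z)},
\end{equation*}
and the first and third terms are controlled by $\|\nabla v\|_{L^2}$ on a suitable enlarged patch by the standard local $L^2$-approximation and $H^1$-stability of $\calI_\mathsf{CR}$.

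The only genuinely new step is applying \eqref{Eq:dG-smoother-stability} with $v_h=\calI_\mathsf{CR} v$, whose right-hand side contributes not only $\|\nabla_h\calI_\mathsf{CR} v\|_{L^2}$ (classical) but also the weighted jumps $\int_F h^{-1}|\jump{\calI_\mathsf{CR} v}|^2$. To absorb the jumps I exploit $\jump{v}=0$ for $v\in\mathring{H}^1(\Omega)$, rewriting $\jump{\calI_\mathsf{CR} v}=\jump{\calI_\mathsf{CR} v-v}$, and combine a scaled trace inequality on each of the two elements adjacent to $F$ with the local $L^2$- and $H^1$-error estimates for $\calI_\mathsf{CR}$, obtaining $\int_F h^{-1}|\jump{\calI_\mathsf{CR} v}|^2\lesssim\|\nabla v\|_{L^2(\omega_F)}^2$. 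Squaring, summing in $z\in\vertices$, and invoking the shape-regularity-controlled finite overlap of the enlarged patches appearing on the right-hand sides then delivers $\Cloc\lesssim 1$. I do not foresee a serious obstacle; the only point requiring care is that the jump contribution in the $\calE_\ell$-stability bound cannot simply be dropped and must be absorbed via the CR-interpolation property together with $v\in\mathring{H}^1(\Omega)$, which is precisely the reason the choice $\calI_h=\calI_\mathsf{CR}$ in \eqref{Eq:dG-partition-of-unity} is convenient.
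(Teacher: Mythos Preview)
Your proposal is correct and follows essentially the same approach as the paper: properties (i) and (ii) carry over verbatim from Lemma~\ref{L:CR-partition-of-unity}, and (iii) is obtained by replacing the $\calE_\mathsf{CR}$-stability \eqref{Eq:CR-smoother-stability} with the $\calE_\ell$-stability \eqref{Eq:dG-smoother-stability}. You are in fact more explicit than the paper, which merely points to this replacement without spelling out how the additional jump term $\sum_F \int_F h^{-1}|\jump{\calI_\mathsf{CR} v}|^2$ on the right-hand side of \eqref{Eq:dG-smoother-stability} is absorbed; your argument via $\jump{\calI_\mathsf{CR} v}=\jump{\calI_\mathsf{CR} v-v}$, the scaled trace inequality, and the local CR-interpolation error is precisely the missing detail.
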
 

\begin{proof}
Since the difference between the settings \eqref{setting-for-Poisson-and-CR1} and \eqref{dG-setting} does not affect the proofs of (i) and (ii) of \eqref{A:partition-of-unity} in Lemma~\ref{L:CR-partition-of-unity}, they  are still valid here. The proof  of condition (iii), however, requires a small change due to the different smoothing operators, namely invoking the stability properties \eqref{Eq:dG-smoother-stability} of $\calE_\ell$ instead of those of $\calE_\mathsf{CR}$ in \eqref{Eq:CR-smoother-stability}.  
\end{proof}

In order to motivate the simple test functions and functionals for assumption~\eqref{A:construction-Pz-local}, we follow Remark~\ref{R:choosing-simple-fcts} and  observe that, for a discrete trial function \(v_h\in S_\ell^0\) and a test function \(v\in \mathring{H}^1(\Omega)\), we have
\begin{equation}
\label{Laplace-action-dG}
  \widetilde a( v_h,v)
  =
  -\sum_{T\in\grid}\int_{T}\Delta v_h \psi 
  +
  \sum_{F\in\sides^i} \int_{F} \jump{\nabla v_h }\cdot\normal v
\end{equation}
thanks to $\jump{v}=0$ on all faces $F \in \sides$. Thus, although the extended bilinear form differs from Section~\ref{sec:CR1}, the left-hand side has the same piecewise structure as the one in \eqref{Eq:action-laplacian}, only with polynomial densities of higher order. Given any vertex $z\in\vertices$, this suggests to choose the simple functionals
\begin{subequations}
\label{dG-simple}
\begin{equation} \begin{aligned}
\label{Eq:dG-simple-functionals}
  D_z
  =
  \Big\{ \chi  &\in H^{-1}(\omega_z) \mid
   \chi(v) = \sum_{T\in\grid_z} \int_T r_T v 
      + \sum_{F\in\sides_z^i} \int_F r_F v 
 \\
 & \phantom{\in}
 \text{ with } r_T \in \poly_{\ell-1}(T), \,  T \in \grid_z,  \text{ and }
  r_F\in \poly_{\ell-1}(F), F \in\sides_z^i
  \Big\}.
\end{aligned} \end{equation}
To define the accompanying simple functions, we need to extend the polynomials on faces into the volume. To this end, given a face \(F=\operatorname{conv\,hull}\{z_0\ldots,z_{d-1}\}\in\sides\) and its midpoint $z_F$,  we recall that the extension operator  $E_F: \poly_{\ell-1}(F)
\to H^1(\omega_F)$ given by
\begin{align*}
	(E_F v)(x) 
	:=
	v\left( z_F + \sum_{i=1}^{d-1} \Psi_{z_i}(x) (z_i - z_F) \right),
\quad
  x \in \omega_F,
\end{align*}
satisfies the stability bound 
\begin{align*}
	\norm[L^2(\omega_F)]{E_F v}
	\lesssim 
	h_F^{\frac12} \norm[L^2(F)]{v};
\end{align*}
see, e.g.,  \cite[Lemma 4.20]{Bonito.Canuto.Nochetto.Veeser:24}.  The simple functions are then
\begin{equation}
\label{Eq:dG-simple-test-functions}
\begin{aligned}
	S_z
	&=
  \operatorname{span} \big\{p_T \Psi_T\mid p_T\in\poly_{\ell-1}(T), T\in\grid_z \big\}
\\
  & \qquad \oplus
  	\operatorname{span} \big\{ (E_Fp_F) \Psi_F \mid p_F \in\poly_{\ell-1}(F), F\in\sides_z^i \big\}.
\end{aligned}
\end{equation}
\end{subequations}
These  choices coincide with those for conforming methods in~\cite{Kreuzer.Veeser.Zanotti:24}. 

\begin{lemma}[Local projections for $\mathsf{dG}$@Poisson]
\label{L:dG-local-projections}
In the settings \eqref{dG-setting} and \eqref{Eq:dG-partition-of-unity},  the choices \eqref{dG-simple} satisfy assumption	 \eqref{A:construction-Pz-local} with \(\CL\lesssim1\) and \(\CNL=1\). Hence, \eqref{Eq:construction-Pz-local} defines projections
$\calP_z: H^{-1}(\omega_z) \to D_z \subset H^{-1}(\omega_z)$ with  
\begin{equation*}
 \forall g \in H^{-1}(\Omega)
\quad
 \sum_{z\in\vertices} 	\norm[H^{-1}(\omega_z)]{\calP_z g}^2
 \lesssim
 \sum_{z\in\vertices} \norm[H^{-1}(\omega_z)]{g}^2.
\end{equation*}
\end{lemma}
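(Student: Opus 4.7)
The plan is to verify assumptions (i)–(v) of \eqref{A:construction-Pz-local} in order. Since the element bubbles $p_T\Psi_T$ and face bubbles $(E_Fp_F)\Psi_F$ are supported in $\omega_z$ and vanish on $\partial\omega_z$, $S_z\subset \mathring{H}^1(\omega_z)=V_z$, hence $\widetilde V_z=V_z$, giving (i) and (v) with $\CNL=1$ immediately. Condition (ii) follows from the integration-by-parts formula \eqref{Laplace-action-dG} applied to $v_h\in S^0_\ell$ tested against $v\in V_z$: the jump-penalty part of $\widetilde a$ vanishes because $\jump{v}=0$, the volume term density is $-\Delta(v_h|_T)\in\poly_{\ell-2}(T)\subset\poly_{\ell-1}(T)$, and the face-term density $\jump{\nabla v_h}\cdot\normal|_F$ lies in $\poly_{\ell-1}(F)$, matching the template \eqref{Eq:dG-simple-functionals}. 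Condition (iii) is a dimension count: the local maps $p\mapsto p\,\Psi_T$ on each $T$ and $p_F\mapsto(E_Fp_F)\Psi_F$ on each $F$ are injective (their restrictions to $T$ respectively $F$ are nonzero on a full-dimensional set), so $\dim S_z=\dim D_z=\sum_{T\in\grid_z}\dim\poly_{\ell-1}(T)+\sum_{F\in\sides_z^i}\dim\poly_{\ell-1}(F)<\infty$.

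The core of the proof is the inf-sup condition (iv), for which I would adapt the constructive argument in Lemma~\ref{L:CR-local-projections-2} to higher order. Given $\chi\in D_z$ with densities $r_T\in\poly_{\ell-1}(T)$, $r_F\in\poly_{\ell-1}(F)$ and an arbitrary $v\in\mathring{H}^1(\omega_z)$, I would explicitly exhibit $s\in S_z$ satisfying $\langle\chi,s\rangle=\langle\chi,v\rangle$ together with $\|\nabla s\|_{L^2(\omega_z)}\lesssim\|\nabla v\|_{L^2(\omega_z)}$; taking the supremum over $v$ then proves (iv) with $\CL\lesssim 1$. For each $F\in\sides_z^i$, determine $s_F\in\poly_{\ell-1}(F)$ uniquely by the face problem
\begin{equation*}
 \forall p_F\in\poly_{\ell-1}(F)\quad \int_F (E_Fs_F)|_F\,\Psi_F\,p_F=\int_F v\,p_F,
\end{equation*}
which is solvable because $p_F\mapsto (E_Fp_F)|_F$ is a bijection on $\poly_{\ell-1}(F)$ and the $\Psi_F$-weighted $L^2$-pairing is positive definite since $\Psi_F>0$ in the interior of $F$. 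Then, for each $T\in\grid_z$, pick $s_T\in\poly_{\ell-1}(T)$ by
\begin{equation*}
 \forall p_T\in\poly_{\ell-1}(T)\quad \int_T s_T\,\Psi_T\,p_T=\int_T v\,p_T-\sum_{\substack{F\in\sides_z^i\\ F\subset\partial T}}\int_T (E_Fs_F)\,\Psi_F\,p_T,
\end{equation*}
which is uniquely solvable since $\Psi_T>0$ in $\operatorname{int}(T)$, and set $s:=\sum_{T\in\grid_z}s_T\Psi_T+\sum_{F\in\sides_z^i}(E_Fs_F)\Psi_F\in S_z$.

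The crucial geometric observation that makes the face problems above \emph{decouple} — so that no patch-wide system needs to be inverted — is that $\Psi_{F'}|_F\equiv 0$ for any two distinct interior faces $F,F'\in\sides_z^i$. Indeed, $F'\neq F$ contains some vertex $w\notin F$, and the Courant function $\Psi_w$ is affine on the $(d-1)$-simplex $F$ and vanishes at all its vertices, so $\Psi_w|_F\equiv 0$ and therefore $\Psi_{F'}|_F=\prod_{w\in F'}\Psi_w|_F\equiv 0$. Combined with the familiar vanishing $\Psi_T|_{\partial T}\equiv 0$ of element bubbles, this forces $s|_F=(E_Fs_F)|_F\,\Psi_F|_F$ on each $F$, whence $\int_F r_F s=\int_F r_F v$ by definition of $s_F$, and likewise $\int_T r_T s=\int_T r_T v$ by definition of $s_T$, yielding $\chi(s)=\chi(v)$. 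The stability $\|\nabla s\|_{L^2(\omega_z)}\lesssim\|\nabla v\|_{L^2(\omega_z)}$ is then obtained via the standard bubble-norm equivalences on finite-dimensional polynomial spaces (cf.~\cite[Proposition~1.4]{Verfuerth:13}), the stated $L^2$-stability of $E_F$, together with scaled trace, inverse and Poincaré inequalities on $\omega_z$, mirroring the final estimate in the proof of Lemma~\ref{L:CR-local-projections-2}. Once (iv) is established, Proposition~\ref{P:construction-Pz-local} immediately provides the projections $\calP_z$ together with the asserted collective stability bound. The main technical hurdle is bookkeeping the scaling constants across $T$ and $F$ so that they ultimately depend only on $d$, $\gamma_\grid$, $\ell$, and $\sigma$; the decoupling $\Psi_{F'}|_F\equiv 0$ is what prevents a global patch coupling and keeps the argument essentially local.
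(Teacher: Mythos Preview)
Your proof is correct and follows essentially the same approach as the paper. The paper's own proof is very brief: it notes that $S_z\subset V_z$ gives (i), (v) with $\CNL=1$, that \eqref{Laplace-action-dG} gives (ii), states (iii), and for (iv) simply cites \cite{Kreuzer.Veeser.Zanotti:24} while remarking that the argument there is ``similar to the corresponding one in Lemma~\ref{L:CR-local-projections-2}.'' Your constructive proof of (iv)---solving decoupled weighted face problems first and then element problems, exactly in the spirit of Lemma~\ref{L:CR-local-projections-2}---is precisely the argument the paper defers to that reference, so you have filled in what the paper outsourced rather than taken a different route.
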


\begin{proof}
Given any vertex $z \in \vertices$, we have \(S_z\subset V_z\), viz.\ the simple functions are locally conforming, and therefore (i) of \eqref{A:construction-Pz-local} holds with $\tV_z = \mathring{H}^1(\omega_z)$, (v) with $\CNL=1$, and (ii) in view of \eqref{Laplace-action-dG}. Condition (iii) holds because $S_z$ determines $D_z$. Condition~(iv) with $\CL \lesssim
1$ is shown in  \cite{Kreuzer.Veeser.Zanotti:24} by using an argument similar to the corresponding one in Lemma~\ref{L:CR-local-projections-2}.  Then Proposition~\ref{P:construction-Pz-local} implies the existence of $\calP_z$ and the collective stability bound.
\end{proof}

Using the above lemmas in the abstract a~posteriori analysis of Section~\ref{sec:posteriori-analysis} leads to the following strictly equivalent estimator. 
\begin{theorem}[Estimator for $\mathsf{dG}$@Poisson]
\label{T:dG-error-estimator}
Let $u \in \mathring{H}^1(\Omega)$  be the weak solution of the Poisson problem \eqref{Eq:Poisson} and $u_h \in S_\ell^0$ its quasi-optimal discontinuous Galerkin approximation from \eqref{Eq:dG-qopt}. Given tuning constants $C_1, C_2,C_3 >0$,
define
\begin{equation*}
\begin{aligned}
		\Est_\ell^2
		:= 
		 C_1 &\Ncf_\ell^2 + C_2 \eta_\ell^2 + C_3 \Osc_\ell^2,
		\quad\text{where}
		\\
		\Ncf_\ell^2
		&:=
		\norm[\mathtt{dG}]{u_h - \calA_\ell u_h}^2,
		\quad
		\Osc_\ell^2
		:=
		\sum_{z \in \vertices} \norm[H^{-1}(\omega_z)]{f-\calP_z f}^2,
		\\
		\eta_\ell^2
		&:=
		\sum_{z \in \vertices}   \eta_{\ell,z}^2
		\quad\text{with}\quad
		\eta_{\ell,z}
		:=
		\sup_{ s \in S_z}
		 \frac{|\scp{f}{s} - \int_{\Omega} \nabla_h u_h \cdot \nabla s|}{\|\nabla s\|_{L^2(\omega_z)}}		
\end{aligned}\end{equation*}
with the averaging operator $\calA_\ell$ from \eqref{Eq:dG-averaging}, the local projections $\calP_z$ from Lemma~\ref{L:dG-local-projections}, the test simple functions $S_z$ from \eqref{Eq:dG-simple-test-functions}, and $\eta_{\ell,z}$ can be computed as indicated in Remark~\ref{R:computing-local-projections}. This estimator quantifies the error by
\begin{equation*}
  \underline{C} \Est_\ell
  \lesssim
  \norm[\mathtt{dG}]{u_h - u }
  \lesssim
   \underline{C} \Est_\ell
 \qquad
  \eta_{\ell,z} \leq \norm[L^2(\omega_z)]{\nabla_h(u_h-u)},
\end{equation*}
where the equivalence constants $\underline{C}, \overline{C}$ depend only on the dimension $d$, the shape constant $\gamma_\grid$, the polynomial degree $\ell$, the stabilization parameter $\sigma$, and the tuning constants $C_1, C_2$ and $C_3$. 

Furthermore, if $f \in L^2(\Omega)$, the oscillation indicator is bounded in terms of the classical higher-order $L^2$-oscillation:
\begin{equation*}
	\Osc_\ell^2
	\lesssim
	\sum_{T \in \grid} h_T^2 \inf_{p \in \poly_{\ell-1}} \norm[L^2(T)]{f-p}^2.
\end{equation*}
\end{theorem}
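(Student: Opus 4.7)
My plan is to apply the abstract Theorem~\ref{T:error-estimator} in the setting \eqref{dG-setting}, whose three structural assumptions have already been verified in Lemmas~\ref{L:dG-distance-to-conformity}, \ref{L:dG-partition-of-unity}, and~\ref{L:dG-local-projections}. In contrast to the Crouzeix-Raviart case, the dG method is quasi-optimal but \emph{not} overconsistent, so the term $\delta_h \lesssim 1$ from Proposition~\ref{P:QuasiOptMethods} appears in the upper bound of Theorem~\ref{T:error-estimator}; it is however uniformly controlled in terms of $d$, $\gamma_\grid$, $\ell$, $\sigma$ and is simply absorbed into the constant $\overline{C}$. The abstract indicators $\Ncf_h$ and $\Osc_h$ coincide with $\Ncf_\ell$ and $\Osc_\ell$ respectively, so the only nontrivial work towards the strict equivalence is to identify $\eta_\ell$ with the abstract PDE indicator $\sum_{z\in\vertices} \norm[H^{-1}(\omega_z)]{\calP_z\Res_h^{\mathtt{C}}}^2$.

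The key identification I would establish is $\eta_{\ell,z} = \norm[\Dual{S_z}]{\calP_z\Res_h^{\mathtt{C}}}$. Since $S_z\subset\mathring{H}^1(\omega_z)$ has no jumps on the skeleton, the denominator $\|\nabla s\|_{L^2(\omega_z)}$ equals $\|s\|_{\mathtt{dG}}$, while for $s\in S_z$ the numerator of $\eta_{\ell,z}$ is exactly $|\scp{\Res_h^{\mathtt{C}}}{s}|$, which equals $|\scp{\calP_z\Res_h^{\mathtt{C}}}{s}|$ by the defining relation \eqref{Eq:construction-Pz-local}. Combining condition~(iv) of \eqref{A:construction-Pz-local} with the inclusion $S_z\subset V_z$ then yields the equivalence $\eta_{\ell,z}\eqsim\norm[H^{-1}(\omega_z)]{\calP_z\Res_h^{\mathtt{C}}}$ with constants depending only on $\CL\lesssim1$, so the claimed strict estimator equivalence follows directly from Theorem~\ref{T:error-estimator}.

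For the local lower bound, any $s\in S_z$ is supported in $\omega_z$ and belongs to $\mathring{H}^1(\Omega)$; testing the weak formulation of \eqref{Eq:Poisson} against $s$ rewrites the numerator as $|\int_{\omega_z}\nabla(u-u_h)\cdot\nabla s|$, and Cauchy-Schwarz gives the constant-free bound $\eta_{\ell,z}\le\norm[L^2(\omega_z)]{\nabla_h(u-u_h)}$. For the oscillation bound under $f\in L^2(\Omega)$, I would take $\bar f$ to be the element-wise $L^2$-projection of $f$ onto $S_{\ell-1}^0$; the structure of \eqref{Eq:dG-simple-functionals} makes $\bar f_{|\omega_z}$ induce a functional in $D_z$, hence $\calP_z\bar f = \bar f$. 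Subtracting $\bar f$, applying the collective stability \eqref{Eq:boundedness-Pz-local}, and using the scaling $\norm[H^{-1}(\omega_z)]{g}\lesssim h_z\norm[L^2(\omega_z)]{g}$ together with finite patch overlap reorganizes the sum into the announced higher-order $L^2$-oscillation bound. The most delicate point is the denominator identification in $\eta_{\ell,z}$ together with the validity of the inf-sup condition (iv) for higher-order simple functions, which is precisely what Lemma~\ref{L:dG-local-projections} provides; the remaining steps parallel the proof of Theorem~\ref{T:CR-error-estimator-1}, with the cosmetic addition of absorbing $\delta_h$ into the constants.
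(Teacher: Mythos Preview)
Your proposal is correct and follows essentially the same route as the paper: apply Theorem~\ref{T:error-estimator} via Lemmas~\ref{L:dG-distance-to-conformity}, \ref{L:dG-partition-of-unity}, \ref{L:dG-local-projections}, identify $\eta_{\ell,z}=\norm[\Dual{S_z}]{\Res_h^{\mathtt{C}}}$ and use (iv)--(v) of \eqref{A:construction-Pz-local} for the equivalence with $\norm[H^{-1}(\omega_z)]{\calP_z\Res_h^{\mathtt{C}}}$, then get the local lower bound by Cauchy--Schwarz after testing the weak formulation with $s\in S_z$. Your treatment of the oscillation bound (invariance of the piecewise $\poly_{\ell-1}$ projection under $\calP_z$, collective stability, Poincar\'e, finite overlap) is the natural higher-order analogue of the argument spelled out for Theorem~\ref{T:CR-error-estimator-1}; only a minor slip: in the local lower bound the integrand should read $\nabla_h(u-u_h)\cdot\nabla s$ rather than $\nabla(u-u_h)\cdot\nabla s$, since $u_h\in S_\ell^0$ need not be continuous.
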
 

\begin{remark}[Estimator variants for $\mathsf{dG}$@Poisson]
\label{R:dG-indicators}
The estimator in Theorem~\ref{T:dG-error-estimator} amounts to an approach based upon (discrete) local problems. Indeed, the solution of
\begin{equation*}
	\text{find } v _z \in S_z \text{ such that }
	\forall  s \in S_z \; \int_{\omega_z} \nabla v_z \cdot \nabla s = \scp{\Res_h^\mathtt{C}}{s}	 
\end{equation*}
satisfies the identity $\eta_{\ell,z} = \norm[L^2(\omega_z)]{\nabla v_z}$. For alternative indicators, we refer to Remark~\ref{R:CR-computable-indicator} as well as \cite[Section~4.9]{Bonito.Canuto.Nochetto.Veeser:24}.
\end{remark}

\begin{proof}[Proof of Theorem~\ref{T:dG-error-estimator}]
Supposing the settings \eqref{dG-setting} and \eqref{Eq:dG-partition-of-unity}, Lemmas~\ref{L:dG-distance-to-conformity}, \ref{L:dG-partition-of-unity}, and \ref{L:dG-local-projections} verify \eqref{A:equiv-to-dist-to-conf}, \eqref{A:partition-of-unity} and \eqref{A:construction-Pz-local}. Hence, we can apply Theorem~\ref {T:error-estimator}  and  the claimed equivalence follows from
  \begin{align*}
    \sum_{z \in \vertices} \norm[H^{-1}(\omega_z)]{\calP_z \Res_h^{\mathtt{C}}}^2
    \eqsim \sum_{z \in \vertices}
    \norm[S_z']{\Res_h^{\mathtt{C}}}^2,
\end{align*}
which is a consequence of (iv) and (v) in \eqref{A:construction-Pz-local}. The lower bound is an immediate consequence of the definition of $\eta_{\ell,z}$ and $\scp{\Res_h^\mathtt{C}}{s} = \int_\Omega \nabla_h (u-u_h) \cdot \nabla s$ for all $s \in S_z$. 
\end{proof}

\begin{remark}[Assumptions for smoothing operator]
\label{R:dG-smoother-ass}
Theorem~\ref{T:dG-error-estimator} assumes that the smoothing operator in \eqref{Eq:dG-qopt} verifies \eqref{Eq:dG-smoother}. We emphasize that this covers high-order smoothing operator which are only invariant  on $\mathring{S}^1_1$. Although such operators are not quasi-optimal  in the sense of \cite[Definition~2.1]{Carstensen.Nataraj:22}, they lead to quasi-optimal methods \cite[Theorem~3.10]{Veeser.Zanotti:18b}. In fact, the required stability property \eqref{Eq:dG-smoother-stability} for quasi-optimality and Theorem~\ref{T:dG-error-estimator} hinges only on an local invariance  within $\mathring{S}^1_1$, which is a proper subspace of the full conforming subspace $\mathring{S}^1_\ell$ for $\ell\geq2$.
\end{remark}

\section{Estimators for the biharmonic problem}
\label{sec:Biharmonic}
%
%
In this section, we continue to illustrate the generality of the guidelines in Section~\ref{sec:posteriori-analysis} for an a~posteriori analysis of nonconforming methods. To this end, we consider the fourth-order problem of the two-dimensional biharmonic equation
with clamped boundary conditions:
\begin{equation}
	\label{Eq:Biharmonic}
	\Delta \Delta u = f \quad\text{in }\Omega \subset \R^2,
	\qquad\text{and}\qquad
	u=\partial_{\normal} u=0 \quad\text{on }\partial\Omega.
\end{equation}
Generally speaking, fourth-order problems are challenging test cases for nonconforming techniques because, in contrast to second-order problems, the conforming subspaces of the underlying discrete spaces may be so small that they are unusable for theoretical or practical purposes. To illustrate how to handle this  adverse feature within the aforementioned guidelines, we derive strictly equivalent error estimators for a quasi-optimal \emph{$C^0$-interior penalty} method  of lowest order and a quasi-optimal \emph{Morely} method. 
For convenience of the reader, we recall the weak formulation of~\eqref{Eq:Biharmonic} underlying these methods:
\begin{equation*}
	\text{ for } f \in H^{-2}(\Omega)
	\text{, find $u \in \mathring{H}^2(\Omega)$ such that }
	\forall v \in \mathring{H}^2(\Omega)\;
	\int_\Omega D^2u : D^2v = \scp{f}{v},
\end{equation*}
where, in addition to the assumptions and notation of Section~\ref{sec:doma-mesh-polyn}, 
\begin{gather*}
	\mathring{H}^2(\Omega)
	= 
	\{ v \in H^2(\Omega) \mid v = \partial_\normal v  = 0 \text{ on }\partial\Omega \},
	\quad
	H^{-2}(\Omega) = \mathring{H}^2(\Omega)', 
	\\ \text{and}\quad
	D^2 u : D^2 v
	=
	\sum_{i,j=1}^2\frac{\partial^2u}{\partial x_i\partial x_j}\frac{\partial^2v}{\partial x_i\partial x_j}.
\end{gather*} 

\subsection{$\mathring{H}^2$-conforming quadratics and $\HCT$ elements}
\label{S:H20-quad-HCT}
The piecewise quadratics $S^0_2$ underlie both the Morley method \cite{Morley:68} and the $C^0$-interior penalty method \cite{Brenner.Sung:05} of lowest order. In this section, we recall that  their $\mathring{H}^2(\Omega)$-conforming subspace $S^0_2 \cap \mathring{H}^2(\Omega)$ may be trivial, whence other conforming discrete functions have to be used in the a~posteriori error analysis (and in a quasi-optimal method). Furthermore, for the later purpose, we recall properties of the $H^2$-conforming $\HCT$ elements.

The following example illustrates the well-known fact from  \cite[Theorem~3]{DeBoor.DeVore:1983} that smoothness requirements for the approximate may adversely affect their approximation properties in multidimensions.
\begin{example}[No $\mathring{H}^2$-conforming quadratics]
\label{E:no-H20-conf-quads}
Subdivide \(\Omega=(0,1)^2\) for a given \(n\in \mathbb{N}\) into \(n^2\) equal sized squares and obtain a triangulation \(\grid\) by inserting in each square the  diagonal parallel to the line \(\{(x,x)\mid x\in\mathbb{R}\}\). Then \(S_2^0\cap \mathring{H}^2(\Omega)=\{0\}\). To see this, let $v \in S_2^0\cap \mathring{H}^2(\Omega)$ and consider first the square containing the origin, denoting by $T_i$, $i=1,2$, its two triangles. Factorizing $v_{|T_i} \in \poly_2$ by means of the boundary conditions yields $v = c_i a_i^2$ on $T_i$ with $c_i \in \mathbb{R}$ and $a_i \in \poly_1$, $i=1,2$. The $C^1$-transition along the diagonal  \(\{(x,x)\mid x\in\mathbb{R}\}\) implies $c_1 = c_2$ and $\nabla a_1 = \nabla a_2$ and therefore $v_{|(T_1 \cup T_2)} = 0$. Inductively repeating this argument for the other squares provides the desired identity $v =0$. 
\end{example}

In the Sections~\ref{sec:CR1} and \ref{sec:dG} for the Poisson problem, the operator $\calA_h$ to approximate the distance of conformity $\norm{v_h-\Pi_V v_h}$ has values in $\mathring{S}^1_\ell = V_h \cap V $. This is algorithmically convenient as there is no need to implement additional shape functions. However, in situations with $V_h \cap \mathring{H}^2(\Omega) = \{0\}$ as in Example~\ref{E:no-H20-conf-quads}, the crucial constant $\Cav$ in Proposition~\ref{P:inds-for-dist-to-conf} vanishes for $h\to0$ whenever the discrete spaces can approximate a nontrivial exact solution.  Indeed, under these assumptions, we have
\begin{align*}
	\Cav \norm{u}
	=
	\lim_{h\to 0} \Cav \norm{\Pi_{V_h}u}
	=
	\lim_{h\to 0}  \Cav \norm{(\calA_h-\mathrm{id}_{\widetilde{V}}) \Pi_{V_h}u}
	\le
	\lim_{h\to 0}   \norm{u-\Pi_{V_h}u} = 0.
\end{align*}
Thus, a generic approach for nonconforming quadratics has to employ additional shape functions with a sufficiently large $\mathring{H}^2$-conforming subspace.

Conforming discrete functions also play a role in bounding the conforming residual $\Res_h^\texttt{C}$, namely trough $\calE_h(V_h)$ in the near orthogonality of Lemma~\ref{L:conforming-residual}.  Of course, Example~\ref{E:no-H20-conf-quads} excludes also that the smoothing operator $\calE_h$ of quasi-optimal methods with piecewise quadratics has values in $V_h \cap \mathring{H}^2(\Omega)$. In fact, the smoothers of the  quasi-optimal  methods in \cite{Veeser.Zanotti:18b,Veeser.Zanotti:19b} are based upon $\HCT$ elements. Since we shall also build here on these elements, we recall them and then prove some useful properties of their nodal basis.

Let
\begin{align}\label{df:HCT}
	\HCT:=\left\{w_h\in C^1(\overline{\Omega})\mid 
	{w_h}_{|T}\in S_3^2(\grid_T),~ T\in\grid\right\} \subset H^2(\Omega)
\end{align}
be the \(H^2(\Omega)\)-conforming \emph{Hsieh-Clough-Tocher} space of \cite{Clough.Tocher:65}. Hereafter, for each \(T\in\grid\), the sub-triangulation  \(\grid_T\) is obtained by drawing three lines from the barycenter \(m_T\) to the vertices of \(T\) and consists of three triangles. Each function \(w_h\in\HCT\) is uniquely defined by the values \(w_h(z)\) of the function itself and its gradient \(\nabla w_h(z)\) in the vertices \(z\in\vertices\) of \(\grid\) and by the normal derivatives \(\nabla w_h(m_F)\cdot\normal_F\) at the midpoints \(m_F\) of each \(F\in\sides\). We denote the corresponding nodal basis functions
by $\Upsilon^0_z$ (function values), $\Upsilon^1_z$, $\Upsilon^2_z$ (respective partial derivatives), \(z\in\vertices\), and $\Upsilon_F$ (normal derivatives in \(m_F\)), \(F\in\sides\). 
\begin{lemma}[Properties of $\HCT$ basis]
	\label{L:ScalingUpsilon}
	We have
	$
	\sum_{z \in \vertices} \Upsilon^0_z = 1 \text{ in }\Omega,
	$ 
	where, for all $z \in \vertices$,
	\begin{equation*}
		\operatorname{supp}\Upsilon_z^0=\omega_z,
		\quad
		h_z^2\norm[L^\infty(\Omega)]{D^2\Upsilon_z^0}\eqsim
		h_z\norm[L^\infty(\Omega)]{\nabla\Upsilon_z^0}\eqsim\norm[L^\infty(\Omega)]{\Upsilon_z^0}\eqsim1.
	\end{equation*}
	Furthermore, for all $z \in \vertices$, $j=1,2$,  all $T\in\grid$ with $T\ni z$,  as well as  all \(F\in\sides\), $T \in \grid$ with $T \supset F$,
	\begin{equation*}
		\operatorname{supp}\Upsilon_z^j = \omega_z,
		\quad
		\norm[L^2(T)]{D^2\Upsilon_z^j} \lesssim 1,
		\qquad 
		\operatorname{supp}\Upsilon_F = \omega_F,
		\quad
		\norm[L^2(T)]{D^2\Upsilon_F} \lesssim 1.
	\end{equation*}
	All hidden constants only depend on the shape constant \(\gamma_\grid\).
\end{lemma}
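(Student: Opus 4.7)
My plan is to derive all the claims from two ingredients: (i) unisolvence of the HCT nodal interpolation on every triangle $T\in\grid$, and (ii) standard affine equivalence via the map $F_T(\hat x)=B_T\hat x+b_T$ from a fixed reference triangle $\hat T$ onto $T$, whose Jacobian satisfies $\|B_T\|\eqsim h_T\eqsim\|B_T^{-1}\|^{-1}$ under shape regularity. Shape regularity also gives $h_T\eqsim h_z$ for every $T\in\grid_z$, which allows me to replace $h_T$ by $h_z$ at the end.

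For the partition-of-unity identity, I would apply HCT nodal interpolation to the constant function $1\in \HCT$: its degrees of freedom are $1$ at every vertex and $0$ for every gradient and normal-derivative DOF, so unisolvence yields $1=\sum_{z\in\vertices}\Upsilon_z^0$. The support statements follow by the same unisolvence argument. On any $T\in\grid$ with $z\notin T$, no vertex of $T$ equals $z$ and no edge of $T$ has $z$ as endpoint, so every DOF of $\Upsilon_z^j$ ($j=0,1,2$) vanishes on $T$, forcing $\Upsilon_z^j{}_{|T}\equiv 0$; likewise $\Upsilon_F$ vanishes on any $T$ with $F\not\subset\partial T$. The reverse inclusions are immediate from nonvanishing of the distinguished DOF.

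For the scaling estimates of $\Upsilon_z^0$, observe that the pullback $\Upsilon_z^0\circ F_T$ has the same DOFs as a reference basis function $\hat\Upsilon_{\hat z}^0$ at a fixed reference vertex, because function-value DOFs are affinely invariant. Thus $\Upsilon_z^0\circ F_T$ coincides with one of finitely many fixed nonzero reference polynomials, and chain-rule scaling yields $\|\Upsilon_z^0\|_{L^\infty(T)}\eqsim 1$, $\|\nabla\Upsilon_z^0\|_{L^\infty(T)}\eqsim h_T^{-1}$, $\|D^2\Upsilon_z^0\|_{L^\infty(T)}\eqsim h_T^{-2}$. The lower bounds hold because the reference value, gradient and Hessian are each bounded away from zero (by inspection of the finitely many reference shapes, which depend only on $\gamma_\grid$). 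Taking $L^\infty$ over $\omega_z$ and using $h_T\eqsim h_z$ yields the claimed chain of equivalences.

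The main point requiring care is the derivative-type basis functions, since derivative DOFs are \emph{not} affinely invariant. For $\Upsilon_z^j$ the condition $\partial_j\Upsilon_z^j(z)=1$ translates, after pullback, into the reference gradient at $\hat z$ equalling $B_T^T e_j$, which has size $O(h_T)$; hence $\Upsilon_z^j\circ F_T$ is an $O(h_T)$ linear combination of fixed reference basis functions. Scaling back gives $\|D^2\Upsilon_z^j\|_{L^\infty(T)}\lesssim h_T\cdot h_T^{-2}=h_T^{-1}$ and so $\|D^2\Upsilon_z^j\|_{L^2(T)}\lesssim h_T^{-1}|T|^{1/2}\lesssim 1$. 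The argument for $\Upsilon_F$ is identical once the physical unit normal at $m_F$ is expressed through $B_T^{-T}$ and the reference unit normal at $\hat m_{\hat F}$, an operation bounded by $\|B_T^{-1}\|\lesssim h_T^{-1}$ and compensated by the $O(h_T)$ scaling of the Jacobi determinant via $|T|^{1/2}$. No idea beyond this bookkeeping is involved.
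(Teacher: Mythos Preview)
There is a genuine gap in your scaling argument for $\Upsilon_z^0$. You claim that the pullback $\Upsilon_z^0\circ F_T$ coincides with a fixed reference basis function because the point-value degree of freedom is affinely invariant. But $\Upsilon_z^0$ is characterised by \emph{all} the HCT degrees of freedom on $T$: in particular it must satisfy $\partial_\normal\Upsilon_z^0(m_F)=0$ for every edge $F\subset\partial T$. Since the unit normal direction is not preserved by a general affine map, this vanishing condition does not survive the pullback: $\nabla\Upsilon_z^0(m_F)$ may have a nonzero tangential component, and that component contributes to the reference normal derivative, so in general $\partial_{\hat\normal}(\Upsilon_z^0\circ F_T)(\hat m_{\hat F})\neq 0$. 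Hence $\Upsilon_z^0\circ F_T$ is \emph{not} one of finitely many fixed reference functions. The same defect undermines your treatment of $\Upsilon_z^j$ and $\Upsilon_F$: you correctly scale the distinguished degree of freedom, but the pullback also acquires uncontrolled normal-derivative DOFs at the other edges, so it is not an $O(h_T)$ combination of fixed reference shapes. In short, the HCT element is not affine-equivalent, and your remark that derivative DOFs are not affinely invariant applies to \emph{every} basis function, not only to the derivative-type ones.

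The paper handles this via Ciarlet's device: on each $T$ one introduces an auxiliary, genuinely affine-equivalent element on the same local space $S_3^2(\grid_T)$, replacing the normal-derivative DOFs $\partial_\normal v(m_F)$ by the directional derivatives $\nabla v(m_F)\cdot(m_T-m_F)$ and the gradient DOFs by edge-directional derivatives. For the auxiliary basis $\tilde\Upsilon_z,\tilde\Upsilon_F$ your scaling argument is valid. One then expresses $\Upsilon_z^0$ explicitly in the auxiliary basis; the change-of-basis coefficients involve only geometric ratios such as $(z-y_F)\cdot(m_T-m_F)/|z-y_F|^2$, which are bounded purely in terms of $\gamma_\grid$, and the claimed $L^\infty$ equivalences follow.
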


\begin{proof}
The functions $\Upsilon_z^0$, $z \in \vertices$, form a partition of unity because the function $1_\Omega$ that equals $1$ in $\Omega$ satisfies $1_\Omega \in \HCT$ and $\nabla 1_\Omega = 0$ in $\Omega$. The identities for the supports are also immediate. Regarding the scaling properties, we only prove those of $\Upsilon_z^0$, $z \in \vertices$,  while those for $\Upsilon_z^j$, $z\in\vertices$, $j=1,2$, and $\Upsilon_F$, $F \in \sides$, follow from a similar reasoning; compare also with \cite[Lemma~3.14]{Veeser.Zanotti:19b}.
	
	The first two equivalences for $\Upsilon_z^0$ immediately follow from the fact that \(\Upsilon^0_z{}_{|T}\in S_3^0(\grid_T)\), which is finite dimensional and affine equivalent to a corresponding reference space.
	
	For the third or last equivalence for $\Upsilon_z^0$, however, notice that the nodal variables involve normal derivatives and are thus not affine equivalent. To overcome this, we follow the idea in the proof of~\cite[Theorem 6.1.3]{Ciarlet:2002}, which involves a similar, affine equivalent finite element: Fix \(T\in\grid\) with \(z\in T\) and observe that \(S_3^2(\grid_T)\) is also determined by the evaluations of \(p(z)\), \(\nabla p(z)\cdot(y-z)\), for \(z,y\in\vertices\cap T\) with \(z\neq y\) and \(\nabla p(m_F)\cdot(m_T-m_F)\) for
	\(F\in\sides(T) :=\{F\in\sides\mid F\subset T\}\). We denote the corresponding nodal basis functions by \(\tilde\Upsilon_z\),
	\(\tilde \Upsilon_z^y\),  \(z,y\in\vertices\cap T\) with \(z\neq y\) and \(\tilde\Upsilon_F\) for \(F\in\sides(T)\), respectively. As this new finite element is affine equivalent, we derive in particular
	\begin{equation}
		\label{scaling-for-similar-affequiv}
		\norm[L^\infty(T)]{\tilde\Upsilon_z} \eqsim 1
		\quad\text{and}\quad
		\norm[L^\infty(T)]{\tilde\Upsilon_F} \eqsim 1 
	\end{equation}
	by standard arguments. To conclude, we relate the new basis functions with the original ones. Given \(F \in\sides(T)\) with \(z\in F\), let \(y_F\in\vertices\cap F\) with \(y\neq z\) and,  from the definition of \(\Upsilon_z^0\), deduce the representation
	\begin{align*}
		\Upsilon_z^0
		=
		\tilde \Upsilon_z + \frac32 \sum_{F\in\sides(T),z\in F} \frac{(z-y_F)}{|z-y_F|^2}
		\cdot(m_T-m_F)\tilde\Upsilon_F. 
	\end{align*}
	
Hence the desired equivalence $\norm[L^\infty(T)]{\Upsilon_z^0} \eqsim 1$ follows from \eqref{scaling-for-similar-affequiv} and the relationship	\(|m_T-m_F|/|z-y_F|\eqsim1\), where, as before, the hidden constants depend only on the shape constant \(\gamma_\grid\).
\end{proof}

\subsection{A quasi-optimal $C^0$ interior penalty method}
\label{sec:qo-C0}
%
In this section, we derive a strictly equivalent error estimator for
the  quasi-optimal variant~\cite[(3.51)]{Veeser.Zanotti:18b} of the $C^0$ interior
penalty method~\cite{Brenner.Sung:05} for the 2-dimensional biharmonic
problem~\eqref{Eq:Biharmonic}. 

We start by recalling the above method. In the notation of Section~\ref{sec:doma-mesh-polyn} and given a stabilization parameter $\sigma>0$, define  a discrete bilinear form on the space $\mathring{S}^1_2=S^1_2 \cap \mathring{H}^1(\Omega)$ by
\begin{equation}
	\label{C0:a_h}
	\begin{multlined} 
		a_h(v_h,w_h)
		:=
		\int_\Omega D^2_h v_h : D^2_h w_h
		+
		\int_\Sigma 
		\frac\sigma h \jump{\partial_\normal v_h} \jump{\partial_\normal w_h}
		\\
		{} - \int_\Sigma 
		\mean{\partial_{\normal}^2 v_h} \jump{\partial_\normal w_h}
		+
		\jump{\partial_\normal v_h} \mean{\partial^2_\normal w_h}
		.
	\end{multlined} 
\end{equation}
Hereafter, the operator \(D_h^2\) evaluates the broken Hessian, i.e., for a suitable $v$, we have
\begin{align*}
	(D^2_hv)_{|T}=D^2(v_{|T})\quad\text{for all } T\in\grid.
\end{align*}
The bilinear form $a_h$ is coercive  provided \(\sigma>\sigma_*\) for some \(\sigma_*>0\) depending on the shape constant \(\gamma_\grid\); see \cite[Lemma~7]{Brenner.Sung:05}. The discrete problem then reads
\begin{equation}
	\label{Eq:C0-qopt}
	\text{for \(f\in H^{-2}(\Omega)\), find $u_h \in \mathring{S}_2^1$ such that }
	\forall v_h \in \mathring{S}^1_2\;
	a_h(u_h,v_h) = \scp{f}{\calE_{\COip} v_h},
\end{equation}
where the smoothing operator \(\calE_{\COip}:\mathring{S}_2^1\to \mathring{H}^2(\Omega)\) is constructed with the help of the $\HCT$ element and  satisfies the following properties: for all \(v_h\in \mathring{S}_2^1\), $F \in \sides$, $T \in \grid$, 
\begin{subequations}
	\label{eq:EC0}
	\begin{gather}
		\label{eq:EC0cons}
		\int_F \nabla\calE_{\COip} v_h=\int_F \mean{\nabla v_h},
		\\ \label{eq:EC0stab}
		\begin{aligned}
			h_T^{-4} \norm[L^2(T)]{v_h-\calE_{\COip} v_h}^2
			+
			h_T^{-2}\norm[L^2(T)]{\nabla(v_h-\calE_{\COip} v_h)}^2
			+
			\norm[L^2(T)]{D^2\calE_{\COip} v_h}^2
			\\
			\lesssim
			\norm[L^2(\omega_T)]{D^2_hv_h}^2
			+
			\sum_{F\in \sides, F\cap T\neq\emptyset} \int_{F}\frac{1}{h}|\jump{\partial_\normal v_h}|^2;
		\end{aligned}
	\end{gather}
\end{subequations}
see~\cite[Proposition 3.17]{Veeser.Zanotti:18b}. 
According to~\cite[Theorem~3.18]{Veeser.Zanotti:18b}, this method is quasi optimal whenever $\sigma > \sigma_*$ and, as the dG methods from Section~\ref{sec:dG}, not overconsistent, i.e., we have Proposition~\ref{P:QuasiOptMethods} with \(\delta_h \lesssim 1\), where the hidden constants depend on the shape constant \(\gamma_\grid\)  and the stabilization parameter $\sigma$.

The weak formulation of the biharmonic problem~\eqref{Eq:Biharmonic} and the method~\eqref{Eq:C0-qopt} fit into the abstract framework of
Section~\ref{sec:qo-methods} with 
\begin{equation}
	\label{setting-for-Biharmonic-and-C0}
	\begin{gathered}
		\tV=\mathring{H}^2(\Omega)+\mathring{S}_2^1
		\quad\text{with}\quad
		V=\mathring{H}^2(\Omega)
		\quad\text{and}\quad
		V_h=\mathring{S}_2^1,
		\\
		\ta(v,w)
		=
		\int_\Omega D^2_h v : D^2_h w
		+
		\int_\Sigma 
		\frac\sigma h \jump{\partial_\normal v} \jump{\partial_\normal w},
		\quad
		\|v\|=\norm[\COip]{v}:=\sqrt{\ta(v,v)}
		\\
			a_h \text{ as in }\eqref{C0:a_h},
		\quad\text{and}\quad
		\calE_h=\calE_{\COip}.
	\end{gathered}
\end{equation}

To derive a strictly equivalent error estimator, we follow the guidelines of Section~\ref{sec:posteriori-analysis} and start by establishing the main assumptions therein.  For the first assumption \eqref{A:equiv-to-dist-to-conf} regarding the approximate distance to conformity, we take the observations of Section~\ref{S:H20-quad-HCT}  into account and base a simplified averaging operator upon the $\HCT$ element, already used for the smoother $\calE_\mathtt{C0}$. More precisely, we assign to each interior vertex \(z\in\vertices^i\) an element \(T_z\in\grid\) with \(z\in T\) , to each interior edge \(F\in\sides^i\) an element \(T_F\) with \(F\subset T\), and define
\begin{align}
	\label{df:AC0}
	\calA_{\COip}  v_h :=
	\!\sum_{z \in \vertices^i}\!\Big(v_h(z) \Upsilon^0_z +
	\sum_{j=1}^2 \frac{\partial}{\partial x_j}({v_h}_{|T_z})(z) \Upsilon^j_z \Big)
	+ \sum_{F\in\sides^i} \partial_{\normal}(v_{h|T_F})
	(m_F)\Upsilon_F. 
\end{align}
This  averaging operator is locally computable, thus in line with Remark~\ref{R:dist-to-conf-averaging}. It also coincides with the operator \cite[(3.48)]{Veeser.Zanotti:18b}, which is helpful in the following lemma specifying Proposition~\ref{P:inds-for-dist-to-conf}.

\begin{lemma}[Approximate $\|\cdot\|_\mathtt{C0}$-distance to $\mathring{H}^2$ by averaging]
	\label{L:C0dist2conf}
	In the setting \eqref{setting-for-Biharmonic-and-C0}, the operator  $\mathcal{A}_{\COip}$ in \eqref{df:AC0} satisfies assumption \eqref{A:equiv-to-dist-to-conf}  and \(  \Cav \gtrsim \sqrt{{\sigma}/{(1+\sigma)}} \): for all $v_h \in \mathring{S}_2^1$, we have
	\begin{align*}
		\sqrt{\frac{\sigma}{1+\sigma}}
		\norm[\COip]{\calA_{\COip}v_h-v_h}
		\lesssim
		 \inf_{v \in \mathring{H}^2(\Omega)}\norm[\COip]{v-v_h}
		 \leq
		 \norm[\COip]{\calA_{\COip}v_h-v_h}.
	\end{align*}
\end{lemma}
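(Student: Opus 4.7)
The plan is to mirror the approach used for Lemma~\ref{L:CR-distance-to-conformity} and Lemma~\ref{L:dG-distance-to-conformity}, which consists of verifying \eqref{A:equiv-to-dist-to-conf} and then controlling the distance to conformity by a jump term on the skeleton that is itself dominated by $\norm[\COip]{v-v_h}$ for any $v\in\mathring{H}^2(\Omega)$.

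The upper bound is immediate: since $\calA_{\COip}v_h\in \HCT\subset\mathring{H}^2(\Omega)$ (the boundary nodal values and edge-normal-derivative averages on $\partial\Omega$ are omitted in \eqref{df:AC0}), it is a competitor in the infimum. This already shows that \eqref{A:equiv-to-dist-to-conf} holds for $\calA_{\COip}$, the invariance on $\mathring{H}^2(\Omega)\cap \mathring{S}^1_2$ being trivial. For the lower bound I would establish the jump estimate
\begin{equation*}
\norm[\COip]{v_h-\calA_{\COip}v_h}^2 \lesssim (1+\sigma)\int_\Sigma \frac{1}{h}|\jump{\partial_\normal v_h}|^2,
\end{equation*}
which is the $\COip$-analogue of \cite[(3.17)]{Veeser.Zanotti:18b} used in Lemma~\ref{L:dG-distance-to-conformity}. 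The jump-penalty contribution from the left-hand side collapses exactly to the right-hand side because $\calA_{\COip}v_h\in \mathring{H}^2(\Omega)$ so $\jump{\partial_\normal \calA_{\COip}v_h}=0$ on all faces (including $\partial\Omega$, where $\partial_\normal\calA_{\COip}v_h=0$).

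The broken-Hessian contribution is the technical core. On each $T\in\grid$, I would represent $(v_h-\calA_{\COip}v_h)_{|T}$ in the $\HCT$-basis of $T$: the coefficients are precisely the differences between the nodal data carried by $v_h$ at the vertices of $T$ and at the midpoints of its faces, and the corresponding selections made in \eqref{df:AC0}. Since $v_h\in\mathring{S}^1_2\subset C^0(\overline\Omega)\cap\mathring{H}^1(\Omega)$, the differences in function values and in tangential gradient components vanish across interior faces and on $\partial\Omega$; only the normal-derivative discrepancies at face midpoints survive, and each of these equals a face jump $\jump{\partial_\normal v_h}(m_F)$ (or a boundary trace). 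Combining the $L^\infty$-scalings of the $\HCT$ basis from Lemma~\ref{L:ScalingUpsilon} with the $L^\infty$-$L^2$ equivalence on $\poly_0(F)$ and a scaled trace inequality for piecewise polynomials then yields the element-wise bound $\norm[L^2(T)]{D_h^2(v_h-\calA_{\COip}v_h)}^2\lesssim \sum_{F\cap T\neq\emptyset}h_F^{-1}\norm[L^2(F)]{\jump{\partial_\normal v_h}}^2$, and summing over $T$ produces the claimed estimate with a finite-overlap constant depending only on $\gamma_\grid$.

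Finally, the lower bound follows by scaling: for any $v\in\mathring{H}^2(\Omega)$ we have $\jump{\partial_\normal v}=0$ on all $F\in\sides$, so
\begin{equation*}
\int_\Sigma \frac{1}{h}|\jump{\partial_\normal v_h}|^2 = \int_\Sigma \frac{1}{h}|\jump{\partial_\normal(v_h-v)}|^2 \leq \frac{1}{\sigma}\norm[\COip]{v_h-v}^2,
\end{equation*}
whence $\tfrac{\sigma}{1+\sigma}\norm[\COip]{v_h-\calA_{\COip}v_h}^2\lesssim \inf_{v\in\mathring{H}^2(\Omega)}\norm[\COip]{v_h-v}^2$. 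The main obstacle is the element-wise broken-Hessian bound; I expect to skip the detailed bookkeeping by invoking the already available construction/estimates for the same operator in \cite[Proposition~3.17 and (3.48)]{Veeser.Zanotti:18b}, which encapsulate exactly these computations.
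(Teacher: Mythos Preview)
Your approach is essentially the same as the paper's: verify \eqref{A:equiv-to-dist-to-conf}, expand $(v_h-\calA_{\COip}v_h)_{|T}$ in the local $\HCT$ basis, bound the surviving coefficients by normal-derivative jumps, invoke the scalings of Lemma~\ref{L:ScalingUpsilon}, and finish with the $\sigma$-weighted jump term in $\norm[\COip]{\cdot}$.

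There is, however, one inaccuracy in your description that you should fix. You write that ``only the normal-derivative discrepancies at face midpoints survive''. This is not true. Since $v_h\in\mathring{S}^1_2$ is merely $C^0$, the gradient $\nabla v_h$ is \emph{not} single-valued at an interior vertex $z$, and $\calA_{\COip}$ selects the value $\nabla(v_h{}_{|T_z})(z)$ from one fixed element $T_z$. Hence on every other element $T\ni z$ the vertex-gradient coefficients $\partial_j(v_h{}_{|T})(z)-\partial_j(v_h{}_{|T_z})(z)$ in the $\HCT$ expansion are generally nonzero. The paper handles these by telescoping the gradient difference along a chain of elements around $z$: each step contributes a jump $\jump{\nabla v_h}$ across one face $F\in\sides_z^i$, and continuity of $v_h$ forces $|\jump{\nabla v_h}|=|\jump{\partial_\normal v_h}|$ on $F$, yielding
\[
|\nabla(v_h{}_{|T})(z)-\nabla(\calA_{\COip}v_h)(z)|
\lesssim
\sum_{F\in\sides_z^i}\|h^{-1/2}\jump{\partial_\normal v_h}\|_{L^2(F)}.
\]
With this correction your elementwise bound and the rest of the argument go through exactly as in the paper.
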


\begin{proof}
	We first verify \eqref{A:equiv-to-dist-to-conf} for $\calA_{\COip}$. Since $\HCT \subset H^2(\Omega)$ and definition \eqref{df:AC0} does not involve any basis functions associated with the boundary $\partial\Omega$, we have $\calA_{\COip} v_h \in \mathring{\HCT} := \HCT \cap \mathring{H}^2(\Omega)$ for all $v \in \mathring{S}^1_2$. Clearly, $\calA_{\COip}$ is invariant on $\mathring{S}^1_2 \cap \mathring{H}^2(\Omega) = \mathring{S}^1_2 \cap \mathring{\HCT}$. Assumption \eqref{A:equiv-to-dist-to-conf} is thus verified, as well as the second inequality of the claimed equivalence.
	
	To show the first inequality, we combine stability properties of $\calA_{\COip}$ and properties of the basis function in Lemma~\ref{L:ScalingUpsilon}. Given $v_h \in \mathring{S}^1_2 \subset C^0(\bar{\Omega})$ and an element \(T\in\grid\), we have $v_h{}_{|T} \in \poly_2(T) \subset S^2_3(\grid_T)$ and 
	\begin{multline*}
		(v_h - \calA_{\COip}(v_h))_{|T}
		=
		\sum_{z\in\vertices^i \cap T} \sum_{j=1}^2 \frac{\partial}{\partial x_j}({v_h}_{|T}-\calA_{\COip} v_h)(z) 
		\Upsilon_{z|T}^{j}
		\\
		{} + \sum_{F\in\sides,\,F\subset T} \partial_{\normal}(v_{h|T} -\calA_{\COip}v_h) (m_F) 
		\Upsilon_{F|T},
	\end{multline*}
	whence
	\begin{multline*}
		\norm[L^2(T)]{D^2_h(v_h - \calA_{\COip}(v_h))}
		\le
		\sum_{z \in \vertices^i \cap T} |\nabla({v_h}_{|T}-\calA_{\COip}v_h)(z)| \,
		\left( \sum_{j=1}^2\norm[L^2(T)]{D^2\Upsilon_z^{j}}^2\right)^{\frac12}
		\\
		{} +\sum_{F\in \sides,F\subset T} |\partial_\normal ({v_h}_{|T}-\calA_{\COip}v_h)(m_F)| \, \norm[L^2(T)]{D^2\Upsilon_F}.
	\end{multline*}
	Thanks to the definition \eqref{df:AC0}, we observe $ \nabla(\calA_{\COip}v_h)(z) = \nabla (v_h{}_{|T_z})(z)$ as well as $ \partial_\normal (\calA_{\COip}v_h)(m_F) = \partial_\normal (v_h{}_{|T_F})(m_F)$.  Furthermore, the continuity of \(v_h\) entails |\(\jump{\nabla v_h}| = |\jump{\partial_{\normal}v_h}| \) on the skeleton $\Sigma$.  We thus obtain
	\begin{subequations}
		\begin{align}
			\label{Eq:Dvh-DAvh<DJumps}
			|\nabla({v_h}_{|T}-\calA_{\COip}v_h)(z)|
			&\lesssim
			\sum_{F\in\sides_z^i} \|h^{-\frac12}\jump{\partial_{\normal}  v_h}\|_{L^2(F)},
			\quad
			z\in\vertices^i,
			\\
			|\nabla({v_h}_{|T}-\calA_{\COip}v_h)(m_F)|
			&\lesssim
			\|h^{-\frac12}\jump{\partial_{\normal} v_h}\|_{L^2(F)},
			\quad
			F\in\sides.
		\end{align}
	\end{subequations}
	We insert these two inequalities in the preceding one and use the scaling properties in Lemma~\ref{L:ScalingUpsilon}. As each side \(F\in\sides\) is contained in at most two of the sets \(\{\sides_z^i\}_{z\in\vertices^i}\), we conclude the proof by arriving at
	\begin{align*}
		\norm[\COip]{\calA_{\COip}v_h-v_h}^2
		\lesssim
		(1+\sigma)\sum_{F\in\sides}\|h^{-\frac12}\jump{\partial_{\normal} v_h}\|_{L^2(F)}^2
		\le
		\frac{1+\sigma}{\sigma}\norm[\COip]{v-v_h}^2,
	\end{align*}
	where  \(v\in \mathring{H}^2(\Omega)\) is arbitrary.
\end{proof}

The proof of Lemma~\ref{L:C0dist2conf} reveals that properly scaled jumps of the normal derivative of \(v_h\) provide an alternative estimator for the distance to \(\mathring{H}^2(\Omega)\),  which is widely used in the literature~\cite{Brenner.Gudi.Sung:10}.

%
\begin{corollary}[Approximate $\|\cdot\|_\mathtt{C0}$-distance to $\mathring{H}^2$ by jumps]
	\label{C:C0JumpDist2conf}
	For \(v_h\in\mathring{S}_2^1\), we have 
	\begin{align*}
		\int_\Sigma \frac{\sigma}{h} \left|\jump{\partial_\normal
			v_h}\right|^2
			\lesssim 
			\inf_{v\in
			\mathring{H}^2(\Omega)}\norm[\COip]{v_h-v}^2
			\lesssim
			\int_\Sigma \frac{1+\sigma}{h} \left|\jump{\partial_\normal
				v_h}\right|^2.
	\end{align*}
	
\end{corollary}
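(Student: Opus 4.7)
The proof is essentially extracted from the proof of Lemma~\ref{L:C0dist2conf}, so my plan is to isolate the two bounds and observe that the necessary work has already been done.

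For the lower bound, I would exploit the fact that any $v\in\mathring{H}^2(\Omega)$ has continuous normal derivative across the skeleton, so $\jump{\partial_\normal v}=0$ on $\Sigma$ and therefore $\jump{\partial_\normal v_h}=\jump{\partial_\normal(v_h-v)}$. Plugging this into the definition of the $\|\cdot\|_{\COip}$-norm in \eqref{setting-for-Biharmonic-and-C0} gives
\[
\int_\Sigma \frac{\sigma}{h} \left|\jump{\partial_\normal v_h}\right|^2
= \int_\Sigma \frac{\sigma}{h} \left|\jump{\partial_\normal(v_h-v)}\right|^2
\le \norm[\COip]{v_h-v}^2,
\]
and taking the infimum over $v\in\mathring{H}^2(\Omega)$ yields the lower bound.

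For the upper bound, I would simply use $\calA_{\COip}v_h\in\mathring{H}^2(\Omega)$ (which was verified in the proof of Lemma~\ref{L:C0dist2conf} via $\calA_{\COip}(\mathring{S}_2^1)\subseteq\mathring{\HCT}\subset\mathring{H}^2(\Omega)$) as a competitor in the infimum. The final display of the proof of Lemma~\ref{L:C0dist2conf} already establishes
\[
\norm[\COip]{\calA_{\COip}v_h-v_h}^2
\lesssim (1+\sigma)\sum_{F\in\sides}\left\|h^{-1/2}\jump{\partial_\normal v_h}\right\|_{L^2(F)}^2,
\]
so the upper bound follows at once.

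No step is a genuine obstacle here; the only subtlety is notational, namely recognising that the previously derived intermediate inequality inside the proof of Lemma~\ref{L:C0dist2conf} is exactly the desired upper bound and that the trivial direction already gives the lower bound. For this reason it is natural to state the corollary immediately after the lemma, with a one-line proof pointing back to that intermediate inequality.
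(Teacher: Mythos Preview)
Your proposal is correct and matches the paper's intent: the paper does not give a separate proof but simply remarks that the corollary is revealed by the proof of Lemma~\ref{L:C0dist2conf}, exactly as you describe. Your lower bound argument in fact yields the inequality with constant~$1$ (compare the analogous Corollary~\ref{C:dG-jump-distance-to-conformity}), and your upper bound is precisely the penultimate display of that proof combined with $\calA_{\COip}v_h\in\mathring{H}^2(\Omega)$.
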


Next, we prepare the localization of the conforming residual by establishing assumption \eqref{A:partition-of-unity}. Motivated by Lemma~\ref{L:ScalingUpsilon} and the embedding $\mathring{H}^2(\Omega)\hookrightarrow C(\bar\Omega)$, we choose
\begin{equation}
	\label{Eq:C0-partition-of-unity}
	\Index = \vertices,
	\qquad
	\Phi_z = \Upsilon_z^0,
	\qquad
	V_z = \mathring{H}^2(\omega_z),
	\qquad
	\calI_h = \calI_{L},
\end{equation}
where \(\calI_{L}\) denotes the Lagrange interpolation operator onto \(\mathring{S}_2^1\).

\begin{lemma}[Partition of unity in $H^2$ for biharmonic $\COip$]
	\label{L:C0-partition-of-unity}
	In the  setting  \eqref{setting-for-Biharmonic-and-C0}, the choices  \eqref{Eq:C0-partition-of-unity} satisfy assumption \eqref{A:partition-of-unity} with constants $\Cloc,\Ccol \lesssim 1$. 
\end{lemma}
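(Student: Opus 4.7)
The plan is to verify the three conditions of \eqref{A:partition-of-unity} in turn, following the template of Lemma~\ref{L:CR-partition-of-unity}; the main novelty relative to the second-order case is that Leibniz's rule must now be applied up to second derivatives on the $\HCT$ hat functions $\Upsilon_z^0$, producing extra terms weighted by inverse powers of the local mesh size.

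For (i), Lemma~\ref{L:ScalingUpsilon} supplies the partition of unity $\sum_{z\in\vertices}\Upsilon_z^0 = 1$ together with $\operatorname{supp}\Upsilon_z^0 = \omega_z$. The inclusion $v\Upsilon_z^0 \in \mathring{H}^2(\omega_z)$ then follows from $\Upsilon_z^0 \in \HCT \subset C^1$, because all $\HCT$ degrees of freedom of $\Upsilon_z^0$ on $\partial\omega_z\setminus\partial\Omega$ vanish by construction (so $\Upsilon_z^0$ and $\nabla\Upsilon_z^0$ vanish there), while on $\partial\omega_z\cap\partial\Omega$ the boundary data of $v$ ensures the vanishing. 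For (ii), the zero extension of any $v_z \in \mathring{H}^2(\omega_z)$ lies in $\mathring{H}^2(\Omega)$, so $\jump{\partial_\normal v_z}=0$ and $\|\sum_z v_z\|_\COip^2 = \|D^2\sum_z v_z\|^2_{L^2(\Omega)}$. Since on each element $T\in\grid$ at most three of the $v_z$ are supported (those associated with the three vertices of $T$), a pointwise Cauchy--Schwarz followed by summation yields $\Ccol \leq \sqrt{3}$.

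The main obstacle is (iii). Setting $w := v - \calE_{\COip}\calI_L v \in \mathring{H}^2(\Omega)$, I would again exploit $\jump{\partial_\normal(w\Upsilon_z^0)}=0$ to reduce $\|w\Upsilon_z^0\|_\COip$ to $\|D^2(w\Upsilon_z^0)\|_{L^2(\omega_z)}$. Applying Leibniz's rule together with the scaling bounds $\|\Upsilon_z^0\|_{L^\infty}\lesssim 1$, $\|\nabla\Upsilon_z^0\|_{L^\infty}\lesssim h_z^{-1}$, $\|D^2\Upsilon_z^0\|_{L^\infty}\lesssim h_z^{-2}$ from Lemma~\ref{L:ScalingUpsilon} produces
\[
\|w\Upsilon_z^0\|_\COip \lesssim \|D^2 w\|_{L^2(\omega_z)} + h_z^{-1}\|\nabla w\|_{L^2(\omega_z)} + h_z^{-2}\|w\|_{L^2(\omega_z)}.
\]
I would then split $w=(v-\calI_L v)+(\calI_L v - \calE_{\COip}\calI_L v)$, controlling the first summand by standard Lagrange interpolation error estimates (valid since $H^2(\Omega)\hookrightarrow C^0(\bar{\Omega})$ in $d=2$) and the second by the stability bound~\eqref{eq:EC0stab} of $\calE_{\COip}$. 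The jump contribution $\int_F h^{-1}|\jump{\partial_\normal\calI_L v}|^2$ arising from \eqref{eq:EC0stab} is converted to $\int_F h^{-1}|\jump{\partial_\normal(\calI_L v-v)}|^2$ via $\jump{\partial_\normal v}=0$, and then estimated by a scaled trace inequality combined with interpolation against $\|D^2 v\|_{L^2}$ on a slightly enlarged patch. Squaring, summing over $z\in\vertices$, and invoking the bounded overlap of the resulting patches (controlled by $\gamma_\grid$) delivers $\Cloc \lesssim 1$.
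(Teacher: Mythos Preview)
Your proposal is correct and follows essentially the same approach as the paper: conditions~(i) and~(ii) are handled exactly as in Lemma~\ref{L:CR-partition-of-unity} with the $\HCT$ partition of unity from Lemma~\ref{L:ScalingUpsilon}, and condition~(iii) is obtained via Leibniz's rule with the scaling of $\Upsilon_z^0$, the split $v-\calE_{\COip}\calI_L v = (v-\calI_L v)+(\calI_L v-\calE_{\COip}\calI_L v)$, Lagrange interpolation estimates, and the smoother stability~\eqref{eq:EC0stab}. The only cosmetic difference is that the paper applies the triangle inequality in the $\|\cdot\|_{\COip}$-norm before expanding with Leibniz, whereas you expand first and then split; when you do split the second-order term you should read it as the broken Hessian $D^2_h$ (since $\calI_L v\notin H^2$), which is harmless because $D^2 w = D^2_h w$ for $w\in\mathring{H}^2(\Omega)$ and the paper invokes exactly the same stability $\|D^2_h(\calI_L v)\|_{L^2(\omega_z)}\lesssim\|D^2 v\|_{L^2(\omega_z)}$.
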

\begin{proof}
	Since \(\Upsilon^0_z\in W^{2,\infty}(\Omega)\) and  \(\operatorname{supp}\Upsilon_z^0=\omega_z\), condition~(i) of \eqref{A:partition-of-unity} holds with \(V_z=\mathring{H}^2(\omega_z)\) and \(\Index=\vertices\). Similarly to the proof in Lemma~\ref{L:CR-partition-of-unity}, condition~(ii) follows with \(\Ccol=3\) from the fact that  each mesh element is contained in three stars. 
	
	It remains to check condition (iii) of \eqref{A:partition-of-unity}. Given any \(v\in \mathring{H}^2(\Omega)\) and $z \in \vertices$, we may write
	\begin{equation*}
		\norm[\COip]{\big((v - \calE_{\COip}(\mathcal{I}_{L} v) \big)\Upsilon_z^0} 
		\leq
		\norm[\COip]{(v-\mathcal{I}_{L} v) \Upsilon_z^0}
		+
		\norm[\COip]{ \big( \mathcal{I}_{L} v - \calE_{\COip} (\mathcal{I}_{L} v) \big) \Upsilon_z^0}.
	\end{equation*}
	The first term on the right-hand side can be bounded by means of trace inequalities, standard interpolation error estimates, the stability bound $\norm[L^2(\omega_z)]{D^2_h(\calI_ L v)} \lesssim  \norm[L^2(\omega_z)]{D^2 v}$ and the scaling properties of $\Upsilon_z^0$ in Lemma~\ref{L:ScalingUpsilon}. For the second term, we use again Lemma~\ref{L:ScalingUpsilon}, \eqref{eq:EC0stab}, a scaled trace inequality and  again error and stability bounds of $\calI_{L}$ to derive 
	\begin{multline*}
		\norm[\COip]{ \big( \mathcal{I}_{L} v - \calE_{\COip} (\mathcal{I}_{L} v) \big) \Upsilon_z^0}^2\\
		\begin{aligned}
			&\lesssim
			\norm[{L^2(\omega_z)}]{ h^{-2} \big( \mathcal{I}_{L}v-\calE_{\COip}(\mathcal{I}_{L} v) \big)}^2
			+
			\norm[{L^2(\omega_z)}]{ h^{-1} \nabla \big( \mathcal{I}_{L} v-\calE_{\COip}(\mathcal{I}_{L} v) \big)}^2
			\\
			&\quad
			{} +
			\norm[{L^2(\omega_z)}]{ D^2_h \big( \mathcal{I}_{L} v-\calE_{\COip}(\mathcal{I}_{L} v) \big)}^2
			+
			\sigma\sum_{F\in\sides_z}\int_{F}\frac1h\jump{\partial_\normal\mathcal{I}_{L} v}^2
			\\
			&\lesssim
			\norm[L^2(\omega_z^+)]{D^2 v}^2
		\end{aligned}
	\end{multline*}
	where \(\sides_z^{+}=\bigcup\{\sides_y\mid  y\in \vertices\cap\omega_z\}\). We collect the previous bounds and sum over all vertices $z \in \vertices$. This finishes the proof as  the stars \(\omega_z^+\), \(z\in\vertices\), overlap finitely many times and, for each edge \(F\in\sides\), we have $\#\{ z \in \vertices \mid \sides_z^+ \ni F\} \lesssim 1$. 
\end{proof}



Our last preparatory step is the choice of simple functionals and test functions verifying \eqref{A:construction-Pz-local}. To this end,
planning for a conforming approach, i.e.\ \(S_z\subset \mathring{H}^2(\omega_z)\), we first analyze the structure of the space \(\tA(\mathring{S}_2^1))_{|\mathring{H}^2(\omega_z)}\). Let $v_h\in\mathring{S}_2^1$ be a discrete trial  function and $w\in \mathring{H}^2(\omega_z)$ be a local test function. Integration by parts gives
\begin{equation*}\begin{aligned}
		\label{Eq:BiHarmPI}
		\ta(v_h,w)
		&=
		\int_\Omega D_h^2 v_h:D^2 w
		=
		\sum_{F\in\sides^i_z} \int_F \jump{D^2_h v_h\normal }\cdot\nabla w
		-
		\sum_{T\in\grid_z}\int_T \nabla \Delta v_h\cdot\nabla w,
\end{aligned}\end{equation*}
where the second sum on the right-hand sides vanishes. To simplify the first sum, we shall use an orthogonal decomposition on the skeleton $\Sigma_z = \bigcup_{F \in \sides_z^i} F$ around $z$. For any edge $F \in \sides_z^i$, denote by $y_z^F \in \vertices$ the vertex of $F$ opposite to $z$ and let $\tangent_z:\Sigma_z \to \R^2$ be the tangent field defined by
\begin{equation}
\label{def-t_z}
 \tangent_z{}_{|F} = \frac{z-y_z^F}{|z-y_z^F|},
\quad
 F \in \sides_z^i.	
\end{equation}
We then may write
\begin{equation*}
	\jump{D^2_h v_h \normal}\cdot\nabla w
	=
	\jump{D_h^2 v_h \normal\cdot\normal}\nabla w\cdot\normal
	+
	\jump{D_h^2 v_h \normal\cdot\tangent_z}\nabla w\cdot\tangent_z,
\end{equation*}
where the jumps are edge-wise constant. Furthermore, thanks to $y_z^F \in \partial\omega_z$ and $w \in \mathring{H}^2(\omega_z) \subset C^0(\bar{\omega}_z)$, we observe $w(y_z^F)=0$, which leads to  $\int_F \nabla w \cdot \tangent_z = w(z)$. Combining these observations, we arrive at the representation
\begin{equation}
\label{eq:StructOfDC0}
	\ta(v_h,w)
	=
	\left(
	\int_{\Sigma_z}  \jump{D_h^2 v_h \normal\cdot\tangent_z} 
	\right)  w(z)
	+
	\sum_{F\in\sides_z^i} \jump{D_h^2 v_h \normal\cdot\normal}_{|F}\int_F \partial_{\normal} w,
\end{equation}
which, as a functional of $w \in \mathring{H}^2(\omega_z)$,  is given by constants associated with the vertex \(z\) and the edges in $\sides_z^i$.  Note that if $z \in \vertices \setminus \vertices^i$ is a boundary vertex, then we always have $w(z)=0$.
We thus define 
\begin{equation}
	\label{Eq:hatDzBi}
	\begin{multlined}
		\widehat D_z
		:=
		\Big\{\chi\in H^{-2}(\Omega) \mid
		\chi(v) = r_zv(z)+\sum_{F \in \sides_z^i} r_F\int_F \partial_\normal v,
		\\
		\text{with}~ r_F \in \R,~F\in \sides_z^i,~\text{and}~r_z\in
		\R~\text{if}~z\in\vertices^i~\text{and}~r_z=0~\text{else} 
		\Big\}
	\end{multlined}
\end{equation}
and take
\begin{subequations}
	\label{Eq:DzSzBiC0}
	\begin{align}\label{Eq:DzBiC0}
		D_z=\widehat D_z{}_{|\mathring{H}^2(\omega_z)}
	\end{align}
	as simple functionals. To choose the simple test functions, we employ  some basis functions of the $\HCT$ space in \eqref{df:HCT}, writing $\Upsilon_z := \Upsilon_z^0$ for notational convenience.  More precisely, we set
	\begin{equation}
		\label{Eq:SzBiC0}
		S_z
		:=
		\operatorname{span}\left\{\Upsilon_F\mid F\in \sides_z^i\right\}
		\oplus
		\begin{cases}
			\operatorname{span} \Upsilon_z &\text{if } z\in\vertices^i,
			\\
			\{0\} &\text{if } z \in \vertices \setminus \vertices^i,
		\end{cases}
	\end{equation}
	which is locally conforming, i.e.\  we have $S_z  \subset \mathring{H}^2(\omega_z)$. 
\end{subequations}
As the following remark shows, this is actually  a convenient choice because, using the  $H^{-2}(\omega_z)$-functionals
\begin{equation}
\label{def-of-chi_z,chi_F}
	\langle{\chi_z}, v\rangle
	:=
	v(z),
	\qquad\text{and}\qquad
	\langle\chi_F,v\rangle
	:=
	\int_F\partial_\normal v,\quad F\in\sides_z^i,
\end{equation}
we can choose a basis of $D_z$ that is almost dual to the basis functions in $S_z$ suggested by \eqref{Eq:SzBiC0}.
\begin{remark}[Dual bases in simple pairs for biharmonic $\COip$]
	\label{R:C0BiorthogonalSystem}
	%
	Consider first the case of an interior vertex $z \in \vertices^i$. Then $\chi_z$, $\chi_F$, $F \in \sides_z^i$ is a basis of $D_z$ and
	\(\Upsilon_z\), \(\Upsilon_F\), \(F\in \sides_z^i\), is one in $S_z$.  The properties of the $\HCT$ nodal variables and straight-forward calculations yield, for all $F, F' \in \sides_z^i$,
	\begin{alignat*}{2}
		\langle \chi_z,\Upsilon_z\rangle
		&=
		\Upsilon_z(z)=1,
		&\qquad
		\langle\chi_z,\Upsilon_F\rangle
		&=
		\Upsilon_F(z)=0,
		\\
		\langle \chi_F,\Upsilon_z\rangle
		&=
		\int_F\partial_\normal\Upsilon_z=0,
		& 
		\langle \chi_F,\Upsilon_{F'}\rangle
		&=
		\int_F\partial_\normal\Upsilon_{F'} = \frac23 h_F\delta_{FF'}.
	\end{alignat*}
	Thus, the bases $\chi_z$, $\chi_F$, $F \in \sides_z^i$, and  \(\Upsilon_z\), \(\frac{3}{2}h_F^{-1}\Upsilon_F\), \(F\in \sides_z^i\),  are dual.
	%
	If $z\in\vertices\setminus\vertices^i$ is a boundary vertex, we only have to discard $\chi_z$ and $\Upsilon_z^0$ from the previous case.
\end{remark}

\begin{lemma}[Local projections for biharmonic $\COip$]
\label{L:C0construction-Pz-local}
In the settings \eqref{setting-for-Biharmonic-and-C0} and \eqref{Eq:C0-partition-of-unity},  the choices \eqref{Eq:DzSzBiC0} satisfy assumption \eqref{A:construction-Pz-local} with \(\CL\lesssim1\) and \(\CNL=1\). Consequently, \eqref{Eq:construction-Pz-local} defines projections $\calP_z: H^{-2}(\omega_z) \to D_z \subset H^{-2}(\omega_z)$ with  
	\begin{equation*}
		\forall g \in H^{-2}(\Omega)
		\quad
		\sum_{z \in \vertices}  \norm[H^{-2}(\omega_z)]{\calP_z g}^2
		\lesssim
		\sum_{z \in \vertices} \norm[H^{-2}(\omega_z)]{g}^2.
	\end{equation*}
\end{lemma}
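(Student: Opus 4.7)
The plan is to verify the five conditions of \eqref{A:construction-Pz-local} individually and then invoke Proposition~\ref{P:construction-Pz-local} for the stability bound. First, because the choice \eqref{Eq:SzBiC0} gives $S_z\subset \mathring{H}^2(\omega_z)=V_z$, the simple test functions are locally conforming, so $\tV_z=V_z$ and condition~(i) is immediate, as is condition~(v) with $\CNL=1$. Condition~(ii) follows directly from the representation \eqref{eq:StructOfDC0}: for $v_h\in\mathring{S}_2^1$, the functional $\ta(v_h,\cdot)_{|V_z}$ is determined by a coefficient at $z$ and by one coefficient per $F\in\sides_z^i$, hence lies in $D_z$. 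Condition~(iii) is finite-dimensionality and matching of dimensions, which follows from Remark~\ref{R:C0BiorthogonalSystem}, where a basis of $D_z$ with the same cardinality as the generating set of $S_z$ in \eqref{Eq:SzBiC0} is exhibited.

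The heart of the proof is condition~(iv), i.e.\ the local inf-sup bound $\|\chi\|_{V_z'}\le\CL\,\|\chi\|_{S_z'}$ with $\CL\lesssim1$. I will exploit the near-duality described in Remark~\ref{R:C0BiorthogonalSystem}. Given $v\in\mathring{H}^2(\omega_z)$, I define the simple test function
\begin{equation*}
  s := v(z)\,\Upsilon_z + \sum_{F\in\sides_z^i}\Bigl(\tfrac{3}{2h_F}\textstyle\int_F \partial_\normal v\Bigr)\Upsilon_F \in S_z,
\end{equation*}
with the convention that the first summand vanishes when $z$ is a boundary vertex. By near-duality, every $\chi\in D_z$ satisfies $\scp{\chi}{s}=\scp{\chi}{v}$. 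The remaining task is the stability estimate $\|s\|_{\COip}\lesssim\|D^2v\|_{L^2(\omega_z)}$: the scaling bounds of Lemma~\ref{L:ScalingUpsilon} reduce this to showing
\begin{equation*}
  h_z^{-1}|v(z)| + \sum_{F\in\sides_z^i} h_F^{-1}\Bigl|\textstyle\int_F \partial_\normal v\Bigr|\;\lesssim\;\|D^2v\|_{L^2(\omega_z)}.
\end{equation*}
For the first term I use the 2D embedding $H^2\hookrightarrow C^0$ combined with the boundary conditions $v=\partial_\normal v=0$ on $\partial\omega_z$ and a scaling argument: on the reference patch, $|\hat{v}(\hat{z})|\lesssim\|\hat{v}\|_{H^2}\eqsim\|D^2\hat{v}\|_{L^2}$ by Poincaré, and rescaling gives $|v(z)|\lesssim h_z\|D^2v\|_{L^2(\omega_z)}$. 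For each edge term I combine Cauchy--Schwarz, a scaled trace inequality applied to $\nabla v$, and a Poincaré inequality for $\nabla v$ (which vanishes on $\partial\omega_z$) to obtain $h_F^{-1}|\int_F\partial_\normal v|\lesssim\|D^2v\|_{L^2(\omega_z)}$. Summing finitely many edges per star then gives (iv) with $\CL\lesssim1$.

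Once (i)--(v) are verified, Proposition~\ref{P:construction-Pz-local} yields the existence of the projections $\calP_z$ and the collective stability bound with constant $\CL^2\CNL^2\lesssim1$, which is exactly the statement of the lemma. The main obstacle is the scaling bookkeeping in condition~(iv); beyond that, the proof is essentially an application of the abstract machinery once the near-dual pairing of simple functionals and simple test functions has been laid out.
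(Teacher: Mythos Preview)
Your proposal is correct and follows essentially the same route as the paper: conditions (i), (ii), (iii), (v) are dispatched exactly as in the paper, and for (iv) both arguments combine the duality of Remark~\ref{R:C0BiorthogonalSystem}, the scaling in Lemma~\ref{L:ScalingUpsilon}, and scaled trace/Poincar\'e inequalities. The only cosmetic difference is that the paper writes $\chi$ in coordinates $r_z,r_F$ and first bounds $\scp{\chi}{v}$ by these coefficients before estimating them with $\norm[\Dual{S_z}]{\chi}$, whereas you dualize by constructing the matching $s\in S_z$; the two arguments are interchangeable.
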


\begin{proof}
Conditions (i), (ii) and (v) of \eqref{A:construction-Pz-local} immediately follow from \(S_z\subset \mathring{H}^2(\omega_z)\) and~\eqref{eq:StructOfDC0}. In view of Remark~\ref{R:C0BiorthogonalSystem}, we have \(\dim S_z= \dim D_z < \infty\) for all \(z\in\vertices\), which implies (iii) and it remains to prove (iv). To this end, we combine the duality in Remark~\ref{R:C0BiorthogonalSystem} with the scaling properties of the involved basis functions, considering only the case of an interior vertex \(z\in\vertices^i\). Given a simple functional \(\chi\in D_z\) and $v \in \mathring{H}^2(\omega_z)$, we write 
	\begin{align*}
		\langle{\chi,v}\rangle
		=
		r_zv(z)+\sum_{F \in \sides_z^i} r_F\int_F \partial_\normal v,
	\end{align*}
	and, by means of scaled trace and Poincar\'e inequalities, derive
	\begin{align}
		\label{Eq:BiScaledResiduals}
		\langle\chi,v\rangle
		\lesssim
		\Big(h^2_z |r_z|^2 +\sum_{F\in\sides_z^i} h_z^2 |r_F|^2 \Big)^{\frac12} \norm[L^2(\omega_z)]{D^2v}.
	\end{align}
	In addition, the duality of Remark~\ref{R:C0BiorthogonalSystem} and the scaling properties in Lemma~\ref{L:ScalingUpsilon} ensure 
	\begin{equation*}
		h_z |r_z|
		=
		h_z|\langle \chi,\Upsilon_z \rangle|
		\lesssim
		\norm[\Dual{S_z}]{\chi} ,
		\qquad \text{and} \qquad
		h_z^{\frac12} |r_F|
		=
		h_z|\langle \chi,\tfrac32 h_F^{-1}\Upsilon_F\rangle|
		\lesssim
		\norm[\Dual{S_z}]{\chi},
	\end{equation*}
	with hidden constants depending only on the shape constant \(\gamma_\grid\). Inserting these estimates in~\eqref{Eq:BiScaledResiduals} and taking the supremum over all \(v\in \mathring{H}^2(\omega_z)\) finishes the proof of \eqref{A:construction-Pz-local}.
\end{proof}

Given the preparation by Lemmas~\ref{L:C0dist2conf}, \ref{L:C0-partition-of-unity}, and \ref{L:C0construction-Pz-local}, we are now in position to derive an error estimator that is strictly equivalent to the error of the $C^0$ interior penalty method.

\begin{theorem}[Estimator for biharmonic $\COip$]
	\label{TH:mainC0}
	Let $u\in \mathring{H}^2(\Omega)$ be the weak solution of the biharmonic problem \eqref{Eq:Biharmonic} and   $u_h\in\mathring{S}_2^1$ be its quasi-optimal approximation of the $C^0$ interior penalty method in \eqref{Eq:C0-qopt} with sufficiently large stabilization parameter $\sigma$. Given tuning constants \(C_1,C_2>0\), define
	\begin{multline*}
		\Est_{\COip}^2
		:=
		\Ncf_{\COip}^2 + C_1^2 \eta_{\COip}^2 + C_2^2\Osc_{\COip}^2,  \text{ where }
		\\
		\begin{aligned}
			\Ncf_{\COip}^2
			&:= 
			\norm[\mathtt{C0}]{u_h - \calA_{\COip} u_h }^2,
			\\ 
			\eta_{\COip}^2
			&:= 
			\sum_{T \in \grid} \eta_{\COip,T}^2
			\text{ with  }
			\eta_{\COip,T}
			:=
			\max_{ K \in \vertices_T^i \cup \sides_T^i}
			\frac{|\scp{f}{\Upsilon_K} - \int_\Omega D^2_h u_h : D^2 \Upsilon_K |}{\|D^2 \Upsilon_K\|_{L^2(\Omega)}},  
			\\
			\Osc_{\COip}^2
			&:= 
			\sum_{z\in\vertices} \norm[H^{-2}(\omega_z)]{f - \calP_z f}^2, 
		\end{aligned}
\end{multline*}
swith the averaging operator $\calA_{\COip}$ from \eqref{df:AC0}, $\vertices_T^i = \vertices^i \cap T$, $\sides_T^i = \{ F \in \sides^i \mid F \subset T \}$, the $\HCT$ nodal basis functions $\Upsilon_z := \Upsilon_z^0$ and $\Upsilon_F$, as well as the
	local projections \( \calP_z:H^{-2}(\omega_z) \to D_z \) from Lemma~\ref{L:C0construction-Pz-local}.
	
	This estimator quantifies the error by
	\begin{equation*}
		\underline{C} \Est_{\COip}
		\leq
		\norm[\COip]{u-u_h}
		\leq
		\overline{C} \Est_{\COip}
		\quad\text{and}\quad
		\eta_{\COip,T} 
		\leq
		\norm[L^2(\widetilde{\omega}_T)]{D^2_h(u-u_h)}
	\end{equation*}
	where $\widetilde{\omega}_T = \bigcup_{z  \in \vertices_T^i} \omega_z $ and the equivalence
	constants $\underline{C}$, $\overline{C}$ depend only on the shape constant \(\gamma_\grid\), the stabilization constant $\sigma$,  and the tuning constants \(C_1, C_2\).
	
Finally, if $f \in L^2(\Omega)$, we have
\begin{equation*}
		\Osc_{\COip}^2
		\lesssim
		\sum_{T \in \grid} h_T^4 \norm[L^2(T)]{f}^2,
\end{equation*}
where the latter is formally of higher order.
\end{theorem}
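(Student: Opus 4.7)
My strategy is to invoke the abstract Theorem~\ref{T:error-estimator} in the setting \eqref{setting-for-Biharmonic-and-C0} combined with the data from \eqref{Eq:C0-partition-of-unity} and \eqref{Eq:DzSzBiC0}. Its three hypotheses \eqref{A:equiv-to-dist-to-conf}, \eqref{A:partition-of-unity}, and \eqref{A:construction-Pz-local} are exactly the content of Lemmas~\ref{L:C0dist2conf}, \ref{L:C0-partition-of-unity}, and \ref{L:C0construction-Pz-local}, respectively, and method \eqref{Eq:C0-qopt} is quasi-optimal with $\delta_h \lesssim 1$. Absorbing the factor $(1+C_1^2\delta_h^2)$ into the hidden constants, this will deliver
\begin{equation*}
\norm[\COip]{u-u_h}^2
\eqsim
\Ncf_{\COip}^2
+ \sum_{z\in\vertices} \norm[H^{-2}(\omega_z)]{\calP_z \Res_h^{\mathtt{C}}}^2
+ \Osc_{\COip}^2,
\end{equation*}
so the remaining work for the equivalence is to identify the middle term with $\eta_{\COip}^2$.

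For this identification, properties (iv) and (v) of \eqref{A:construction-Pz-local} first give $\sum_z \norm[H^{-2}(\omega_z)]{\calP_z \Res_h^{\mathtt{C}}}^2 \eqsim \sum_z \norm[\Dual{S_z}]{\Res_h^{\mathtt{C}}}^2$. On each star I will use the almost biorthogonal system $\{\Upsilon_z, \tfrac{3}{2}h_F^{-1}\Upsilon_F\}_{F\in\sides_z^i}$ from Remark~\ref{R:C0BiorthogonalSystem} to expand $\Res_h^{\mathtt{C}}{}_{|S_z}$ and evaluate $\norm[\Dual{S_z}]{\Res_h^{\mathtt{C}}}^2$ as a weighted linear combination of $|\scp{\Res_h^{\mathtt{C}}}{\Upsilon_K}|^2$ ranging over $K\in\{z\}\cup\sides_z^i$ (resp.\ $K\in\sides_z^i$ if $z\in\vertices\setminus\vertices^i$), the weights being controlled by the Gram matrix of the basis in $(\mathring{H}^2(\omega_z),\norm[L^2]{D^2\cdot})$. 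Lemma~\ref{L:ScalingUpsilon}, together with an affine scaling argument on the $\HCT$ sub-triangulation, yields $\norm[L^2(\Omega)]{D^2 \Upsilon_z}\eqsim h_z^{-1}$ and $\norm[L^2(\Omega)]{D^2 \Upsilon_F}\eqsim 1$, so that those weights exactly match the normalization $1/\norm[L^2(\Omega)]{D^2 \Upsilon_K}^2$ appearing in the theorem. Regrouping the contributions per element $T\in\grid$ (each star contains a shape-regularly bounded number of triangles and each triangle meets at most three interior vertices and three interior edges) shows this sum is equivalent to $\eta_{\COip}^2$, the passage from the $\ell^2$ form to the element-wise `$\max$' introducing only a dimension-dependent factor which is absorbed into the tuning constants.

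For the local lower bound I observe that $\Upsilon_K\in\mathring{H}^2(\Omega)$ is an admissible test function in the weak formulation of \eqref{Eq:Biharmonic}, hence
\begin{equation*}
\scp{f}{\Upsilon_K} - \int_\Omega D_h^2 u_h : D^2 \Upsilon_K
=
\int_\Omega D_h^2(u-u_h) : D^2 \Upsilon_K.
\end{equation*}
Since $\operatorname{supp}\Upsilon_K\subseteq\widetilde{\omega}_T$ whenever $K\in\vertices_T^i\cup\sides_T^i$, a Cauchy--Schwarz inequality immediately yields the constant-free bound $\eta_{\COip,T}\le\norm[L^2(\widetilde{\omega}_T)]{D_h^2(u-u_h)}$.

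For the oscillation estimate with $f\in L^2(\Omega)$, the key observation is that any $v\in\mathring{H}^2(\omega_z)$ vanishes together with its normal derivative on $\partial\omega_z$, so two consecutive Poincar\'e inequalities give $\norm[L^2(\omega_z)]{v}\lesssim h_z^2\norm[L^2(\omega_z)]{D^2 v}$ and consequently $\norm[H^{-2}(\omega_z)]{f}\lesssim h_z^2\norm[L^2(\omega_z)]{f}$. Combined with the collective stability of Lemma~\ref{L:C0construction-Pz-local} through a triangle inequality (no constant subtraction is needed here, unlike in Theorems~\ref{T:CR-error-estimator-1}--\ref{T:CR-error-estimator-2}, precisely because the doubly-vanishing trace of $v$ already provides the extra factor $h_z^2$), this yields $\norm[H^{-2}(\omega_z)]{f-\calP_z f}\lesssim h_z^2\norm[L^2(\omega_z)]{f}$, and summing over $z\in\vertices$ with the finite-overlap property of the patches $\omega_z$ produces the claimed bound $\Osc_{\COip}^2\lesssim\sum_T h_T^4\norm[L^2(T)]{f}^2$. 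I expect the second paragraph to be the main technical obstacle: the heterogeneous scaling of $\Upsilon_z$ versus $\Upsilon_F$ in the $H^2$-seminorm forces a careful tracking of the biorthogonal normalization, and one must verify that the conversion from the natural quadratic-form indicator to the `max' form stated in the theorem does not introduce $h$-dependent constants.
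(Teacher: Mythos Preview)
Your proposal is correct and follows essentially the same route as the paper: invoke Theorem~\ref{T:error-estimator} via Lemmas~\ref{L:C0dist2conf}, \ref{L:C0-partition-of-unity}, \ref{L:C0construction-Pz-local}, identify $\sum_z\norm[H^{-2}(\omega_z)]{\calP_z\Res_h^{\mathtt{C}}}^2\eqsim\sum_z\norm[\Dual{S_z}]{\Res_h^{\mathtt{C}}}^2\eqsim\eta_{\COip}^2$ along the lines of \eqref{CR-pde-indicator-1}, get the local lower bound by Cauchy--Schwarz with $\Upsilon_K\in\mathring{H}^2(\Omega)$, and bound the oscillation using only the collective stability of $\calP_z$ together with the double Poincar\'e inequality on $\mathring{H}^2(\omega_z)$. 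The paper's proof is terse and explicitly defers to ``similar arguments as in the proof of Theorem~\ref{T:CR-error-estimator-1}'', noting only that for the oscillation bound ``just stability and no invariance of $\calP_z$ is invoked''---exactly the point you make about not needing to subtract a constant.
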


\begin{remark}[Estimator variants of biharmonic $\COip$]
\label{R:oscHiOC0}
%
In view of  Corollary~\ref{C:C0JumpDist2conf}, we may remove the volume term from the indicator $\Ncf_{\COip}$ in Theorem~\ref{TH:mainC0} at the expense of an additional tuning constant.
	
In \eqref{Eq:DzSzBiC0}, we could augment the simple functionals $D_z$ with piecewise constants over $\omega_z$ and the simple functions $S_z$ with corresponding \(H^2\)-element-bubble functions. Then the new oscillation bound involves on the right-hand side the oscillation in \cite{Brenner.Gudi.Sung:10,Carstensen.Graessle.Nataraj:24}, which is formally of third order. The new indicator of the approximate residual, however, is more expensive.
\end{remark}

\begin{proof}[Proof of Theorem~\ref{TH:mainC0}]
Recall that, according to \cite[Theorem 3.18]{Veeser.Zanotti:18b}, we have that \(\delta_h\lesssim 1\) depending only on the shape constant  \(\gamma_\grid\) and the penalty parameter \(\sigma>\sigma_*\). Thanks to Lemmas~\ref{L:C0dist2conf},~\ref{L:C0construction-Pz-local} and~\ref{L:C0-partition-of-unity}, we can apply Theorem~\ref{T:error-estimator} and the claimed equivalence as well as the local lower bound follow with similar arguments as in the proof of Theorem~\ref{T:CR-error-estimator-1}; note that for the oscillation bound, just stability and no invariance of $\calP_{z}$ is invoked.  
\end{proof}

\subsection{A quasi-optimal Morley method}
\label{sec:morley}
In this section, we derive a strictly equivalent error estimator for the  quasi-optimal Morley method  from \cite[Section~4,3]{Veeser.Zanotti:19b} approximating the two-dimensional biharmonic problem~\eqref{Eq:BiHarmPI}. The estimator is simplified in the spirit of Theorem~\ref{T:CR-error-estimator-2}, reducing this time to an approximate distance to $\mathring{H}^2(\Omega)$ and data oscillation that is classically bounded and formally of higher order.  Again, the simplification is obtained by locally nonconforming simple test functions that are orthogonal to the residual. 

In the notation of Section~\ref{sec:doma-mesh-polyn}, the Morley space \cite{Morley:68} is
\begin{align*}
	\begin{aligned}
		\MR := \bigg\{ v_h \in S^0_2 &\mid v_h 
		\text{ is continuous in \(\vertices^i\), } v_h=0 \text{
			in }
		\vertices\setminus\vertices^i,
		\\
		&\quad\text{and}~\forall F \in \sides ~\int_F \jump{\partial_\normal v_h}  = 0 \bigg\}
	\end{aligned}
\end{align*}
and the discrete problem reads
\begin{equation}
\label{eq:qo-MR}
\begin{multlined} 
		\text{for \(f\in H^{-2}(\Omega)\)},~\text{find $u_h \in \MR$ such that}
	\\
		\forall v_h\in \MR\quad \int_\Omega D^2_h
		u_h: D_h^2 v_h=\langle f, \calE_{\mr} v_h\rangle.
\end{multlined}
\end{equation}
The smoothing operator \(\calE_{\mr}:\MR\to \mathring{H}^2(\Omega)\) has the following properties:
for all \(v_h\in \MR\), $F\in\sides$, $z \in \vertices$, and $T \in \grid$, 
\begin{subequations}\label{eq:EMR}
\begin{gather}
		\label{eq:EMRcons}
		\calE_{\mr}v_h(z)
		=
		v_h(z)
	\quad\text{and}\quad
		\int_F \partial_\normal\calE_{\mr} v_h
		=
		\int_F \partial_\normal v_h,
	\\ \label{eq:EMRstab}                                           
		\begin{align}
			\begin{aligned}
				h_T^{-4}\norm[L^2(T)]{v_h-\calE_{\mr} v_h}^2
			+
			h_T^{-2}\norm[L^2(T)]{\nabla(v_h-\calE_{\mr} v_h)}^2
			\\
			{} + \norm[L^2(T)]{D^2 \calE_{\mr} v_h}^2\lesssim \norm[L^2(\omega_T)]{D^2_hv_h}^2;
			\end{aligned}
		\end{align}
\end{gather}
see \cite[Propostition~3.17]{Veeser.Zanotti:19b}.	
\end{subequations}
We note that the weak formulation of the biharmonic problem \eqref{Eq:BiHarmPI} and this method fit into the abstract framework of Section~\ref{sec:qo-methods} with
\begin{equation}
\label{setting-for-Biharmonic-and-MR}
	\begin{gathered}
		\tV = \mathring{H}^2(\Omega) + \MR
		\quad\text{with}\quad
		V = \mathring{H}^2(\Omega),
		\quad
		V_h = \MR,
		\\
		\ta(v,w) = \int_\Omega D^2_h v : D_h^2 w,
		\quad
		\norm{v} =\|D^2_h v\|_{L^2(\Omega)} ,
		\\
		a_h = \ta_{|\MR\times\MR},
		\quad\text{and}\quad
		\calE_h = \calE_{\mr}.
	\end{gathered}
\end{equation}
The conservation \eqref{eq:EMRcons} of point values and face means of the normal derivatives, together with element-wise integration by parts, implies
\begin{align}
\label{Eq:MR-smoother-overconsistency}
	\forall v_h,w_h\in\MR\quad \int_\Omega D_h^2w_h:D_h(v_h-\calE_{\mr}v_h)=0.
\end{align}
Since both discrete and continuous bilinear forms are restrictions of \(\ta\), the method is thus overconsistent, i.e.\ its consistency measure vanishes, \(\delta_h=0\). Hence, Proposition~\ref{P:QuasiOptMethods} and the stability~\eqref{eq:EMRstab} ensure that the method is quasi-optimal with the constant $\Cqo=\normtr{\calE_{\mr}}$ depending on the shape constant $\gamma_\grid$. 

To apply the abstract a~posteriori analysis in Section~\ref{sec:posteriori-analysis}, we establish its main assumptions. As we proceed similarly to Section~\ref{sec:qo-C0} , we focus on the differences and start with \eqref{A:equiv-to-dist-to-conf} regarding the distance to conformity. Motivated by Section~\ref{S:H20-quad-HCT}, we  define a linear and bounded operator \(\calA_{\mr}:\MR\to \mathring{H}^2(\Omega)\) with the help of \(\HCT\) elements by 
\begin{equation}
\label{df:AHCT}
	\calA_{\mr}  v_h
	:=
	\sum_{z \in \vertices^i}\Big(v_h(z) \Upsilon^0_z
	 +
	\sum_{j=1}^2 \frac{\partial}{\partial x_j}({v_h}_{|T_z})(z) \Upsilon^j_z \Big)
	+
	\sum_{F\in\sides^i} \partial_{\normal}{v_h}
	 (m_F)\Upsilon_F,
\end{equation}
which in contrast to the operator $\calA_{\COip}$ in \eqref{df:AC0} for the $C^0$ interior penalty method exploits continuity of the normal derivatives in edge midpoints. The next lemma specifies Proposition~\ref{P:inds-for-dist-to-conf} for $\calA_{\mr}$.

\begin{lemma}[Approximate $\|D^2_h\cdot\|_{L^2}$-distance to $\mathring{H}^2$ by averaging]
\label{L:MRdist2conf}
In the setting~\eqref{setting-for-Biharmonic-and-MR}, the operator \(\calA_{\mr}\) in~\eqref{df:AHCT} satisfies
assumption~\eqref{A:equiv-to-dist-to-conf}  and \(\Cav\gtrsim 1\):  for all $v_h \in \MR$,
\begin{align*}
	\norm[L^2(\Omega)]{D_h^2(\calA_{\mr}v_h-v_h)}
	\lesssim \inf_{v\in \mathring{H}^2(\Omega)}\norm[L^2(\Omega)]{D_h^2(v-v_h)}
	\leq
	\norm[L^2(\Omega)]{D_h^2(\calA_{\mr}v_h-v_h)}.
\end{align*}
\end{lemma}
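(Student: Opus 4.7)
My plan is to mirror the proof of Lemma~\ref{L:C0dist2conf} for the $C^0$ interior penalty case, adapting it to the weaker conformity of Morley functions.

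First, I would verify \eqref{A:equiv-to-dist-to-conf}: the image $\calA_{\mr} v_h$ lies in $\mathring{H}^2(\Omega)$ because it is an $\HCT$ function whose boundary DoFs vanish---the vertex and midpoint sums are restricted to $\vertices^i$ and $\sides^i$, and on any boundary edge $F\in\sides\setminus\sides^i$ the Morley condition $\int_F\partial_\normal v_h=0$ combined with the linearity of $\partial_\normal v_h{}_{|T}$ on $F$ forces $\partial_\normal v_h(m_F)=0$. The required invariance on $\MR\cap\mathring{H}^2(\Omega)$ holds trivially since, in the spirit of Example~\ref{E:no-H20-conf-quads}, this intersection reduces to $\{0\}$. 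This directly yields the second inequality of the claim.

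For the first inequality, on each triangle $T\in\grid$ I would represent $v_h-\calA_{\mr} v_h$ in the $\HCT$ nodal basis. The key observation is that two families of nodal variables coincide on $v_h$ and $\calA_{\mr}v_h$: vertex function values, by Morley's continuity at interior vertices and zero boundary values; and midpoint normal derivatives across interior edges, since $\int_F\jump{\partial_\normal v_h}=0$ combined with the linearity of $\partial_\normal v_h{}_{|T}$ on $F$ implies $\jump{\partial_\normal v_h}(m_F)=0$. Only the vertex gradient DoFs then contribute, so that
\begin{equation*}
(v_h-\calA_{\mr}v_h)_{|T}
=
\sum_{z\in\vertices^i\cap T}\sum_{j=1}^2 \Bigl(\frac{\partial}{\partial x_j}(v_h{}_{|T}-v_h{}_{|T_z})(z)\Bigr)\Upsilon^j_z{}_{|T}.
\end{equation*}
Combining the scaling $\norm[L^2(T)]{D^2\Upsilon^j_z}\lesssim 1$ from Lemma~\ref{L:ScalingUpsilon} with a path argument through faces of the star $\omega_z$, and using that both $\jump{\partial_\normal v_h}$ and $\jump{\partial_\tangent v_h}$ are linear functions with zero mean on $F$---the normal one by Morley, the tangential one by vertex continuity giving $\int_F\jump{\partial_\tangent v_h}=\jump{v_h}(y)-\jump{v_h}(z)=0$---their pointwise values at $z$ are bounded by $h_F^{-1/2}$ times their $L^2(F)$-norms. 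This leads to
\begin{equation*}
\norm[L^2(\Omega)]{D_h^2(v_h-\calA_{\mr}v_h)}^2
\lesssim
\sum_{F\in\sides} h_F^{-1}\norm[L^2(F)]{\jump{\nabla v_h}}^2.
\end{equation*}

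The final step is a Morley analogue of Corollary~\ref{C:CR-jump-distance-to-conformity},
\begin{equation*}
\sum_{F\in\sides} h_F^{-1}\norm[L^2(F)]{\jump{\nabla v_h}}^2
\lesssim
\inf_{v\in\mathring{H}^2(\Omega)}\norm[L^2(\Omega)]{D_h^2(v-v_h)}^2,
\end{equation*}
which I would establish by local biharmonic Neumann problems on the patches $\omega_F$, in the spirit of Lemma~\ref{L:CR-distance-to-conformity}: the compatibility conditions of these auxiliary problems are provided by the Morley moment condition $\int_F\jump{\partial_\normal v_h}=0$ for the normal part and by vertex continuity (via integration by parts along $F$) for the tangential part. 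The main obstacle relative to the $C^0$ setting of Lemma~\ref{L:C0dist2conf} is precisely this handling of the tangential component of $\jump{\nabla v_h}$, which is no longer trivially zero and forces a simultaneous use of both Morley constraints; once it is controlled, the absence of any stabilization parameter yields the desired bound with $\Cav\gtrsim 1$.
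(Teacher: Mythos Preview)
Your overall strategy is sound and largely parallels the paper, but there is one small flaw and one genuine methodological difference worth noting.

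\textbf{The invariance argument.} You claim that $\MR\cap\mathring{H}^2(\Omega)=\{0\}$ ``in the spirit of Example~\ref{E:no-H20-conf-quads}''. That example treats one specific mesh; it does not show the intersection is trivial for an arbitrary shape-regular triangulation, and in fact there exist meshes admitting nontrivial $C^1$ piecewise quadratics. The paper instead observes that any $w_h\in\MR\cap\mathring{H}^2(\Omega)$ is $C^1$, hence lies in $\mathring{\HCT}$, and that $\calA_{\mr}$ reproduces such a function from its (now single-valued) $\HCT$ degrees of freedom. This is the robust argument; your shortcut does not close the gap in general.

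\textbf{The final jump estimate.} Here you and the paper diverge. You propose element-wise biharmonic Neumann problems with datum $\jump{\nabla v_h}$ on $F$, the compatibility conditions being supplied by $\int_F\jump{\nabla v_h}=0$ (orthogonality to $\poly_1$), followed by a scaled trace/Poincar\'e bound on $\nabla\phi_T$. This is a legitimate route and yields $h_F^{-1/2}\|\jump{\nabla v_h}\|_{L^2(F)}\lesssim\|D_h^2(v_h-v)\|_{L^2(\omega_F)}$. The paper takes a different path: it first applies a Poincar\'e inequality on $F$ (using $\int_F\jump{\nabla v_h}=0$) to pass from $\jump{\nabla v_h}$ to the tangential derivative jump $\jump{D_h^2 v_h}\tangent_F$, and then, following \cite{Gallistl:15}, tests against $\operatorname{Curl}$ of an explicit edge-bubble construction $\phi_F$, which after Cauchy and inverse inequalities gives the same local bound without solving any auxiliary PDE. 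Your approach is conceptually closer to the Crouzeix--Raviart argument of Lemma~\ref{L:CR-distance-to-conformity} and is arguably more transparent; the paper's Curl trick is more constructive and avoids well-posedness considerations entirely.
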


\begin{proof}
The operator $\calA_{\mr}$ maps \(\MR\) into $ \mathring{\HCT} = \HCT \cap \mathring{H^2}(\Omega) $ and is invariant on \( \MR\cap \mathring{H}^2(\Omega)\subset \mathring{\HCT}\). Hence, \eqref{A:equiv-to-dist-to-conf} and the second claimed inequality hold.

It remains to show the first claimed inequality. Given $v_h \in \MR$ and a fixed  element \(T\in\grid\), we have $v_h{}_{|T} \subset S^2_3(\grid_T)$ and thus
\begin{align*}
		D^2_h \big( v_h - \calA_{\mr}(v_h) \big)_{|T}
		=
		\sum_{z\in\vertices^i\cap T} \sum_{j=1}^2
		\partial_j({v_h}_{|T}-\calA_{\mr} v_h)(z) D^2\Upsilon_z^{j}{}_{|T}.
\end{align*}
Furthermore, arguing similarly as for~\eqref{Eq:Dvh-DAvh<DJumps},
\begin{align}
\label{Eq:Dvh-DAvh<DJumps;MR}
		|\nabla_h({v_h}_{|T}-\calA_{\mr} v_h)(z)|
		\lesssim 
		\sum_{F\in\sides_z^i}\|h^{-\frac12}\jump{\nabla_h v_h}\|_{L^2(F)},
\end{align}
where each face contribution has mean value zero, i.e.	\(\int_F \jump{\nabla_h v_h}=0\) for all $F \in \sides_z^i$.  Indeed, \(\int_F \jump{\nabla_h v_h\cdot\normal}=0\) follows explicitly from the definition of \(\MR\), while \(\int_F \jump{\nabla_h v_h\cdot\tangent_F}=0\), where the unit vector $\tangent_F = (-\normal_2,\normal_1)_{|F}$ is  tangent to $F$, is a consequence of  the continuity of \(v_h\) in \(\vertices\). Hence,  a Poincar\'e inequality implies 
\begin{align*}
		\|h^{-\frac12}\jump{\nabla_h v_h}\|_{L^2(F)}
		\lesssim
		\|h^{\frac12}\jump{D^2_h v_h}\tangent_F\|_{L^2(F)}.
\end{align*}
As in the proof of \cite[Proposition 2.3]{Gallistl:15}, we introduce \(\phi_F := \jump{D^2_h v_h} \tangent_F \Psi_F / \int_F\Psi_F \) with the piecewise quadratic edge-bubble function \(\Psi_F\) from~\eqref{Eq:bubbles} and observe that,  for \(v \in \mathring{H}^2(\Omega)\),
\begin{align*}
		\|\jump{D^2_h v_h}\tangent_F\|_{L^2(F)}^2
		=
		\int_F \jump{D^2_h	v_h} \tangent_F \cdot \phi_F
		=
		\int_{\omega_F}D^2_h(v_h-v):\operatorname{Curl} \phi_F
\end{align*}
with
\begin{align*}
		\operatorname{Curl} \phi_F=
		\begin{pmatrix}
			-\partial\phi_{F,1}/\partial
			x_2&\partial\phi_{F,1}/\partial x_1\\
			-\partial\phi_{F,2}/\partial
			x_2&\partial\phi_{F,2}/\partial x_1
		\end{pmatrix}.
\end{align*}
Cauchy and inverse inequalities thus lead to
\begin{align}
\label{eq:MRlocal-jumpEst}
		\|h^{-\frac12}\jump{\nabla_h v_h}\|_{L^2(F)}^2
		\lesssim
		\norm[L^2(\omega_F)]{D^2_h(v_h-v)}^2.
\end{align}
We combine the above bounds with the scaling properties of $\Upsilon_z^{j}$, $j=1,2$, in Lemma~\ref{L:ScalingUpsilon} , sum over all elements $T\in\grid$,  and  take the infimum over all \(v\in \mathring{H}^2(\Omega)\). This concludes the second claimed inequality thanks to  the fact that the overlapping of the sets $\omega_F $ with $F \in \sides_z^i$, $z \in \vertices^i \cap T$, and $T \in \grid$ is bounded in terms of the shape constant $\gamma_\grid$.
\end{proof}

The proof of Lemma~\ref{L:MRdist2conf} reveals an alternative approximate distance to $\mathring{H}^2(\Omega)$.

\begin{corollary}[Approximate $\|D^2_h\cdot\|_{L^2}$-distance to $\mathring{H}^2$ by jumps]
\label{C:MRJumpDist2conf}
For \(v_h\in\MR\),
\begin{align*}
	\int_\Sigma \frac{1}{h} \left|\jump{\nabla_h v_h}\right|^2
	\eqsim
	\inf_{ v \in \mathring{H}^2(\Omega)} \norm[L^2(\Omega)]{D_h^2(v_h-v)}^2.
\end{align*}
\end{corollary}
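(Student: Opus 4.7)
The plan is to harvest both directions of the equivalence directly from the proof of Lemma~\ref{L:MRdist2conf}, mirroring how Corollary~\ref{C:CR-jump-distance-to-conformity} and Corollary~\ref{C:C0JumpDist2conf} follow from their respective averaging lemmas.

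First I would establish the bound $\inf_{v \in \mathring{H}^2(\Omega)} \norm[L^2(\Omega)]{D_h^2(v_h - v)}^2 \lesssim \int_\Sigma h^{-1}|\jump{\nabla_h v_h}|^2$. This is immediate: the averaging operator $\calA_{\mr} v_h$ belongs to $\mathring{H}^2(\Omega)$, so it is an admissible competitor in the infimum. Combining the element-wise representation
\begin{equation*}
 D_h^2\bigl(v_h - \calA_{\mr} v_h\bigr)_{|T}
 =
 \sum_{z \in \vertices^i \cap T} \sum_{j=1}^{2} \partial_j({v_h}_{|T} - \calA_{\mr} v_h)(z)\, D^2\Upsilon_z^j{}_{|T}
\end{equation*}
from the proof of Lemma~\ref{L:MRdist2conf} with the pointwise bound \eqref{Eq:Dvh-DAvh<DJumps;MR}, the scaling properties of $\Upsilon_z^j$ from Lemma~\ref{L:ScalingUpsilon}, and finite overlap of the stars, summing over $T \in \grid$ yields precisely the required upper bound on the distance.

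For the reverse inequality $\int_\Sigma h^{-1}|\jump{\nabla_h v_h}|^2 \lesssim \inf_{v \in \mathring{H}^2(\Omega)} \norm[L^2(\Omega)]{D_h^2(v_h - v)}^2$, I would fix an arbitrary $v \in \mathring{H}^2(\Omega)$ and exploit the chain of estimates already carried out in the proof of Lemma~\ref{L:MRdist2conf} that leads to \eqref{eq:MRlocal-jumpEst}. For each interior face $F \in \sides^i$, the zero-mean jump property of $\nabla_h v_h$ (from continuity at vertices for the tangential component and from the Morley constraint $\int_F \jump{\partial_\normal v_h} = 0$ for the normal component) enables a Poincar\'e inequality on $F$, followed by the edge-bubble testing with $\phi_F = \jump{D_h^2 v_h}\tangent_F\,\Psi_F / \int_F \Psi_F$ and integration by parts against the conforming function $v$ to arrive at
\begin{equation*}
 \|h^{-\frac12}\jump{\nabla_h v_h}\|_{L^2(F)}^2
 \lesssim
 \norm[L^2(\omega_F)]{D_h^2(v_h - v)}^2.
\end{equation*}
Summing over $F \in \sides^i$ (boundary faces carry no contribution since $\jump{\nabla_h v_h}_{|F} = 0$ would otherwise require care; see the local definition of the jump) and using the finite overlap $\#\{F : \omega_F \ni T\} \lesssim 1$, then taking the infimum over $v \in \mathring{H}^2(\Omega)$, closes the equivalence.

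The main (though mild) obstacle is simply bookkeeping: the proof of Lemma~\ref{L:MRdist2conf} already assembles every ingredient, so the corollary is essentially a matter of re-reading those estimates, discarding the averaging-operator intermediate step in one direction and invoking the averaging operator as an admissible competitor in the other. No new technical estimate is required, and all hidden constants depend only on the shape constant $\gamma_\grid$.
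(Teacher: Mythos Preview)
Your approach is exactly the paper's: the corollary is stated immediately after Lemma~\ref{L:MRdist2conf} with the sentence ``The proof of Lemma~\ref{L:MRdist2conf} reveals an alternative approximate distance,'' and no separate proof is given. Both directions are indeed read off from that proof as you describe.

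One point needs correcting. Your parenthetical that boundary faces carry no contribution because $\jump{\nabla_h v_h}_{|F}=0$ there is false. For $F\subset\partial\Omega$ the one-sided jump equals $(\nabla v_h)_{|T}$, an affine vector field on $F$ that is generically nonzero; only its \emph{mean} vanishes, thanks to the Morley constraints (the condition $v_h(z)=0$ at both boundary vertices of $F$ yields $\int_F\partial_{\tangent} v_h=0$, and $\int_F\partial_\normal v_h=0$ is the boundary instance of the face condition in the definition of $\MR$). The fix is simply to run the same Poincar\'e-plus-bubble argument on boundary faces: the mean-zero property gives the Poincar\'e step, and in the integration-by-parts identity the contribution $\int_{T} D^2 v:\operatorname{Curl}\phi_F$ vanishes for $v\in\mathring{H}^2(\Omega)$ because $\nabla v=0$ on $\partial\Omega$ in the trace sense, so the boundary term on $F$ disappears. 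With this amendment the local bound \eqref{eq:MRlocal-jumpEst} holds for every $F\in\sides$, the sum over $\Sigma$ is complete, and your argument closes.
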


We next establish assumption~\eqref{A:partition-of-unity} for the localization of the conforming residual. Motivated by Section~\ref{S:H20-quad-HCT}, we choose 
\begin{equation}
	\label{Eq:MR-partition-of-unity}
	\Index = \vertices,
	\qquad
	\Phi_z = \Upsilon_z^0,
	\qquad
	V_z = \mathring{H}^2(\omega_z),
	\qquad
	\calI_h = \Pi_{\mr},
\end{equation}
where \(\Pi_{\mr}\) denotes the Morley projection, viz.\ the \(\ta\)-orthogonal projection of \(\mathring{H}^2(\Omega)+\MR\) projection onto \(\MR\). Notice that \eqref{Eq:MR-partition-of-unity} differs from \eqref{Eq:C0-partition-of-unity} for the $C^0$ interior penalty method only in the choice of the operator $\calI_h$.

\begin{lemma}[Partition of unity in $H^2$ for biharmonic $\mr$]
\label{L:MR-partition-of-unity}
In the  setting  \eqref{setting-for-Biharmonic-and-MR}, the choices \eqref{Eq:MR-partition-of-unity} satisfy assumption
\eqref{A:partition-of-unity} with constants $\Cloc,\Ccol \lesssim 1$.  
\end{lemma}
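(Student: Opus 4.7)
The plan is to follow the structure of Lemma~\ref{L:C0-partition-of-unity}, substituting the Morley setting throughout. Properties (i) and (ii) depend only on the partition of unity $(\Upsilon_z^0)_{z\in\vertices}$ and the local test spaces $V_z = \mathring H^2(\omega_z)$, and transfer essentially verbatim: (i) follows from $\Upsilon_z^0 \in W^{2,\infty}(\Omega)$, $\operatorname{supp}\Upsilon_z^0 = \omega_z$, and $\sum_z \Upsilon_z^0 \equiv 1$ (Lemma~\ref{L:ScalingUpsilon}); (ii) follows with $\Ccol \leq \sqrt{3}$ from the fact that each triangle of $\grid$ lies in exactly three vertex-stars.

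The heart of the proof is property (iii). Fix $v \in \mathring H^2(\Omega)$. The Leibniz rule combined with the scaling estimates of Lemma~\ref{L:ScalingUpsilon} gives, for each $z \in \vertices$,
\begin{multline*}
\|D_h^2((v - \calE_\mr \Pi_\mr v)\Upsilon_z^0)\|_{L^2(\omega_z)}^2
\lesssim h_z^{-4}\|v - \calE_\mr \Pi_\mr v\|^2_{L^2(\omega_z)} \\
{}+ h_z^{-2}\|\nabla_h(v - \calE_\mr \Pi_\mr v)\|^2_{L^2(\omega_z)}
+ \|D_h^2(v - \calE_\mr \Pi_\mr v)\|^2_{L^2(\omega_z)}.
\end{multline*}
Summing over $z$, using the shape-regular equivalence $h_z \eqsim h_T$ and the finite overlap of the stars, the claim reduces to proving
\begin{equation*}
\sum_{T\in\grid} \Bigl( h_T^{-4}\|v - \calE_\mr \Pi_\mr v\|^2_{L^2(T)} + h_T^{-2}\|\nabla_h(v-\calE_\mr \Pi_\mr v)\|^2_{L^2(T)} + \|D_h^2(v - \calE_\mr \Pi_\mr v)\|^2_{L^2(T)} \Bigr) \lesssim \|D^2 v\|^2_{L^2(\Omega)}.
\end{equation*}

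I would split $v - \calE_\mr \Pi_\mr v = (v - \Pi_\mr v) + (\Pi_\mr v - \calE_\mr \Pi_\mr v)$. The second summand is handled element-wise by the smoother stability \eqref{eq:EMRstab} applied to $v_h = \Pi_\mr v$, giving the desired three bounds in terms of $\|D^2_h \Pi_\mr v\|^2_{L^2(\omega_T)}$; summation plus the $\widetilde a$-optimality $\|D^2_h\Pi_\mr v\|_{L^2(\Omega)} \leq \|D^2 v\|_{L^2(\Omega)}$ of the orthogonal projection completes this half. For the first summand I would interpose the Morley interpolant $I_\mr v$ and write $v - \Pi_\mr v = (v - I_\mr v) + (I_\mr v - \Pi_\mr v)$: standard local Morley interpolation estimates with $L^2$, $H^1$, $H^2$ scaling handle $v - I_\mr v$, and the broken $H^2$ seminorm of the discrete correction $I_\mr v - \Pi_\mr v \in \MR$ is dominated by $2\|D_h^2(v - I_\mr v)\|_{L^2(\Omega)} \lesssim \|D^2 v\|_{L^2(\Omega)}$ via $\widetilde a$-orthogonality of $\Pi_\mr$.

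The main obstacle is the local control of the discrete error $I_\mr v - \Pi_\mr v \in \MR$ in $L^2$ and $H^1$ with the respective scalings $h_T^{-4}$ and $h_T^{-2}$. I would handle this by expressing $\|I_\mr v - \Pi_\mr v\|_{L^2(T)}$ and $\|\nabla(I_\mr v - \Pi_\mr v)\|_{L^2(T)}$ through the Morley degrees of freedom on $T$, namely the vertex values and the face-averaged normal derivatives, and reducing each of these to jumps of $\nabla_h(I_\mr v - \Pi_\mr v)$ on neighboring faces in the spirit of \eqref{Eq:Dvh-DAvh<DJumps;MR} and \eqref{eq:MRlocal-jumpEst}. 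Combined with the global broken $H^2$-bound on $I_\mr v - \Pi_\mr v$ already noted and the finite overlap of the patches, all contributions are bounded by $\|D^2 v\|^2_{L^2(\Omega)}$, yielding $\Cloc \lesssim 1$.
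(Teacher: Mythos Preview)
Your treatment of (i) and (ii), and the Leibniz-plus-scaling reduction for (iii), are fine and match the paper. The gap is in the last paragraph. The plan to bound the vertex values and face means of $w_h := I_\mr v - \Pi_\mr v$ by ``jumps of $\nabla_h w_h$ on neighbouring faces'' cannot work: for any $w_h \in \MR$ both $w_h(z)$ and $\int_F \partial_\normal w_h$ are \emph{single-valued}, and the estimates \eqref{Eq:Dvh-DAvh<DJumps;MR}--\eqref{eq:MRlocal-jumpEst} you cite control only differences of $\nabla_h w_h$ across faces, not the degrees of freedom themselves. In fact the scaled bound $\sum_T h_T^{-4}\|w_h\|_{L^2(T)}^2 \lesssim \|D_h^2 w_h\|_{L^2(\Omega)}^2$ is false for general $w_h \in \MR$: take $w_h = I_\mr \phi$ for a fixed nonzero $\phi \in \mathring H^2(\Omega)$ and let $h\to 0$.

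The fix is that your ``main obstacle'' is empty. On $\mathring H^2(\Omega)$ the Morley interpolant \emph{is} the $\ta$-orthogonal projection: integrating $\int_T D^2(v-I_\mr v):D^2 w_h$ by parts and using that $D^2 w_h$ is piecewise constant together with the interpolation conditions (vertex values and face means of $\partial_\normal$) gives $\ta(v-I_\mr v,w_h)=0$ for all $w_h\in\MR$, the same mechanism behind \eqref{Eq:MR-smoother-overconsistency}. Hence $I_\mr v = \Pi_\mr v$ and your splitting collapses to $(v - I_\mr v) + (I_\mr v - \calE_\mr I_\mr v)$, which you already handle by local interpolation estimates and \eqref{eq:EMRstab}.

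The paper reaches the same endpoint more directly: it notes that \eqref{eq:EMRcons} makes $\calE_\mr$ a right inverse of $\Pi_\mr$, so $w := v - \calE_\mr \Pi_\mr v \in \mathring H^2(\Omega)$ satisfies $\Pi_\mr w = 0$, whence $v - \calE_\mr \Pi_\mr v = w - \Pi_\mr w$. One application of the local Morley approximation estimates to the \emph{smooth} function $w$ then gives all three scaled norms bounded by $\|D^2 w\|_{L^2(\omega_z)}$, and \eqref{eq:EMRstab} closes with $\|D^2 w\|_{L^2(\omega_z)} \lesssim \|D^2 v\|_{L^2(\omega_z^+)}$. This avoids any triangle-inequality splitting and keeps every step local.
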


\begin{proof}
Conditions (i) and (ii) in~\eqref{A:partition-of-unity} follow as in the proof of Lemma~\ref{L:C0-partition-of-unity} and we only need to verify condition~(iii). Let \(v\in \mathring{H}^2(\Omega)\).  The properties \eqref{eq:EMR} ensure that \(\calE_{\mr}\) is a bounded right inverse of \(\Pi_{\mr}\); see \cite[Lemma~3.13]{Veeser.Zanotti:19b}. We thus have \(v - \calE_{\mr}\Pi_{\mr} v = w -\Pi_{\mr} w \) with \(w=v-\calE_{\mr}\Pi_{\mr} v\). Using the scaling properties of $\Upsilon_z^0$ in Lemma~\ref{L:ScalingUpsilon}, stability and approximation properties of the Morley interpolation (see also \cite[Proposition 2.3]{Gallistl:15}),  and the stability~\eqref{eq:EMRstab} of the smoothing operator, we derive
\begin{multline*}
  \norm[L^2(\Omega)]{D^2_h((v - \calE_{\mr}\Pi_{\mr} v)\Upsilon^0_z)}
  =
  \norm[L^2(\omega_z)]{D_h^2\big((w - \Pi_{\mr} w)\Upsilon^0_z\big)}
\\
  \lesssim h^{-2}_z \norm[L^2(\omega_z)]{w - \Pi_{\mr} w} 
  +
  h^{-1}_z \norm[L^2(\omega_z)]{\nabla_h (w - \Pi_{\mr} w)}
  +
  \norm[L^2(\omega_z)]{D_h^2(w-\Pi_{\mr} w)}
\\
	\lesssim \norm[L^2(\omega_z)]{D^2w}
	=
	\norm[L^2(\omega_z)]{D^2 (v - \calE_{\mr}\Pi_{\mr} v)}
	\lesssim
	\norm[L^2(\omega_z^+)]{D^2v}.
\end{multline*}
Consequently,  summing over all \(z\in\vertices\) and accounting for the finite overlapping of the enlarged stars \(\omega_z^+\), \(z\in\vertices\), concludes the proof.
\end{proof}

Let us now turn to choose the simple test functions and functionals for assumption \eqref{A:construction-Pz-local}. Our departure point is the local structure of the conforming residual, whose local contributions are represented in
\begin{equation*}
	\widehat{D}_z
	=
	\operatorname{span} \{ \widehat{\chi}_F \mid F \in \sides_z^i \}
	\oplus
	\begin{cases}
		\operatorname{span} \widehat{\chi}_z, &\text{if } z \in \vertices^i,
    \\
		\{0\}, &\text{if } z \in \vertices \setminus \vertices^i,
	\end{cases}
\end{equation*}
where $\widehat{\chi}_z$, $\widehat{\chi}_F$, $F \in \sides_z^i$, are the $H^{-2}(\Omega)$-variants of $\chi_z$, $\chi_F$, $F \in \sides_z^i$ in \eqref{def-of-chi_z,chi_F}; cf.\ \eqref{eq:StructOfDC0}.  Notably, the functionals  $\widehat{\chi}_z$, $z \in \vertices^i$, $\widehat{\chi}_F$, $F \in \sides^i$, are the degrees of freedom of $\MR$. Moreover, the nodal basis functions $\Psi^{\mr}_F$, $F\in\sides^i$, $\Psi^{\mr}_z$, $z\in\vertices^i$, given by
\begin{equation}
\label{eq:nodalBasisMR}
 \Psi^{\mr}_K \in \MR
 \quad\text{and}\quad
 \scp{\widehat{\chi}_{K'}}{\Psi^{\mr}_{K}} = \delta_{KK'},
 \qquad
 K, K' \in \vertices^i \cup \sides^i,
\end{equation}
satisfy the orthogonality
\begin{equation}
	\scp{\Res_h}{\calE_{\mr}\Psi^{\mr}_K} = 0
\end{equation}
thanks to the overconsistency of \eqref{eq:qo-MR}. Taking into account that the smoothing operator $\calE_{\mr}$ conserves the nodal variables $\widehat{\chi}_K$, $K \in \vertices^i \cup \sides^i$, see \eqref{eq:EMRcons}, we choose the simple functions
\begin{subequations}
\label{Eq:DzSzMR}
\begin{align}
\label{Eq:SzBiMR}
	S_z
	:=
	\operatorname{span} \{ \calE_{\mr}\psi^{\mr}_F \mid F \in \sides_z^i \}
	\oplus
	\begin{cases}
		\operatorname{span} \calE_{\mr}\psi^{\mr}_z, &\text{if } z \in \vertices^i,
		\\
		\{0\}, &\text{if } z \in \vertices \setminus \vertices^i,
	\end{cases}
\end{align}
and the simple functionals
\begin{align}
\label{Eq:DzBiMR}
		D_z
		:=
		\widehat{D}_z{}_{|\tV_z}\qquad\text{with}\qquad\tV_z=\mathring{H}^2(\omega_z)+S_z
\end{align}
\end{subequations}
and define, for convenience, $\chi_K := \widehat{\chi}_K{}_{|\tV_z}$ for $K \in \mathcal{K}_z := \sides_z^i \cup ( \{z\} \cap \vertices^i)$.

\begin{remark}[Dual bases in simple pairs for biharmonic $\mr$]
\label{R:MRBiorthogonalSystem}
In view of the conservation \eqref{eq:EMRcons}, the duality in \eqref{eq:nodalBasisMR} implies
\begin{equation*}
	\scp{\chi_{K'}}{\calE_{\mr}\psi^{\mr}_K} = \delta_{KK'},
\quad
   K, K' \in \mathcal{K}_z,
\end{equation*}
so that these bases of $D_z$ and $S_z$ are dual.
\end{remark}

\begin{lemma}[Local projections for biharmonic $\mr$]
\label{L:MRconstruction-Pz-local}
In the settings \eqref{setting-for-Biharmonic-and-MR} and \eqref{Eq:MR-partition-of-unity},  the choices \eqref{Eq:DzSzMR} satisfy assumption \eqref{A:construction-Pz-local} with \(\CL,\CNL\lesssim1\). In particular, \eqref{Eq:construction-Pz-local} defines projections $\calP_z: \Dual{\tV_z}\to D_z \subset \Dual{\tV_z}$ with \(\Dual{\tV_z}= \Dual{(\mathring{H}^2(\omega_z)+S_z)}\)~and  
\begin{equation*}
  \forall g \in H^{-2}(\Omega)
 \quad
  \sum_{z \in \vertices}
  \norm[H^{-2}(\omega_z)]{\calP_z g}^2
  \lesssim
  \sum_{z \in \vertices}
  \norm[H^{-2}(\omega_z)]{g}^2.
\end{equation*}
\end{lemma}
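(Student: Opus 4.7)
The plan is to verify conditions (i)--(v) of \eqref{A:construction-Pz-local} in a structure paralleling the proof of Lemma~\ref{L:CR-local-projections-2} (with smoothed nonconforming simple test functions) and then to invoke Proposition~\ref{P:construction-Pz-local}. The easy conditions come first: (i) is immediate since $\calE_{\mr}$ maps $\MR$ into $\mathring{H}^2(\Omega) \subset V$ and $D_z$ is defined as the restriction to $\tV_z = V_z + S_z$; condition (iii) follows at once from Remark~\ref{R:MRBiorthogonalSystem}, which actually yields $\dim S_z = \dim D_z = \#\mathcal{K}_z < \infty$.

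For condition (ii) I would adapt the argument used for the CR counterpart in Lemma~\ref{L:CR-local-projections-2}. Given $v_h \in \MR$ and $w \in \tV_z \subset \mathring{H}^2(\omega_z^+)$, element-wise integration by parts together with the identity $\nabla \Delta v_h = 0$ (which holds because $v_h \in S^0_2$) reduces $\ta(v_h, w)$ to a sum of face terms $\int_F \jump{D^2_h v_h \normal} \cdot \nabla_h w$ over $F \in \sides_z^i \cup (\sides_z^{i+} \setminus \sides_z^i)$. For an outer face $F' \in \sides_z^{i+} \setminus \sides_z^i$, I would decompose $\nabla w = (\partial_\normal w)\normal + (\partial_{\tangent_{F'}} w)\tangent_{F'}$, use that the jumps are edge-wise constant, and then show the two resulting factors $\int_{F'} \partial_\normal w$ and $\int_{F'} \partial_{\tangent_{F'}} w = w(z_2)-w(z_1)$ vanish: the first via the moment conservation \eqref{eq:EMRcons} combined with the Morley duality \eqref{eq:nodalBasisMR}, and the second because the endpoints of $F'$ are vertices distinct from $z$ at which $w = \psi^{\mr}_K$ vanishes. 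For inner faces $F \in \sides_z^i$, the decomposition along $\normal$ and the tangent field $\tangent_z$ from \eqref{def-t_z} collapses the tangential contribution to the point value $w(z)$, since $w(y_z^F) = 0$ (again by the Morley duality and the boundary conditions on $V_z$). Collecting terms produces the $\widehat{D}_z$-structure displayed in \eqref{eq:StructOfDC0}, hence $(\tA v_h)_{|\tV_z} \in D_z$.

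For condition (iv) I plan to exploit the dual basis from Remark~\ref{R:MRBiorthogonalSystem}. Writing $\chi = r_z \chi_z + \sum_{F \in \sides_z^i} r_F \chi_F \in D_z$, duality yields $r_K = \scp{\chi}{\calE_{\mr}\psi^{\mr}_K}$ for $K \in \mathcal{K}_z$. Standard scaling of the Morley basis gives $\|D^2_h \psi^{\mr}_K\|_{L^2(\omega_K)} \lesssim h_K^{-1}$, and then \eqref{eq:EMRstab} upgrades this to $\|D^2 \calE_{\mr} \psi^{\mr}_K\|_{L^2(\Omega)} \lesssim h_K^{-1}$, whence $h_K |r_K| \lesssim \norm[\Dual{S_z}]{\chi}$. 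On the test side, scaled Sobolev embedding (in two dimensions) and a scaled trace-plus-Poincar\'e estimate on $V_z = \mathring{H}^2(\omega_z)$ provide $|v(z)| \lesssim h_z \norm[L^2(\omega_z)]{D^2 v}$ and $|\int_F \partial_\normal v| \lesssim h_F \norm[L^2(\omega_z)]{D^2 v}$. Inserting these bounds, summing, and taking the supremum over $v \in V_z$ yields $\norm[\Dual{V_z}]{\chi} \lesssim \norm[\Dual{S_z}]{\chi}$ with a constant depending only on $\gamma_\grid$, i.e.\ $\CL \lesssim 1$.

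For condition (v), since the smoother $\calE_{\mr}$ may enlarge supports, we only have $S_z \subset \mathring{H}^2(\omega_z^+)$, so $\norm[\Dual{S_z}]{g} \leq \norm[H^{-2}(\omega_z^+)]{g}$. It thus suffices to establish the $H^{-2}$-analogue of \eqref{stability-of-overlapping}, namely $\sum_z \norm[H^{-2}(\omega_z^+)]{g}^2 \lesssim \sum_z \norm[H^{-2}(\omega_z)]{g}^2$. To do this I would repeat the argument of \eqref{E:omega+<omega}: given $w \in \mathring{H}^2(\omega_z^+)$ with $\norm[L^2(\omega_z^+)]{D^2 w} \leq 1$, decompose $w = \sum_{y \in \vertices \cap \omega_z^+} \Upsilon_y^0 w$ via the HCT partition of unity from Lemma~\ref{L:ScalingUpsilon}; each $\Upsilon_y^0 w$ lies in $\mathring{H}^2(\omega_y)$, and the scaling bounds on $\Upsilon_y^0$ combined with Poincar\'e in $\mathring{H}^2(\omega_z^+)$ give $\norm[L^2(\omega_y)]{D^2(\Upsilon_y^0 w)} \lesssim \norm[L^2(\omega_z^+)]{D^2 w}$. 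Testing $g$ against this decomposition and applying Cauchy--Schwarz, then squaring and summing over $z$ with the overlap count controlled by $\gamma_\grid$, provides the claim with $\CNL \lesssim 1$. The existence of the projections and the collective stability estimate then follow verbatim from Proposition~\ref{P:construction-Pz-local}.

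The main obstacle is condition (ii): the cancellation on the outer faces $\sides_z^{i+} \setminus \sides_z^i$ relies on a precise interplay between the two moment-preserving properties of $\calE_{\mr}$ in \eqref{eq:EMRcons} and the Morley duality \eqref{eq:nodalBasisMR}. Unlike the CR case, both a tangential and a normal component of $\nabla w$ must be handled, and one needs $w$ to vanish at every vertex of each outer face; getting this cleanly for both components of $\tV_z$ is the delicate point.
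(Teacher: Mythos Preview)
Your proposal is correct and follows essentially the same route as the paper: verify (i)--(v) of \eqref{A:construction-Pz-local} using the duality of Remark~\ref{R:MRBiorthogonalSystem}, the moment conservation~\eqref{eq:EMRcons}, and the HCT partition of unity for the $H^{-2}$-localization, then invoke Proposition~\ref{P:construction-Pz-local}. The only organisational difference is in condition~(ii): the paper treats the restrictions to $V_z$ and to $S_z$ separately (citing \eqref{eq:StructOfDC0} for the former and computing $\ta(v_h,\calE_{\mr}\Psi^{\mr}_K)$ explicitly for each basis function of $S_z$), whereas you argue for a general $w\in\tV_z$ at once; both rest on the same cancellations. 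One minor imprecision: since $\operatorname{supp}\calE_{\mr}\Psi^{\mr}_K\subset\omega_z^+$, the face sum after integration by parts runs over all interior faces of $\grid_z^+$, not only $\sides_z^{i+}$, but your vanishing argument (via \eqref{eq:EMRcons} and \eqref{eq:nodalBasisMR}) applies verbatim to every face $F'\notin\sides_z^i$, so nothing is lost.
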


\begin{proof}
Condition~(i) of \eqref{A:construction-Pz-local} is clear from the definitions \eqref{Eq:DzSzMR} of the respective spaces. 
To verify condition~(ii), 
let $z \in \vertices$ be arbitrary. Observations
similar to the proof of~\eqref{eq:StructOfDC0} imply \(\tA(\MR)_{|\mathring{H}^2(\omega_z)}\subset D_z\) and it  remains to verify  \(\tA(\MR)_{|S_z}\subset D_z\). To this end, we let \(v_h\in \MR\)  and proceed similarly as for~\eqref{eq:StructOfDC0} and denote by $\tangent_z^+$ an extension of the tangent field $\tangent_z$ in \eqref{def-t_z} to $\Sigma_z^+ = \cup_{F \in \sides_z^{i+}} F$, where the orientation of the unit vectors $\tangent_z^+{}_{|F}$, $F \in \sides_z^{i+} \setminus \sides_z^+$,  is arbitrary but fixed. For \(F\in\sides_z^i\), we have $\operatorname{supp} \calE_{\mr} \Psi^{\mr}_F \subset \omega_z^+$ and thus
\begin{align*}
 \ta(v_h, \calE_{\mr}\Psi^{\mr}_F)
 &=
 \int_{\Sigma_z^+} \jump{D_h^2 v_h \normal\cdot\normal} \partial_{\normal} \calE_{\mr}\Psi^{\mr}_F
 +
 \int_{\Sigma_z^+} \jump{D_h^2 v_h  \normal\cdot\tangent_z^+} \nabla \calE_{\mr} \Psi^{\mr}_F \cdot \tangent_z^+
\end{align*}
The second integral vanishes thanks to the fundamental theorem of calculus since \(\jump{D_h^2 v_h  \normal\cdot\tangent_z^+}\) is piecewise constant on \(\sides_z^{i+}\) and \( \calE_{\mr}\Psi^{\mr}_F(y) = \Psi^{\mr}_F(y) = 0\) for all \(y\in \vertices\).  In the light of \( \scp{\chi_{F'}}{\calE_{\mr}\Psi^{\mr}_F} = \scp{\chi_{F'}}{\Psi^{\mr}_F} = \delta_{FF'} \), the first  integral reduces to an integral over $F$ and we arrive at
\begin{align*}
	\ta( v_h, \calE_{\mr}\Psi^{\mr}_F)
	=
   \int_{F} \jump{D_h^2 v_h \normal\cdot\normal}\partial_{\normal} \calE_{\mr}\Psi^{\mr}_F.
\end{align*}
If $z \in \vertices^i$, a similar reasoning yields
\begin{align*}
	\ta( v_h, \calE_{\mr}\Psi^{\mr}_z )
	=
	\sum_{F\in\sides_z^i}\jump{D_h^2 v_h \normal\cdot \tangent_z}.
\end{align*}
This shows \(\tA(\MR)_{|S_z} \subset D_z\) and thus (ii) of \eqref{A:construction-Pz-local} is verified.
	
Remark~\ref{R:MRBiorthogonalSystem} readily implies \(\operatorname{\dim} D_z= \operatorname{\dim} S_z<\infty\) and
thus (iii) of \eqref{A:construction-Pz-local}.
To prove (iv), fix \(z\in\vertices\),  without loss of generality $z \in \vertices^i$, and let   $\chi \in D_z$ with
\begin{align*}
		\scp{\chi}{v}
		=
		r_z v(z)
		+
		\sum_{F\in\sides_z^i} \int_F r_F \partial_\normal v  \quad\text{for}~v\in \mathring{H}^2(\omega_z).
\end{align*}
Standard arguments with scaled trace and Poincar\'e inequalities yield
\begin{align*}
		\scp{\chi}{v} 
		&\lesssim 
		\Big(\sum_{F\in\sides_z^i} h_z^2 |{r_F}|^2
		+
		h^2_z |r_z|^2 \Big)^{\frac12} \norm[L^2(\omega_z)]{D^2v}.
\end{align*}
To further bound the right-hand side, we use the duality of the bases of \(D_z\) and \(S_z\) and their scaling properties.
For \(F\in\sides_z^i\), we obtain
\begin{align*}
		|r_F|
		&=
		\Big|\int_F r_F \partial_\normal\calE_{\mr}(\Psi^{\mr}_F)\Big|
		\leq \norm[\Dual{S_z}]{\chi} \norm[L^2(\omega_z^+)]{D^2\calE_{\mr}(\Psi^{\mr}_F)}
		\lesssim
		\norm[\Dual{S_z}]{\chi} h_F^{-1},
\end{align*}
where we use the stability of \(\calE_{\mr}\) and  \(\norm[L^2(\Omega)]{D_h^2\Psi^{\mr}_F} \lesssim h_F^{-1}\), which follows from the fact that the Morley element is almost affine~\cite[Theorem~6.1.3]{Ciarlet:2002}. A similar reasoning yields
\begin{align*} 
	|r_z|
	=
	|r_z \calE_{\mr}\Psi^{\mr}_z(z)|
	=
	\left|\langle \chi, \calE_{\mr}\Psi^{\mr}_z\rangle\right| 
	&\leq
	\norm[\Dual{S_z}]{\chi} \norm[L^2(\omega_z^+)]{D^2 \calE_{\mr}{\Psi^{\mr}_z}}
	\lesssim
	h^{-1}_z\norm[\Dual{S_z} ]{\chi}. 
\end{align*}
Combining the above results, and  taking the supremum over all \(v\in \mathring{H}^2(\omega_z)\) proves the desired assertion.

The last step is to verify the collective stability property of $\calP_z$, $z \in \vertices$.  Given  any vertex \(z\in\vertices\) and \(g\in H^{-2}(\omega_z^+)\), we have
\begin{align*}
 S_z\subset \mathring{H}^2(\omega_z^+)
\quad\text{and thus}\quad
  \norm[{\Dual{S_z}}]{g}
  \le
  \norm[H^{-2}(\omega_z^+)]{g}.
\end{align*}
Furthermore, we recall from Lemma~\ref{L:ScalingUpsilon} that the
functions \(\Upsilon_y^0\), \(y\in\vertices\), form a partition of unity with $\Upsilon_y^0 \in W^{2,\infty}(\Omega)$ and \(\operatorname{supp} \Upsilon_y^0=\omega_y\). Consequently, for local test functions \(v\in \mathring{H}^2(\omega_z^+)\), we derive 
\begin{align*}
	 \langle g, v\rangle
	 &=
	 \sum_{y\in\vertices\cap\omega_z^+} \langle g,v\Upsilon_y^0\rangle
	\le
	\sum_{y\in\vertices\cap\omega_z^+} \norm[H^{-2}(\omega_y)]{g} \norm[L^2(\omega_y)]{D^2(v\Upsilon_y^0)}
	\\
	&\lesssim
	\sum_{y\in\vertices\cap\omega_z^+}\norm[H^{-2}(\omega_y)]{g} \norm[L^2(\omega_z^+)]{D^2v},
\end{align*}
where, similarly to	\eqref{E:omega+<omega}, we use the scaling properties of $\Upsilon_y^0$, the Poincar\'e inequality with vanishing boundary values, and \(\operatorname{diam}(\omega_z^+)\eqsim h_y \) for all \(y\in\vertices\cap\omega_z^+\). As $\# \{ z \in V \mid \omega_z^+ \ni y\} \lesssim 1$ uniformly in $y \in \vertices$, we arrive at
\begin{align*}
		\sum_{z\in\vertices}\norm[H^{-2}(\omega_z^+)]{g}^2
		\lesssim
		\adjustlimits{\sum}_{z\in\vertices}{\sum}_{y\in\vertices\cap\omega_z^+}\norm[H^{-2}(\omega_y)]{g}^2
		\lesssim
		\sum_{z\in\vertices}\norm[H^{-2}(\omega_z)]{g}^2.
\end{align*}
This proves (v) of \eqref{A:construction-Pz-local} with $\Ccol \lesssim 1$. The collective stability bound follows from Proposition~\ref{P:construction-Pz-local}.
\end{proof}

Building on Lemmas~\ref{L:MRdist2conf},~\ref{L:MR-partition-of-unity}
and~\ref{L:MRconstruction-Pz-local}, the following theorem derives an estimator that is strictly equivalent to the error of the above Morley method.

\begin{theorem}[Estimator for biharmonic $\mr$]
\label{TH:mainMR}
	Let $u\in \mathring{H}^2(\Omega)$ be the weak solution
	of~\eqref{Eq:Biharmonic} and   $u_h\in\MR$ be its quasi-optimal
	Morley approximation from~\eqref{eq:qo-MR}.
	Given a tuning constant \(C>0\), define
	\begin{equation*}
		\Est_{\mr}^2
		:=
		\Ncf_{\mr}^2 + C^2 \Osc_{\mr}^2,  \text{ where} \quad
		\begin{aligned}[t]
			\Ncf_{\mr}^2
			&:= 
			\norm[L^2(\Omega)]{ D^2_h( u_h - \calA_{\mr} u_h )}^2,
		\\ 
			\Osc_{\mr}^2
			&:= 
			\sum_{z\in\vertices} \norm[H^{-2}(\omega_z)]{f - \calP_z f}^2, 
  \end{aligned}
\end{equation*}
with the averaging operator $\calA_{\mr}$ from \eqref{df:AHCT}  and the local projections \( \calP_z
\) from Lemma~\ref{L:MRconstruction-Pz-local}.
This estimator quantifies the error by
\begin{gather*}
		\underline{C} \Est_{\mr}
		\leq
		\norm[L^2(\Omega)]{D_h^2(u-u_h)}
		\leq
		\overline{C} \Est_{\mr}.
\end{gather*}
The equivalence constants $\underline{C}$, and $\overline{C}$ depend only on  the shape constant \(\gamma_\grid\) and the tuning constant \(C\).

Finally, if $f \in L^2(\Omega)$, we have
\begin{equation*}
	\Osc_{\mr}^2
	\lesssim
	\sum_{T \in \grid} h_T^4 \norm[L^2(T)]{f}^2,
\end{equation*}
where the latter is formally of higher order.
\end{theorem}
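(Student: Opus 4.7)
The proof plan is to apply the abstract Theorem~\ref{T:error-estimator} in the setting \eqref{setting-for-Biharmonic-and-MR} together with the partition-of-unity choice \eqref{Eq:MR-partition-of-unity}, all three main hypotheses already being in place: Lemmas~\ref{L:MRdist2conf}, \ref{L:MR-partition-of-unity}, and \ref{L:MRconstruction-Pz-local} verify \eqref{A:equiv-to-dist-to-conf}, \eqref{A:partition-of-unity}, and \eqref{A:construction-Pz-local}, respectively, with constants depending only on $\gamma_\grid$. The first simplification is that the overconsistency \eqref{Eq:MR-smoother-overconsistency} of the Morley method gives $\delta_h = 0$, so the factor $(1 + C_1^2 \delta_h^2)$ in the abstract estimator collapses to $1$ and the tuning constant $C_1$ disappears from the equivalence constants.

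The crucial reduction is to show that the approximate conforming residual term $\eta_h$ from Theorem~\ref{T:error-estimator} vanishes identically. This reflects the deliberate choice of $S_z$ via smoothed Morley basis functions $\calE_{\mr}\Psi^{\mr}_K$, $K \in \mathcal{K}_z$. For each such test function, I would combine the discrete problem \eqref{eq:qo-MR} with the overconsistency identity \eqref{Eq:MR-smoother-overconsistency} to obtain
\begin{equation*}
\scp{\Res_h^{\mathtt{C}}}{\calE_{\mr}\Psi^{\mr}_K} = \scp{f}{\calE_{\mr}\Psi^{\mr}_K} - \ta(u_h, \calE_{\mr}\Psi^{\mr}_K) = \ta(u_h, \Psi^{\mr}_K - \calE_{\mr}\Psi^{\mr}_K) = 0.
\end{equation*}
Since the bases $\{\chi_K\}_{K \in \mathcal{K}_z}$ of $D_z$ and $\{\calE_{\mr}\Psi^{\mr}_K\}_{K \in \mathcal{K}_z}$ of $S_z$ are dual by Remark~\ref{R:MRBiorthogonalSystem}, expanding $\calP_z \Res_h^{\mathtt{C}} = \sum_K r_K \chi_K$ and testing with $\calE_{\mr}\Psi^{\mr}_{K'}$ gives $r_{K'} = \scp{\calP_z\Res_h^{\mathtt{C}}}{\calE_{\mr}\Psi^{\mr}_{K'}} = \scp{\Res_h^{\mathtt{C}}}{\calE_{\mr}\Psi^{\mr}_{K'}} = 0$. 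Hence $\calP_z \Res_h^{\mathtt{C}} = 0$ for every $z \in \vertices$, $\eta_h = 0$, and the abstract estimator reduces exactly to $\Ncf_{\mr}^2 + C^2 \Osc_{\mr}^2$ with equivalence constants depending only on $\gamma_\grid$ and $C$.

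For the higher-order oscillation bound when $f \in L^2(\Omega)$, I plan to combine the collective stability of the local projections with a Poincar\'e argument on each star. The stability of Lemma~\ref{L:MRconstruction-Pz-local} applied to $g=f$, together with a triangle inequality, gives $\sum_{z \in \vertices} \norm[H^{-2}(\omega_z)]{f - \calP_z f}^2 \lesssim \sum_{z \in \vertices} \norm[H^{-2}(\omega_z)]{f}^2$. For $v \in \mathring{H}^2(\omega_z)$, the vanishing boundary trace and normal derivative yield the double Poincar\'e bound $\norm[L^2(\omega_z)]{v} \lesssim h_z^2 \norm[L^2(\omega_z)]{D^2 v}$, so Cauchy--Schwarz implies $\norm[H^{-2}(\omega_z)]{f} \lesssim h_z^2 \norm[L^2(\omega_z)]{f}$. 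Summing over vertices and exploiting the bounded overlap of the stars $\omega_z$ will produce the claimed $\sum_T h_T^4 \norm[L^2(T)]{f}^2$ estimate. The main technical point to be careful about is the vanishing of $\eta_h$ through the duality of the simple bases, which is precisely what distinguishes this simplified Morley estimator from the one in Theorem~\ref{TH:mainC0}; the remaining steps are routine applications of the abstract framework and standard Sobolev inequalities.
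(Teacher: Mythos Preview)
Your proposal is correct and follows essentially the same route as the paper: apply Theorem~\ref{T:error-estimator} via Lemmas~\ref{L:MRdist2conf}, \ref{L:MR-partition-of-unity}, \ref{L:MRconstruction-Pz-local}, use overconsistency to drop the $\delta_h$-term, show $\eta_h=0$ from the orthogonality $\scp{\Res_h^{\mathtt{C}}}{\calE_{\mr}\Psi^{\mr}_K}=0$, and bound the oscillation by collective stability plus the $\mathring{H}^2(\omega_z)$ Poincar\'e inequality. The paper compresses your vanishing-$\eta_h$ argument into the single line $\norm[H^{-2}(\omega_z)]{\calP_z\Res^{\mathtt{C}}_h}\lesssim\norm[\Dual{S_z}]{\calP_z\Res^{\mathtt{C}}_h}=\norm[\Dual{S_z}]{\Res^{\mathtt{C}}_h}=0$ using (iv) of \eqref{A:construction-Pz-local} and \eqref{Eq:construction-Pz-local} directly, without expanding in the dual basis, but this is only a cosmetic difference.
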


\begin{remark}[Estimator variants of biharmonic $\mr$]
\label{R:Variants-of-MR-estimator}
In view of  Corollary~\ref{C:MRJumpDist2conf}, we may replace $\Ncf_{\mr}$ with properly scaled jumps and a second tuning constant. This variant is widely used in the literature; see e.g.~\cite{Carstensen.Graessle.Nataraj:24}.  As in Remark~\ref{R:oscHiOC0}, the order of the oscillation can be increased by suitably augmenting simple functionals and test functions. 
\end{remark}

\begin{proof}[Proof of Theorem~\ref{TH:mainMR}]
The claimed equivalence is  a direct consequence of Theorems~\ref{T:error-estimator} and the observation
\begin{align*}
	\norm[H^{-2}(\omega_z)]{\calP_z\Res^{\mathtt{C}}_h}
	\lesssim
	\norm[\Dual{S_z}]{\calP_z\Res^{\mathtt{C}}_h}=\norm[\Dual{S_z}]{\Res^{\mathtt{C}}_h}
	=
	0,
\end{align*} 
which follows from (iv) of \eqref{A:construction-Pz-local},  \eqref{Eq:construction-Pz-local} and~\eqref{eq:qo-MR}. The oscillation bounds follows as in the proof of Theorem~\ref{TH:mainC0}.
\end{proof}

\section{A numerical benchmark with rough source term}
\label{sec:numerics}
This section presents some numerical results obtained from the discretization of the Poisson problem \eqref{Eq:Poisson}, on an open polygon $\Omega \subseteq \R^2$, with Crouzeix-Raviart finite elements. Our implementation is realized with the library ALBERTA 3.1; see~\cite{Heine.Koester.Kriessl.Schmidt.Siebert,Schmidt.Siebert:05}.

Recall that the Crouzeix-Raviart discretization \eqref{Eq:CR-without-smoothing} without smoothing is defined only for sufficiently smooth source terms like, e.g., $f \in L^2(\Omega)$. This assumption, in turn, implies that the solution of the Poisson problem is in $H^2(\Omega)$, provided that $\Omega$ is convex. Hence, adaptive mesh refinement can provide a higher error decay rate than uniform refinement only in presence of re-entrant corners, as for the so-called L-shaped or slit domains. 

In contrast, the quasi-optimal discretization \eqref{Eq:CR-qopt} is defined for general sources $f \in H^{-1}(\Omega)$, entailing that $H^2(\Omega)$-regularity of the solution cannot be expected in general. Therefore, adaptive mesh refinement can asymptotically outperform the uniform refinement also for convex $\Omega$. We propose one such benchmark and test the ability of the estimators in Theorems~\ref{T:CR-error-estimator-1} and~\ref{T:CR-error-estimator-2} to quantify the error and to drive adaptive mesh refinement.

Let $\Omega = (0,1)^2$ be the unit square. For $\lambda \in (0,1)$, consider the manufactured weak solution of \eqref{Eq:Poisson} defined as
\begin{subequations}
	\label{E:benchmark}
	\begin{equation}
		\label{E:benchmark-solution}
		u(x_1, x_2) := \begin{cases}
			x_1(\lambda - x_1)x_2(1-x_2), & \text{for $x_1 \in (0,\lambda)$},\\
			(1-x_1)(x_1 - \lambda)x_2(1-x_2), & \text{for $x_1 \in (\lambda, 1)$}.
		\end{cases}
	\end{equation}
	This function is in $\mathring{H}^1(\Omega)$ and the gradient is discontinuous along the `critical' line $\Lambda = \{(x_1, x_2) \in \Omega \mid x_1 = \lambda\}$. Thus, the corresponding source $f = -\Delta u$ in \eqref{Eq:Poisson} consists of a regular and a singular component and acts on $v \in \mathring{H}^1(\Omega)$ as follows
	\begin{equation}
		\label{E:benchmark-source}
		\scp{f}{v} = \int_\Omega f_\mathrm{reg} v + \int_{\Lambda} w(x_2) v(\lambda, x_2) \mathrm{d} x_2,
	\end{equation}
	where $w(x_2) := x_2(1-x_2)$ and $f_\mathrm{reg} \in L^2(\Omega)$ is given by
	\begin{equation}
		\label{E:benchmark-source-singular}
		f_\mathrm{reg}(x_1, x_2) = \begin{cases}
			2x_1(\lambda - x_1) + 2x_2(1 - x_2), & \text{for $x_1 \in (0,\lambda)$},\\
			2(1 - x_1)(x_1 - \lambda) + 2x_2(1 - x_2), & \text{for $x_1 \in (\lambda, 1)$}.
		\end{cases} 
	\end{equation}
\end{subequations}

To obtain the initial mesh $\grid_0$, we subdivide $\Omega$ into four triangles, by drawing the two main diagonals, cf.\ Figure~\ref{F:benchmark-adaptive-meshes}(A). Each successive mesh $\grid_1, \grid_2, \dots$ is obtained from the previous one subdividing each marked triangle into four triangles by bisection and performing completion to preserve conformity. We set $\lambda = \frac{2}{3}$ in \eqref{E:benchmark}, to make sure that the critical line $\Lambda$ is not contained in the skeleton of any mesh $\grid_k$, $k \geq 0$, cf. Remark~\ref{R:critical-line} below. According to Remarks~\ref{R:computability} and~\ref{R:class-osc-as-surrogate}, we replace the data oscillation in the estimators from Theorems~\ref{T:CR-error-estimator-1} and~\ref{T:CR-error-estimator-2} by the surrogate
\begin{equation}
	\label{E:benchmark-surrogate-osc}
	\mathrm{sur}_{*,k}^2 := 
	\sum_{T \in \grid_k} h_T^2 \inf_{c \in \R} \norm[L^2(T)]{f_\mathrm{reg}-c}^2
	+
	\sum_{T \in \grid_k, T \cap \Lambda \neq \emptyset} h_T^2 \left( \max_{T \cap \Lambda} w \right)^2 
\end{equation}
where $* \in \{\mathsf{CR}, \widetilde{\mathsf{CR}}\}$. The second summand is inspired by \cite[Lemma~7.19]{Bonito.Canuto.Nochetto.Veeser:24}. For both estimators, we make use of the tuning constants $C_1 = 1$ and $C_2 = 0.3$.

\begin{table}[h!]
	\centering
	\begin{tabular}{|c|c|c|c|c|c|c|c|}
		\hline
		\multicolumn{2}{|c|}{Mesh} & \multicolumn{2}{|c|}{Error} & \multicolumn{2}{|c|}{Theorem~\ref{T:CR-error-estimator-1}} & \multicolumn{2}{|c|}{Theorem~\ref{T:CR-error-estimator-2}}\\
		\hline
		$k$ & $\#\grid_{k} $ & $\mathrm{err}_k$ & $\mathrm{eoc_k}$ & $\mathrm{est}_{\mathsf{CR},k}$ & $\mathrm{eff}_{\mathsf{CR},k}$ & $\mathrm{est}_{\widetilde{\mathsf{CR}},k}$ & $\mathrm{eff}_{\widetilde{\mathsf{CR}},k}$\\
		\hline
		0 & 4 &6.55e-02 &  & 1.37e-01 & 2.09 & 1.33e-01 & 2.03 \\ 
		1 & 16 &5.91e-02 & 0.07 & 1.07e-01 & 1.81 & 1.05e-01 & 1.78 \\ 
		2 & 64 &4.10e-02 & 0.26 & 7.33e-02 & 1.79 & 7.13e-02 & 1.74 \\ 
		3 & 256 &2.71e-02 & 0.30 & 4.89e-02 & 1.80 & 4.77e-02 & 1.76 \\ 
		4 & 1024 &1.86e-02 & 0.27 & 3.36e-02 & 1.81 & 3.30e-02 & 1.77 \\ 
		5 & 4096 &1.30e-02 & 0.26 & 2.35e-02 & 1.81 & 2.31e-02 & 1.78 \\ 
		6 & 16384 &9.11e-03 & 0.26 & 1.65e-02 & 1.82 & 1.62e-02 & 1.78 \\ 
		7 & 65536 &6.42e-03 & 0.25 & 1.17e-02 & 1.82 & 1.15e-02 & 1.78 \\ 
		8 & 262144 &4.53e-03 & 0.25 & 8.24e-03 & 1.82 & 8.09e-03 & 1.79 \\ 
		9 & 1048576 &3.20e-03 & 0.25 & 5.82e-03 & 1.82 & 5.72e-03 & 1.79 \\ 
		\hline
	\end{tabular}
	\caption{Errors, experimental orders of convergence, estimators and corresponding effectivity indices for uniform mesh refinement.}
	\label{F:benchmark-uniform-refinements}
\end{table}

We first consider the uniform mesh refinement, i.e.\ we obtain $\grid_{k+1}$ from $\grid_{k}$, $k \geq 0$, by marking each triangle for refinement. Since $u \in H^{1+s}(T)$ for all $T \in \grid_k$ only if $s < 0.5$, due to the choice of $\lambda$ above, we expect that the error $\mathrm{err}_k$ on $\grid_k$, measured in the broken $H^1$-norm from \eqref{setting-for-Poisson-and-CR1}, decays to zero as $(\#\grid_{k})^{-0.25}$. We test our expectation by computing the experimental order of convergence
\begin{equation*}
	\label{E:benchmark-EOC}
	\mathrm{eoc}_k := \log(\mathrm{err}_k/\mathrm{err}_{k-1}) / \log(\#\grid_{k-1}/\#\grid_{k}), \quad k \geq 1.
\end{equation*}
We compute also the effectivity index
\begin{equation*}
	\label{E:benchmark-effectivity-index}
	\mathrm{eff}_{*,k} := \mathrm{est}_{*,k} / \mathrm{err}_k, \quad k \geq 0
\end{equation*}
to assess the quality of the estimator $\mathrm{est}_{*,k}$, with $* \in \{\mathsf{CR}, \widetilde{\mathsf{CR}}\}$, computed on $\grid_{k}$. The data in Table~\ref{F:benchmark-uniform-refinements} suggest that, for $k\to \infty$, we have $\mathrm{eoc}_k \to 0.25$ as expected, the two estimators yield quite similar results and their effectivity indices are bounded between $1$ and $2$.

The decay rate $(\#\grid_{k})^{-0.25}$ of $\mathrm{err}_k$ above hinges on the discontinuity of $\nabla u$ along the critical line $\Lambda$. Therefore, the error is expected to concentrate along $\Lambda$. Remarkably, this property is captured by each indicator in the estimators, as illustrated in Figure~\ref{F:benchmark-estimator-components} for $\mathrm{est}_{\widetilde{\mathsf{CR}},k}$. In particular, all indicators decay to zero at the same rate and the surrogate oscillation is not of higher order compared to the error.

\begin{figure}[htp]
	\hfill
	\subfloat[$\mathrm{ncf}_{1,4}$]{\includegraphics[width=0.32\hsize]{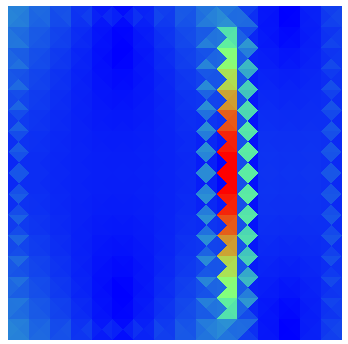}}
	\hfill
	\subfloat[$\eta_{1,4}$]{\includegraphics[width=0.32\hsize]{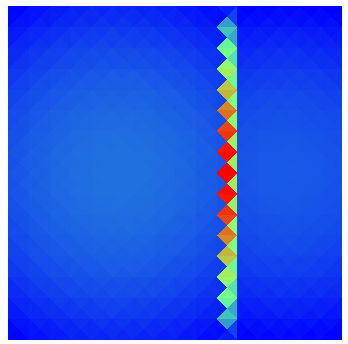}}
	\hfill
	\subfloat[$\mathrm{sur}_{1,4}$]{\includegraphics[width=0.32\hsize]{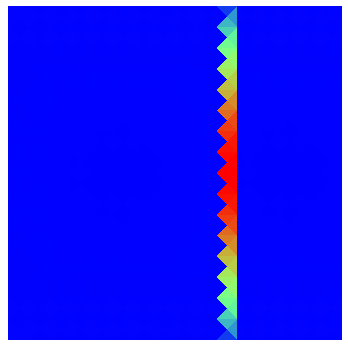}}
	\hfill
	\caption{Distribution in space of the indicators in the estimator $\mathrm{est}_{\widetilde{\mathsf{CR}},4}$, with oscillation replaced by surrogate, on the mesh $\grid_4$ obtained by uniform refinement. Lower and higher values correspond to cold and warm colors, respectively.} 
	\label{F:benchmark-estimator-components}
\end{figure}

\begin{figure}[htp]
	\hfill
	\subfloat[$\grid_0$]{\includegraphics[width=0.32\hsize]{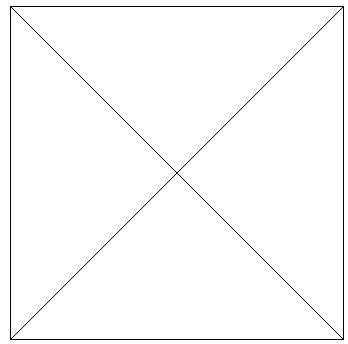}}
	\hfill
	\subfloat[$\grid_5$]{\includegraphics[width=0.32\hsize]{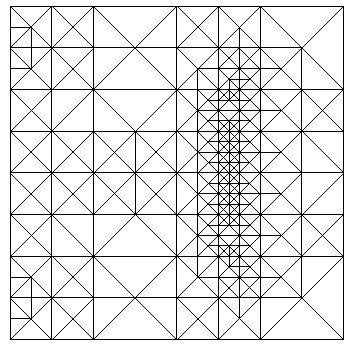}}
	\hfill
	\subfloat[$\grid_{10}$]{\includegraphics[width=0.32\hsize]{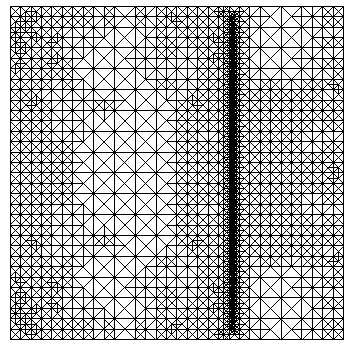}}
	\hfill
	\caption{Initial mesh $\grid_0$ and subsequent meshes $\grid_5$ and $\grid_{10}$ generated by adaptive refinement.} 
	\label{F:benchmark-adaptive-meshes}
\end{figure}

The above results suggest that adaptive mesh refinement could asymptotically outperform uniform refinement for this benchmark. To check this, we restrict to the use of the simplified estimator $\mathrm{est}_{\widetilde{\mathsf{CR}},k}$ as input for D\"{o}rfler marking with bulk parameter $\theta = 0.7$. As expected from Figure~\ref{F:benchmark-estimator-components}, such meshes are highly graded along the line $\Lambda$ and relatively coarse elsewhere, see Figure~\ref{F:benchmark-adaptive-meshes}. Moreover, owing to Figure~\ref{F:benchmark-errorplot}, both error and estimator decay to zero as $(\#\grid_k)^{-0.5}$, that is the best possible decay rate for a first-order method.

\begin{figure}[htp]
	\includegraphics[width=0.5\hsize]{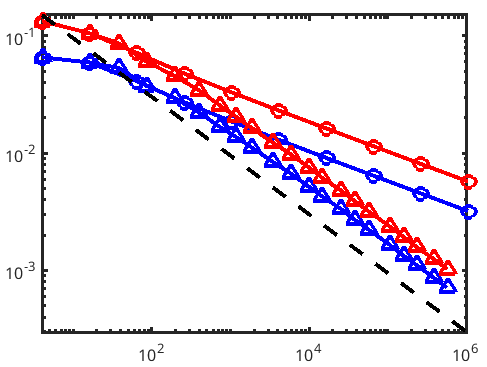}
	\caption{Error $\mathrm{err}_k$ (blue) and estimator $\mathrm{est}_{\widetilde{\mathsf{CR}},k}$ (red) for uniform ($\bigcirc$) and adaptive ($\triangle$) mesh refinement versus $\#\grid_k$. The dashed line indicates the decay rate $(\#\grid_k)^{-0.5}$.}
	\label{F:benchmark-errorplot}
\end{figure}

\begin{remark}[Critical line]
	\label{R:critical-line}
	The position of the line $\Lambda$ is crucial in this benchmark. Indeed, if $\Lambda$ was contained in the skeleton of, e.g., the initial mesh, then we would have $u \in H^2(T)$ for all $T \in \grid_{k}$, $k \geq 0$, and we would observe the error decay rate $(\#\grid_k)^{-0.5}$ also with uniform refinement. Moreover, we should modify the surrogate oscillation, because the simple functionals in \eqref{simple-fct-CR1-nconf} would approximate the singular component of the source term on $\Lambda$ more accurately than predicted by the latter term in \eqref{E:benchmark-surrogate-osc}.
\end{remark}

\subsection*{Funding}
Christian Kreuzer acknowledges funding by the Deutsche
For\-schungs\-ge\-mein\-schaft (DFG, German Research Foundation) --
321270008. 
Pietro Zanotti was supported by the GNCS-INdAM project CUP E53C23001670001.
\bibliographystyle{myalpha}
\bibliography{ms}

\newcommand{\etalchar}[1]{$^{#1}$}
\begin{thebibliography}{DDPV96}

\bibitem[ABC03]{Achdou.Bernardi.Coquel:03}
Y.~Achdou, C.~Bernardi, and F.~Coquel.
\newblock A priori and a posteriori analysis of finite volume discretizations
  of {D}arcy's equations.
\newblock {\em Numer. Math.}, 96(1):17--42, 2003.

\bibitem[AR08]{Ainsworth.Rankin:08}
M.~Ainsworth and R.~Rankin.
\newblock Fully {Computable} {Bounds} for the {Error} in {Nonconforming}
  {Finite} {Element} {Approximations} of {Arbitrary} {Order} on {Triangular}
  {Elements}.
\newblock {\em SIAM Journal on Numerical Analysis}, 46(6):3207--3232, January
  2008.
\newblock Publisher: Society for Industrial and Applied Mathematics.

\bibitem[BCNV24]{Bonito.Canuto.Nochetto.Veeser:24}
A.~Bonito, C.~Canuto, R.~H. Nochetto, and A.~Veeser.
\newblock Adaptive finite element methods.
\newblock {\em Acta Numer.}, 33:163--485, 2024.

\bibitem[BGS10]{Brenner.Gudi.Sung:10}
S.~C. Brenner, T.~Gudi, and L.-y. Sung.
\newblock An a posteriori error estimator for a quadratic {$C^0$}-interior
  penalty method for the biharmonic problem.
\newblock {\em IMA J. Numer. Anal.}, 30(3):777--798, 2010.

\bibitem[Bre96]{Brenner:96}
S.~Brenner.
\newblock Two-level additive {Schwarz} preconditioners for nonconforming finite
  element methods.
\newblock {\em Mathematics of Computation}, 65(215):897--921, 1996.

\bibitem[Bre15]{Brenner:15}
S.~C. Brenner.
\newblock Forty years of the {C}rouzeix-{R}aviart element.
\newblock {\em Numer. Methods Partial Differential Equations}, 31(2):367--396,
  2015.

\bibitem[BS05]{Brenner.Sung:05}
S.~C. Brenner and L.-Y. Sung.
\newblock {$C^0$} interior penalty methods for fourth order elliptic boundary
  value problems on polygonal domains.
\newblock {\em J. Sci. Comput.}, 22/23:83--118, 2005.

\bibitem[Car05]{Carstensen:05}
C.~Carstensen.
\newblock A unifying theory of a posteriori finite element error control.
\newblock {\em Numer. Math.}, 100(4):617--637, 2005.

\bibitem[CDN12]{Cohen.DeVore.Nochetto:12}
A.~Cohen, R.~DeVore, and R.~H. Nochetto.
\newblock Convergence rates of {AFEM} with {$H^{-1}$} data.
\newblock {\em Found. Comput. Math.}, 12(5):671--718, 2012.

\bibitem[CGN24]{Carstensen.Graessle.Nataraj:24}
C.~Carstensen, B.~Gr{\"a}{\ss}le, and N.~Nataraj.
\newblock Unifying a posteriori error analysis of five piecewise quadratic
  discretisations for the biharmonic equation.
\newblock {\em J. Numer. Math.}, 32(1):77--109, 2024.

\bibitem[Cia02]{Ciarlet:2002}
P.~G. Ciarlet.
\newblock {\em The finite element method for elliptic problems}, volume~40 of
  {\em Classics in Applied Mathematics}.
\newblock Society for Industrial and Applied Mathematics (SIAM), Philadelphia,
  PA, 2002.
\newblock Reprint of the 1978 original [North-Holland, Amsterdam; MR0520174 (58
  \#25001)].

\bibitem[CN22]{Carstensen.Nataraj:22}
C.~Carstensen and N.~Nataraj.
\newblock Lowest-order equivalent nonstandard finite element methods for
  biharmonic plates.
\newblock {\em ESAIM Math. Model. Numer. Anal.}, 56(1):41--78, 2022.

\bibitem[CR73]{Crouzeix.Raviart:73}
M.~Crouzeix and P.-A. Raviart.
\newblock Conforming and nonconforming finite element methods for solving the
  stationary {S}tokes equations. {I}.
\newblock {\em Rev. Fran\c caise Automat. Informat. Recherche Op\'erationnelle
  S\'er. Rouge}, 7:33--75, 1973.

\bibitem[CT65]{Clough.Tocher:65}
R.~W. Clough and J.~L. Tocher.
\newblock Finite element stiffness matrices and analysis of plate in bending.
\newblock In {\em Proceedings of the Conference on Matrix Methods in Structural
  Mechanics}, pages 515--545. Wright Patterson AFB, 1965.

\bibitem[dBD83]{DeBoor.DeVore:1983}
C.~de~Boor and R.~DeVore.
\newblock Approximation by smooth multivariate splines.
\newblock {\em Trans. Amer. Math. Soc.}, 276(2):775--788, 1983.

\bibitem[DDP95]{Dari.Duran.Padra:95}
E.~Dari, R.~Dur\'an, and C.~Padra.
\newblock Error estimators for nonconforming finite element approximations of
  the {S}tokes problem.
\newblock {\em Math. Comp.}, 64(211):1017--1033, 1995.

\bibitem[DDPV96]{Dari.Duran.Padra.Vampa:96}
E.~Dari, R.~Duran, C.~Padra, and V.~Vampa.
\newblock A posteriori error estimators for nonconforming finite element
  methods.
\newblock {\em RAIRO Mod\'el. Math. Anal. Num\'er.}, 30(4):385--400, 1996.

\bibitem[FV06]{Fierro.Veeser:06}
F.~Fierro and A.~Veeser.
\newblock A posteriori error estimators, gradient recovery by averaging, and
  superconvergence.
\newblock {\em Numerische Mathematik}, 103(2):267--298, April 2006.

\bibitem[Gal15]{Gallistl:15}
D.~Gallistl.
\newblock Morley finite element method for the eigenvalues of the biharmonic
  operator.
\newblock {\em IMA J. Numer. Anal.}, 35(4):1779--1811, 2015.

\bibitem[HKK{\etalchar{+}}]{Heine.Koester.Kriessl.Schmidt.Siebert}
K.-J. Heine, D.~K{\"{o}}ster, O.~Kriessl, A.~Schmidt, and K.~Siebert.
\newblock Alberta - an adaptive hierachical finite element toolbox.

\bibitem[KP03]{Karakashian.Pascal:03}
O.~A. Karakashian and F.~Pascal.
\newblock A posteriori error estimates for a discontinuous {G}alerkin
  approximation of second-order elliptic problems.
\newblock {\em SIAM J. Numer. Anal.}, 41(6):2374--2399, 2003.

\bibitem[KV21]{Kreuzer.Veeser:21}
C.~Kreuzer and A.~Veeser.
\newblock Oscillation in a posteriori error estimation.
\newblock {\em Numer. Math.}, 148(1):43--78, 2021.

\bibitem[KVZ]{Kreuzer.Veeser.Zanotti:24}
C.~Kreuzer, A.~Veeser, and P.~Zanotti.
\newblock Accurate error bounds for finite element methods.
\newblock In preparation.

\bibitem[Mor68]{Morley:68}
L.~S.~D. Morley.
\newblock The triangular equilibrium element in the solution of plate bending
  problems.
\newblock {\em The Aeronautical Quarterly}, 19:149--169, 1968.

\bibitem[SB06]{Stoyan.Baran:06}
G.~Stoyan and A.~Baran.
\newblock Crouzeix-{V}elte decompositions for higher-order finite elements.
\newblock {\em Comput. Math. Appl.}, 51(6-7):967--986, 2006.

\bibitem[SS05]{Schmidt.Siebert:05}
A.~Schmidt and K.~G. Siebert.
\newblock {\em Design of adaptive finite element software}, volume~42 of {\em
  Lecture Notes in Computational Science and Engineering}.
\newblock Springer-Verlag, Berlin, 2005.
\newblock The finite element toolbox ALBERTA, With 1 CD-ROM (Unix/Linux).

\bibitem[Szy06]{Szyld:06}
D.~B. Szyld.
\newblock The many proofs of an identity on the norm of oblique projections.
\newblock {\em Numer. Algorithms}, 42(3-4):309--323, 2006.

\bibitem[Ver13]{Verfuerth:13}
R.~Verf\"urth.
\newblock {\em A posteriori error estimation techniques for finite element
  methods}.
\newblock Numerical Mathematics and Scientific Computation. Oxford University
  Press, Oxford, 2013.

\bibitem[Voh07]{Vohralik:07}
M.~Vohral{\'i}k.
\newblock A posteriori error estimates for finite volume and mixed finite
  element discretizations of convection-diffusion-reaction equations.
\newblock In {\em Paris-{S}ud {W}orking {G}roup on {M}odelling and {S}cientific
  {C}omputing 2006--2007}, volume~18 of {\em ESAIM Proc.}, pages 57--69. EDP
  Sci., Les Ulis, 2007.

\bibitem[VZ18a]{Veeser.Zanotti:18}
A.~Veeser and P.~Zanotti.
\newblock Quasi-optimal nonconforming methods for symmetric elliptic problems.
  {I}---{A}bstract theory.
\newblock {\em SIAM J. Numer. Anal.}, 56(3):1621--1642, 2018.

\bibitem[VZ18b]{Veeser.Zanotti:18b}
A.~Veeser and P.~Zanotti.
\newblock Quasi-optimal nonconforming methods for symmetric elliptic problems.
  {III}---{D}iscontinuous {G}alerkin and other interior penalty methods.
\newblock {\em SIAM J. Numer. Anal.}, 56(5):2871--2894, 2018.

\bibitem[VZ19]{Veeser.Zanotti:19b}
A.~Veeser and P.~Zanotti.
\newblock Quasi-optimal nonconforming methods for symmetric elliptic problems.
  {II}---{O}verconsistency and classical nonconforming elements.
\newblock {\em SIAM J. Numer. Anal.}, 57(1):266--292, 2019.

\end{thebibliography}

\end{document}